\definecolor{Blue}{rgb}{0.3,0.3,0.9}
\DeclareSymbolFont{cyrillic}{T2A}{cmr}{m}{n}
\DeclareMathSymbol{\Sha}{\mathalpha}{cyrillic}{216}
\newcommand{\sk}{\vspace{0.1in}}
\newtheorem{thm}{Theorem}[section]
\newtheorem{pro-thm}[thm]{Main Theorem}
\newtheorem{def-thm}[thm]{Definition-Theorem}
\newtheorem{cor}[thm]{Corollary}
\newtheorem{lem}[thm]{Lemma}
\newtheorem{prop}[thm]{Proposition}
\newtheorem{conj}[thm]{Conjecture}
\newtheorem*{conjA'}{Conjecture~A$'$}
\newtheorem*{assumption-spade}{Hypothesis~$\spadesuit$}
\theoremstyle{definition}
\newtheorem{defn}[thm]{Definition}
\theoremstyle{remark}
\newtheorem{rem}[thm]{Remark}
\newtheorem{intro-rem}{Remark}
\numberwithin{equation}{section}
\newcommand{\abs}[1]{\left\vert#1\right\vert}
\newcommand{\rH}{{\rm H}}
\newcommand{\pp}{\mathfrak{p}}
\newcommand{\ppbar}{\overline{\mathfrak{p}}}
\newcommand{\qq}{\mathfrak{q}}
\newcommand{\qqbar}{\overline{\mathfrak{q}}}
\newcommand{\bZ}{\mathbf{Z}}
\def\cK{{\mathcal K}}  
\def\cO{\mathcal O}
\def\cP{{\mathcal P}}
\newcommand{\Q}{\mathbf{Q}}
\newcommand{\Z}{\mathbf{Z}}
\def\makeop#1{\expandafter\def\csname#1\endcsname
	{\mathop{\rm #1}\nolimits}\ignorespaces}
\newcommand{\beqcd}[1]{\begin{equation*}\label{#1}\tag{#1}}
\newcommand{\eeqcd}{\end{equation*}}
\newcommand{\dBr}[1]{\llbracket{#1}\rrbracket}
\begin{document}
\title[The leading coefficient of the BDP $p$-adic $L$-function]{Derived $p$-adic heights and the leading coefficient of the Bertolini--Darmon--Prasanna $p$-adic $L$-function}

\author[F.~Castella]{Francesc Castella}
\address[Castella]{Department of Mathematics, University of California Santa Barbara, CA 93106}
\email{castella@ucsb.edu}

\author[C-Y.~Hsu]{Chi-Yun Hsu}
\address[Hsu]{Department of Mathematics and Computer Science, Santa Clara University, Santa Clara, CA 95053, USA}
\email{chsu6@scu.edu}

\author[D.~Kundu]{Debanjana Kundu}
\address[Kundu]{Department of Mathematical and Statistical Sciences\\ UTRGV \\ 1201 W University Dr.\\ Edinburg, TX 78539\\ USA}
\email{dkundu@math.toronto.edu}

\author[Y-S.~Lee]{Yu-Sheng Lee}
\address[Lee]{Department of Mathematics, University  of Michigan, Ann Arbor, MI 48109, USA}
\email{yushglee@umich.edu}

\author[Z.~Liu]{Zheng Liu}
\address[Liu]{Department of Mathematics, University of California Santa Barbara, CA 93106, USA}
\email{zliu@math.ucsb.edu}

\date{\today}

\keywords{}
\subjclass[2020]{Primary 11G05; Secondary 11R23, 11G16}

\begin{abstract}
Let $E/\Q$ be an elliptic curve and let $p$ be an odd prime of good reduction for $E$. Let $K$ be an imaginary quadratic field satisfying the classical Heegner hypothesis and in which $p$ splits. The goal of this paper is two-fold:
\begin{enumerate}
\item We formulate a $p$-adic BSD conjecture for the $p$-adic $L$-function $L_\pp^{\rm BDP}$ introduced by Bertolini--Darmon--Prasanna \cite{bdp1}. 
\item For an algebraic analogue $F_{\ppbar}^{\rm BDP}$ of $L_\pp^{\rm BDP}$, we show that the ``leading coefficient'' part of our conjecture holds, and that the ``order of vanishing'' part follows from the expected ``maximal non-degeneracy'' of an anticyclotomic $p$-adic height.  
\end{enumerate}
In particular, when the Iwasawa--Greenberg Main Conjecture   
$(F_{\ppbar}^{\rm BDP})=(L_\pp^{\rm BDP})$ is known, our results determine the leading coefficient of $L_{\pp}^{\rm BDP}$ at $T=0$ up to a $p$-adic unit.  Moreover, by adapting the approach of Burungale--Castella--Kim \cite{BCK}, 
we prove the main conjecture for supersingular primes $p$ under mild hypotheses. 

In the $p$-ordinary case, and under some additional hypotheses, similar results were obtained by Agboola--Castella \cite{AC}, but our method is new and completely independent from theirs, and apply to all good primes.

\end{abstract}

\maketitle
\setcounter{tocdepth}{1}
\tableofcontents

\section{Introduction}

\subsection{The BDP \texorpdfstring{$p$}{}-adic \texorpdfstring{$L$}{}-function}
Let $E/\Q$ be an elliptic curve of conductor $N$ and let $p$ be an odd prime of good reduction for $E$.
Set $f\in S_2(\Gamma_0(N))$ to denote the newform associated with $E$.  Let $K$ be an imaginary quadratic field of discriminant prime to $Np$, and assume the classical \emph{Heegner hypothesis}, i.e., that
\begin{equation}\label{eq:Heeg}
\textrm{every prime factor of $N$ splits in $K$.}\tag{Heeg}
\end{equation}
Fix an embedding $\iota_p:\overline{\Q}\hookrightarrow\overline{\Q}_p$, and  assume also that
\begin{equation}\label{eq:spl}
\textrm{$p=\pp\ppbar$ splits in $K$,}\tag{spl}
\end{equation} 
with $\pp$ the prime of $K$ above $p$ induced by $\iota_p$.
Let $K_\infty/K$ be the anticyclotomic $\Z_p$-extension, and put
\[
\Gamma={\rm Gal}(K_\infty/K),\qquad
\Lambda=\Z_p\dBr{\Gamma},\qquad\Lambda_{\widehat{\cO}}=\Lambda\widehat\otimes_{\Z_p}\widehat{\cO},
\]
where  $\widehat{\cO}$ is the completion of the ring of integers of the maximal unramified extension of $\Q_p$.

In a seminal paper \cite{bdp1}, Bertolini--Darmon--Prasanna introduced a $p$-adic $L$-function
\[
\mathscr{L}_\pp^{\rm BDP}\in\Lambda_{\widehat{\cO}}
\]
whose square $L_\pp^{\rm BDP}=(\mathscr{L}_\pp^{\rm BDP})^2$ interpolates central critical values of the complex $L$-function of $f/K$ twisted by  infinite order characters of $\Gamma$.
The main result in \emph{op.\,cit.}, asserts that the value of $L_\pp^{\rm BDP}$ at the trivial character $\mathds{1}$ of $\Gamma$ (which lies outside the range of interpolation) is given by
\begin{equation}\label{eq:bdp}
L_\pp^{\rm BDP}(\mathds{1})=\frac{1}{u_K^{2}c_E^{2}}\cdot\biggl(\frac{1-a_p(E)+p}{p}\biggr)^2\cdot\log_{\omega_E}(z_K)^2.\tag{BDP}
\end{equation}

Here, $a_p(E):=p+1-\#E(\mathbf{F}_p)$, $u_K:=\frac{1}{2}\#\cO_K^\times$, $z_K\in E(K)$ is a Heegner point arising from a modular parametrisation 
\[
\varphi_E:X_0(N)\rightarrow E
\] 
with associated Manin constant\footnote{using \cite{mazur-prime} for the inclusion $c_E\in\Z_{(p)}$.} $c_E\in\Z_{>0}\cap\Z_{(p)}$ (so $z_K\otimes c_E^{-1}\in E(K)\otimes\Z_p$ is independent of the choice of  $\varphi_E$), and 
\[
\log_{\omega_E}:E(K_\pp)\otimes\bZ_p\rightarrow\bZ_p
\] 
is the formal group logarithm associated with a N\'{e}ron differential $\omega_E\in\Omega^1(E/\bZ_{(p)})$.

The above formula \eqref{eq:bdp} has been a key ingredient in recent progress over the past decade towards the Birch--Swinnerton-Dyer conjecture when the analytic rank of $E$ is $\leq 1$: \cite{JSW}, \cite{skinner}, etc. 
(see \cite{bur-BDP} and the references therein).


The goal of this paper is to formulate and study a $p$-adic analogue of the Birch--Swinnerton-Dyer conjecture for $L_\pp^{\rm BDP}$ for \emph{all good primes} $p>2$, predicting: 
\begin{itemize}
\item[(i)] the ``order of vanishing'' of $L_\pp^{\rm BDP}$ at the trivial character $\mathds{1}$;  
\item[(ii)] a formula for the ``leading coefficient'' of $L_\pp^{\rm BDP}$ at $\mathds{1}$.
\end{itemize} 
%


In the $p$-ordinary case, this task was first carried out by Agboola--Castella \cite{AC} drawing from the methods of Bertolini--Darmon \cite{BD-derived-AJM}.
The formulation of the conjecture in \cite{AC} imposed some technical hypotheses required for the existence of a ``perfect control theorem'': $p\nmid c_\ell$ for the Tamagawa numbers $c_\ell$ of $E/\Q_\ell$ for all primes $\ell\mid N$, and $p\nmid\#E(\mathbf{F}_p)$. Such control theorem is well-known to fail in the supersingular case. 
Moreover, it was also assumed that $\#\Sha(E/K)[p^\infty]<\infty$.
 
The new approach in this paper allows us to give a formulation of a $p$-adic BSD conjecture for $L_\pp^{\rm BDP}$ without any of those additional hypotheses and for all good primes $p>2$.
Moreover, modulo the expected ``maximal non-degeneracy'' of an anticyclotomic $p$-adic height pairing, we prove our conjecture for an algebraic analogue of $L_\pp^{\rm BDP}$.

\subsection{\texorpdfstring{$p$-adic analogue of BSD for $L_\pp^{\rm BDP}$}{}}

For the formulation of our conjecture, we assume that the triple $(E,K,p)$ satisfies the following additional hypotheses:
\begin{equation}\label{eq:h0-intro}
E(K)[p]=0, \tag{h0}
\end{equation}
and that for every $\qq\in\{\pp,\ppbar\}$ the restriction map
\begin{equation}\label{eq:h1-intro}
{\rm res}_\qq:\check{S}_p(E/K)\rightarrow E(K_\qq)\otimes\Z_p\tag{h1}
\end{equation}
has non-torsion image, where $\check{S}_p(E/K)=\varprojlim_k{\rm Sel}_{p^k}(E/K)$ is the usual compact Selmer group. (Note that (\ref{eq:h1-intro}) is implied by the finiteness of $\Sha(E/K)[p^\infty]$, since by the $p$-parity conjecture \cite{nekovarII,kim-parity}, hypothesis (\ref{eq:Heeg}) implies that the Selmer group $\check{S}_p(E/K)$ has odd $\Z_p$-rank.)


Denote by $E(K_\qq)_{/{\rm tor}}$ the quotient of $E(K_\qq)$ by its torsion submodule, and let ${\rm res}_{\qq/{\rm tor}}$ be the composition of ${\rm res}_\qq$ with the projection $E(K_\qq)\otimes\Z_p\rightarrow E(K_\qq)_{/{\rm tor}}\otimes\Z_p$.
Let 
\[
T=\varprojlim_k E[p^k]
\]
be the $p$-adic Tate module of $E$, 
and set ${\rm Sel}_\qq(K,T)={\rm ker}({\rm res}_{\qq/{\rm tor}})$.

In Section~\ref{sec:der-ht}, building on Howard's theory of derived $p$-adic heights \cite{howard-derived}, we construct a filtration 
\[
{\rm Sel}_\qq(K,T)\supset
\underleftarrow{\mathfrak{S}}_\qq^{(1)}\supset\underleftarrow{\mathfrak{S}}_\qq^{(2)}\supset\cdots\supset\underleftarrow{\mathfrak{S}}_\qq^{(i)}\supset\cdots,
\]
with $\underleftarrow{\mathfrak{S}}_\pp^{(i)}=\{0\}$ for $i\gg 0$, 
equipped with a sequence of ``derived'' $p$-adic height pairings
\begin{equation}
\label{eq:intro-hpi}
h_\pp^{(i)}:\underleftarrow{\mathfrak{S}}_\pp^{(i)}\times\underleftarrow{\mathfrak{S}}_{\ppbar}^{(i)}\rightarrow J^i/J^{i+1},
\end{equation}
where $J\subset\Lambda$ is the augmentation ideal.
%
Using these pairings, 
we define a 
$p$-adic regulator 
\[
{\rm Reg}_{\pp,{\rm der}}\in\bigl((J^\sigma/J^{\sigma+1})\otimes_{\Z_p}\Q_p\bigr)/\Z_p^\times,
\] 
where $\sigma=\sum_{i\geq 1}i\cdot{\rm rank}_{\Z_p}(\underleftarrow{\mathfrak{S}}_\pp^{(i)}/\underleftarrow{\mathfrak{S}}_\pp^{(i+1)})$. By construction, ${\rm Reg}_{\pp,{\rm der}}$ is \emph{always} nonzero.

Set
\[
r:={\rm rank}_{\Z_p}\check{S}_p(E/K).
\]
Under hypotheses \eqref{eq:h0-intro}-\eqref{eq:h1-intro}, the Selmer groups ${\rm Sel}_\qq(K,T)\subset\check{S}_p(E/K)$ are free $\Z_p$-modules of rank $r-1$ and $r$, respectively.
Let $(s_1,\dots,s_{r-1})$ be a $\Z_p$-basis for ${\rm Sel}_\pp(K,T)$, and extend it to a $\Z_p$-basis $(s_1,\dots,s_{r-1},s_\pp)$ for $\check{S}_p(E/K)$.
In particular, ${\rm res}_{\pp/{\rm tor}}(s_\pp)\neq 0$.


The following is our $p$-adic BSD conjecture for $L_\pp^{\rm BDP}$.

\begin{conj}[$p$-adic BSD conjecture for $L_\pp^{\rm BDP}$]\label{conj:BSD-intro}
Assume {\rm \eqref{eq:h0-intro}}-{\rm \eqref{eq:h1-intro}}.
\begin{itemize}
\item[(i)] {\rm (Leading Coefficient Formula)} Let 
\[
\varrho_{\rm an}={\rm ord}_JL_\pp^{\rm BDP}:=
\max\{i\geq 0\,\colon\,L_\pp^{\rm BDP}\in J^i\},
\] 
and denote by $\overline{L_\pp^{\rm BDP}}$ the natural image of $L_\pp^{\rm BDP}$ in $J^{\varrho_{\rm an}}/J^{\varrho_{\rm an}+1}$.
Then, up to a $p$-adic unit
\[
\overline{L_\pp^{\rm BDP}}=\biggl(\frac{1-a_p(E)+p}{p}\biggr)^2\cdot{\rm log}_\pp(s_\pp)^2\cdot{\rm Reg}_{\pp,{\rm der}}\cdot\#\Sha_{\rm BK}(K,W)\cdot\prod_{\ell\mid N}c_\ell^2.
\]
\item[(ii)] {\rm (Order of Vanishing)}
Let $r^\pm$ denote the $\Z_p$-rank of the $\pm$-eigenspace of $\check{S}_p(E/K)$ under the action of complex conjugation.
Then,
\[
\varrho_{\rm an}=2(\max\{r^+,r^-\}-1).
\]
\end{itemize}
\end{conj}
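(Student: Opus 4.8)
As Conjecture~\ref{conj:BSD-intro} concerns the analytic object $L_\pp^{\rm BDP}$, the plan is to prove instead its \emph{algebraic} avatar, obtained by replacing $L_\pp^{\rm BDP}$ with the analogue $F_{\ppbar}^{\rm BDP}$ --- which we take to generate the characteristic ideal of the Pontryagin dual $X_{\ppbar}$ of a Greenberg Selmer group over $K_\infty$ attached to $E[p^\infty]$, with one of the two primes above $p$ relaxed and the other strict, normalised so that the Iwasawa--Greenberg Main Conjecture reads $(F_{\ppbar}^{\rm BDP})=(L_\pp^{\rm BDP})$; the latter then transfers the conclusion to $L_\pp^{\rm BDP}$ whenever it is available. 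I would prove part~(i) for $F_{\ppbar}^{\rm BDP}$ unconditionally, and deduce part~(ii) for $F_{\ppbar}^{\rm BDP}$ from the maximal non-degeneracy of the anticyclotomic height.

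For part~(i), the first step is a control theorem relating $X_{\ppbar}$ to Selmer groups of $E$ over $K$. Its error terms account for the Tamagawa numbers $c_\ell$ at the primes $\ell\mid N$, for the Euler factor $\bigl(\tfrac{1-a_p(E)+p}{p}\bigr)^2$ and the factor $\log_\pp(s_\pp)^2$ at the primes above $p$ --- the latter arising, as in \cite{AC}, from the \emph{asymmetry} between the relaxed condition at one prime and the strict condition at the other, which singles out the distinguished class $s_\pp$ with ${\rm res}_{\pp/{\rm tor}}(s_\pp)\neq 0$ --- and for $\#\Sha_{\rm BK}(K,W)$, which enters linearly. Once these contributions are stripped away one is reduced to a $\Lambda$-module with a perfect control theorem, to which Howard's derived-height formalism \cite{howard-derived} applies: the image of its characteristic generator in $J^\sigma/J^{\sigma+1}$ is, up to a $p$-adic unit, the discriminant of the family $\{h_\pp^{(i)}\}_{i\geq1}$ of derived pairings constructed in Section~\ref{sec:der-ht}, that is, ${\rm Reg}_{\pp,{\rm der}}$. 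Multiplying the local factors back in yields the Leading Coefficient Formula for $F_{\ppbar}^{\rm BDP}$.

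For part~(ii), note that in the formula just established every factor other than ${\rm Reg}_{\pp,{\rm der}}$ lies in $\Q_p^\times$, hence is a unit in $\Lambda\otimes_{\Z_p}\Q_p$, and that ${\rm Reg}_{\pp,{\rm der}}\neq 0$ by construction; consequently the $J$-adic order of vanishing of $F_{\ppbar}^{\rm BDP}$ equals $\sigma$, and it remains to compute $\sigma$ under maximal non-degeneracy. This is a combinatorial problem about the filtration $\underleftarrow{\mathfrak{S}}_\pp^{(\bullet)}$, governed by how complex conjugation acts on the derived pairings: conjugation interchanges $\pp$ and $\ppbar$ and acts by $(-1)^i$ on $J^i/J^{i+1}$, so that the odd-index $h_\pp^{(i)}$ make the $+$- and $-$-eigenspaces of $\check{S}_p(E/K)$ isotropic --- hence couple one to the other --- while the even-index ones respect the eigenspace decomposition. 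Granting that each $h_\pp^{(i)}$ is as non-degenerate as this constraint permits, the filtration drops by $2\min\{r^+,r^-\}$ at the first stage --- once the line spanned by $s_\pp$ in the larger eigenspace is set aside --- and by the remaining $|r^+-r^-|-1$ (an \emph{even} number, since $r$ is odd by $p$-parity) at the second stage, with no further drops; substituting this into $\sigma=\sum_{i\geq1}i\cdot{\rm rank}_{\Z_p}(\underleftarrow{\mathfrak{S}}_\pp^{(i)}/\underleftarrow{\mathfrak{S}}_\pp^{(i+1)})$ gives $\sigma=2(\max\{r^+,r^-\}-1)$.

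The main obstacle is the arithmetic bookkeeping in part~(i): identifying $F_{\ppbar}^{\rm BDP}$ with the correct characteristic ideal and proving a control theorem sharp enough to pin down \emph{every} local factor --- with the right power, and with $\#\Sha_{\rm BK}(K,W)$ appearing linearly rather than squared --- \emph{without} the perfect-control hypotheses of \cite{AC}; this forces one to keep track of the $\Z_p$-torsion and of the possible non-semisimplicity of the relevant local cohomology, and to reconcile the square root $\mathscr{L}_\pp^{\rm BDP}$ with the honestly defined module $X_{\ppbar}$. A secondary difficulty, even granting maximal non-degeneracy, is that the computation in part~(ii) rests on the conjugation symmetry of the derived heights being compatible with the filtration $\underleftarrow{\mathfrak{S}}_\pp^{(\bullet)}$ at \emph{every} level, and on locating the class $s_\pp$ in the larger eigenspace --- functoriality statements internal to the construction of Section~\ref{sec:der-ht}, not formal consequences of the case $i=1$. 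Absent the non-degeneracy hypothesis, one should still be able to extract the inequality $\varrho_{\rm an}\geq 2(\max\{r^+,r^-\}-1)$.
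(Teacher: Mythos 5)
Your high-level strategy is the right one and matches the paper's: pass to the algebraic avatar $F_{\ppbar}^{\rm BDP}$, prove the Leading Coefficient Formula for it unconditionally, and reduce the Order of Vanishing to a maximal non-degeneracy hypothesis on the derived heights (Theorem~\ref{thm:intro-A}, proved in Section~\ref{sec:proofs}); the Main Conjecture then transfers part~(i) back to $L_\pp^{\rm BDP}$.

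However, your sketch of part~(i) has a genuine gap at its core. You describe the argument as a control theorem that "strips away" the Tamagawa, Euler, logarithm and Sha factors, after which "one is reduced to a $\Lambda$-module with a perfect control theorem, to which Howard's derived-height formalism applies." That is not what the paper does, and it is unclear how it could be made to work: there is no cleaner auxiliary module one reduces to. The actual technical engine is Lemma~\ref{lem:well-known}, which gives the leading coefficient of the characteristic power series of an \emph{arbitrary} torsion $\Lambda$-module $X$ as $\#\bigl(T^rX/T^{r+1}X\bigr)/\#(T^rX[T])$ for any $r\geq {\rm ord}_T F_X$, with no semisimplicity assumption on the $T$-action. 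Applied to $X_{\ppbar}$ itself, this turns the leading coefficient into a \emph{finite cardinality} $\#\bigl(\underrightarrow{\mathfrak{S}}_{\ppbar}^{(r+1)}\bigr)$ (Proposition~\ref{prop:well-known}), which is then peeled apart iteratively via the four-term exact sequences coming from the derived pairings (Proposition~\ref{prop:recursion}), producing ${\rm Reg}_{\pp,{\rm der},\gamma}$ together with the two "discrepancy" factors $\bigl[{\rm Sel}_\pp(K,T):\mathfrak{Sel}_\pp(K,T)\bigr]$ and $\#\bigl(\mathfrak{Sel}_{\ppbar}(K,W)_{/{\rm div}}\bigr)$. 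Only then does the comparison with the natural Selmer groups (Corollaries~\ref{cor:control-1}--\ref{cor:control-2} and Propositions~\ref{prop:sel-sha}--\ref{prop:coker}) yield the local factors. Your proposal flags the non-semisimplicity as a "difficulty" but does not say how to handle it; Lemma~\ref{lem:well-known} is precisely the missing ingredient, and without it the plan does not close.

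In part~(ii) your computation of $\sigma=2(\max\{r^+,r^-\}-1)$ under maximal non-degeneracy is arithmetically correct, but the route you propose --- deducing the answer from conjugation acting by $(-1)^i$ on $J^i/J^{i+1}$ at \emph{every} level $i$, with odd-level pairings isotropic and even-level pairings eigenspace-respecting --- asks for more than the paper proves and more than is needed. The paper establishes only the level-one degeneracy (Proposition~\ref{prop:e2}), via the single sign relation $h_{\rm str}(x^\tau,y^\tau)=-h_{\rm str}(x,y)$ from \cite[Rem.~1.12]{howard-derived}, giving the unconditional inequality; "maximal non-degeneracy" is then \emph{defined} by the explicit rank conditions $e_2=|r^+-r^-|-1$ and $e_i=0$ for $i\geq3$, after which the equality is immediate from $\varrho_{\rm alg}=\sum_i ie_i$. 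Your stronger symmetry claim at all levels would be an extra theorem (in the direction of Remark~\ref{rem:max-nondeg}), not a simplification. Also, the phrase "the line spanned by $s_\pp$ in the larger eigenspace" is not well-posed: $s_\pp$ is only defined modulo ${\rm Sel}_\pp(K,T)$ and need not lie in an eigenspace; the $-1$ in the rank count comes from ${\rm Sel}_\pp(K,T)$ having corank one in $\check{S}_p(E/K)$, not from any eigenspace location of $s_\pp$.
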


Here $c_\ell$ is the Tamagawa number of $E/\Q_\ell$, $W=E[p^\infty]$, and $\Sha_{\rm BK}(K,W)={\rm Sel}_{p^\infty}(E/K)_{/{\rm div}}$ 
is the Bloch--Kato Tate--Shafarevich group, i.e., the quotient of the $p^\infty$-Selmer group ${\rm Sel}_{p^\infty}(E/K)$ by its maximal divisible submodule.
Also, 
\[
{\rm log}_\pp:\check{S}_p(E/K)\rightarrow\Z_p
\] 
denotes the composition ${\rm log}_{\omega_E}\circ{\rm res}_{\pp/{\rm tor}}$.

\begin{rem}\label{rem:cf-AC}
In the $p$-ordinary case, a variant of Conjecture~\ref{conj:BSD-intro} was formulated in \cite{AC}, 
with a regulator defined using the theory of derived $p$-adic heights by Bertolini--Darmon \cite{BD-derived-AJM}. An important advantage of the present formulation in comparison with the formulation of \cite[Conj.~4.2]{AC} is that the latter requires a hypothesis on the derived $p$-adic heights  amounting to the requirement that $\underleftarrow{\mathfrak{S}}_\qq^{(i)}=0$ for $i\geq p$ 
for the definition of their derived $p$-adic regulator in [\emph{op.\,cit.}, Def.~4.1], whereas our formulation  of Conjecture~\ref{conj:BSD-intro} is completely unconditional.
\end{rem}

\begin{rem}\label{rem:motivic}
Assuming $\#\Sha(E/K)[p^\infty]<\infty$ for the usual Tate--Shafarevich group $\Sha(E/K)$, it is possible to remove the ambiguity by a $p$-adic unit in the formulation of Conjecture~\ref{conj:BSD-intro}(i).
Indeed, following an observation from \cite[Rem.~2.21]{BD-derived-AJM}, this can be achieved as follows: in this case 
\[
\check{S}_p(E/K)\simeq E(K)\otimes\Z_p=(E(K)/E(K)_{\rm tor})\otimes\Z_p\simeq\Z_p^r,
\]
using \eqref{eq:h0-intro} for the middle equality.
Let $M$ be an endomorphism of $\check{S}_p(E/K)$ sending a $\Z$-basis $(P_1,\dots,P_r)$ of $E(K)/E(K)_{\rm tor}\simeq\Z^r$ to $(s_1,\dots,s_{r-1},s_\pp)$.
Then it suffices to replace ${\rm Reg}_{\pp,{\rm der}}$ in the right-hand side of Conjecture~\ref{conj:BSD-intro}(i) by the modification
\[
{\rm det}(M)^{-2}\cdot{\rm Reg}_{\pp,{\rm der}},
\]
which is a well-defined element in $(J^\sigma/J^{\sigma+1})\otimes_{\Z_p}\Q_p$ 
and is independent of the choice of $M$.
\end{rem}

\begin{rem}
When ${\rm ord}_{s=1}L(E/K,s)=1$,  Conjecture~\ref{conj:BSD-intro}(ii) follows immediately from \eqref{eq:bdp} and the work of Gross--Zagier and Kolyvagin.
In this case, $\varrho_{\rm an}=0$, and  the Leading Coefficient Formula in Conjecture~\ref{conj:BSD-intro}(i) is \emph{equivalent} to the $p$-part of the Birch--Swinnerton-Dyer formula for $L'(E/K,1)$ (see Proposition~\ref{prop:rank-1}).
\end{rem}


\subsection{Main results}

By the Iwasawa--Greenberg Main Conjecture (see Conjecture~\ref{conj:BDP-IMC}), 
$L_\pp^{\rm BDP}$ should generate the characteristic ideal of a $\Lambda$-adic Selmer group denoted $X_{\ppbar}={\rm Sel}_{\ppbar}(K_\infty,W)^\vee$ 
in the body of the paper.
This module is known to be $\Lambda$-torsion under hypothesis \eqref{eq:h0-intro} (see Proposition~\ref{prop:Lambda-tors}).

The main result of this paper is the following.

\begin{thm}\label{thm:intro-A}
Let $F_{\ppbar}^{\rm BDP}\in\Lambda$ be a generator of ${\rm char}_\Lambda(X_{\ppbar})$, and put 
\[
\varrho_{\rm alg}={\rm ord}_{J}F_{\ppbar}^{\rm BDP}:=
\max\{i\geq 0\,\colon\,F_{\ppbar}^{\rm BDP}\in J^i\}.
\]
\begin{itemize} 
\item[(i)] Let $\overline{F_{\ppbar}^{\rm BDP}}$ be the natural image of $F_{\ppbar}^{\rm BDP}$ in $J^{\varrho_{\rm alg}}/J^{{\varrho_{\rm alg}}+1}$.
Then, up to a $p$-adic unit
\[
\overline{F_{\ppbar}^{\rm BDP}}=\biggl(\frac{1-a_p(E)+p}{p}\biggr)^2\cdot{\rm log}_{\pp}(s_\pp)^2\cdot{\rm Reg}_{\pp,{\rm der}}\cdot\#\Sha_{\rm BK}(K,W)\cdot\prod_{\ell\mid N}c_\ell^2.
\]
\item[(ii)] Furthermore,
\[
{\varrho_{\rm alg}}\geq 2(\max\{r^+,r^-\}-1),
\]
with equality if and only if ${\rank}_{\Z_p}\underleftarrow{\mathfrak{S}}_\pp^{(2)}=\abs{ r^+-r^-}-1$ and $\underleftarrow{\mathfrak{S}}_\pp^{(i)}=0$ for $i\geq 3$.
\end{itemize}
\end{thm}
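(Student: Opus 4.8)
The plan is to relate the characteristic ideal of $X_{\ppbar}$ to the derived $p$-adic height pairings via a Mazur--Tate--Teitelbaum-style argument, leveraging the control theorem for $X_{\ppbar}$ together with Howard's description of the structure of a $\Lambda$-adic Selmer group in terms of its organizing module. First I would recall the control diagram relating the specialization $X_{\ppbar}/JX_{\ppbar}$ to the dual of $\mathrm{Sel}_{\ppbar}(K,W)$, and identify the latter via hypothesis \eqref{eq:h0-intro} with the Pontryagin dual of the Bloch--Kato Selmer group twisted by the local condition at $\ppbar$; the point $s_\pp$ detects the failure of the restriction map $\mathrm{res}_{\ppbar}$ to be zero, so the term $\log_\pp(s_\pp)^2$ and the factor $\bigl((1-a_p(E)+p)/p\bigr)^2$ arise exactly as in \eqref{eq:bdp} from comparing the algebraic local condition at $\ppbar$ with the formal group logarithm (the Euler factor being the discrepancy between the Bloch--Kato exponential and $\log_{\omega_E}$ at a prime of good reduction). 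The factor $\#\Sha_{\mathrm{BK}}(K,W)\cdot\prod_{\ell\mid N}c_\ell^2$ should come out of the Euler characteristic / Tamagawa-number bookkeeping in the global duality computation, precisely as in the standard derivation of the $p$-part of BSD from an Iwasawa main conjecture.

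For part (i), the key input is the theory of derived heights: I would invoke the result from Section~\ref{sec:der-ht} that the filtration $\underleftarrow{\mathfrak{S}}_\pp^{(i)}$ together with the pairings $h_\pp^{(i)}$ computes the image of $F_{\ppbar}^{\rm BDP}$ in the graded pieces $J^i/J^{i+1}$ — this is the analogue, in the present setting, of Howard's theorem that the characteristic ideal of a torsion $\Lambda$-adic Selmer module is generated, in the graded ring $\mathrm{gr}_J\Lambda$, by the product of determinants of the derived height pairings. Concretely, one writes $F_{\ppbar}^{\rm BDP}$ as a characteristic generator, applies the structure theory to get $\mathrm{ord}_J F_{\ppbar}^{\rm BDP}=\sigma$ on the nose when the pairings are nondegenerate in the appropriate sense, and reads off $\overline{F_{\ppbar}^{\rm BDP}}$ as $\det(h_\pp^{(i)})$'s multiplied together — which is exactly $\mathrm{Reg}_{\pp,\mathrm{der}}$ — times the remaining arithmetic factors isolated above. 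The independence of the choice of generator (hence the ambiguity by a $p$-adic unit) and of the basis $(s_1,\dots,s_{r-1},s_\pp)$ is where one must be careful, but this is bookkeeping with $\det(M)^{-2}$-type corrections as in Remark~\ref{rem:motivic}.

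For part (ii), the inequality $\varrho_{\rm alg}\geq 2(\max\{r^+,r^-\}-1)$ follows from the parity / sign structure: complex conjugation acts on $\check{S}_p(E/K)$, the pairings $h_\pp^{(i)}$ are (anti)symmetric with respect to it, and a skew pairing on a space with eigenspaces of unequal dimension $r^\pm$ necessarily has a kernel of dimension $\geq |r^+-r^-|$, forcing $\sigma\geq r^+ + r^- - \min\{r^+,r^-\}\cdot(\text{something})$; carrying this out gives the stated bound, with the factor of $2$ coming from the square in $L_\pp^{\rm BDP}=(\mathscr{L}_\pp^{\rm BDP})^2$. The equality criterion then amounts to saying the $h_\pp^{(1)}$ pairing is as nondegenerate as the sign constraint allows: its kernel has dimension exactly $|r^+-r^-|$, so $\underleftarrow{\mathfrak{S}}_\pp^{(2)}$ has rank $|r^+-r^-|-1$ (the $-1$ from the extra basis vector $s_\pp$ living in the $\ppbar$-side filtration), and then $h_\pp^{(2)}$ must be perfect on what remains, killing $\underleftarrow{\mathfrak{S}}_\pp^{(i)}$ for $i\geq 3$. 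The main obstacle I anticipate is the first paragraph's global duality computation: making the control theorem for $X_{\ppbar}$ precise enough to pin down the arithmetic factors $\#\Sha_{\mathrm{BK}}(K,W)$ and $\prod_{\ell\mid N}c_\ell^2$ (rather than just up to $p$-adic units of uncontrolled size), and correctly matching the local term at $\ppbar$ to $\bigl((1-a_p(E)+p)/p\bigr)^2\log_\pp(s_\pp)^2$ uniformly in both the ordinary and supersingular cases, will require the delicate Poitou--Tate and local-Tate-duality input that occupies the technical heart of the paper.
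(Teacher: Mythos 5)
Your plan for part (i) is, at the level of strategy, the same as the paper's: reduce the leading coefficient of $F_{\ppbar}^{\rm BDP}$ to a count of $J$-graded pieces of $X_{\ppbar}$, feed the derived-height filtration into that count to pull out $\mathrm{Reg}_{\pp,\mathrm{der}}$, and then use Poitou--Tate duality to identify the remaining arithmetic factors. You correctly locate the technical weight on the last step. The paper organises this as four explicit steps (an algebraic lemma on $T^rX/T^{r+1}X$ for torsion $\Lambda$-modules; an iterated exact-sequence argument with $h_\pp^{(i)}$ producing $\mathrm{Reg}_{\pp,\mathrm{der},\gamma}\cdot[\mathrm{Sel}_\pp(K,T):\mathfrak{Sel}_\pp(K,T)]\cdot\#(\mathfrak{Sel}_{\ppbar}(K,W)_{/\mathrm{div}})$; a self-duality/five-term-sequence computation giving $\#E(\Q_p)[p^\infty]^2\cdot\prod_w c_w^{(p)}$; and a final global-duality step combined with the formal-group logarithm giving the Euler factor, $\log_\pp(s_\pp)^2$, and $\#\Sha_{\rm BK}(K,W)$), but this is a refinement of your outline rather than a different route. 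One small imprecision: the factor $((1-a_p(E)+p)/p)^2\log_\pp(s_\pp)^2$ is produced at the prime $\pp$ (via $\#\mathrm{coker}(\mathrm{res}_{\pp/\mathrm{tor}})$, using the reduction map $E(K_\pp)\to E(\mathbf{F}_p)$), not by analysing the local condition at $\ppbar$.

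There is a genuine gap in your treatment of part (ii). First, the factor of $2$ in the bound $\varrho_{\rm alg}\geq 2(\max\{r^+,r^-\}-1)$ cannot come from $L_\pp^{\rm BDP}=(\mathscr{L}_\pp^{\rm BDP})^2$: that square plays no role in the purely algebraic statement about $F_{\ppbar}^{\rm BDP}$, which is unrelated to any analytic $p$-adic $L$-function until the main conjecture is invoked. In the paper the $2$ arises combinatorially from $\varrho_{\rm alg}=\sum_{i\geq1}i\,e_i\geq(\sum_{i\geq1}e_i)+(\sum_{i\geq2}e_i)\geq(r-1)+(|r^+-r^-|-1)=2(\max\{r^+,r^-\}-1)$, where $r=r^++r^-$ and the second inequality is Proposition~\ref{prop:e2}. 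Second, your reduction to ``a skew pairing on a space with unequal eigenspace dimensions has a large kernel'' does not apply directly: $h_\pp^{(1)}$ is a \emph{symmetric} pairing between two distinct modules $\underleftarrow{\mathfrak{S}}_\pp^{(1)}\times\underleftarrow{\mathfrak{S}}_{\ppbar}^{(1)}\to\Z_p$, neither of which is $\tau$-stable. To extract a skew structure one must first restrict to the $\tau$-stable intersection ${\rm Sel}_{\rm str}(K,T)={\rm Sel}_\pp(K,T)\cap{\rm Sel}_{\ppbar}(K,T)$, where Howard's relation $h_{\rm str}(x^\tau,y^\tau)=-h_{\rm str}(x,y)$ holds, and then run a case analysis depending on whether ${\rm rank}_{\Z_p}{\rm im}(\mathrm{res}_p)$ is $1$ or $2$ to account for the drop by one from $|r^+-r^-|$ to $|r^+-r^-|-1$. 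Without this restriction and the case analysis, the argument as you sketch it does not establish the bound on ${\rm rank}_{\Z_p}\underleftarrow{\mathfrak{S}}_\pp^{(2)}$.
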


Set $h_\pp:=h_\pp^{(1)}$.
We say that  $h_\pp$ is \emph{maximally non-degenerate} when the condition for equality in the last part of Theorem~\ref{thm:intro-A} holds.
In the $p$-ordinary case, conjectures due to Mazur and Bertolini--Darmon (\cite[\S{3}]{BD-derived-AJM}) 
imply that $h_\pp$ is maximally non-degenerate (see Remark~\ref{rem:max-nondeg}), and we expect this condition to hold for all good primes $p>2$.
(For the second assertion in Theorem~\ref{thm:intro-A}(ii), note that hypothesis \eqref{eq:Heeg} and the $p$-parity conjecture imply that $r^+$ and $r^-$ have different parities, and so $\abs{r^+-r^-}-1\geq 0$.)

As a consequence, when the Iwasawa--Greenberg Main Conjecture 
\begin{equation}\label{eq:BDP-IMC-intro}
\bigl(F_{\ppbar}^{\rm BDP}\bigr)\overset{?}=\bigl(L_\pp^{\rm BDP}\bigr)
\end{equation}
is known, Theorem~\ref{thm:intro-A} implies Conjecture~\ref{conj:BSD-intro}(i) (up to a $p$-adic unit). For ordinary primes, a proof of the Main Conjecture \eqref{eq:BDP-IMC-intro} was obtained in \cite{BCK}. In Section~\ref{sec:IMC} we extend their methods to the supersingular case, leading to the following result.

\begin{thm}\label{thm:intro-B}
Let $p>2$ be a prime of good supersingular reduction for $E$, and let $K$ be an imaginary quadratic field satisfying \eqref{eq:Heeg} and \eqref{eq:spl}. Assume in addition that:
\begin{itemize}
\item[(i)] $N$ is square-free.
\item[(ii)] $E[p]$ is ramified at every prime $\ell\mid N^+$. 
\item[(iii)] $E[p]$ is ramified at every prime $\ell\mid N^-$ with $\ell\equiv\pm{1}\pmod{p}$.
\item[(iv)] Every prime above $p$ is totally ramified in $K_\infty/K$.
\end{itemize}
Then $X_{\ppbar}$ is $\Lambda$-torsion, with
\[
{\rm char}_\Lambda\bigl( X_{\ppbar}\bigr)=\bigl(L_\pp^{\rm BDP}\bigr).
\]
In other words, the Iwasawa--Greenberg Main Conjecture \eqref{eq:BDP-IMC-intro} holds.
\end{thm}

%

Hence (together with the proof of \eqref{eq:BDP-IMC-intro} in \cite{BCK} in the $p$-ordinary case, since Theorem~\ref{thm:intro-A} also applies to this case), our results in this paper determine 
(up to a $p$-adic unit) the leading coefficient of $L_{\pp}^{\rm BDP}$ at $T=0$ for all primes $p>2$ of good reduction, and reduce a full proof of Conjecture~\ref{conj:BSD-intro} to the expected maximal non-degeneracy of $h_\pp$. 

\subsection{Relation to prior work}
In the $p$-ordinary case, a version of Theorem~\ref{thm:intro-A} was obtained in \cite[Thm.~6.12]{AC} under some additional hypotheses, including: 
\begin{itemize}
\item[(i)] $a_p(E)\not\equiv 1\pmod{p}$, 
\item[(ii)] $p\nmid c_\ell$ for all primes $\ell\mid N$,
\item[(iii)] $\#\Sha(E/K)[p^\infty]<\infty$.
\end{itemize}
As noted in [\emph{op.\,cit.}, \S{6.2}], their proof is largely based on an adaptation of the arguments from \cite{BD-derived-AJM}, and hypotheses (i) and (ii) are essential to their method. Our proof, building on the approach to derived $p$-adic heights later developed by Howard \cite{howard-derived}, is completely different from and independent of \cite{AC}, and we are able to dispense with hypotheses (i)--(iii) (and obtain a result also in the $p$-supersingular case).

Back to the $p$-ordinary case, Sano \cite{sano} has also given a new proof of \cite[Thm.~6.12]{AC} removing hypotheses (i) and (ii) 
by building on an extension of Nekov{\'a}{\v{r}}'s descent formalism \cite[\S{11.6}]{nekovar310} using ``derived'' Bockstein maps; his methods are also different from ours. 

Finally we note that for supersingular primes $p>2$, the results of Theorem~\ref{thm:intro-A} is new for all cases $\varrho_{\rm alg}>0$; on the other hand, the case $\varrho_{\rm alg}=0$  of  Theorem~\ref{thm:intro-A} (in which case ${\rm Sel}_\qq(K,T)=0$ and $p$-adic heights make no appearance)  
can be deduced from the ``anticyclotomic control theorem'' of Jetchev--Skinner--Wan \cite{JSW}, 
but our results also give a new proof in this case.

\subsection{Method of proof}

In a series of papers culminating in \cite{PR-ht} 
(see also \cite{colmez-Lp}), Perrin-Riou developed a method for computing the leading coefficient of algebraic $p$-adic $L$-functions in terms of arithmetic invariants. 
In the setting of this paper, her methods allow one to determine a formula for $\varrho_{\rm alg}$ and the image $\overline{F_{\ppbar}^{\rm BDP}}\in J^{\varrho_{\rm alg}}/J^{{\varrho_{\rm alg}}+1}$ (up to a $p$-adic unit)  precisely when the sequence
\begin{equation}\label{eq:4-term}
0\rightarrow\underleftarrow{\mathfrak{S}}_\pp^{(\infty)}\otimes\Q_p\rightarrow{\rm Sel}_\pp(K,V)\xrightarrow{h_\pp}{\rm Hom}({\rm Sel}_{\ppbar}(K,V),\Q_p)\rightarrow{\rm Hom}\bigl(\underleftarrow{\mathfrak{S}}_{\ppbar}^{(\infty)}\otimes\Q_p,\Q_p\bigr)\rightarrow 0\nonumber
\end{equation}
is exact, where $V=T\otimes\Q_p$ is the rational $p$-adic Tate module, $\underleftarrow{\mathfrak{S}}_\qq^{(\infty)}\otimes\Q_p=\cap_{i\geq 1}\underleftarrow{\mathfrak{S}}_\qq^{(i)}\otimes\Q_p$ is the space of universal norms in ${\rm Sel}_\qq(K,V)$, 
and the middle arrow is defined by 
the $p$-adic height pairing $h_\pp$ (see \cite[Thm.~{3.3.4}]{PR-ht}).
However, in the setting of this paper one can show that 
\[
\underleftarrow{\mathfrak{S}}_\qq^{(\infty)}=0,
\] 
(see  Proposition~\ref{prop:Lambda-tors} and Corollary~\ref{cor:der-ht}(ii)), and therefore exactness of \eqref{eq:4-term} amounts to the \emph{non-degeneracy} of $h_\pp$, which in the anticyclotomic setting fails as long as $\abs{ r^+-r^-}>1$ (see Proposition~\ref{prop:e2}). 

The key technical innovation of this paper is the development of an extension of the results of \cite{PR-ht} 
applicable to $X_{\ppbar}$, 
building on Howard's theory of derived $p$-adic heights \cite{howard-derived} to account for the systematic degeneracies of $h_\pp$ in the anticyclotomic setting. 

\subsection{Outline of the paper}  
In Section~\ref{sec:Sel} we introduce some of our Selmer groups of interest, and state the Iwasawa--Greenberg Main Conjecture \eqref{eq:BDP-IMC-intro}.
In Section~\ref{sec:der-ht}, we extend the results we need from \cite{howard-derived} to our setting, yielding the groundwork for the definition of the derived regulator ${\rm Reg}_{\pp,{\rm der}}$ in the formulation of our conjectures in Section~\ref{sec:BSDconj}.
In Section~\ref{sec:main}, we state our main results toward the $p$-adic Birch--Swinnerton-Dyer conjectures for $L_\pp^{\rm BDP}$, and Sections~\ref{sec:proofs}-\ref{sec:IMC} are devoted to the proofs. In particular, we refer the reader to the start of Section~\ref{sec:proofs} for an outline of the proof of our main Theorem~\ref{thm:intro-A}.




\subsection{Acknowledgements}

This work began at the APAW Collaborative Research Workshop held at the University of Oregon in the summer of 2022.
The authors would like to thank the organisers of this workshop, and especially Ellen Eischen, for creating such a stimulating and thought-provoking event.
We would also like to thank Takamichi Sano for sending us a copy of his recent preprint and for a number of helpful exchanges, and the anonymous referee for some useful comments and suggestions.

During the preparation of this paper, F.C. was partially supported by the NSF grant 
DMS-2101458, C.-Y.H. was partially supported by Agence Nationale de Recherche ANR-18-CE40-0029 in the project \textit{Galois representations, automorphic forms and their L-functions (GALF)}, D.K. was partially supported by a PIMS postdoctoral fellowship, and Z.L. was partially supported by the NSF grant DMS-2001527.

This work was partially supported by Ellen Eischen's NSF CAREER grant DMS-1751281, and by the NSF grant DMS-1928930 while some of the authors (F.C., Y.-S.L., and Z.L.) were in residence at MSRI/SLMath during the spring of 2023 
for the program ``Algebraic Cycles, $L$-Values, and Euler Systems''.

\section{Selmer groups}\label{sec:Sel}

We keep the notation from the Introduction, and assume that the triple $(E,K,p)$ satisfies hypotheses \eqref{eq:Heeg} and \eqref{eq:spl}.
For every $n>0$, we write $K_n$ for the subextension of the anticyclotomic $\Z_p$-extension $K_\infty/K$ with ${\rm Gal}(K_n/K)\simeq\bZ/p^n\bZ$.

Let $\Sigma$ be a finite set of places of $K$ containing the archimedean place $\infty$ and the primes dividing $Np$.
Denote by $\Sigma_f$ the set of finite places in $\Sigma$, and assume that all primes in $\Sigma_f$ split in $K$.
For every number field $F$ containing $K$, let $G_{F,\Sigma}={\rm Gal}(F^\Sigma/F)$ be the Galois group of the maximal algebraic extension of $F$ unramified outside the places above $\Sigma$.
Recall that $T$ denotes the $p$-adic Tate module of $E$, and put
\[
V=T\otimes_{\Z_p}\Q_p,\qquad W=V/T\simeq E[p^\infty].
\]


\begin{defn}\label{def:BDP-Sel}
Suppose $F\supset K$ is a field extension, and let $\qq\in\{\pp,\ppbar\}$ be any of the primes of $K$ above $p$.
We define the \emph{$\qq$-strict Selmer group} of $V$ by
\begin{equation}\label{eq:def-Sel}
{\rm Sel}_\qq(F,V):={\rm ker}\biggl\{{\rm H}^1(G_{F,\Sigma},V)\rightarrow\prod_{w\in\Sigma_f}\frac{{\rm H}^1(F_w,V)}{{\rm H}^1_{\qq}(F_w,V)}\biggr\},
\end{equation}
where
\[
{\rm H}^1_\qq(F_w,V)=\begin{cases}
{\rm H}^1(F_{w},V)&\textrm{if $w\mid\overline{\qq}$},\\
0&\textrm{else.}
\end{cases}
\]
Similarly, letting $\rH^1_{\rm str}(F_w,V)$ and $\rH^1_{\rm rel}(F_w,V)$ be defined by
\begin{align*}
\rH^1_{\rm str}(F_w,V)&=0\quad\textrm{for all $w\in\Sigma_f$,}
\end{align*}
and
\begin{align*}
{\rm H}^1_{\rm rel}(F_w,V)&=\begin{cases}
{\rm H}^1(F_{w},V)&\textrm{if $w\mid p$},\\
0&\textrm{else,}
\end{cases}
\end{align*}
we define the Selmer groups ${\rm Sel}_{\rm str}(F,V)$ and ${\rm Sel}_{\rm rel}(F,V)$ by replacing $\rH^1_\qq(F_w,V)$ by $\rH^1_{\rm str}(F_w,V)$ and $\rH^1_{\rm rel}(F_w,V)$, respectively, in \eqref{eq:def-Sel}.
\end{defn}

For $?\in\{\qq,{\rm str},{\rm rel}\}$, let $\rH^1_{?}(F_w,T)$ and $\rH^1_{?}(F_w,W)$ be the pre-image and image, respectively, of $\rH^1_{?}(F_w,V)$ under the natural exact sequence
\[
\rH^1(F_w,T)\rightarrow\rH^1(F_w,V)\rightarrow\rH^1(F_w,W),
\]
and define the Selmer groups ${\rm Sel}_?(F,T)$ and ${\rm Sel}_?(F,W)$ by the same recipe as in Definition~\ref{def:BDP-Sel}.

Let ${\rm Sel}_{p^k}(E/F)\subset\rH^1(G_{F,\Sigma},E[p^k])$ be the Selmer group fitting into the $p^k$-descent exact sequence
\[
0\rightarrow E(F)/p^kE(F)\rightarrow{\rm Sel}_{p^k}(E/F)\rightarrow\Sha(E/F)[p^k]\rightarrow 0,
\]
where $\Sha(E/F)[p^k]$ denotes the $p^k$-torsion in the Tate--Shafarevich group of $E/F$. Set 
\[
{\rm Sel}_{p^\infty}(E/F)=\varinjlim_k{\rm Sel}_{p^k}(E/F),\qquad\qquad
\check{S}_p(E/F)=\varprojlim_k{\rm Sel}_{p^k}(E/F),
\]
where the limits are with respect to the inclusion $E[p^k]\rightarrow E[p^{k+1}]$ and the multiplication-by-$p$ map $E[p^{k+1}]\rightarrow E[p^k]$, respectively.
Finally, set $K_p:=K\otimes\Q_p\simeq K_\pp\oplus K_{\ppbar}$.

\begin{lem}\label{lem:rank-1}
Assume {\rm \eqref{eq:h0-intro}}-{\rm \eqref{eq:h1-intro}}.
Then for every $\qq\in\{\pp,\ppbar\}$
\[
{\rm Sel}_\qq(K,T)=\ker({\rm res}_{\qq/{\rm tor}}),
\] 
where ${\rm res}_{\qq/{\rm tor}}$ is the composition
\[
\check{S}_p(E/K)\xrightarrow{{\rm res}_{\qq}}E(K_\qq)\otimes\Z_p\rightarrow E(K_\qq)_{/{\rm tor}}\otimes\Z_p.
\] 
In particular, 
\[
{\rm rank}_{\Z_p}{\rm Sel}_\qq(K,T)={\rm rank}_{\Z_p}\check{S}_p(E/K)-1.
\]
\end{lem}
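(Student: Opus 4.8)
The plan is to compare the two descriptions of the group place by place. The Galois-cohomological definition of ${\rm Sel}_\qq(K,T)$ imposes the full local condition $\rH^1(K_\ppbar,T)$ at the prime $\ppbar$, so the substantial point is to show that every class in ${\rm Sel}_\qq(K,T)$ is automatically \emph{crystalline} at $\ppbar$, i.e.\ lies in the image of the local Kummer map there; the remaining comparisons and the rank count are then formal. I would first record the relevant local input. Recall that $\check{S}_p(E/K)=\varprojlim_k{\rm Sel}_{p^k}(E/K)$ has, at each finite place $w$, local condition equal to the image of the injective Kummer map $\delta_w\colon E(K_w)\otimes\Z_p\hookrightarrow\rH^1(K_w,T)$. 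Hypothesis \eqref{eq:h0-intro} gives $\rH^0(G_{K,\Sigma},E[p])=E(K)[p]=0$, so $\rH^1(G_{K,\Sigma},T)$ is $\Z_p$-torsion free — hence so are $\check{S}_p(E/K)$ and ${\rm Sel}_\qq(K,T)$, both finitely generated — and $\rH^1(G_{K,\Sigma},T)\otimes\Q_p\simeq\rH^1(G_{K,\Sigma},V)$. For every finite $w\nmid p$ one has $\rH^1(K_w,V)=0$ (as $V^{G_{K_w}}=0$ and $V\simeq V^*(1)$) and $\delta_w(E(K_w)\otimes\Z_p)=\rH^1(K_w,T)_{\rm tors}$ (both finite of order $\#E(K_w)[p^\infty]$), and $\rH^1(K_\qq,T)_{\rm tors}$ is the image under $\delta_\qq$ of the torsion submodule of $E(K_\qq)\otimes\Z_p$. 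Finally, since $p=\pp\ppbar$ splits we have $K_\qq=\Q_p$, so $\rH^1_f(K_\qq,V)$ — the image of $E(K_\qq)\widehat\otimes\Q_p$ in $\rH^1(K_\qq,V)$ — is one-dimensional, and, $E$ having good reduction at $p$, it is its own orthogonal complement under local Tate duality, while its preimage in $\rH^1(K_\qq,T)$ is exactly the Kummer image $\delta_\qq(E(K_\qq)\otimes\Z_p)$.

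The heart of the argument, and the only place \eqref{eq:h1-intro} is used, is the claim that ${\rm res}_\ppbar\bigl({\rm Sel}_\qq(K,V)\bigr)\subseteq\rH^1_f(K_\ppbar,V)$ (and symmetrically with $\pp$ and $\ppbar$ exchanged, which is legitimate since \eqref{eq:h1-intro} is assumed for both primes). To prove it, take $c\in{\rm Sel}_\qq(K,V)$; then ${\rm res}_w(c)=0$ for every place $w\neq\ppbar$ — at $\qq$ by the defining local condition of ${\rm Sel}_\qq$, at the finite $w\nmid p$ because $\rH^1(K_w,V)=0$, and at the archimedean place because $\rH^1(K_\infty,V)=0$. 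The reciprocity law of global Tate duality, applied to $c$ and an arbitrary $d\in\rH^1(G_{K,\Sigma},V^*(1))=\rH^1(G_{K,\Sigma},V)$, then forces $\langle{\rm res}_\ppbar(c),{\rm res}_\ppbar(d)\rangle_\ppbar=0$ in the local pairing on $\rH^1(K_\ppbar,V)$. By \eqref{eq:h1-intro} for $\ppbar$, the image of ${\rm res}_\ppbar\colon\check{S}_p(E/K)\otimes\Q_p\to\rH^1(K_\ppbar,V)$ is a nonzero subspace of the line $\rH^1_f(K_\ppbar,V)$, hence equals it; and since $\check{S}_p(E/K)\otimes\Q_p\subseteq\rH^1(G_{K,\Sigma},V)$, the classes ${\rm res}_\ppbar(d)$ already span $\rH^1_f(K_\ppbar,V)$. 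Therefore ${\rm res}_\ppbar(c)$ is orthogonal to $\rH^1_f(K_\ppbar,V)$, hence lies in it. It follows that ${\rm Sel}_\qq(K,V)=\ker\bigl({\rm res}_\qq\colon\check{S}_p(E/K)\otimes\Q_p\to\rH^1_f(K_\qq,V)\bigr)$.

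To finish I would transfer this to $T$. If $c\in{\rm Sel}_\qq(K,T)$, then $c\otimes 1\in{\rm Sel}_\qq(K,V)$, so by the claim ${\rm res}_\ppbar(c)$ maps into $\rH^1_f(K_\ppbar,V)$, whence ${\rm res}_\ppbar(c)\in\delta_\ppbar(E(K_\ppbar)\otimes\Z_p)$ by the saturation statement above; combined with ${\rm res}_\qq(c)\in\rH^1_\qq(K_\qq,T)=\rH^1(K_\qq,T)_{\rm tors}\subseteq\delta_\qq(E(K_\qq)\otimes\Z_p)$ and ${\rm res}_w(c)\in\rH^1(K_w,T)_{\rm tors}=\delta_w(E(K_w)\otimes\Z_p)$ for $w\nmid p$, this shows $c\in\check{S}_p(E/K)$, and since ${\rm res}_\qq(c)$ is torsion, ${\rm res}_{\qq/{\rm tor}}(c)=0$. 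Conversely, if $c\in\check{S}_p(E/K)$ with ${\rm res}_{\qq/{\rm tor}}(c)=0$, then ${\rm res}_\qq(c)$ lies in the torsion submodule of $E(K_\qq)\otimes\Z_p$, whose $\delta_\qq$-image is $\rH^1_\qq(K_\qq,T)$; no condition is imposed at $\ppbar$; and ${\rm res}_w(c)\in\delta_w(E(K_w)\otimes\Z_p)=\rH^1(K_w,T)_{\rm tors}=\rH^1_\qq(K_w,T)$ for $w\nmid p$ — so $c\in{\rm Sel}_\qq(K,T)$. This gives ${\rm Sel}_\qq(K,T)=\ker({\rm res}_{\qq/{\rm tor}})$. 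For the last assertion, $\check{S}_p(E/K)$ is free over $\Z_p$ of rank $r:={\rm rank}_{\Z_p}\check{S}_p(E/K)$, and ${\rm res}_{\qq/{\rm tor}}\colon\check{S}_p(E/K)\to E(K_\qq)_{/{\rm tor}}\otimes\Z_p\simeq\Z_p$ has nonzero image by \eqref{eq:h1-intro}, so its kernel is free of rank $r-1$.

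The step I expect to be the main obstacle is the crux claim ${\rm res}_\ppbar({\rm Sel}_\qq(K,V))\subseteq\rH^1_f(K_\ppbar,V)$: it is the only point that is not purely local bookkeeping, and it genuinely requires combining the reciprocity law of global (Poitou--Tate) duality with the non-triviality of the restriction at the \emph{other} prime above $p$ provided by \eqref{eq:h1-intro}, together with the self-duality of the finite local condition at a prime of $K$ above $p$ of residue degree one. A secondary technical point is the integral refinement at $\ppbar$, which rests on the Kummer image being saturated in $\rH^1(K_\ppbar,T)$ — a standard consequence of $E$ having good reduction at $p$.
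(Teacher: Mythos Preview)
Your proof is correct and takes a genuinely different route from the paper's. The paper argues via a case analysis on the $\Z_p$-rank of the image of the combined restriction ${\rm res}_{p/{\rm tor}}=\check{S}_p(E/K)\to (E(K_\pp)_{/{\rm tor}}\otimes\Z_p)\oplus(E(K_{\ppbar})_{/{\rm tor}}\otimes\Z_p)$: it first invokes a result of Skinner that the image of ${\rm Sel}_{\rm rel}(K,T)$ in $\rH^1(K_p,T)$ has $\Z_p$-rank~$2$, then uses a five-term Poitou--Tate sequence to separate the cases ${\rm rank}\,{\rm im}({\rm res}_{p/{\rm tor}})\in\{1,2\}$, reducing the first case to the identity ${\rm Sel}_\pp(K,T)={\rm Sel}_{\rm str}(K,T)={\rm Sel}_{\ppbar}(K,T)$ (again citing Skinner) and handling the second by comparing coranks of ${\rm Sel}_{p^\infty}(E/K)$ and ${\rm Sel}_{\rm rel}(K,W)$. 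Your argument bypasses this entirely: you use the global reciprocity law directly, together with \eqref{eq:h1-intro} at the \emph{conjugate} prime $\overline{\qq}$ (to know that the local restrictions of $\check{S}_p(E/K)\otimes\Q_p$ span the line $\rH^1_f(K_{\overline{\qq}},V)$) and the self-duality of $\rH^1_f$ at a good prime, to force every class in ${\rm Sel}_\qq(K,V)$ to be crystalline at $\overline{\qq}$. This is more elementary and self-contained, avoids the case split, and makes transparent exactly how the hypothesis at $\overline{\qq}$ is what controls ${\rm Sel}_\qq$; the paper's approach, on the other hand, situates the lemma within the broader framework of strict/relaxed Selmer groups used elsewhere and makes visible the dichotomy (later revisited in Remark~\ref{rem:sha}) between the two possible ranks of ${\rm im}({\rm res}_{p/{\rm tor}})$.
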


\begin{proof}
It follows from global duality and Tate's global Euler characteristic formula that the image of the restriction map
\[
{\rm Sel}_{\rm rel}(K,T)\rightarrow\rH^1(K_p,T):=\rH^1(K_\pp,T)\oplus\rH^1(K_{\ppbar},T)
\]
has $\Z_p$-rank $2$ (see \cite[Lem.\,2.3.1]{skinner}).
By global duality we also have the exact sequence
\begin{equation}\label{eq:PT-ranks}
\begin{aligned}
0\rightarrow{\rm Sel}_{\rm str}(K,T)\rightarrow\check{S}_p(E/K)\xrightarrow{{\rm res}_{p/{\rm tor}}}&(E(K_\pp)_{/{\rm tor}}\otimes\Z_p)\oplus(E(K_{\ppbar})_{/{\rm tor}}\otimes\Z_p)\\
&\rightarrow{\rm Sel}_{\rm rel}(K,W)^\vee\rightarrow{\rm Sel}_{p^\infty}(E/K)^\vee\rightarrow 0.
\end{aligned}
\end{equation}
Note that by assumption, the $\Z_p$-rank of the image of the map ${\rm res}_{p/{\rm tor}}={\rm res}_{\pp/{\rm tor}}\oplus{\rm res}_{\ppbar/{\rm tor}}$ is either $1$ or $2$.
In the former case, it follows from \cite[Lem.\,2.3.2]{skinner} that
\begin{equation}
\label{eq:BDP=str}
{\rm Sel}_\pp(K,T)={\rm Sel}_{\rm str}(K,T)={\rm Sel}_{\ppbar}(K,T)
\end{equation}
and this yields the conclusion of the lemma. 
If the image of ${\rm res}_{p/{\rm tor}}$ has $\Z_p$-rank $2$, 
we see from \eqref{eq:PT-ranks} that ${\rm Sel}_{p^\infty}(E/K)$ is contained in ${\rm Sel}_{\rm rel}(K,W)$ with finite index, and the conclusion again follows.
\end{proof}

\begin{rem}
A discussion of the case where ${\rm im}({\rm res}_{p/{\rm tor}})$ has $\Z_p$-rank $2$ is missing in the proof of \cite[Lem.\,2.2]{AC}.
Hence the statement of that lemma needs to be corrected as in Lemma~\ref{lem:rank-1}.
\end{rem}

For $\qq\in\{\pp,\ppbar\}$ set
\[
{\rm Sel}_\qq(K_\infty,W)=\varinjlim_n{\rm Sel}_\qq(K_n,W).
\]
As is well-known, 
${\rm Sel}_{\mathfrak{q}}(K_\infty,W)$ is a cofinitely generated $\Lambda$-module, i.e., its Pontryagin dual ${\rm Sel}_{\mathfrak{q}}(K_\infty,W)^\vee$ is finitely generated over $\Lambda$.

\begin{conj}[Iwasawa--Greenberg Main Conjecture]\label{conj:BDP-IMC}
${\rm Sel}_{\ppbar}(K_\infty,W)$ is $\Lambda$-cotorsion and 
\[
{\rm char}_\Lambda({\rm Sel}_{\ppbar}(K_\infty,W)^\vee)\Lambda_{\widehat{\cO}}=(L_\pp^{\rm BDP}).
\]
\end{conj}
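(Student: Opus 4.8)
\emph{The plan.} Since the ordinary case is \cite{BCK}, I focus on supersingular $p$, adapting that approach. I would deduce the equality of characteristic ideals from the signed analogue of Perrin-Riou's Heegner point main conjecture. First one needs $X_{\ppbar}:={\rm Sel}_{\ppbar}(K_\infty,W)^\vee$ to be $\Lambda$-torsion, which is Proposition~\ref{prop:Lambda-tors} under \eqref{eq:h0-intro}. The remaining ingredients are: (a) a $\Lambda$-adic Heegner class $\mathfrak{z}_\infty\in{\rm H}^1(K,\mathbb{T})$, where $\mathbb{T}=T\otimes_{\Z_p}\Lambda$ with the tautological twist, built from a norm-compatible system of CM points as in Howard's big Heegner points, adapted to supersingular $p$ along the lines of B\"uy\"ukboduk--Lei; and (b) an \emph{explicit reciprocity law} identifying the image of ${\rm loc}_\pp(\mathfrak{z}_\infty)$ under the relevant big (signed) Coleman map with $\mathscr{L}_\pp^{\rm BDP}$, up to a $p$-adic unit and the Euler factor $\tfrac{1-a_p(E)+p}{p}$---the supersingular counterpart of the Bertolini--Darmon--Prasanna formula \cite{bdp1} and of its $\Lambda$-adic refinement. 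Establishing (b) is the first genuine step.

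\emph{The deduction.} Granting these, the argument runs as follows. The signed Heegner point main conjecture over $K_\infty$---known under mild hypotheses by Castella--Wan---pins down the characteristic ideal of the torsion submodule of the rank-one $\Lambda$-adic Selmer group of $E$ in terms of the square of the index of $\mathfrak{z}_\infty$ (the index being the characteristic ideal of the quotient of that Selmer group by $\Lambda\mathfrak{z}_\infty$). On one hand, the reciprocity law (b), together with a diagram chase through the $\ppbar$-strict Selmer group, converts this index into $(\mathscr{L}_\pp^{\rm BDP})$, so its square becomes $(L_\pp^{\rm BDP})$. On the other hand, the Poitou--Tate exact sequence comparing ${\rm Sel}_{\ppbar}(K_\infty,W)$---whose local condition at $\pp$ is the relaxed one---with Kobayashi's $\pm$-Selmer groups and with the local cohomology ${\rm H}^1(K_{\infty,\pp},W)$ converts the torsion characteristic ideal above into ${\rm char}_\Lambda(X_{\ppbar})$. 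Combining, ${\rm char}_\Lambda(X_{\ppbar})=(\mathscr{L}_\pp^{\rm BDP})^2$ in $\Lambda$, and extending scalars gives ${\rm char}_\Lambda(X_{\ppbar})\Lambda_{\widehat{\cO}}=(L_\pp^{\rm BDP})$. (The divisibility $(L_\pp^{\rm BDP})\subseteq{\rm char}_\Lambda(X_{\ppbar})\Lambda_{\widehat{\cO}}$ can alternatively be obtained directly by running the anticyclotomic Kolyvagin-system machinery of Howard and Mazur--Rubin on $\mathfrak{z}_\infty$.) The big-image hypotheses on $\bar\rho_{E,p}$ and the control of local conditions at the primes $\ell\mid N$ are what force the hypotheses in the statement.

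\emph{Main obstacle.} The crux is twofold, and the two parts are intertwined. First, the supersingular explicit reciprocity law: in place of Perrin-Riou's ordinary logarithm (available only when $V|_{G_{\Q_p}}$ is ordinary) one must work with the Wach module of $V|_{G_{\Q_p}}$ and the associated pair of signed Coleman maps, and verify that the precise combination localizing $\mathfrak{z}_\infty$ reproduces $\mathscr{L}_\pp^{\rm BDP}$ with the correct Euler factor. Second, the matching of Selmer conditions at $p=\pp\ppbar$: because the defining condition of $X_{\ppbar}$ at $\pp$ is relaxed rather than a Greenberg-type one, identifying $X_{\ppbar}$ with the module governed by the signed Heegner point main conjecture requires a careful comparison with Kobayashi's $\pm$-conditions over the tower, after which one must control the pseudo-null and $\mu$-invariant discrepancies so that the divisibilities assemble into an equality rather than a mere containment. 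Adapting \cite{BCK} amounts to substituting this signed formalism for Perrin-Riou's ordinary logarithm and the Greenberg-type conditions throughout; by comparison, the Euler-system bound, the Poitou--Tate sequences, and the passage to $\Lambda_{\widehat{\cO}}$ are routine.
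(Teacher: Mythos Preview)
Your reduction of the BDP main conjecture to the signed Heegner point main conjecture via an explicit reciprocity law and a Poitou--Tate comparison of Selmer conditions is correct and is exactly what the paper invokes (this equivalence is \cite[Thm.~6.8]{CW-PR}, cited in Corollary~\ref{cor:BDP-IMC}). However, there is a genuine gap in how you propose to obtain the signed Heegner point main conjecture itself. You write that it is ``known under mild hypotheses by Castella--Wan,'' but the results of \cite{CW-PR} (which build on \cite{CLW}) explicitly \emph{exclude} the case $N^-=1$, i.e., the classical Heegner hypothesis \eqref{eq:Heeg}---which is precisely the standing assumption of this paper. So your plan, as written, does not cover the case you need.

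The paper's appendix fills this gap by a genuinely different argument: rather than a single $\Lambda$-adic Heegner class plus a Coleman-map reciprocity law, it runs the \emph{bipartite Euler system} machinery of Bertolini--Darmon/Howard in the supersingular setting (Darmon--Iovita, Pollack--Weston), producing for each admissible $m$ a pair $(\kappa_j^\pm(m),\lambda_j^\pm(m))$ linked by first and second reciprocity laws. Howard's criterion then upgrades the divisibility to equality provided some $\lambda_j^\pm(m)$ is a unit modulo $\mathfrak{m}$; this is verified by a level-raising argument that drops the mod-$p$ Selmer rank to zero and then invokes the \emph{cyclotomic} ${\rm GL}_2$ main conjecture (in $\sharp/\flat$ form, since the level-raised form need not have $a_p=0$) to control the special value. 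This is the actual ``adaptation of \cite{BCK}.'' Relatedly, you misidentify the main obstacle: the supersingular explicit reciprocity law and the matching of local conditions at $p$ are already in \cite{CW-PR}; the hard part is proving the signed Heegner point main conjecture when $N^-=1$, which requires the bipartite/level-raising input and the cyclotomic main conjecture, none of which appear in your plan.
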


Here, as in the Introduction, $L_\pp^{\rm BDP}=(\mathscr{L}_\pp^{\rm BDP})^2$ denotes the square of the $p$-adic $L$-function 
\[
\mathscr{L}_\pp^{\rm BDP}\in\Lambda_{\widehat{\cO}}
\] 
introduced in \cite{bdp1} and further studied in \cite{braIMRN} (where it was shown to be an element in the Iwasawa algebra) and \cite{cas-hsieh1} (where it was also shown to be non-zero).

The first claim in Conjecture~\ref{conj:BDP-IMC} is now known under a mild hypothesis. 

\begin{prop}\label{prop:Lambda-tors}
Assume \eqref{eq:h0-intro}. Then ${\rm Sel}_\qq(K_\infty,W)$ is $\Lambda$-cotorsion.
\end{prop}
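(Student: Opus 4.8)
The plan is to reduce the $\Lambda$-cotorsionness of ${\rm Sel}_\qq(K_\infty,W)$ to a single computation at the trivial character of $\Gamma$, by invoking a control theorem. First I would recall that $\Lambda$ is a regular local ring of dimension $2$, so a finitely generated $\Lambda$-module $M$ is torsion if and only if $M/JM$ is finite, where $J$ is the augmentation ideal; equivalently, it suffices to show that ${\rm Sel}_\qq(K_\infty,W)^\vee\otimes_\Lambda\Lambda/J$ — i.e. the $\Gamma$-coinvariants of ${\rm Sel}_\qq(K_\infty,W)^\vee$, which is dual to ${\rm Sel}_\qq(K_\infty,W)^\Gamma={\rm Sel}_\qq(K_\infty,W)[J]$ — is finite. (One could instead work at any height-one prime, but the trivial character is the natural choice given hypothesis \eqref{eq:h0-intro}.) So the key step is a control theorem comparing ${\rm Sel}_\qq(K_\infty,W)[J]$ with ${\rm Sel}_\qq(K,W)$, or more precisely bounding the former in terms of the latter up to finite error.

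Next I would set up the fundamental diagram: for each $n$, restriction gives a map ${\rm Sel}_\qq(K,W)\to{\rm Sel}_\qq(K_n,W)^{{\rm Gal}(K_n/K)}$, and passing to the limit, ${\rm Sel}_\qq(K,W)\to{\rm Sel}_\qq(K_\infty,W)^\Gamma$. Applying the snake lemma to the commutative diagram relating global cohomology of $W$ over $K$ and over $K_\infty$ with the product of local conditions, the kernel and cokernel of this map are controlled by (a) ${\rm H}^1(\Gamma, W^{G_{K_\infty}})$ and ${\rm H}^0,{\rm H}^1$ of $\Gamma$ acting on the local cohomology quotients, and (b) the failure of the local conditions to be preserved. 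Hypothesis \eqref{eq:h0-intro}, $E(K)[p]=0$, forces $W^{G_{K_\infty}}$ to be finite (indeed trivial after a standard argument, since $K_\infty/K$ is a pro-$p$ extension and the residual representation is irreducible or at least has no $G_{K_\infty}$-fixed points by \eqref{eq:h0-intro}), which kills the global $\Gamma$-cohomology contributions up to finite groups. For the local terms, at primes $w\nmid p$ the local conditions are the strict/unramified ones and the relevant $\Gamma$-cohomology is finite because $K_\infty/K$ is ramified only at $p$ (or: the Tamagawa factors are finite); at $w\mid p$, the prime $\qq$ (resp. $\ppbar$) is inert or splits further in the anticyclotomic tower in a controlled way, and the $\qq$-strict condition (zero above $\qq$, everything above $\ppbar$) behaves well under the snake lemma — the "everything" condition is automatically controlled, and the "zero" condition contributes only an ${\rm H}^0$ term of $\Gamma$ acting on $W$ over the completion, again finite by \eqref{eq:h0-intro}. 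Hence ${\rm Sel}_\qq(K_\infty,W)^\Gamma$ differs from ${\rm Sel}_\qq(K,W)$ by finite groups.

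It then remains to show ${\rm Sel}_\qq(K,W)$ is finite. This follows from the structure developed earlier in the excerpt: by Lemma~\ref{lem:rank-1} under \eqref{eq:h0-intro}--\eqref{eq:h1-intro} the compact group ${\rm Sel}_\qq(K,T)$ has $\Z_p$-rank $r-1$, but for the present proposition we only assume \eqref{eq:h0-intro}, so instead I would argue directly: ${\rm Sel}_\qq(K,W)$ sits in an exact sequence with ${\rm Sel}_{p^\infty}(E/K)$-type groups and a local term at $\ppbar$ coming from ${\rm H}^1(K_{\ppbar},W)$, and the corank of ${\rm Sel}_\qq(K,W)$ equals the corank of ${\rm Sel}_{p^\infty}(E/K)$ minus the corank of the image of the restriction to $K_{\ppbar}$; global duality (as in the sequence \eqref{eq:PT-ranks}) together with the fact that $\check S_p(E/K)$ has odd rank (the $p$-parity consequence of \eqref{eq:Heeg}) forces this difference to be $\leq 0$, hence the corank is $0$ and ${\rm Sel}_\qq(K,W)$ is finite. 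Combined with the control theorem, ${\rm Sel}_\qq(K_\infty,W)^\Gamma$ is finite, hence ${\rm Sel}_\qq(K_\infty,W)$ is $\Lambda$-cotorsion.

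\textbf{Main obstacle.} The delicate point is the control theorem at the primes above $p$: one must check that the $\qq$-strict local condition (which is "relaxed" above $\ppbar$ and "strict" above $\qq$) is preserved along the anticyclotomic tower up to finite error, and that the relevant local ${\rm H}^0/{\rm H}^1$ of $\Gamma$ — in particular involving $E(K_{\ppbar,\infty})[p^\infty]$ and the behavior of $W$ over the completions of the tower at $\ppbar$, where the extension is nontrivially ramified — are finite. Getting the finiteness here, rather than merely $\Lambda$-cotorsionness directly, is where \eqref{eq:h0-intro} does the essential work, and is the step I would write out most carefully.
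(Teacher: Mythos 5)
Your strategy has a genuine gap that cannot be repaired within the control-theorem framework you propose. The core of your argument is the reduction: show that ${\rm Sel}_\qq(K_\infty,W)[J]$ (the $\Gamma$-invariants) is finite, and conclude torsion of $X_\qq={\rm Sel}_\qq(K_\infty,W)^\vee$. The implication you actually need (finiteness of $X_\qq/JX_\qq$ implies $X_\qq$ is $\Lambda$-torsion) is correct, but note that your stated ``if and only if'' is not: $\Lambda/J$ is torsion yet its $J$-coinvariants are $\Z_p$. More importantly, the finiteness you need simply fails. The group ${\rm Sel}_\qq(K,W)$ has $\Z_p$-corank equal to ${\rm rank}_{\Z_p}{\rm Sel}_\qq(K,T)$, which under \eqref{eq:h0-intro}--\eqref{eq:h1-intro} equals $r-1$ with $r={\rm rank}_{\Z_p}\check{S}_p(E/K)$ (Lemma~\ref{lem:rank-1}; note (h1) is used there, which you do not assume). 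Your parity argument has the inequality backwards: the parity consequence of \eqref{eq:Heeg} forces $r\geq 1$ odd, so the corank of ${\rm Sel}_\qq(K,W)$ is $\geq 0$ and can be arbitrarily large, not $\leq 0$. Thus ${\rm Sel}_\qq(K,W)$ is infinite as soon as $r\geq 2$, and by the very control theorem you invoke (in the paper this is Proposition~\ref{prop:control-mazur}), so is ${\rm Sel}_\qq(K_\infty,W)[J]$. This is in fact the central phenomenon of the whole paper: $J$ divides ${\rm char}_\Lambda(X_\qq)$ to order $\varrho_{\rm alg}$, which is generically $2(\max\{r^+,r^-\}-1)\geq 2$. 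So the specialization-at-$J$ criterion is never satisfied in the interesting cases, even though the conclusion (cotorsion) remains true; one must see that $J$ divides the characteristic power series to finite order, which a single coinvariants computation cannot detect.

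The paper's proof is of an entirely different nature. It cites Propositions 4.1.2 and 4.2.1 of [CGLS] (ordinary case) and Theorem 6.8, Theorem A.4, Corollary 6.4 of [CW-PR] with an adaptation of [CGLS, Prop.~4.2.1] (supersingular case). As Remark 2.8 of the paper makes explicit, these rest on Kolyvagin's methods, the non-vanishing of $\mathscr{L}_\qq^{\rm BDP}$, and the explicit reciprocity law relating it to a $\Lambda$-adic system of Heegner points. In other words, cotorsionness here is a deep Euler-system bound on a $\Lambda$-adic Selmer group, not a consequence of a control theorem combined with a rank-zero computation at the trivial character.
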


\begin{proof}
In the $p$-ordinary case, this follows from \cite[Thm.~4.2.2]{CGLS} and \cite[Thm.~5.53]{CGS}. Note that the proof of these results is based on Kolyvagin's methods applied to the anticyclotomic Euler system of Heegner points on $E/K$ 
and a $\Lambda$-adic extension of the formula \eqref{eq:bdp} of Bertolini--Darmon--Prasanna obtained in \cite[Thm.~5.1]{cas-hsieh1}.
For supersingular primes $p$ (and assuming $a_p=0$ when $p=3$), the result similarly follows from 
an adaptation of the argument in \cite[Thm.~3.4.1]{CGLS} applied to (signed) Heegner point Kolyvagin system constructed in \cite[Thm.~A.4]{CW-PR} (which is non-trivial by \cite[Cor.~6.4]{CW-PR} and in a  similar relation to $L_\qq^{\rm BDP}$ as in the $p$-ordinary case by virtue of \cite[Thm.~6.2]{CW-PR}). Finally, when $p=3$ is supersingular for $E$ but $a_3$ is not necessarily zero, the result follows from the analogues of the aforementioned results from \cite{CW-PR} developed in \cite[\S{4}]{CCSS}.
\end{proof}


We conclude this section by recalling the following basic result 
which will be useful in some of our arguments.

\begin{prop}\label{prop:greenberg}
Assume \eqref{eq:h0-intro}. 
Then 
${\rm Sel}_\qq(K_\infty,W)$ has no proper finite index $\Lambda$-submodules.
\end{prop}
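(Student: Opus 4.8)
The plan is to deduce the absence of proper finite-index $\Lambda$-submodules in ${\rm Sel}_{\qq}(K_\infty,W)$ from a general criterion of Greenberg: if $X$ is a finitely generated torsion $\Lambda$-module that arises as the Pontryagin dual of a cofinitely generated Selmer group defined by local conditions, then $X$ has no proper finite-index $\Lambda$-submodule provided $X$ has no nonzero finite $\Lambda$-submodule and a certain ``no-pseudo-null-submodule'' style hypothesis at the bad primes holds; in the $\Z_p$-extension setting one typically checks this via the surjectivity of the global-to-local restriction map together with an Euler-characteristic count. Concretely, I would write $X_{\ppbar}^{\,\vee}={\rm Sel}_{\qq}(K_\infty,W)$ (for $\qq$ running over $\{\pp,\ppbar\}$, symmetrically) and invoke Proposition~\ref{prop:Lambda-tors} to know $X_{\ppbar}$ is $\Lambda$-torsion, which is what makes the characteristic-ideal machinery and Greenberg's criterion applicable.

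First I would reduce the statement to two local/global inputs. (1) The module $H^1(G_{K_\infty,\Sigma},W)^{\vee}$ has no nonzero finite $\Lambda$-submodule; this is standard once one knows $H^2(G_{K_\infty,\Sigma},W)=0$ and uses hypothesis \eqref{eq:h0-intro} (i.e. $E(K)[p]=0$, equivalently $W^{G_{K_\infty}}$ is cofinitely generated / the relevant $H^0$ vanishes over the $\Z_p$-extension), together with the fact that $\Lambda$ has no nonzero finite submodules in the cokernel of the control-type maps. (2) The defining local conditions cut out $X_{\ppbar}$ by a surjection, i.e. the Poitou--Tate sequence yields a short exact sequence $0\to {\rm Sel}_{\qq}(K_\infty,W)\to H^1(G_{K_\infty,\Sigma},W)\to \bigoplus_{w}\mathcal{H}_w\to \cdots$ in which the relevant global map has the expected (co)rank, and the local terms $\mathcal{H}_w$ at the split primes of $\Sigma$ are cofree (no finite submodules in their duals). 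Granting these, the dual sequence exhibits $X_{\ppbar}$ as a quotient of a $\Lambda$-module with no finite submodules by a $\Lambda$-module which is itself without finite submodules, and a short homological-algebra argument over the regular local ring $\Lambda$ (using that $\Lambda$-torsion modules with no finite submodules remain so under such extensions, after controlling the connecting map) gives the conclusion. In practice I would simply cite the appropriate statement — e.g. \cite[Prop.~2.5 / Cor.~3.2]{greenberg-cetraro} or the analogous lemma used in \cite{CGLS} in the ordinary case and in \cite{CW-PR} in the supersingular case — and verify its hypotheses in our setting.

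The key steps, in order: (a) record that $X_{\ppbar}$ is finitely generated $\Lambda$-torsion (Proposition~\ref{prop:Lambda-tors}); (b) check $H^2(G_{K_\infty,\Sigma},W)=0$ and $H^0(G_{K_\infty,\Sigma},W)=0$, the latter from \eqref{eq:h0-intro} propagated up the anticyclotomic tower (here one uses that $\Gamma$ is pro-$p$ and $W[p]^{G_K}=E(K)[p]=0$); (c) deduce that $H^1(G_{K_\infty,\Sigma},W)^{\vee}$ is $\Lambda$-torsion-free up to no-finite-submodule and in fact has no nonzero finite $\Lambda$-submodule; (d) analyze the local conditions: at $w\mid\qq$ (where the condition is the full $H^1$) and $w\mid\ppbar$ (where it is $0$) and $w\nmid p$ in $\Sigma$, show the cokernel of the global-to-local map has no nonzero finite $\Lambda$-submodule, using local Tate duality and the split hypothesis on $\Sigma_f$; (e) feed these into Greenberg's criterion. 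The main obstacle I anticipate is step (d) together with the surjectivity/rank statement of the Poitou--Tate map at the ``bottom'' of the tower — i.e. ensuring that passing from the characteristic-$0$ rank computations (as in Lemma~\ref{lem:rank-1}) to the integral/$\Lambda$-adic statement does not introduce a spurious finite submodule; in the supersingular case this is precisely where one must lean on the plus/minus or $\pm$-Coleman-type local conditions and the input from \cite{CW-PR} rather than on a naive ordinary argument, so I would handle the ordinary and supersingular cases in parallel exactly as in the proof of Proposition~\ref{prop:Lambda-tors}.
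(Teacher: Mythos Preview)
Your approach is essentially the paper's: both invoke Greenberg's general criterion for the absence of nonzero finite $\Lambda$-submodules in the Pontryagin dual of a Selmer group. The paper, however, does not spell out any of your steps (a)--(e); it simply cites \cite{greenberg-str-Sel} and points to \cite[Prop.~3.12]{hatley-lei-MRL} for the verification of the hypotheses in this particular case.

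One correction to your sketch is worth flagging. In step (d) and your final paragraph you anticipate needing the $\pm$-Coleman local conditions in the supersingular case, in parallel with the proof of Proposition~\ref{prop:Lambda-tors}. This is not so: ${\rm Sel}_\qq(K_\infty,W)$ is defined (Definition~\ref{def:BDP-Sel}) with the purely strict/relaxed conditions at $p$ --- strict at $\qq$, relaxed at $\overline{\qq}$ --- and no signed local conditions enter. The local modules you must analyze are therefore $\rH^1(K_{\infty,\eta},W)$ itself at the primes $\eta\mid\qq$ (and $0$ at $\eta\mid\overline{\qq}$), and the verification that these quotients are almost divisible is uniform in the ordinary and supersingular cases. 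This is why the paper gives a single citation here rather than splitting into cases as it does for Proposition~\ref{prop:Lambda-tors}. Relatedly, step (a) (the $\Lambda$-torsion property) is not actually an input to Greenberg's almost-divisibility criterion; it plays no role in the paper's one-line proof.
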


\begin{proof}
As shown in \cite[Prop.~3.12]{hatley-lei-MRL}, this can be deduced from Greenberg's general results \cite{greenberg-str-Sel}.
\end{proof}

\section{\texorpdfstring{Howard's derived $p$-adic heights and the derived regulator}{}}\label{sec:der-ht}

We keep the setting from Section~\ref{sec:Sel}, and assume in addition that \eqref{eq:h0-intro} holds. In this section, we recall Howard's general construction of derived $p$-adic heights \cite{howard-derived}, and extend some of his results in \emph{op.\,cit.} to our setting to obtain derived $p$-adic heights on ${\rm Sel}_\qq(K,T)$. Then we define the regulator ${\rm Reg}_{\pp,{\rm der},\gamma}$ appearing in the formulation of our conjectures.

%
%


\subsection{\texorpdfstring{$\Lambda_k$}{}-adic Selmer groups} \label{subsec:lambda-sel}

Fix a topological generator $\gamma\in\Gamma$.
Adopting the notations in \cite{howard-derived}, for any $k$ we set
\[
\cO_k=\Z_p/p^k\Z_p,\qquad\Lambda_k=\cO_k\dBr{\Gamma}.
\]
Let $\mathcal{K}_k$ be the localisation of $\Lambda_k$ at 
all elements of the form $g_n=\frac{\gamma^{p^n}-1}{\gamma-1}$ for some $n$, 
and define $\mathcal{P}_k$ by the exactness of the sequence
\begin{equation}\label{eq:K-P}
0\rightarrow\Lambda_k\rightarrow\mathcal{K}_k\rightarrow\mathcal{P}_k\rightarrow 0.
\end{equation}
Let $S^{(k)}=E[p^k]$, and put
\[
S_{\rm Iw}^{(k)}:=\varprojlim_n{\rm Ind}_{K_n/K}S^{(k)},\qquad
S_\infty^{(k)}:=\varinjlim_n{\rm Ind}_{K_n/K}S^{(k)},
\]
where the limits are with respect to the natural corestriction and restriction maps, respectively.

By Shapiro's lemma, we have a canonical $\Lambda_k[G_{K,\Sigma}]$-module isomorphism $S_{\rm Iw}^{(k)}\simeq S^{(k)}\otimes_{\cO_k}\Lambda_k$, where the $G_{K,\Sigma}$-action on $\Lambda_k$ is given by the inverse of the tautological character $G_{K,\Sigma}\twoheadrightarrow\Gamma\hookrightarrow\Lambda_k^\times$ (see \cite[Lem.~1.4]{howard-derived}).
There is also an isomorphism $S^{(k)}\otimes_{\cO_k}\cP_k\simeq S_\infty^{(k)}$ (\cite[Lem.~1.5]{howard-derived}) depending on the choice of $\gamma$.  
Tensoring \eqref{eq:K-P} over $\cO_k$ 
with $S^{(k)}$, we then obtain
\begin{equation}\label{eq:S:K-P}
0\rightarrow S_{\rm Iw}^{(k)}\rightarrow S_{\mathcal{K}}^{(k)}\rightarrow S_{\infty}^{(k)}\rightarrow 0,
\end{equation}
dropping the subscripts in $\cK_k$ for the ease of notation.

For $\qq\in\{\pp,\ppbar\}$, let $\mathcal{F}_{\qq}$ be the Selmer structure on $S_\cK^{(k)}$ given by
\begin{equation}
\label{eq:local-F}
\rH^1_{\mathcal{F}_{\qq}}(K_w,S_\cK^{(k)})=
\begin{cases}
\rH^1_{\rm unr}(K_w,S_{\cK}^{(k)})&\textrm{if $w\nmid p\infty$,}\\
\rH^1(K_{\overline{\qq}},S_{\cK}^{(k)})&\textrm{if $w=\overline{\qq}$,}\\
0&\textrm{if $w={\qq}$}.
\end{cases}
\end{equation}
Let $\mathfrak{Sel}_\qq(K,S_\cK^{(k)})$ be the associated Selmer group: 
\[
\mathfrak{Sel}_\qq(K,S_\cK^{(k)}):={\rm ker}\biggl\{\rH^1(G_{K,\Sigma},S_\cK^{(k)})\rightarrow\prod_{w\in\Sigma_f}\frac{\rH^1(K_w,S_\cK^{(k)})}{\rH^1_{\mathcal{F}_\qq}(K_w,S_\cK^{(k)})}\biggr\}.
\]

The short exact sequence \eqref{eq:S:K-P} induces the natural exact sequence
\[
\rH^1(K_w,S_{\rm Iw}^{(k)})\rightarrow \rH^1(K_w,S_{\mathcal{K}}^{(k)})\rightarrow \rH^1(K_w,S_\infty^{(k)}).
\]
Taking the image (resp. inverse image) of $\rH^1_{\mathcal{F}_\qq}(K_w,S_{\cK}^{(k)})$, we obtain the local condition  $\rH^1_{\mathcal{F}_\qq}(K_w,S_{\infty}^{(k)})$ (resp. $\rH^1_{\mathcal{F}_\qq}(K_w,S_{\rm Iw}^{(k)})$).  
Following \cite{MR-KS}, we refer to these local conditions as being \emph{propagated} from $\rH^1_{\mathcal{F}_\qq}(K_w,S_{\cK}^{(k)})$ (or $S_{\cK}^{(k)}$) via (\ref{eq:S:K-P}). The Selmer groups they define will be denoted $\mathfrak{Sel}_\qq(K,S_\infty^{(k)})$ and $\mathfrak{Sel}_{\qq}(K,S_{\rm Iw}^{(k)})$, respectively.  (Similar definitions of local conditions ``by propagation'' will be made below.)



\begin{defn}\label{def:Sel-k}
For every $\qq\in\{\pp,\ppbar\}$, denote by $\rH^1_{\mathcal{F}_\qq}(K_w,S^{(k)})$ the local conditions obtained from $\rH^1_{\mathcal{F}_\qq}(K_w,S_\infty^{(k)})$ by propagation via $S^{(k)}\rightarrow S^{(k)}_\infty$, and let 
\[
\mathfrak{Sel}_\qq(K,S^{(k)}):=\ker\Biggl\{\rH^1(G_{K,\Sigma},S^{(k)})\rightarrow\prod_{w\in\Sigma_f}\frac{\rH^1(K_w,S^{(k)})}{\rH^1_{\mathcal{F}_\qq}(K_w,S^{(k)})}\Biggr\}
\]
be the resulting Selmer group.
\end{defn}

Note that by condition \eqref{eq:h0-intro}, the natural surjective map $\rH^1(G_{K,\Sigma},S^{(k)})\twoheadrightarrow\rH^1(G_{K,\Sigma},S_\infty^{(k)})[J]$ 
is an isomorphism; since the local conditions on $\mathfrak{Sel}_\qq(K,S^{(k)})$ are propagated from $S_\infty^{(k)}$, this restricts to an isomorphism
\begin{equation}\label{eq:sp-iso}
\mathfrak{Sel}_\qq(K,S^{(k)})\simeq\mathfrak{Sel}_\qq(K,S_\infty^{(k)})[J].
\end{equation}
%

\subsection{Howard's derived \texorpdfstring{$p$}{}-adic heights}\label{subsec:hp-der}


For every $i\geq 1$, let $\mathfrak{S}_{\qq,k}^{(i)}\subset\mathfrak{Sel}_\qq(K,S^{(k)})$ be the submodule mapping to $J^{i-1}\mathfrak{Sel}_\qq(K,S_\infty^{(k)})[J]$ under the isomorphism \eqref{eq:sp-iso}.
We have a filtration
\[
\mathfrak{Sel}_\qq(K,S^{(k)})=\mathfrak{S}_{\qq,k}^{(1)}\supset\mathfrak{S}_{\qq,k}^{(2)}\supset\cdots\supset\mathfrak{S}_{\qq,k}^{(i)}\supset\cdots.
\]


\begin{thm}[Howard]\label{thm:howard-ht-pk}
For $i\geq 1$, there is a sequence of canonical 
symmetric \emph{$i$-th ``derived'' height pairings}
\[
h_{\qq,k}^{(i)}:\mathfrak{S}_{\qq,k}^{(i)}\times\mathfrak{S}_{\qqbar,k}^{(i)}\rightarrow\cO_k
\]
such that the kernel on the left (resp. right) is $\mathfrak{S}_{\qq,k}^{(i+1)}$ (resp. $\mathfrak{S}_{\qqbar,k}^{(i+1)}$).
\end{thm}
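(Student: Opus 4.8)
The plan is to obtain the derived pairings as the successive obstruction maps in the snake-lemma analysis of the three-term exact sequence \eqref{eq:S:K-P}, imitating the construction of the classical $p$-adic height pairing via the extension $0\to S_{\rm Iw}^{(k)}\to S_{\mathcal K}^{(k)}\to S_\infty^{(k)}\to 0$ but now iterating the Bockstein-type connecting maps along the $J$-adic filtration on $S_\infty^{(k)}\simeq S^{(k)}\otimes_{\cO_k}\cP_k$. The key structural input is that $\cP_k$, as a $\Lambda_k$-module, carries the filtration $J^{i-1}\cP_k$ whose graded pieces $J^{i-1}\cP_k/J^i\cP_k$ are free of rank one over $\cO_k$, so that multiplication by $\gamma-1$ (equivalently, by a generator of $J$) induces, at the level of cohomology, a sequence of connecting homomorphisms. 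Concretely, for $x\in\mathfrak S_{\qq,k}^{(i)}$ one lifts the corresponding class in $J^{i-1}\mathfrak{Sel}_\qq(K,S_\infty^{(k)})[J]$ to a class in $\rH^1(G_{K,\Sigma},S_{\mathcal K}^{(k)})$ (using that the Selmer local conditions are propagated, so a global lift with the right local behavior exists after possibly modifying by the torsion-free part of the relevant $\rH^1$'s), applies the connecting map for \eqref{eq:S:K-P}, and reads off a class in $\rH^1(G_{K,\Sigma},S_{\rm Iw}^{(k)})$; pairing this against $y\in\mathfrak S_{\qqbar,k}^{(i)}$ via global (Poitou--Tate) duality between the $\qq$- and $\qqbar$-Selmer structures and the cup product $S_{\rm Iw}^{(k)}\times S^{(k)}\to \cO_k$ (after contracting the $\Lambda_k$-coefficient using the augmentation) yields $h_{\qq,k}^{(i)}(x,y)\in\cO_k$. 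One must check this is well-defined modulo the choices of lift — the ambiguity lands in the image of $\rH^1(G_{K,\Sigma},S_{\rm Iw}^{(k)})$ under maps that, after the duality pairing, annihilate $\mathfrak S_{\qqbar,k}^{(i)}$ by the very definition of the filtration — and that it is $\cO_k$-bilinear.

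The next step is symmetry. Here I would invoke the standard symmetry mechanism for height pairings built from a self-dual (up to the appropriate twist) extension class: the extension \eqref{eq:S:K-P} is, under the Weil pairing identification $S^{(k)}\simeq (S^{(k)})^\vee(1)$ and the identification $\cP_k\simeq\cK_k/\Lambda_k$ with its natural Iwasawa-theoretic involution $\iota$ sending $\gamma\mapsto\gamma^{-1}$, compatible with global duality in a way that intertwines the two connecting maps. The iterated version requires tracking that the involution $\iota$ preserves the $J$-adic filtration and acts by $(-1)^{i-1}$ (or trivially, depending on normalization) on the graded piece $J^{i-1}/J^i$; the sign works out to give genuine symmetry of $h_{\qq,k}^{(i)}$ because the $i$-fold iteration and the $i$-fold action of $\iota$ combine. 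This is exactly the computation carried out in \cite{howard-derived}, and I would cite it rather than redo it, checking only that the propagation of local conditions through $S_{\mathcal K}^{(k)}\to S_\infty^{(k)}$ and $S^{(k)}\to S_\infty^{(k)}$ in our Definition~\ref{def:Sel-k} matches Howard's setup (which it does, since our $\cF_\qq$ is his Selmer structure with the strict/relaxed conditions swapped at $\qq$ versus $\qqbar$, and duality between $\cF_\qq$ and $\cF_{\qqbar}$ is built into \eqref{eq:local-F}).

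Finally, the kernel statement: that the left kernel of $h_{\qq,k}^{(i)}$ is exactly $\mathfrak S_{\qq,k}^{(i+1)}$. This is where the real content sits, and it is the step I expect to be the main obstacle. The inclusion $\mathfrak S_{\qq,k}^{(i+1)}\subseteq\ker$ is formal: an element of $\mathfrak S_{\qq,k}^{(i+1)}$ corresponds to a class in $J^{i}\mathfrak{Sel}_\qq(K,S_\infty^{(k)})[J]$, hence lifts to $S_{\mathcal K}^{(k)}$ already one filtration step deeper, so its $i$-th connecting map vanishes. The reverse inclusion is the delicate part: given $x$ in the left kernel, one must produce a lift to $\rH^1(G_{K,\Sigma},S_{\mathcal K}^{(k)})$ whose image under the connecting map not merely pairs trivially with all of $\mathfrak S_{\qqbar,k}^{(i)}$ but actually vanishes in the relevant subquotient, which requires the \emph{perfectness} of the duality pairing between the graded pieces of the two filtrations — this in turn rests on Poitou--Tate global duality for the finite modules $S^{(k)}$ together with the fact (guaranteed by \eqref{eq:h0-intro}) that $\rH^0$ and $\rH^2$ contributions vanish or are controlled, so that the only obstruction to lifting is the one detected by the pairing. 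I would set this up as: the pairing $h_{\qq,k}^{(i)}$ factors through a perfect pairing of the $\cO_k$-modules $\mathfrak S_{\qq,k}^{(i)}/\mathfrak S_{\qq,k}^{(i+1)}$ and $\mathfrak S_{\qqbar,k}^{(i)}/\mathfrak S_{\qqbar,k}^{(i+1)}$, deduced from the non-degeneracy of global cup-product duality after passing to graded pieces. Again this is Howard's Theorem; my contribution is only to verify that his hypotheses (which he states for a general self-dual $\Z_p[[G_K]]$-module satisfying a condition like $\rH^0(K,S^{(k)})=0$) are met here thanks to \eqref{eq:h0-intro}, and that the propagated local conditions at the bad primes $w\mid N$ and at $w\mid p$ are the ones appearing in \cite{howard-derived}. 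I would therefore present the proof as a reduction to \cite[\S{1}--2]{howard-derived}, spelling out the translation of local conditions and the role of \eqref{eq:h0-intro}, and leaving the snake-lemma bookkeeping and the symmetry sign computation as citations.
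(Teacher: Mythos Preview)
Your proposal is essentially correct and, like the paper, reduces to \cite{howard-derived}; but the paper's route is considerably more economical, and it is worth seeing why.

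Rather than building each $h_{\qq,k}^{(i)}$ as an iterated Bockstein map with its own lift-and-connect construction, the paper observes once and for all that the local conditions $\mathcal F_\qq$ and $\mathcal F_{\qqbar}$ on $S_{\cK}^{(k)}$ are exact orthogonal complements under the $\cK_k$-valued local Tate pairing (this is immediate from \eqref{eq:local-F}), so Howard's \cite[Thm.~1.11]{howard-derived} already supplies a \emph{single} $\cO_k$-valued pairing $\tilde h_{\qq,k}$ on $\mathfrak{Sel}_\qq(K,S_\infty^{(k)})\times\mathfrak{Sel}_{\qqbar}(K,S_\infty^{(k)})$. All the derived pairings are then defined by the formula
\[
\tilde h_{\qq,k}^{(i)}(s,t)\;=\;\tilde h_{\qq,k}\bigl(\phi_{i,\gamma}^{-1}(s),\,t\bigr),
\]
where $\phi_{i,\gamma}$ is the injection given by multiplication by $(\gamma-1)^{i-1}$. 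With this packaging, the kernel assertion is a one-line citation of \cite[Lem.~2.3]{howard-derived}, and no separate well-definedness, symmetry, or perfectness argument is needed at each stage. Your description via iterated connecting maps is morally how Howard constructs $\tilde h_{\qq,k}$ in the first place, so you are unpacking what the paper leaves packaged; this is not wrong, but the global-duality-and-perfectness argument you sketch for the kernel is more work than necessary once the $\phi_{i,\gamma}^{-1}$ definition is in hand. The only substantive verification either approach requires is the orthogonality of the local conditions at the $S_{\cK}^{(k)}$ level, which you note in passing but the paper isolates as the sole input.
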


\begin{proof}
By definition \eqref{eq:local-F}, 
the local conditions cutting out  $\mathfrak{Sel}_\qq(K,S_\cK^{(k)})$ and $\mathfrak{Sel}_{\overline{\qq}}(K,S_\cK^{(k)})$, are everywhere exact orthogonal complements under the pairing 
\[
\rH^1(K_w,S_\cK^{(k)})\times \rH^1(K_w,S_\cK^{(k)})\rightarrow\rH^2(K_w,\cK(1))\simeq\cK
\]
induced by the Weil pairing $S^{(k)}\times S^{(k)}\rightarrow\cO_k(1)$.
Therefore, by \cite[Thm.~1.11]{howard-derived} there is a canonical symmetric 
\emph{height pairing}
\begin{equation}\label{eq:hk}
\tilde{h}_{\qq,k}:\mathfrak{Sel}_\qq(K,S_\infty^{(k)})\times\mathfrak{Sel}_{\overline{\qq}}(K,S_\infty^{(k)})\rightarrow\cO_k.\nonumber
\end{equation}
(As written here, the pairing $\tilde{h}_{\qq,k}$ depends on the choice of a topological generator $\gamma\in\Gamma$, but 
the $J/J^2$-valued pairing $(\gamma-1)\cdot\tilde{h}_{\qq,k}$ 
is independent of $\gamma$, as one checks immediately from \cite[Lem.~1.10(b)]{howard-derived}.) 
%
Note that $J^{i-1}\mathfrak{Sel}_\qq(K,S_\infty^{(k)})[J]$ is the image of the injection
\begin{equation}
\label{eq:phi-gamma}
\phi_{i,\gamma}:\frac{\mathfrak{Sel}_\qq(K,S_\infty^{(k)})[J^i]}{\mathfrak{Sel}_\qq(K,S_\infty^{(k)})[J^{i-1}]}\hookrightarrow\mathfrak{Sel}_\qq(K,S_\infty^{(k)})[J]
\end{equation}
given by multiplication by $(\gamma-1)^{i-1}$.
Thus we may define
\begin{equation}\label{eq:der-ht-k}
\tilde{h}_{\qq,k}^{(i)}:J^{i-1}\mathfrak{Sel}_\qq(K,S_\infty^{(k)})[J]\times J^{i-1}\mathfrak{Sel}_{\qqbar}(K,S_\infty^{(k)})[J]\rightarrow\cO_k\nonumber
\end{equation}
by $\tilde{h}_{\qq,k}^{(i)}(s,t):=\tilde{h}_{\qq,k}(\phi_{i,\gamma}^{-1}(s),t)$.
In particular, $\tilde{h}_{\qq,k}^{(1)}$ is the restriction of $\tilde{h}_{\qq,k}$ to $\mathfrak{Sel}_\qq(K,S_\infty^{(k)})[J]\times\mathfrak{Sel}_{\overline{\qq}}(K,S_\infty^{(k)})[J]$.
By \cite[Lem.~2.3]{howard-derived}, the left kernel (resp. right kernel) of $\tilde{h}_{\qq,k}^{(i)}$ is exactly $J^{i}\mathfrak{Sel}_\qq(K,S_\infty^{(k)})[J]$ (resp. $J^{i}\mathfrak{Sel}_{\qqbar}(K,S_\infty^{(k)})[J]$).
Since \eqref{eq:sp-iso} restricts to an isomorphism
\[
\mathfrak{S}_{\qq,k}^{(i)}\simeq J^{i-1}\mathfrak{Sel}_\qq(K,S_\infty^{(k)})[J],
\] 
we can transfer $\tilde{h}_{\qq,k}^{(i)}$ to $\mathfrak{S}_{\qq,k}^{(i)}\times\mathfrak{S}_{\qqbar,k}^{(i)}$ via this isomorphism.
Thus, we get a sequence of pairings $h_{\qq,k}^{(i)}$ with the stated properties.
\end{proof}


\subsection{Control theorems}

We now compare the Selmer groups $\mathfrak{Sel}_\qq(K,S_\infty^{(k)})$ (and limits thereof) of the preceding section with the Selmer groups ${\rm Sel}_\qq(K,T)$ and ${\rm Sel}_\qq(K_\infty,W)$ (see Corollary~\ref{cor:control} and Corollary~\ref{cor:control-mazur}). This will allow us to deduce from Theorem~\ref{thm:howard-ht-pk} a construction of $p$-adic height pairings for ${\rm Sel}_\qq(K,T)$, and to relate  their degeneracies to the $\Lambda$-module structure of ${\rm Sel}_\qq(K_\infty,W)$. 

Let ${\rm Sel}_\qq(K,S_\infty^{(k)})$ be the Selmer group cut out by the local conditions 
\[
\rH^1_{\qq}(K_w,S_\infty^{(k)})=
\begin{cases}
\rH^1_{\rm unr}(K_w,S_{\infty}^{(k)})&\textrm{if $w\nmid p\infty$,}\\
\rH^1(K_{\overline\qq},S_\infty^{(k)})&\textrm{if $w=\overline{\qq}$,}\\
0&\textrm{if $w={\qq}$}.
\end{cases}
\] 
Putting $S_\infty=\varinjlim_kS_\infty^{(k)}$, we also consider the $\Lambda=\Z_p\dBr{\Gamma}$-module
\[
{\rm Sel}_\qq(K,S_\infty):=\varinjlim_k{\rm Sel}_\qq(K,S_\infty^{(k)}),\quad
\]
where the limit is with respect to the maps induced by the inclusion $S^{(k)}\rightarrow S^{(k+1)}$. 

\begin{lem}\label{lem:shapiro}
For every $\qq\in\{\pp,\ppbar\}$ there is a canonical $\Lambda$-module isomorphism
\[
{\rm Sel}_\qq(K,S_\infty)\simeq{\rm Sel}_\qq(K_\infty,W).
\]
\end{lem}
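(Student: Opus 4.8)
The plan is to unwind the definition of ${\rm Sel}_\qq(K,S_\infty)$ as a limit of Selmer groups over $K$ for the induced modules $S_\infty^{(k)} = \varinjlim_n {\rm Ind}_{K_n/K} S^{(k)}$, and match it term by term with ${\rm Sel}_\qq(K_\infty, W) = \varinjlim_n {\rm Sel}_\qq(K_n, W)$ via Shapiro's lemma. First I would recall that Shapiro's lemma gives a canonical isomorphism $\rH^1(G_{K,\Sigma}, {\rm Ind}_{K_n/K}S^{(k)}) \simeq \rH^1(G_{K_n,\Sigma}, S^{(k)})$, and similarly at each place: for $w$ a place of $K$, $\rH^1(K_w, {\rm Ind}_{K_n/K}S^{(k)}) \simeq \bigoplus_{v \mid w} \rH^1((K_n)_v, S^{(k)})$, where the sum runs over places $v$ of $K_n$ above $w$. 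Passing to the limit over $n$ (with respect to restriction maps, which on the induced side correspond to the inclusions ${\rm Ind}_{K_n/K} \hookrightarrow {\rm Ind}_{K_{n+1}/K}$) and then over $k$ (with respect to $S^{(k)} \hookrightarrow S^{(k+1)}$, which in the limit gives $W = \varinjlim_k S^{(k)}$ since $E[p^\infty] = \varinjlim E[p^k]$), one gets $\rH^1(G_{K,\Sigma}, S_\infty) \simeq \varinjlim_n \rH^1(G_{K_n,\Sigma}, W)$ and the analogous statement locally.

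The main point is then to check that the local conditions match up under these identifications. On the ${\rm Sel}_\qq(K,S_\infty^{(k)})$ side, the local condition at a place $w \nmid p\infty$ is the unramified one $\rH^1_{\rm unr}(K_w, S_\infty^{(k)})$, and under Shapiro's isomorphism this corresponds — because $K_\infty/K$ is unramified away from $p$, so each such $w$ splits completely or is unramified in $K_n$ — to the product of unramified classes $\bigoplus_{v \mid w} \rH^1_{\rm unr}((K_n)_v, S^{(k)})$, which is exactly the local condition defining the $p^k$-Selmer-type group over $K_n$ at places away from $p$. At $w = \ppbar$ the local condition on the $S_\infty^{(k)}$ side is the full $\rH^1(K_{\ppbar}, S_\infty^{(k)})$ (the relaxed condition), and at $w = \pp$ it is $0$ (the strict condition); under Shapiro these become the relaxed, resp. strict, conditions at all places of $K_n$ above $\ppbar$, resp. $\pp$. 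Comparing with Definition~\ref{def:BDP-Sel} applied over $K_n$ — noting that the $\qq$-strict Selmer group of $W$ over $K_n$ is precisely defined by the unramified condition away from $p$, the unrestricted condition above $\overline{\qq}$, and the zero condition above $\qq$ — we obtain ${\rm Sel}_\qq(K, S_\infty^{(k)}) \simeq {\rm Sel}_\qq(K_n, E[p^k])$-type groups assembled over $n$, and passing to the limit over $k$ (using that the relevant local conditions for $W$ are the direct limits of those for $E[p^k]$, which needs the standard fact that $\rH^1_{\rm unr}$ commutes with the direct limit here, valid since $E$ has good reduction away from $N$ and the cohomology groups are finite) yields the desired isomorphism ${\rm Sel}_\qq(K, S_\infty) \simeq \varinjlim_n {\rm Sel}_\qq(K_n, W) = {\rm Sel}_\qq(K_\infty, W)$. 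Finally I would note that all the isomorphisms used are $\Gamma$-equivariant — Shapiro's isomorphism intertwines the $\Gamma$-action on $S_\infty^{(k)}$ (coming from the Iwasawa-module structure via the tautological character) with the natural $\Gamma = {\rm Gal}(K_\infty/K)$-action on $\varinjlim_n \rH^1(G_{K_n,\Sigma}, -)$ by transport of structure — so the resulting isomorphism is one of $\Lambda$-modules, as claimed.

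The step I expect to require the most care is the compatibility of local conditions at the places $w \mid p$, and in particular making sure that the ``propagated'' local conditions $\rH^1_{\mathcal F_\qq}(K_w, S_\infty^{(k)})$ used in Section~\ref{subsec:lambda-sel} — which are defined by propagation from $S_\cK^{(k)}$ — coincide, for the simple relaxed/strict structure relevant to ${\rm Sel}_\qq(K, S_\infty^{(k)})$, with the naive conditions ``$0$ at $\pp$, everything at $\ppbar$, unramified elsewhere'' written in the displayed formula just before the lemma; once that bookkeeping is pinned down, the rest is a routine limit argument with Shapiro's lemma. (Note that the statement of the lemma refers to the Selmer group cut out by the explicitly displayed local conditions, so strictly one only needs the Shapiro comparison for those, and the reconciliation with the propagated conditions is carried out separately where needed.)
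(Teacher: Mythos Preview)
Your overall Shapiro-lemma strategy is right, but there is a genuine gap at the step where you compare local conditions at primes $w\nmid p$. You write that ``the $\qq$-strict Selmer group of $W$ over $K_n$ is precisely defined by the unramified condition away from $p$''; this is a misreading of Definition~\ref{def:BDP-Sel}. There, the local condition $\rH^1_\qq(F_w,V)$ is set equal to $0$ for \emph{every} $w\nmid\overline{\qq}$, hence in particular for all $w\nmid p$; propagating to $W$, the condition at such $w$ remains $0$, not $\rH^1_{\rm unr}$. So under Shapiro you are matching the unramified condition on the $S_\infty^{(k)}$-side with the \emph{zero} condition on the $K_\infty$-side, and these do not obviously agree.

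The missing ingredient --- which is exactly the content of the paper's proof --- is that for $w\in\Sigma_f\smallsetminus\{\pp,\ppbar\}$ one has
\[
\varinjlim_k\rH^1_{\rm unr}(K_w,S_\infty^{(k)})=0.
\]
This is \cite[Lem.~1.7]{howard-derived}, and uses crucially that such $w$ is \emph{finitely decomposed} in $K_\infty/K$ (guaranteed here because, by the standing assumption on $\Sigma$, every $w\in\Sigma_f$ splits in $K$ and hence has nontrivial decomposition group in the anticyclotomic $\Z_p$-extension). Once this vanishing is established, the unramified condition on the $S_\infty$-side collapses to zero and matches the condition defining ${\rm Sel}_\qq(K_\infty,W)$. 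Without it your argument does not close; the phrase ``$\rH^1_{\rm unr}$ commutes with the direct limit'' does not supply what is needed, since the target condition is not unramified but strict.
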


\begin{proof}
By Shapiro's lemma, we have $\rH^1(G_{K,\Sigma},S_\infty)\simeq\rH^1(G_{K_\infty,\Sigma},W)$. We check that under the above isomorphism, the Selmer groups in the statement are cut out by the same local conditions. For the  primes $w$ above $p$, this is clear. For the primes $w\in\Sigma_f\setminus\{\pp,\ppbar\}$, we need to check that $\varinjlim_k\rH^1_{\rm unr}(K_w,S_\infty^{(k)})=0$, but this follows from \cite[Lem.~1.7]{howard-derived}, since by our assumption on $\Sigma$ 
these primes are split in $K$, so they are finitely decomposed in $K_\infty/K$.  \end{proof}

\begin{rem}In light of Lemma~\ref{lem:shapiro}, 
we shall henceforth write ${\rm Sel}_\qq(K,S_\infty)$ and ${\rm Sel}_\qq(K_\infty,W)$ interchangeably.
\end{rem}

Directly from the definitions we have the inclusion $\mathfrak{Sel}_\qq(K,S_\infty^{(k)})\subset{\rm Sel}_\qq(K,S_\infty^{(k)})$.
On the other hand, the natural surjection 
\begin{equation}\label{eq:H1-modpk}
\rH^1(G_{K,\Sigma},S_\infty^{(k)})\rightarrow\rH^1(G_{K,\Sigma},S_\infty)[p^k]
\end{equation} 
induces a map $\alpha_\qq:{\rm Sel}_\qq(K,S_\infty^{(k)})\rightarrow{\rm Sel}_\qq(K,S_\infty)[p^k]$.

\begin{prop}\label{prop:control}
For every $\qq\in\{\pp,\ppbar\}$, the composition
\[
\mathfrak{Sel}_\qq(K,S_\infty^{(k)})\xrightarrow{\subset}{\rm Sel}_\qq(K,S_\infty^{(k)})\xrightarrow{\alpha_\qq}{\rm Sel}_\qq(K,S_\infty)[p^k]
\]
is injective with finite cokernel of bounded order as $k\to\infty$.
\end{prop}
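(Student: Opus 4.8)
The plan is to analyze the two maps separately and then combine. First I would understand the kernel and cokernel of the inclusion $\mathfrak{Sel}_\qq(K,S_\infty^{(k)})\hookrightarrow{\rm Sel}_\qq(K,S_\infty^{(k)})$. The global cohomology groups are the same; only the local conditions differ. By definition \eqref{eq:local-F}, the local conditions for $\mathfrak{Sel}_\qq$ are propagated via \eqref{eq:S:K-P} from the conditions on $S_\cK^{(k)}$, whereas those for ${\rm Sel}_\qq$ are the ``naive'' conditions displayed just before Lemma~\ref{lem:shapiro}. At primes $w\mid p\infty$ these coincide (both take $\rH^1(K_{\overline{\qq}},-)$, resp.\ $0$, resp.\ are trivial at archimedean places), so the discrepancy is concentrated at the finitely many $w\in\Sigma_f\setminus\{\pp,\ppbar\}$, where one has the unramified condition $\rH^1_{\rm unr}(K_w,S_\infty^{(k)})$ against the propagated condition $\rH^1_{\mathcal F_\qq}(K_w,S_\infty^{(k)})$. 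Using \cite[Lem.~1.7]{howard-derived} (as in the proof of Lemma~\ref{lem:shapiro}), at such split, finitely-decomposed primes the groups $\rH^1(K_w,S_\infty^{(k)})$ are controlled and the two conditions differ by a group of bounded order; in fact I expect the inclusion to have finite cokernel of order bounded independently of $k$, and trivial (or bounded) kernel. This is the step where I would invoke Howard's local computations verbatim.

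Next I would treat $\alpha_\qq:{\rm Sel}_\qq(K,S_\infty^{(k)})\to{\rm Sel}_\qq(K,S_\infty)[p^k]$, which is the usual ``control'' comparison induced by \eqref{eq:H1-modpk}. The cokernel of $\rH^1(G_{K,\Sigma},S_\infty^{(k)})\to\rH^1(G_{K,\Sigma},S_\infty)[p^k]$ is a subquotient of $\rH^2(G_{K,\Sigma},S_\infty)$, which is a cofinitely generated $\Lambda$-module; combined with hypothesis \eqref{eq:h0-intro} (which kills $\rH^0(G_{K,\Sigma},S_\infty)$-type obstructions and makes the mod-$p^k$ reduction well-behaved, cf.\ the isomorphism \eqref{eq:sp-iso} and the remark preceding it), the map \eqref{eq:H1-modpk} is surjective with kernel and cokernel of bounded order as $k\to\infty$. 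A diagram chase comparing local conditions — using that the local conditions defining ${\rm Sel}_\qq(K,S_\infty^{(k)})$ are, by construction, the pullbacks of those defining ${\rm Sel}_\qq(K,S_\infty)$ — then shows $\alpha_\qq$ has kernel and cokernel of bounded order. Here I would also use Proposition~\ref{prop:greenberg} (no proper finite-index $\Lambda$-submodules in ${\rm Sel}_\qq(K_\infty,W)={\rm Sel}_\qq(K,S_\infty)$) together with Proposition~\ref{prop:Lambda-tors} ($\Lambda$-cotorsion) to pin down that the relevant Pontryagin duals have bounded torsion, so the error terms do not grow with $k$.

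Finally I would assemble: injectivity of the composite follows once I check that the inclusion is injective (immediate, since it is an inclusion of subgroups of the same $\rH^1$) and that $\alpha_\qq$ is injective on the image of $\mathfrak{Sel}_\qq(K,S_\infty^{(k)})$ — the latter because any class in the kernel of \eqref{eq:H1-modpk} that lies in $\mathfrak{Sel}_\qq(K,S_\infty^{(k)})$ comes from $\rH^1(G_{K,\Sigma},S_\infty)[p^{k-1}]$ divided by $p$, and \eqref{eq:h0-intro} forces this to vanish (this is exactly the mechanism behind \eqref{eq:sp-iso}). Finiteness and boundedness of the cokernel of the composite then follow by combining the two bounded cokernels via the snake lemma. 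The main obstacle I anticipate is the bookkeeping at the auxiliary primes $w\in\Sigma_f\setminus\{\pp,\ppbar\}$: one must be careful that ``propagation via \eqref{eq:S:K-P}'' does not introduce an error growing with $k$, and this rests on the precise form of \cite[Lem.~1.7]{howard-derived} for $S_\infty^{(k)}$ at primes that are finitely decomposed in $K_\infty/K$; everything else is a standard control-theorem diagram chase granted hypothesis \eqref{eq:h0-intro} and Propositions~\ref{prop:Lambda-tors}--\ref{prop:greenberg}.
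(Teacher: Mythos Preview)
Your overall strategy—analyze the inclusion and $\alpha_\qq$ separately, use \eqref{eq:h0-intro} for injectivity, and bound cokernels via the snake lemma—matches the paper's. However, you have misidentified where the local conditions for $\mathfrak{Sel}_\qq(K,S_\infty^{(k)})$ and ${\rm Sel}_\qq(K,S_\infty^{(k)})$ differ, and this is a genuine gap.

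You claim the discrepancy is concentrated at the primes $w\in\Sigma_f\setminus\{\pp,\ppbar\}$ and that the conditions at $\overline\qq$ coincide. In fact it is the reverse. At primes $w\nmid p$, the propagated condition $\rH^1_{\mathcal F_\qq}(K_w,S_\infty^{(k)})$ is contained in $\rH^1_{\rm unr}(K_w,S_\infty^{(k)})$, and by \cite[Lem.~1.7]{howard-derived} the latter vanishes (these primes being finitely decomposed in $K_\infty/K$); so both conditions are zero and there is nothing to bound. At $w=\overline\qq$, however, the naive condition is the full $\rH^1(K_{\overline\qq},S_\infty^{(k)})$, while the propagated condition is only the \emph{image} of $\rH^1(K_{\overline\qq},S_\cK^{(k)})$, which is in general strictly smaller. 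The quotient is identified with $\ker\bigl\{\rH^2(K_{\overline\qq},S_{\rm Iw}^{(k)})\to\rH^2(K_{\overline\qq},S_\cK^{(k)})\bigr\}$, and bounding it requires a separate input: by local duality it is controlled by $\bigoplus_{\eta\mid\overline\qq}E(K_{\infty,\eta})[p^k]$, whose finiteness uniformly in $k$ follows from the finiteness of $\bigoplus_{\eta\mid\overline\qq}E(K_{\infty,\eta})[p^\infty]$ (cf.\ \cite[Lem.~2.7]{KO}). Your appeal to Howard's lemma at $w\nmid p$ does not touch this.

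Two smaller points. First, the global map \eqref{eq:H1-modpk} is already surjective by construction and injective by \eqref{eq:h0-intro}, hence an isomorphism; the cokernel of $\alpha_\qq$ arises purely from the local conditions, bounded via the snake lemma by $\prod_{w\in\Sigma_f\setminus\{\overline\qq\}}\#\bigl(\rH^0(K_w,S_\infty)/p^k\rH^0(K_w,S_\infty)\bigr)$—no $\rH^2$ term appears. Second, Propositions~\ref{prop:Lambda-tors} and~\ref{prop:greenberg} are not needed here; they enter only in the subsequent Corollary~\ref{cor:control}.
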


\begin{proof}
By \eqref{eq:h0-intro}, the map \eqref{eq:H1-modpk} is an injection, and therefore so is $\alpha_\qq$.
To bound the cokernel of the map in the statement, we bound the cokernel of each of the two maps in the composition.

From the definitions, we see that the quotient ${\rm Sel}_\qq(K,S_\infty^{(k)})/\mathfrak{Sel}_\qq(K,S_\infty^{(k)})$ injects into
\begin{equation}\label{eq:def-Fq}
\frac{\rH^1(K_{\overline\qq},S_\infty^{(k)})}{{\rm im}\bigl\{\rH^1(K_{\overline{\qq}},S_\cK^{(k)})\rightarrow\rH^1(K_{\overline{\qq}},S_\infty^{(k)})\bigr\}}\simeq{\rm ker}\bigl\{\rH^2(K_{\overline{\qq}},S_{\rm Iw}^{(k)})\rightarrow\rH^2(K_{\overline{\qq}},S_\cK^{(k)})\bigr\}.
\end{equation}
By local duality, this is bounded by the size of 
$\bigoplus_{\eta\mid\overline{\qq}}E(K_{\infty,\eta})[p^k]$.
Since $\bigoplus_{\eta\mid\overline{\qq}}E(K_{\infty,\eta})[p^\infty]$ is finite by \cite[Lem.~2.7]{KO}, the above quotient has the desired bound.


On the other hand, for the primes $w\in\Sigma_f\setminus\{\pp,\ppbar\}$,  \cite[Lem.~1.7]{howard-derived} gives
\[
\rH^1_{\rm unr}(K_w,S_\infty^{(k)})=\rH^1_{\rm unr}(K_w,S_\infty)=0.
\] 
Therefore, by the snake lemma we see that the cokernel of $\alpha_\qq$ is bounded by the kernel of the restriction map
\begin{equation}
\label{eq:res-a2}
(r_w)_w:\bigoplus_{w\in\Sigma_f\setminus\{\overline{\qq}\}}\rH^1(K_w,S_\infty^{(k)})\rightarrow\bigoplus_{w\in\Sigma_f\setminus\{\overline{\qq}\}}\rH^1(K_w,S_\infty)[p^k].
\end{equation}
From the cohomology long exact sequence associated with multiplication by $p^k$ we see that ${\rm ker}(r_w)=\rH^0(K_w,S_\infty)/p^k\rH^0(K_w,S_\infty)$, and this is bounded by 
\begin{equation}\label{eq:def-B}
\prod_{w\in\Sigma_f\setminus\{\overline{\qq}\}}\#((B_w)_{/{\rm div}}),\quad\textrm{where $B_w:=\bigoplus_{\eta\mid w}E(K_{\infty,\eta})[p^\infty]$,}
\end{equation}
which is clearly finite and independent of $k$. Thus $\#{\rm coker}(\alpha_\qq)$ has bounded order as $k\to\infty$, whence the result.
\end{proof}

\begin{cor}\label{cor:control}
With notation as above, the following equality of $\Lambda$-modules holds
\[
\varinjlim_k\mathfrak{Sel}_\qq(K,S_\infty^{(k)})={\rm Sel}_\qq(K,S_\infty).
\]
\end{cor}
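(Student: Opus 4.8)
The plan is to pass to the direct limit over $k$ in Proposition~\ref{prop:control}, and then to rule out a proper inclusion using the absence of finite-index $\Lambda$-submodules. First I would observe that for each $k$ the composition
\[
\mathfrak{Sel}_\qq(K,S_\infty^{(k)})\xrightarrow{\ \subset\ }{\rm Sel}_\qq(K,S_\infty^{(k)})\xrightarrow{\ \alpha_\qq\ }{\rm Sel}_\qq(K,S_\infty)[p^k]
\]
is functorial in $k$: all three terms carry transition maps induced by the inclusion $S^{(k)}\hookrightarrow S^{(k+1)}$ (equivalently $S_\infty^{(k)}=S_\infty[p^k]\hookrightarrow S_\infty[p^{k+1}]=S_\infty^{(k+1)}$), and these are compatible with the defining local conditions, so the displayed maps fit into a commutative ladder. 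Hence one may form $\varinjlim_k$ of the composition.

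Next I would identify the limits. By construction $\varinjlim_k{\rm Sel}_\qq(K,S_\infty^{(k)})={\rm Sel}_\qq(K,S_\infty)$, and since $S_\infty$ is $p$-power torsion, so are ${\rm H}^1(G_{K,\Sigma},S_\infty)$ and its Selmer submodule, whence $\varinjlim_k{\rm Sel}_\qq(K,S_\infty)[p^k]={\rm Sel}_\qq(K,S_\infty)$; under these identifications $\varinjlim_k\alpha_\qq$ becomes the identity map of ${\rm Sel}_\qq(K,S_\infty)$. Because filtered colimits are exact, the resulting $\Lambda$-linear map
\[
\varinjlim_k\mathfrak{Sel}_\qq(K,S_\infty^{(k)})\longrightarrow{\rm Sel}_\qq(K,S_\infty)
\]
is injective, and its cokernel is the colimit of the cokernels appearing in Proposition~\ref{prop:control}. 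By that proposition these cokernels are finite of order bounded by a constant $C$ independent of $k$, and a filtered colimit of finite abelian groups of order $\leq C$ again has order $\leq C$ (any finitely many of its elements lift to a common stage, where distinct elements remain distinct). Thus $\varinjlim_k\mathfrak{Sel}_\qq(K,S_\infty^{(k)})$ is identified with a $\Lambda$-submodule of finite index in ${\rm Sel}_\qq(K,S_\infty)$.

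To conclude, I would invoke Lemma~\ref{lem:shapiro} to identify ${\rm Sel}_\qq(K,S_\infty)\simeq{\rm Sel}_\qq(K_\infty,W)$, which under \eqref{eq:h0-intro} has no proper finite-index $\Lambda$-submodule by Proposition~\ref{prop:greenberg}; hence the inclusion above is an equality, which is the assertion of the corollary. The only point that demands some care is the bookkeeping of the three direct systems and the check that the colimit of the uniformly bounded cokernels stays finite; beyond that, the result is a formal consequence of Propositions~\ref{prop:control} and~\ref{prop:greenberg}.
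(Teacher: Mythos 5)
Your proposal is correct and follows essentially the same approach as the paper: pass to the direct limit in Proposition~\ref{prop:control} to obtain a finite-index inclusion $\varinjlim_k\mathfrak{Sel}_\qq(K,S_\infty^{(k)})\subset{\rm Sel}_\qq(K,S_\infty)$, then conclude via Proposition~\ref{prop:greenberg}. The paper states this in one line; you have simply made explicit the bookkeeping (functoriality of the transition maps, identification of the limits, finiteness of a colimit of uniformly bounded finite cokernels) that the paper leaves implicit.
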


\begin{proof}
By Proposition~\ref{prop:control}, $\varinjlim_k\mathfrak{Sel}_\qq(K,S_\infty^{(k)})$ is contained in ${\rm Sel}_\qq(K,S_\infty)$ with finite index, so the result follows from Proposition~\ref{prop:greenberg}.
\end{proof}

The next result is a variant of Mazur's control theorem for our $\qq$-strict Selmer groups.

\begin{prop}\label{prop:control-mazur}
The map $W\rightarrow S_\infty$ induces an injection
\[
\beta_\qq:{\rm Sel}_\qq(K,W)\rightarrow{\rm Sel}_\qq(K,S_\infty)[J]
\]
with finite cokernel.
\end{prop}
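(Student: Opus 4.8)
The plan is to prove Proposition~\ref{prop:control-mazur} by the standard Mazur-style control argument, comparing the Selmer group at the bottom level with the $J$-torsion of the Iwasawa-theoretic Selmer group via the inflation-restriction sequence and a local-at-$p$ analysis. First I would recall that by Lemma~\ref{lem:shapiro} we may identify ${\rm Sel}_\qq(K,S_\infty)$ with ${\rm Sel}_\qq(K_\infty,W)$, so the map in question is induced by the restriction map $\rH^1(G_{K,\Sigma},W)\to\rH^1(G_{K_\infty,\Sigma},W)^{\Gamma}$ together with the natural identification $W=S_\infty[J]$ (equivalently, $W$ as a submodule of $S_\infty$). The inflation-restriction sequence reads
\[
0\to\rH^1(\Gamma,W^{G_{K_\infty,\Sigma}})\to\rH^1(G_{K,\Sigma},W)\to\rH^1(G_{K_\infty,\Sigma},W)^{\Gamma}\to\rH^2(\Gamma,W^{G_{K_\infty,\Sigma}}).
\]
Under \eqref{eq:h0-intro} we have $E(K)[p]=0$, hence $W^{G_{K_\infty,\Sigma}}=E(K_\infty)[p^\infty]$ is finite (indeed one can argue it vanishes, or at worst is finite), and since $\Gamma\simeq\Z_p$ has cohomological dimension $1$, the groups $\rH^1(\Gamma,W^{G_{K_\infty,\Sigma}})$ and $\rH^2(\Gamma,W^{G_{K_\infty,\Sigma}})$ are finite; in fact $\rH^1(\Gamma,-)$ of a finite module with $\Gamma\simeq\Z_p$ is trivial because $\Gamma$ is free pro-$p$, giving injectivity of the global restriction map at the level of $\rH^1(G_{K,\Sigma},-)$ on the nose, with controlled (finite) cokernel.

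Next I would carry out the local comparison of Selmer conditions. The Selmer group ${\rm Sel}_\qq(K,W)$ is cut out by: the unramified condition at places $w\nmid p\infty$, the relaxed (full $\rH^1$) condition at $w\mid\ppbar$, and the strict ($0$) condition at $w\mid\qq$; and similarly for ${\rm Sel}_\qq(K_\infty,W)^{\Gamma}$ the conditions are the propagated/$\Gamma$-invariant analogues. For the bad primes $w\in\Sigma_f\setminus\{\pp,\ppbar\}$, these split in $K$ and hence are finitely decomposed in $K_\infty/K$, so by \cite[Lem.~1.7]{howard-derived} (as already invoked in the proof of Proposition~\ref{prop:control}) the relevant unramified local cohomology vanishes in the limit, and the corresponding local error term is finite (bounded by $\rH^0(K_w,S_\infty)_{/{\rm div}}$, exactly the quantities $(B_w)_{/{\rm div}}$ appearing in \eqref{eq:def-B}). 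At $w\mid\qq$ the strict condition is $0$ on both sides so contributes nothing. The only subtle comparison is at the prime(s) $w\mid\ppbar$, where the local condition is the full $\rH^1$: here I must check that $\rH^1(K_{\ppbar},W)\to\rH^1(K_{\infty,\eta},W)^{\Gamma_\eta}$ has finite kernel and cokernel for $\eta\mid\ppbar$, which again follows from inflation-restriction together with the finiteness of $E(K_{\infty,\eta})[p^\infty]$ (by \cite[Lem.~2.7]{KO}, as used in the proof of Proposition~\ref{prop:control}). Assembling these via the snake lemma applied to the diagram of global-to-local restriction maps gives that $\beta_\qq$ is injective with finite cokernel. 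Finally, to upgrade ``finite cokernel'' appropriately one notes ${\rm Sel}_\qq(K,S_\infty)$ has no proper finite-index $\Lambda$-submodules by Proposition~\ref{prop:greenberg}, though strictly for the statement as written only finiteness of the cokernel is asserted, so this last remark is not even needed here.

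The main obstacle I anticipate is the local analysis at $\ppbar$ with the \emph{relaxed} condition: unlike the classical Mazur control theorem where the local condition at $p$ is the Greenberg/ordinary submodule and one exploits its specific structure, here the condition is the full $\rH^1(K_{\ppbar,\eta},-)$, so I need to verify directly that passing to the limit over $K_\infty/K$ and taking $\Gamma$-invariants changes this by only a finite amount — this reduces to controlling $\rH^2(\Gamma_\eta,W^{G_{K_{\infty,\eta}}})$ and the kernel $\rH^1(\Gamma_\eta,W^{G_{K_{\infty,\eta}}})$, both finite once $E(K_{\infty,\eta})[p^\infty]$ is finite. A secondary bookkeeping point is to make sure the identification $W\hookrightarrow S_\infty$ inducing the map on Selmer groups is the correct one (it is $W=S_\infty^{(1)}$-into-$S_\infty$ at level $k=1$ composed with the structural maps, matching the propagation conventions of Section~\ref{subsec:lambda-sel}), so that the local conditions defining ${\rm Sel}_\qq(K,W)$ indeed propagate to those on ${\rm Sel}_\qq(K,S_\infty)[J]$; this is routine but should be stated carefully.
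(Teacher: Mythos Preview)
Your approach is the same as the paper's (the paper works directly with the exact sequence $0\to W\to S_\infty\xrightarrow{\gamma-1}S_\infty\to 0$ rather than phrasing it via Shapiro and inflation--restriction, but these are equivalent). The injectivity argument is correct.

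However, two of your local claims are wrong, and you should fix them even though the conclusion (finiteness of the cokernel) survives. First, at $w=\qq$ you write ``the strict condition is $0$ on both sides so contributes nothing.'' This is false: when the local condition is zero, the quotient $\rH^1/\rH^1_\qq$ appearing in the snake-lemma diagram is the \emph{full} $\rH^1$, and you must bound $\ker\bigl(\rH^1(K_\qq,W)\to\rH^1(K_\qq,S_\infty)\bigr)\simeq B_\qq/(\gamma-1)B_\qq$, which is finite by \cite[Lem.~2.7]{KO} but not zero. The paper treats this term explicitly. Second, at $w=\overline{\qq}$ you say the local condition on $W$ is ``the full $\rH^1$'', but since $\rH^1_\qq(K_{\overline\qq},W)$ is defined by propagation from $V$, it is $\rH^1(K_{\overline\qq},W)_{\rm div}$, not all of $\rH^1(K_{\overline\qq},W)$. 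The quotient at $\overline\qq$ is therefore $\rH^1(K_{\overline\qq},W)/\rH^1(K_{\overline\qq},W)_{\rm div}\simeq\rH^1(K_{\overline\qq},T)_{\rm tors}$, which the paper computes to be $E(\Q_p)[p^\infty]$; there is no need to analyze the cokernel of the local restriction map at $\overline\qq$ as you propose. (Also, the local condition at $w\nmid p$ on $W$ is the strict one, not the unramified one, though in this setting they coincide after taking the direct limit.)
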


\begin{proof}
The map $\beta_\qq$ is the restriction of the natural map $\rH^1(G_{K,\Sigma},W)\rightarrow\rH^1(G_{K,\Sigma},S_\infty)$ induced by $W\rightarrow S_\infty$.
This natural map is part of the cohomology long exact sequence  induced by 
\[
0\rightarrow W\rightarrow S_\infty\xrightarrow{\gamma-1}S_\infty\rightarrow 0,
\]
and by \eqref{eq:h0-intro}, it induces an isomorphism $\rH^1(G_{K,\Sigma},W)\simeq\rH^1(G_{K,\Sigma},S_\infty)[J]$.
The injectivity of $\beta_\qq$ follows from this.
On the other hand, from the definitions and a direct application of the snake lemma, we see that the cokernel of $\beta_\qq$ is bounded by the kernel of the restriction map
\begin{equation}
\label{eq:res-qbar-w}
(r_{\overline\qq},(r_w)_w):\frac{\rH^1(K_{\overline\qq},W)}{\rH^1(K_{\overline\qq},W)_{\rm div}}\times\prod_{w\in\Sigma_f\setminus\{\qqbar\}}\rH^1(K_w,W)\rightarrow\{0\}\times\prod_{w\in\Sigma_f\setminus\{\qqbar\}}\prod_{\eta\mid w}\rH^1(K_{\infty,\eta},W).
\end{equation}

For $w=\overline\qq$, we compute
\begin{equation}
\label{eq:q-tor}
\begin{aligned}
{\rm ker}(r_{\overline\qq})
&=\rH^1(K_{\overline\qq},T)_{\rm tors} \quad \textrm{by local Tate duality} \\
&={\rm ker}\{\rH^1(K_{\overline\qq},T)\rightarrow\rH^1(K_{\overline\qq},V)\}\\
&={\rm coker}\{\rH^0(K_{\overline\qq},V)\rightarrow\rH^0(K_{\overline\qq},W)\},
\end{aligned}
\end{equation}
where the last equality follows from the cohomology long exact sequence associated to $T\hookrightarrow V\twoheadrightarrow W$.
Since $\rH^0(K_{\overline\qq},V)=0$, this shows that 
\begin{equation}\label{eq:q-tor-bis}
\#{\rm ker}(r_{\overline\qq})=\#\rH^0(K_{\overline\qq},W)=\#E(\Q_p)[p^\infty].
\end{equation}
For $w={\qq}$, we have
\[
{\rm ker}(r_{{\qq}})=B_{{\qq}}/(\gamma-1)B_{{\qq}},
\]
where $B_{\qq}$ is as in \eqref{eq:def-B}, and therefore finite by \cite[Lem.~2.7]{KO}.
Finally, for $w\in\Sigma_f\setminus\{\pp,\ppbar\}$ we have ${\rm ker}(r_w)=B_w/(\gamma-1)B_w$.
From the exact sequence
\[
0\rightarrow E(K_w)[p^\infty]\rightarrow B_w\xrightarrow{\gamma-1}B_w\rightarrow B_w/(\gamma-1)B_w\rightarrow 0
\]
and the finiteness of $E(K_w)[p^\infty]$, we see that 
\[
(B_w)_{\rm div}\subset(\gamma-1)B_w,
\]
and so $\#{\rm ker}(r_w)$ is bounded by $[B_w:(B_w)_{\rm div}]$.
Since all primes $w\in\Sigma_f$ are finitely decomposed in $K_\infty/K$ by our assumption on $\Sigma$, this concludes the proof.
\end{proof}

\begin{rem}\label{rem:jsw}
When $\#{\rm Sel}_\qq(K,W)<\infty$, adapting the arguments in \cite[\S{4}]{greenberg-cetraro}, one can determine the exact size of the cokernel of the restriction map $\beta_\qq$ in Proposition~\ref{prop:control-mazur}, resulting in the formula 
\[
\#{\rm coker}(\beta_\qq)=(\#E(\Q_p)[p^\infty])^2\cdot\prod_{w\mid N}c_w^{(p)},
\] 
where $c_w^{(p)}$ is the $p$-part of the Tamagawa number of $E/K_w$ (see \cite[Thm.~3.3.1]{JSW}).
However, Greenberg's arguments rely crucially on the surjectivity
of the global-to-local map
\[
\rH^1(G_{K,\Sigma},W)\rightarrow\prod_{w\in\Sigma_f}\frac{\rH^1(K_w,W)}{\rH^1_\qq(K_w,W)},
\]
which fails when ${\rm Sel}_\qq(K,W)$ is infinite. In our approach, when ${\rm Sel}_\qq(K,W)$ is not necessarily finite, a result playing the role of  an exact control on $\#{\rm coker}(\beta_\qq)$ will be obtained in $\S\ref{subsec:step3}$ (see Corollary~\ref{cor:control-2}).
\end{rem}

\begin{cor}\label{cor:control-mazur}
The generalised Selmer group $\varprojlim_k\mathfrak{Sel}_\qq(K,S_\infty^{(k)})[J]$ is contained in ${\rm Sel}_\qq(K,T)$ with finite index.
\end{cor}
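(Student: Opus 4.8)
The plan is to compare $\varprojlim_k\mathfrak{Sel}_\qq(K,S_\infty^{(k)})[J]$ with ${\rm Sel}_\qq(K,T)$ by factoring through the two intermediate modules already controlled in the preceding subsections: the discrete module ${\rm Sel}_\qq(K,S_\infty)={\rm Sel}_\qq(K_\infty,W)$ and its $J$-torsion. First I would pass to the $J$-torsion in Corollary~\ref{cor:control}: since $\varinjlim_k\mathfrak{Sel}_\qq(K,S_\infty^{(k)})={\rm Sel}_\qq(K,S_\infty)$ as $\Lambda$-modules, taking $J$-torsion and then the inverse limit over $k$ along the multiplication-by-$p$ maps gives a map
\[
\varprojlim_k\mathfrak{Sel}_\qq(K,S_\infty^{(k)})[J]\longrightarrow\varprojlim_k\bigl({\rm Sel}_\qq(K,S_\infty)[J]\bigr)[p^k],
\]
and I would argue this has finite kernel and cokernel. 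The cokernel is controlled by Proposition~\ref{prop:control}, which gives that $\mathfrak{Sel}_\qq(K,S_\infty^{(k)})\hookrightarrow{\rm Sel}_\qq(K,S_\infty)[p^k]$ has finite cokernel of order bounded independently of $k$; intersecting with $J$-torsion and passing to the limit preserves this boundedness (here one uses that $S_\infty^{(k)}\to S_\infty^{(k+1)}$ induces the transition maps compatibly, and that a bounded system of finite cokernels yields a finite cokernel in the limit, possibly after invoking that the relevant $\varprojlim{}^1$ terms vanish because the groups are finite). The kernel is finite because \eqref{eq:h0-intro} makes $\rH^1(G_{K,\Sigma},S_\infty^{(k)})\to\rH^1(G_{K,\Sigma},S_\infty)[p^k]$ injective, so each term on the left injects into the corresponding term on the right.

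Next I would identify $\varprojlim_k\bigl({\rm Sel}_\qq(K,S_\infty)[J]\bigr)[p^k]$ with (a finite-index submodule of) ${\rm Sel}_\qq(K,T)$. The module ${\rm Sel}_\qq(K,S_\infty)[J]$ is cofinitely generated over $\Z_p$, and its $p$-adic Tate module $\varprojlim_k(\,\cdot\,)[p^k]$ is a finitely generated $\Z_p$-module which agrees with the Pontryagin-dual-free-part construction. On the other hand, Proposition~\ref{prop:control-mazur} furnishes an injection $\beta_\qq:{\rm Sel}_\qq(K,W)\hookrightarrow{\rm Sel}_\qq(K,S_\infty)[J]$ with finite cokernel; dualizing and taking Tate modules, or more directly applying $\varprojlim_k(\,\cdot\,)[p^k]$ to $\beta_\qq$, yields that the Tate module of ${\rm Sel}_\qq(K,S_\infty)[J]$ is commensurable with the Tate module of ${\rm Sel}_\qq(K,W)$. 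Finally, the Tate module of ${\rm Sel}_\qq(K,W)$ is ${\rm Sel}_\qq(K,T)$ up to finite index — this follows from the exact sequence $0\to T\to V\to W\to 0$, hypothesis \eqref{eq:h0-intro} (which kills $\rH^0(K,W)$ torsion obstructions), and the compatibility of the strict local conditions at $\qq,\qqbar$ with passage between $T$, $V$, $W$, as set up in Section~\ref{sec:Sel}. Chaining these commensurabilities gives the claim.

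The main obstacle I anticipate is the bookkeeping in the double limit: one must take $J$-torsion, then $\varprojlim_k$ along multiplication-by-$p$, and verify that "finite cokernel, bounded in $k$" survives this process. Concretely, if $0\to A_k\to B_k\to C_k\to 0$ with $C_k$ finite of bounded order and $B_k=M[p^k]$ for a fixed cofinitely generated $\Z_p$-module $M$, then $\varprojlim_k A_k\hookrightarrow\varprojlim_k B_k=T_pM$ with cokernel injecting into $\varprojlim_k C_k$, which is finite (a bounded inverse system of finite groups has finite limit, and $\varprojlim{}^1$ of a system of finite groups vanishes since the Mittag-Leffler condition holds trivially). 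I would state this as a short lemma or simply cite it as standard. A secondary, purely technical point is checking that the $J$-torsion functor interacts correctly with the transition maps $S^{(k)}\to S^{(k+1)}$ so that the isomorphism \eqref{eq:sp-iso} is compatible across $k$; this is where the hypothesis $E(K)[p]=0$ is used repeatedly, exactly as in the proof of Proposition~\ref{prop:control}. Everything else is a formal diagram chase combining Corollary~\ref{cor:control}, Proposition~\ref{prop:control-mazur}, and the $T/V/W$ comparison.
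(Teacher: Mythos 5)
Your proposal is correct and tracks the paper's own proof essentially step for step: the same chain $\varprojlim_k\mathfrak{Sel}_\qq(K,S_\infty^{(k)})[J]\leftrightarrow T_p\bigl({\rm Sel}_\qq(K,S_\infty)[J]\bigr)\leftrightarrow T_p\bigl({\rm Sel}_\qq(K,W)\bigr)\leftrightarrow{\rm Sel}_\qq(K,T)$ is used, invoking Proposition~\ref{prop:control}, Proposition~\ref{prop:control-mazur}, and the $T/W$ comparison via \eqref{eq:h0-intro} and propagation from $\rH^1_\qq(K_w,V)$, respectively. The only cosmetic differences are the direction in which you traverse the chain and a slight overcaution (you allow for a finite kernel in the first map where it is in fact injective, and say ``up to finite index'' where the paper establishes ${\rm Sel}_\qq(K,T)\simeq\varprojlim_k{\rm Sel}_\qq(K,W)[p^k]$ as an isomorphism), neither of which affects the conclusion.
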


\begin{proof}
By \eqref{eq:h0-intro}, the natural surjection $\rH^1(G_{K,\Sigma},S^{(k)})\rightarrow\rH^1(G_{K,\Sigma},W)[p^k]$ is an isomorphism. 
Since the local conditions defining ${\rm Sel}_\qq(K,T)$ and ${\rm Sel}_\qq(K,W)$ are propagated from $\rH^1_\qq(K_w,V)$, we have
\begin{equation}\label{eq:iso-T}
{\rm Sel}_\qq(K,T)\simeq\varprojlim_k{\rm Sel}_\qq(K,W)[p^k].
\end{equation}

On the other hand, it follows from the proof of Proposition~\ref{prop:control-mazur} that for every $k$ there is a natural injection 
\begin{equation}
{\rm Sel}_\qq(K,W)[p^k]\rightarrow{\rm Sel}_\qq(K,S_\infty)[J+p^k\Lambda]\nonumber
\end{equation}
with cokernel bounded by the size of 
\[
\biggl(E(\Q_p)[p^\infty]\times\prod_{w\in\Sigma_f\setminus\{\qqbar\}}B_w/(\gamma-1)B_w\biggr)[p^k].
\]
Since this is finite (even before taking $p^k$-torsion) and the transition maps are given by multiplication by $p$, its inverse limit with respect to $k$ vanishes. Therefore,
\begin{equation}\label{eq:iso-mazur-control}
\varprojlim_k{\rm Sel}_\qq(K,W)[p^k]\simeq\varprojlim_k{\rm Sel}_\qq(K,S_\infty)[J+p^k\Lambda].
\end{equation}

Since Proposition~\ref{prop:control} implies that $\varprojlim_k\mathfrak{Sel}_\qq(K,S_\infty^{(k)})[J]$ is contained in $\varprojlim_k{\rm Sel}_\qq(K,S_\infty)[J+p^k\Lambda]$ with finite index, the result follows from \eqref{eq:iso-T} and \eqref{eq:iso-mazur-control}.
\end{proof}

\begin{defn}
\label{def:proj}
For every $i\geq 1$, put
\[
\underleftarrow{\mathfrak{S}}_\qq^{(i)}=\varprojlim_k\mathfrak{S}_{\qq,k}^{(i)},
\] 
where the limit is with respect to the multiplication-by-$p$ maps $S^{(k+1)}\rightarrow S^{(k)}$.
\end{defn}

Thus we obtain a filtration
\begin{equation}
\label{eq:fil-T}
\varprojlim_k\mathfrak{Sel}_{\qq}(K,S^{(k)})=
\underleftarrow{\mathfrak{S}}_\qq^{(1)}\supset\underleftarrow{\mathfrak{S}}_\qq^{(2)}\supset\cdots\supset\underleftarrow{\mathfrak{S}}_\qq^{(i)}\supset\cdots\supset\underleftarrow{\mathfrak{S}}_\qq^{(\infty)},
\end{equation}
where $\underleftarrow{\mathfrak{S}}_\qq^{(\infty)}:=\cap_{i\geq 1}\underleftarrow{\mathfrak{S}}_\qq^{(i)}$. The pairings $h_{\qq,k}^{(i)}$ of Theorem~\ref{thm:howard-ht-pk} are compatible as $k$ varies, and in the limit they give rise to a sequence of ``derived'' $p$-adic height pairings
\begin{equation}
\label{eq:ht-Zp}
h_{\qq}^{(i)}:\underleftarrow{\mathfrak{S}}_\qq^{(i)}\times\underleftarrow{\mathfrak{S}}_{\qqbar}^{(i)}\rightarrow\Z_p
\end{equation}
such that the kernel on the left (resp. right) is $\underleftarrow{\mathfrak{S}}_\qq^{(i+1)}$ (resp. $\underleftarrow{\mathfrak{S}}_{\qqbar}^{(i+1)}$).

%

\begin{cor}
\label{cor:der-ht}
Let $\qq\in\{\pp,\ppbar\}$.
Using $\sim$ to denote $\Lambda$-module pseudo-isomorphism, write
\[
{\rm Sel}_\qq(K,S_\infty)^\vee\,\sim\,\Lambda^{e_\infty}\oplus(\Lambda/J)^{e_1}\oplus(\Lambda/J^2)^{e_2}\oplus\cdots\oplus(\Lambda/J^{i})^{e_{i}}\oplus\cdots\oplus M
\]
with $M$ a torsion $\Lambda$-module with characteristic ideal prime to $J$.
Then 
\begin{itemize}
\item[(i)] $e_i={\rm rank}_{\Z_p}(\underleftarrow{\mathfrak{S}}_\qq^{(i)}/\underleftarrow{\mathfrak{S}}_\qq^{(i+1)})$.
\item[(ii)] $e_\infty={\rm rank}_{\Z_p}(\underleftarrow{\mathfrak{S}}_{\qq}^{(\infty)})={\rm rank}_{\Z_p}({\rm Sel}_\qq(K,T)^u)$, where 
\[
{\rm Sel}_\qq(K,T)^u:=\bigcap_{n}{\rm cor}_{K_n/K}({\rm Sel}_\qq(K_n,T))
\]
is the space of universal norms in ${\rm Sel}_\qq(K,T)$.
\end{itemize}
\end{cor}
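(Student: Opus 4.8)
The plan is to deduce Corollary~\ref{cor:der-ht} from the structure theory of $\Lambda$-modules, combined with the control theorems of the previous subsection and the compatibility of the derived height pairings with the $\Lambda_k$-level pairings of Theorem~\ref{thm:howard-ht-pk}. First I would record that by Corollary~\ref{cor:control} and Proposition~\ref{prop:control-mazur} together with Corollary~\ref{cor:control-mazur}, the modules $\varprojlim_k\mathfrak{Sel}_\qq(K,S^{(k)})$ are, up to finite index, identified with $\varprojlim_k {\rm Sel}_\qq(K,S_\infty)[J+p^k\Lambda]$, which is the ``$J$-adic Tate module'' of the cotorsion $\Lambda$-module $A:={\rm Sel}_\qq(K,S_\infty)$; more precisely the relevant object is controlled by $A^\vee$. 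Writing $A^\vee \sim \Lambda^{e_\infty}\oplus\bigoplus_{i\geq 1}(\Lambda/J^i)^{e_i}\oplus M$ with ${\rm char}_\Lambda(M)$ prime to $J$, I would compute the $J$-adic filtration on the dual side: the summand $\Lambda/J^i$ contributes a free $\Z_p$-module of rank $1$ to each graded piece $J^{j-1}A[J]/J^jA[J]$ for $0\leq j\leq i-1$ and nothing beyond, while $M$ contributes nothing (its $J$-torsion is finite) and each copy of $\Lambda$ contributes a rank-one piece in every graded degree. By Proposition~\ref{prop:greenberg}, $A$ has no proper finite-index $\Lambda$-submodules, so the pseudo-isomorphism can be taken to be an actual injection with finite cokernel when computing these $\Z_p$-ranks, and the finite cokernel does not affect ranks.

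Next I would match the $J$-adic filtration on $A$ with the filtration $\underleftarrow{\mathfrak{S}}_\qq^{(i)}$. By construction (Definition~\ref{def:Sel-k} and the definition of $\mathfrak{S}_{\qq,k}^{(i)}$ via the isomorphism \eqref{eq:sp-iso}), the module $\mathfrak{S}_{\qq,k}^{(i)}$ corresponds to $J^{i-1}\mathfrak{Sel}_\qq(K,S_\infty^{(k)})[J]$, and passing to the inverse limit over $k$ and using Corollary~\ref{cor:control}, Corollary~\ref{cor:control-mazur} and the boundedness statement in Proposition~\ref{prop:control}, the module $\underleftarrow{\mathfrak{S}}_\qq^{(i)}$ is, up to uniformly bounded finite index, $\varprojlim_k J^{i-1}A[J+p^k\Lambda]$. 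Therefore ${\rm rank}_{\Z_p}(\underleftarrow{\mathfrak{S}}_\qq^{(i)}/\underleftarrow{\mathfrak{S}}_\qq^{(i+1)})$ equals the $\Z_p$-rank of the $i$-th graded piece $J^{i-1}A[J]/J^iA[J]$ (ranks are insensitive to the bounded finite indices and to passing to $p$-adic Tate modules). Comparing with the count from the previous paragraph gives $e_i = {\rm rank}_{\Z_p}(\underleftarrow{\mathfrak{S}}_\qq^{(i)}/\underleftarrow{\mathfrak{S}}_\qq^{(i+1)})$, which is part (i). For part (ii), $\underleftarrow{\mathfrak{S}}_\qq^{(\infty)}=\bigcap_i\underleftarrow{\mathfrak{S}}_\qq^{(i)}$ corresponds to $\bigcap_i J^{i-1}A[J]$, whose $\Z_p$-corank (equivalently the $\Z_p$-rank of its Pontryagin dual contribution) is exactly $e_\infty$, since only the free part $\Lambda^{e_\infty}$ of $A^\vee$ survives all the $J^{i-1}$.

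Finally, for the identification of $e_\infty$ with ${\rm rank}_{\Z_p}{\rm Sel}_\qq(K,T)^u$, I would use that the space of universal norms ${\rm Sel}_\qq(K,T)^u = \bigcap_n {\rm cor}_{K_n/K}({\rm Sel}_\qq(K_n,T))$ is, by the standard duality between compact and discrete Iwasawa modules, identified up to isogeny with $\varprojlim_k \bigcap_n (\text{image of corestriction})$, and the corestriction maps at finite level correspond on the dual side exactly to the maps $A[J+p^k\Lambda] \to A$ followed by multiplication by $g_n=\frac{\gamma^{p^n}-1}{\gamma-1}$; the image of $\bigcap_n g_n$ acting on $A^\vee$ picks out precisely the free quotient, of rank $e_\infty$. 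Concretely one reduces to the well-known fact that for a finitely generated torsion $\Lambda$-module the projective limit of the ``finite-layer coinvariants modulo torsion'' has $\Z_p$-rank equal to the number of copies of $\Lambda$ in a pseudo-isomorphic decomposition of its dual — here applied to ${\rm Sel}_\qq(K_\infty,W)$ — so that the universal norm submodule of the compact Selmer group $\check S_\qq$-type object has rank $e_\infty$. The main obstacle I anticipate is the bookkeeping needed to transfer the several control statements (Corollary~\ref{cor:control}, Proposition~\ref{prop:control-mazur}, Corollary~\ref{cor:control-mazur}) into a clean statement that the $J$-adic graded pieces of $A$ and the graded pieces of the filtration $\underleftarrow{\mathfrak{S}}_\qq^{(\bullet)}$ have the same $\Z_p$-ranks, i.e. checking that none of the finite cokernels appearing along the way can shift a rank; here Proposition~\ref{prop:greenberg} (no proper finite-index submodules) and the uniform boundedness in Proposition~\ref{prop:control} are the crucial inputs that make the argument go through.
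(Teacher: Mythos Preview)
Your proposal is correct and follows essentially the same approach as the paper: the paper's proof simply cites \cite[Cor.~4.3]{howard-derived}, noting that Howard's argument for the usual Selmer group applies verbatim once one replaces his control results (Proposition~3.5 and Lemma~4.1 in \emph{op.\,cit.}) by Proposition~\ref{prop:control} and (the proof of) Proposition~\ref{prop:control-mazur} here. Your sketch is precisely a reconstruction of Howard's argument---reading off the $J$-adic graded pieces from the structure theorem and using the uniform boundedness in the control theorems to ensure the finite discrepancies do not affect $\Z_p$-ranks---so the two are the same in substance; you have simply unpacked what the paper leaves as a citation.
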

\begin{rem}
Proposition~\ref{prop:Lambda-tors} says that ${\rm Sel}_\qq(K,S_\infty)^\vee$ is $\Lambda$-torsion, so $e_\infty=0$.
\end{rem}

\begin{proof}
The argument leading to the proof of \cite[Cor.~4.3]{howard-derived} (for the usual Selmer group) applies \emph{verbatim} to our setting, replacing the use of Proposition~3.5 and Lemma~4.1 in \emph{op.\,cit.} by Proposition~\ref{prop:control} and (the proof of) Proposition~\ref{prop:control-mazur} above, respectively.
\end{proof}

\subsection{The derived regulator}\label{subsec:der-reg}




%
Note that by Corollary~\ref{cor:control-mazur} and 
\eqref{eq:sp-iso}, the Selmer group 
\[
\varprojlim_k\mathfrak{Sel}_{\qq}(K,S^{(k)})= \underleftarrow{\mathfrak{S}}_\qq^{(1)}
\] 
is contained in ${\rm Sel}_\qq(K,T)$ with finite index.

\begin{defn}
For $i\geq 1$, define
\[
\underleftarrow{\overline{\mathfrak{S}}}_\qq^{(i)}:=\bigl(\underleftarrow{\mathfrak{S}}_\qq^{(i)}\otimes_{\Z_p}\Q_p\bigr)\cap{\rm Sel}_\qq(K,T)
\] 
to be the $p$-adic saturation of $\underleftarrow{\mathfrak{S}}_\qq^{(i)}$ in ${\rm Sel}_\qq(K,T)$.
In particular, $\underleftarrow{\overline{\mathfrak{S}}}_\qq^{(1)}={\rm Sel}_\qq(K,T)$.
\end{defn}
By linearity, the pairings \eqref{eq:ht-Zp} extend to $\Q_p$-valued pairings
\begin{equation}\label{eq:ht-Qp}
{h}_\qq^{(i)}:\underleftarrow{\overline{\mathfrak{S}}}_\qq^{(i)}\times\underleftarrow{\overline{\mathfrak{S}}}_{\qqbar}^{(i)}\rightarrow\Q_p,
\end{equation}
whose kernel on the left (resp. right) is $\underleftarrow{\overline{\mathfrak{S}}}_\qq^{(i+1)}$ (resp. $\underleftarrow{\overline{\mathfrak{S}}}_{\qqbar}^{(i+1)}$) by virtue of Theorem~\ref{thm:howard-ht-pk}.



By definition, for every $i\geq 1$ the quotients $\underleftarrow{\overline{\mathfrak{S}}}_\pp^{(i)}/\underleftarrow{\overline{\mathfrak{S}}}_{\pp}^{(i+1)}$ and $\underleftarrow{\overline{\mathfrak{S}}}_{\ppbar}^{(i)}/\underleftarrow{\overline{\mathfrak{S}}}_{\ppbar}^{(i+1)}$ are free $\Z_p$-modules, and since the action of complex conjugation defines an isomorphism ${\rm Sel}_\pp(K,S_\infty)\simeq{\rm Sel}_{\ppbar}(K,S_\infty)$, by Corollary~\ref{cor:der-ht}(i) the $\Z_p$-ranks of these quotients are equal.
Hence for every $\qq\in\{\pp,\ppbar\}$ we have
\begin{equation}\label{eq:ei}
\underleftarrow{\overline{\mathfrak{S}}}_\qq^{(i)}/\underleftarrow{\overline{\mathfrak{S}}}_{\qq}^{(i+1)}\simeq\Z_p^{e_i}
\end{equation}
for some integers $e_i\geq 0$ (the same for both $\pp$ and $\ppbar$).
By Corollary~\ref{cor:der-ht}(ii) and Proposition~\ref{prop:Lambda-tors} we know that $e_i=0$ for $i\gg 0$.


Note that the $i$-th derived $p$-adic heights $h_\qq^{(i)}$ depend on the choice of a topological generator $\gamma$, but the $(J^{i}/J^{i+1})\otimes_{\Z_p}\Q_p$-valued pairings $(\gamma-1)^{i}\cdot h_{\qq}^{(i)}$ are independent of this choice.
We record the dependence on $\gamma$ in the following definition.

\begin{defn}\label{def:der-Reg}
Let $(x_{1,1},\dots,x_{1,e_1};\cdots;x_{i,1},\dots,x_{i,e_i};\cdots)$ be a $\Z_p$-basis for ${\rm Sel}_\pp(K,T)$ adapted to the filtration
\begin{equation}\label{eq:fil-T-sat}
{\rm Sel}_\pp(K,T)= \underleftarrow{\overline{\mathfrak{S}}}_\pp^{(1)}\supset\underleftarrow{\overline{\mathfrak{S}}}_\pp^{(2)}\supset\cdots\supset\underleftarrow{\overline{\mathfrak{S}}}_\pp^{(i)}\supset\cdots,\nonumber
\end{equation}
so that for every $i\geq 1$ the elements $x_{i,1},\dots,x_{i,e_i}$ project to a $\Z_p$-basis for $\underleftarrow{\overline{\mathfrak{S}}}_\pp^{(i)}/\underleftarrow{\overline{\mathfrak{S}}}_\pp^{(i+1)}$.
Let $(y_{1,1},\dots,y_{1,e_1};\cdots;y_{i,1},\dots,y_{i,e_i};\cdots)$ be a $\Z_p$-basis for ${\rm Sel}_{\ppbar}(K,T)$ defined in the same manner.
Define the \emph{$i$-th partial regulator} 
by
\begin{equation}\label{eq:partialp}
R_{\pp,\gamma}^{(i)}
:=\det\bigl(h_\pp^{(i)}(x_{i,j},y_{i,j'})\bigr)_{1\leq j,j'\leq e_i},\nonumber
\end{equation}
and the \emph{derived regulator} 
by ${\rm Reg}_{\pp,{\rm der},\gamma}:=\prod_{i\geq 1}R_{\pp,\gamma}^{(i)}$.
\end{defn}

\begin{rem}\label{rem:J-valued}
By definition, the partial regulators $R^{(i)}_{\pp,\gamma}$ are non-zero, and they are well-defined up to a $p$-adic unit.
So, we have
\[
{\rm Reg}_{\pp,{\rm der},\gamma}\in\Q_p^\times/\Z_p^\times.
\]
Replacing $h_{\pp}^{(i)}$ by $(\gamma-1)^i\cdot h_{\pp}^{(i)}$ and writing $\sigma=\sum_{i\geq 1}ie_{i}$, the above definition gives a non-zero derived regulator
\[
{\rm Reg}_{\pp,{\rm der}}\in\bigl((J^\sigma/J^{\sigma+1})\otimes_{\Z_p}\Q_p\bigr)/\Z_p^\times
\]
which is independent of $\gamma$.
\end{rem}


\section{BSD conjecture for \texorpdfstring{$L_\pp^{\rm BDP}$}{}}\label{sec:BSDconj}

In this section, we formulate our $p$-adic analogue of the Birch--Swinnerton-Dyer conjecture for $L_\pp^{\rm BDP}$, extending the formulation given in \cite{AC} in the $p$-ordinary case. 

We keep the setting from Section~\ref{sec:Sel}, and assume in addition that \eqref{eq:h0-intro} and \eqref{eq:h1-intro} hold.



By 
Lemma~\ref{lem:rank-1}, we have 
\[
{\rm Sel}_\pp(K,T)={\rm ker}({\rm res}_{\pp/{\rm tor}})\simeq\Z_p^{r-1},
\]
where $r={\rm rank}_{\Z_p}\check{S}_p(E/K)$.
Let $(s_1,\dots,s_{r-1})$ be a $\Z_p$-basis for ${\rm Sel}_\pp(K,T)$, and extend it to a $\Z_p$-basis $(s_1,\dots,s_{r-1},s_\pp)$ for $\check{S}_p(E/K)$.
In particular, ${\rm res}_\pp(s_\pp)\in E(K_\pp)\otimes\Z_p$ is non-torsion.
Henceforth, we let
\[
{\rm log}_\pp:\check{S}_p(E/K)\rightarrow\Z_p
\]
be the composition of the map ${\rm res}_{\pp/{\rm tor}}$ with the formal group logarithm $E(K_\pp)_{/{\rm tor}}\otimes\Z_p\rightarrow\Z_p$ associated with a N\'{e}ron differential $\omega_E\in\Omega^1(E/\Z_{(p)})$.
Also, let $\Sha_{\rm BK}(K,W)={\rm Sel}_{p^\infty}(E/K)_{/{\rm div}}$ be the Bloch--Kato Tate--Shafarevich group, and for every prime $\ell\mid N$ write $c_\ell$ to denote the Tamagawa number of $E/\Q_\ell$. 

In the following, we shall interchangeably view $L_\pp^{\rm BDP}$ as an element in $\widehat\cO\dBr{T}$ via the identification $\Lambda_{\widehat{\cO}}\simeq\widehat\cO\dBr{T}$ defined by $T=\gamma-1$ for our fixed topological generator $\gamma\in\Gamma$. Thus $T$ corresponds to a generator of the augmentation ideal $J\subset\Lambda_{\widehat{\cO}}$. 



\begin{conj}[$p$-adic BSD conjecture for $L_\pp^{\rm BDP}$]\label{conj:BSD}
The following assertions hold:
\begin{itemize}
\item[(i)] {\rm (Leading Coefficient Formula)} Let 
$\varrho_{\rm an}:={\rm ord}_JL_\pp^{\rm BDP}$. 
Then, up to a $p$-adic unit, 
\begin{align*}
\frac{1}{\varrho_{\rm an}!}\frac{d^{\varrho_{\rm an}}}{dT^{\varrho_{\rm an}}}L_{\pp}^{\rm BDP}\Bigl\vert_{T=0}&=\biggl(\frac{1-a_p(E)+p}{p}\biggr)^2\cdot{\rm log}_\pp(s_\pp)^2\\
&\quad\times{\rm Reg}_{\pp,{\rm der},\gamma}\cdot\#\Sha_{\rm BK}(K,W)\cdot\prod_{\ell\mid N}c_\ell^2.
\end{align*}
\item[(ii)] {\rm (Order of Vanishing)}
Set $r^\pm$ to denote the $\Z_p$-corank of the $\pm$-eigenspace of $\check{S}_p(E/K)$ under the action of complex conjugation.
Then
\[
\varrho_{\rm an}=2(\max\{r^+,r^-\}-1).
\]
\end{itemize}
\end{conj}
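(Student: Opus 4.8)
The goal is to prove Conjecture~\ref{conj:BSD}, which by the comparison between $L_\pp^{\rm BDP}$ and the algebraic object $F_{\ppbar}^{\rm BDP}$ (granting the Iwasawa--Greenberg Main Conjecture~\ref{conj:BDP-IMC}) reduces to proving the purely algebraic statement in Theorem~\ref{thm:intro-A} about $\overline{F_{\ppbar}^{\rm BDP}}$ and $\varrho_{\rm alg}={\rm ord}_J F_{\ppbar}^{\rm BDP}$. So the plan is to forget the analytic $p$-adic $L$-function entirely and work with $X_{\ppbar}={\rm Sel}_{\ppbar}(K_\infty,W)^\vee$, whose characteristic ideal is generated by $F_{\ppbar}^{\rm BDP}$ by definition.

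\textbf{Step 1: Extract a Perrin-Riou/Howard-type leading term formula.} By Proposition~\ref{prop:Lambda-tors}, $X_{\ppbar}$ is a torsion $\Lambda$-module, and by Corollary~\ref{cor:der-ht} its $J$-adic ``shape'' is governed by the derived height filtration $\underleftarrow{\mathfrak{S}}_\qq^{(i)}$ and the pairings $h_\pp^{(i)}$: pseudo-isomorphically $X_{\ppbar}\sim(\Lambda/J)^{e_1}\oplus(\Lambda/J^2)^{e_2}\oplus\cdots\oplus M$ with $M$ having characteristic ideal prime to $J$. The first thing I would do is compute ${\rm ord}_J$ of the characteristic ideal of each summand: $(\Lambda/J^i)$ contributes $i$, so $\varrho_{\rm alg}=\sum_{i\geq 1} i\,e_i + {\rm ord}_J({\rm char}_\Lambda M)$. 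Then I would identify ${\rm ord}_J({\rm char}_\Lambda M)$ with the contribution of the ``non-universal-norm, trivially-paired'' part of the Selmer group, which via global duality and Mazur's control (Proposition~\ref{prop:control-mazur}, and the refined control in $\S\ref{subsec:step3}$ alluded to in Remark~\ref{rem:jsw}) should produce exactly the factor $\bigl(\frac{1-a_p(E)+p}{p}\bigr)^2\cdot{\rm log}_\pp(s_\pp)^2\cdot\#\Sha_{\rm BK}(K,W)\cdot\prod_{\ell\mid N}c_\ell^2$. This is the heart of the descent computation: one sets up the fundamental exact sequence relating $X_{\ppbar}$, $X_{\ppbar}/JX_{\ppbar}$, $X_{\ppbar}[J]$, and the strict/relaxed Selmer groups over $K$, runs Mazur's control theorem carefully keeping track of the local error terms at $p$, $\bar{\pp}$, and the bad primes $\ell\mid N$, and reads off that the local term at $\bar{\pp}$ (the prime where the strict condition is imposed) is where the Euler factor $\frac{1-a_p(E)+p}{p}$ and the Heegner-point logarithm ${\rm log}_\pp(s_\pp)$ enter — the latter because $s_\pp$ spans the cokernel of ${\rm res}_{\pp/{\rm tor}}$ on $\check{S}_p(E/K)$.

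\textbf{Step 2: Assemble $\overline{F_{\ppbar}^{\rm BDP}}$ and prove part (i).} Having isolated the ``$M$-part'' scalar, the remaining $J^{\sum i e_i}/J^{\sum i e_i+1}$-valued factor must be the determinant of the successive derived height pairings, i.e. $\prod_i \det\bigl((\gamma-1)^i h_\pp^{(i)}(x_{i,j},y_{i,j'})\bigr) = {\rm Reg}_{\pp,{\rm der}}$, using Theorem~\ref{thm:howard-ht-pk}'s statement that the left/right kernels of $h_\pp^{(i)}$ are exactly the next step of the filtration — this is precisely the input that makes the ``derived'' determinant nonzero and well-defined. Multiplying the pieces gives part (i). Note that $\sigma=\sum i e_i$ records the $J$-adic order of this regulator factor, and since the $M$-part scalar is a $p$-adic unit times an honest integer (nonzero), $\varrho_{\rm alg}=\sigma+{\rm ord}_J({\rm char}_\Lambda M)$; but in fact one checks $\mathrm{ord}_J(\mathrm{char}_\Lambda M)=0$ is \emph{not} what happens — rather the $M$-contribution is a unit and $\varrho_{\rm alg}=\sigma$ exactly; the ``$\#\Sha_{\rm BK}$'' and Tamagawa and $\log_\pp(s_\pp)^2$ factors are the $J$-adically-trivial scalar coefficient sitting in front.

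\textbf{Step 3: The inequality and equality criterion, part (ii).} For the order-of-vanishing bound, the key structural input is that complex conjugation acts on everything, splitting $\check{S}_p(E/K)$ and the Selmer groups into $\pm$-eigenspaces of coranks $r^\pm$, and the anticyclotomic height pairing $h_\pp$ is \emph{skew} with respect to this action in the sense that it kills $(+,+)$ and $(-,-)$ pairs — hence $h_\pp$ has rank at most $2\min\{r^+,r^-\}$ on a space of dimension $r-1=r^++r^--1$, forcing $e_2=\mathrm{rank}_{\Z_p}\underleftarrow{\mathfrak{S}}_\pp^{(2)}\geq |r^+-r^-|-1$ (this is essentially Proposition~\ref{prop:e2}). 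Feeding this into $\varrho_{\rm alg}=\sum_{i\geq1} i\,e_i$ with $e_1 + e_2 + \cdots = r-1$ gives $\varrho_{\rm alg}\geq e_1+2e_2\geq (r-1)+e_2\geq (r-1)+(|r^+-r^-|-1)=2(\max\{r^+,r^-\}-1)$, with equality forcing $e_i=0$ for $i\geq 3$ and $e_2=|r^+-r^-|-1$, which is the stated ``maximal non-degeneracy'' criterion. I expect \textbf{Step 1}, the refined Mazur control computation that pins down the exact $p$-adic-unit-ambiguity scalar (especially getting the Tamagawa factors and the $E(\Q_p)[p^\infty]$-type local terms to cancel correctly even when ${\rm Sel}_\qq(K,W)$ is infinite, cf. Remark~\ref{rem:jsw}), to be the main obstacle — this is exactly where the paper's technical innovation over \cite{PR-ht} and \cite{JSW} lies, since the global-to-local surjectivity that Greenberg's classical argument relies on fails here, and one needs to replace it with an argument internal to the derived-height formalism (Corollary~\ref{cor:control-2}).
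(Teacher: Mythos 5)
You have correctly identified that, since Conjecture~\ref{conj:BSD} is genuinely a conjecture, one can only reduce it to the algebraic statement of Theorem~\ref{thm:A} via the Iwasawa--Greenberg Main Conjecture~\ref{conj:BDP-IMC} (as in Corollary~\ref{cor:B}), and that even then part~(ii) yields only an inequality, with equality conditional on the maximal non-degeneracy of $h_\pp$. Your outline of the proof of Theorem~\ref{thm:A} is also consonant with the paper's: derived-height filtration, regulator times scalar factor, and a conjugation argument for the lower bound.

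Where your proposal has genuine gaps is in Steps~1--2. First, the claim that ``the $M$-contribution is a unit'' is wrong: the factor coming from the $J$-prime-to part $M$ of the pseudo-isomorphism \eqref{eq:pseudo-iso} is precisely the nontrivial scalar involving $\#\Sha_{\rm BK}(K,W)$, $\log_\pp(s_\pp)^2$, the Tamagawa numbers and the Euler factor; what is true is only that ${\rm ord}_J({\rm char}_\Lambda M)=0$, so $M$ contributes nothing to $\varrho_{\rm alg}$. Second, and more substantively, you cannot read the scalar off the pseudo-isomorphism decomposition directly, since pseudo-isomorphisms have finite kernel and cokernel that would pollute it. The paper instead expresses the leading coefficient as $\#(J^rX_{\ppbar}/J^{r+1}X_{\ppbar})/\#(J^rX_{\ppbar}[J])$ for $r\gg 0$ (Lemma~\ref{lem:well-known}), identifies this with $\#(\underrightarrow{\mathfrak{S}}_{\ppbar}^{(r+1)})$ (Proposition~\ref{prop:well-known}), and then peels off the regulator layer by layer via Lemma~\ref{lem:alg} and the derived-height exact sequences (Proposition~\ref{prop:recursion}). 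Finally, the ``Mazur control'' step you wave at is exactly where the technical work lies: Greenberg's argument relies on a global-to-local surjectivity that fails when ${\rm Sel}_\qq(K,W)$ is infinite, and the replacement --- the self-duality of the propagated local conditions established in Proposition~\ref{prop:self-dual}, from which the five-term exact sequence of Corollary~\ref{cor:control-2} follows by Poitou--Tate --- is where the local terms actually arise. You correctly point to Corollary~\ref{cor:control-2} but do not supply the argument for it.
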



\begin{rem}
We observe that Conjecture~\ref{conj:BSD} is a reformulation (depending on $\gamma$) of Conjecture~\ref{conj:BSD-intro} in the Introduction.
\end{rem}
 
\begin{rem}
As noted in the Introduction (see Remark~\ref{rem:cf-AC}),  Conjecture~\ref{conj:BSD} extends to all good primes $p>2$ and with an unconditional definition of ${\rm Reg}_{\pp,{\rm der}}$ the $p$-adic Birch--Swinnerton-Dyer conjecture formulated in \cite[Conj.~4.2]{AC}. That their $p$-adic heights (deduced from the construction in \cite{BD-derived-AJM}) agree with ours (deduced from the construction in \cite{howard-derived}) when both apply, follows from the height comparisons in \cite[\S{11}]{nekovar302} and \cite[\S{10}]{burns-mc}.
\end{rem}


By the works of Kolyvagin, Gross--Zagier, and Bertolini--Darmon--Prasanna, Conjecture~\ref{conj:BSD} enjoys the following compatibility with the  classical Birch--Swinnerton-Dyer for $L(E/K,s)$.

\begin{prop}\label{prop:rank-1}
Assume ${\rm ord}_{s=1}L(E/K,s)=1$. Then: 
\begin{itemize}
\item[(i)] Conjecture~\ref{conj:BSD}(i) is equivalent to the $p$-part of the Birch--Swinnerton-Dyler formula for $L'(E/K,1)$.
\item[(ii)] $\varrho_{\rm an}=0$, and Conjecture~\ref{conj:BSD}(ii) holds.
\end{itemize}
\end{prop}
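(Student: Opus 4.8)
The plan is to derive both parts from the theorems of Gross--Zagier and Kolyvagin, which apply because ${\rm ord}_{s=1}L(E/K,s)=1$. First I would record the structural input: Gross--Zagier gives that the Heegner point $z_K$ is non-torsion, and Kolyvagin's method gives $\rank E(K)=1$ and $\#\Sha(E/K)<\infty$; combined with \eqref{eq:h0-intro}, this yields $\check{S}_p(E/K)\simeq E(K)\otimes\Z_p\simeq\Z_p$, so $r=1$. Consequently ${\rm Sel}_\pp(K,T)$, which has $\Z_p$-rank $r-1=0$ and is torsion-free by \eqref{eq:h0-intro}, vanishes; by Corollary~\ref{cor:control-mazur} this forces $\underleftarrow{\mathfrak{S}}_\pp^{(i)}=0$ for all $i$, so ${\rm Reg}_{\pp,{\rm der},\gamma}=1$ is an empty product; the element $s_\pp$ is just a $\Z_p$-generator of $\check{S}_p(E/K)$; and $\Sha_{\rm BK}(K,W)={\rm Sel}_{p^\infty}(E/K)_{/{\rm div}}=\Sha(E/K)[p^\infty]$ since $\Sha(E/K)$ is finite.

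For part (ii), by \eqref{eq:bdp} the value $L_\pp^{\rm BDP}(\mathds{1})$ is nonzero: the Euler factor $\tfrac{1-a_p(E)+p}{p}=\tfrac{\#E(\mathbf{F}_p)}{p}$ is nonzero, and $\log_{\omega_E}(z_K)\neq 0$ because $z_K$ is non-torsion and $E(K)$ embeds into $E(K_\pp)$, on which $\log_{\omega_E}$ is injective modulo torsion. Hence $\varrho_{\rm an}={\rm ord}_JL_\pp^{\rm BDP}=0$. Since $r^++r^-=r=1$ forces $\max\{r^+,r^-\}=1$, the predicted order of vanishing $2(\max\{r^+,r^-\}-1)$ equals $0=\varrho_{\rm an}$, which is precisely Conjecture~\ref{conj:BSD}(ii).

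For part (i), since $\varrho_{\rm an}=0$ Conjecture~\ref{conj:BSD}(i) asserts that $L_\pp^{\rm BDP}(\mathds{1})$ agrees, up to a $p$-adic unit, with $\bigl(\tfrac{1-a_p(E)+p}{p}\bigr)^2\,{\rm log}_\pp(s_\pp)^2\,\#\Sha(E/K)[p^\infty]\prod_{\ell\mid N}c_\ell^2$. I would substitute the value from \eqref{eq:bdp}, cancel the common factor $\bigl(\tfrac{1-a_p(E)+p}{p}\bigr)^2$, and use $p\nmid u_K$ ($p$ odd), $p\nmid c_E$ (\cite{mazur-prime}), and the relation $\log_{\omega_E}(z_K)=I_K\cdot{\rm log}_\pp(s_\pp)$ up to a $p$-adic unit, where $I_K:=[E(K):\Z z_K+E(K)_{\rm tor}]$ is the Heegner index --- valid because $\check{S}_p(E/K)$ is free of rank $1$ generated (up to a unit) by $s_\pp$, while $z_K$ is $I_K$ times a generator modulo torsion. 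After cancelling the nonzero factor ${\rm log}_\pp(s_\pp)^2$, Conjecture~\ref{conj:BSD}(i) becomes equivalent to the arithmetic identity
\[
2\,{\rm ord}_p(I_K)={\rm ord}_p\bigl(\#\Sha(E/K)[p^\infty]\bigr)+2\sum_{\ell\mid N}{\rm ord}_p(c_\ell).
\]
On the other hand, combining the Gross--Zagier formula with the standard relations among $\langle f,f\rangle$, $\deg(\varphi_E)$, $c_E$, the periods $\Omega_E^\pm$, and the period $\Omega_{E/K}$ of $E/K$ --- to the effect that $\frac{L'(E/K,1)}{\Omega_{E/K}\,{\rm Reg}(E/K)}$ equals $I_K^2$ times a rational number of $p$-adic valuation $0$, the transcendental factors and $\deg(\varphi_E)$ cancelling --- and using that every $\ell\mid N$ splits in $K$ (so that $\prod_{v\mid N}c_v(E/K)=\prod_{\ell\mid N}c_\ell^2$) together with $p\nmid\#E(K)_{\rm tor}$, the $p$-part of the BSD formula for $L'(E/K,1)$ translates into the \emph{same} identity. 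This gives the asserted equivalence.

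I expect the only genuinely non-formal step to be this last translation of the $p$-part of BSD into the displayed Heegner-index identity: it rests on the precise form of the Gross--Zagier formula and on the bookkeeping of the transcendental periods, of $\deg(\varphi_E)$, and of the Manin constant (using $p\nmid c_E$). As indicated in the Introduction, all of this is classical and is essentially contained in the proof of the main result of \cite{JSW}, so the remaining difficulty is organisational rather than conceptual.
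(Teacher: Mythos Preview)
Your proof is correct and follows essentially the same approach as the paper: both use Gross--Zagier and Kolyvagin to reduce to rank one with finite $\Sha$, deduce that ${\rm Sel}_\pp(K,T)=0$ and ${\rm Reg}_{\pp,{\rm der},\gamma}=1$, substitute \eqref{eq:bdp}, and reduce Conjecture~\ref{conj:BSD}(i) to the equality (up to $p$-adic units) $[E(K):\Z z_K]^2\sim_p\#\Sha(E/K)\cdot\prod_{\ell\mid N}c_\ell^2$, which is equivalent to the $p$-part of BSD via the Gross--Zagier formula. The only cosmetic difference is that the paper keeps $u_K^2c_E^2$ in the final displayed identity and cites \cite[Lem.~10.1]{zhang-Kolyvagin} for the last equivalence, whereas you absorb these $p$-adic units earlier and point to \cite{JSW}.
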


\begin{proof}
By the Gross--Zagier formula \cite{GZ}, the Heegner point $z_K\in E(K)$ in formula \eqref{eq:bdp} is non-torsion. Therefore, ${\rm rank}_{\Z}E(K)=1$ and $\#\Sha(E/K)<\infty$ by Kolyvagin's work \cite{kol88}, and $\varrho_{\rm an}=0$ by formula \eqref{eq:bdp}. In particular, Conjecture~\ref{conj:BSD}(ii) holds. 

The above also shows that $\Sha_{\rm BK}(K,W)=\Sha(E/K)[p^\infty]$ and ${\rm rank}_{\Z_p}\check{S}_p(E/K)=1$.  
Together with Lemma~\ref{lem:rank-1}, it follows that ${\rm Sel}_\pp(K,T)=0$, and so ${\rm Reg}_{\pp,{\rm der},\gamma}=1$. 
Therefore, if  $s_\pp$ is any element of $\check{S}_p(E/K)\simeq E(K)\otimes\Z_p$ satisfying ${\rm res}_{\pp/{\rm tor}}(s_\pp)\neq 0$, Conjecture~\ref{conj:BSD}(i) now reads
\begin{equation}\label{eq:conj-rk0}
L_{\pp}^{\rm BDP}(0)\,\sim_p\,\biggl(\frac{1-a_p(E)+p}{p}\biggr)^2\cdot{\rm log}_\pp(s_\pp)^2\cdot\#\Sha(E/K)[p^\infty]\cdot\prod_{\ell\vert N}c_\ell^2.
\end{equation}
We can write $z_K\otimes 1=m\cdot s_\pp$, with $m\in\Z_p$ satisfying
\begin{equation}\label{eq:m}
{\rm ord}_p(m)={\rm ord}_p\bigl([E(K):\Z z_K]\bigr),\nonumber
\end{equation}
and formula \eqref{eq:bdp} can then be rewritten as 
\begin{equation}\label{eq:bdp-s}
L_{\pp}^{\rm BDP}(0)\,\sim_p\,\frac{m^2}{u_K^2c_E^2}\cdot\biggl(\frac{1-a_p(E)+p}{p}\biggr)^2\cdot{\rm log}_\pp(s_\pp)^2.
\end{equation}
Combining (\ref{eq:conj-rk0}) and (\ref{eq:bdp-s}), we thus see that Conjecture~\ref{conj:BSD}(i) is equivalent to 
\begin{equation}\label{eq:conclusion}
[E(K):\Z z_K]^2\,\sim_p\,u_K^2c_E^2\cdot\#\Sha(E/K)\cdot\prod_{\ell\mid N}c_\ell^2.
\end{equation}
By the Gross--Zagier formula, (\ref{eq:conclusion}) is equivalent to the $p$-part of the Birch--Swinnerton-Dyer formula for $L'(E/K,1)$ (see e.g. \cite[Lem.~10.1]{zhang-Kolyvagin}), so this concludes the proof.
\end{proof}







\section{Main result}\label{sec:main}

We shall say that the $p$-adic height pairing $h_\pp:=h_\pp^{(1)}$ is \emph{maximally non-degenerate} if, letting $e_i$ be as in \eqref{eq:ei}, we have
\[
e_i=\begin{cases}
\abs{ r^+-r^-}-1 &\textrm{if $i=2$,}\\
0&\textrm{if $i\geq 3$,}
\end{cases}
\]
where $r^\pm={\rm rank}_{\Z_p}(\check{S}_p(E/K)^\pm)$ for the $\pm$-eigenspace $\check{S}_p(E/K)^\pm$ of $\check{S}_p(E/K)$ under the action of complex conjugation.

 
The main result of this paper is the following. 

\begin{thm}\label{thm:A}
In the setting of $\S\ref{sec:BSDconj}$, let $F_{\ppbar}^{\rm BDP}\in\Lambda$ be a generator of ${\rm char}_\Lambda({\rm Sel}_{\ppbar}(K_\infty,W)^\vee)$.  
Put ${\varrho_{\rm alg}}:={\rm ord}_{J}F_{\ppbar}^{\rm BDP}$. 
 
\begin{itemize}
\item[(i)] Up to a $p$-adic unit,
\begin{align*}
 \frac{1}{\varrho_{\rm alg}!}\frac{d^{\varrho_{\rm alg}}}{dT^{\varrho_{\rm alg}}}F_{\ppbar}^{\rm BDP}\Bigl\vert_{T=0}=&\,\biggl(\frac{1-a_p(E)+p}{p}\biggr)^2\cdot{\rm log}_\pp(s_\pp)^2\\
 &\times{\rm Reg}_{\pp,{\rm der},\gamma}\cdot\#\Sha_{\rm BK}(K,W)\cdot\prod_{\ell\mid N}c_\ell^2.
\end{align*}
\item[(ii)] The following inequality holds
\[
{\varrho_{\rm alg}}\geq 2(\max\{r^+,r^-\}-1).
\]
Furthermore, equality is attained if and only if $h_\pp^{}$ is maximally non-degenerate.
\end{itemize}
\end{thm}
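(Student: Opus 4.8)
The plan is to reduce both assertions to the $\Lambda$-module structure of the torsion module $X_{\ppbar}={\rm Sel}_{\ppbar}(K_\infty,W)^\vee$, and then to compute the leading term of its characteristic ideal at the augmentation ideal $J$ by a \emph{derived} refinement of Perrin-Riou's descent formalism. First I would identify $X_{\ppbar}$ with ${\rm Sel}_{\ppbar}(K,S_\infty)^\vee$ via Lemma~\ref{lem:shapiro}, and invoke Corollary~\ref{cor:der-ht} together with Proposition~\ref{prop:Lambda-tors} (which gives $e_\infty=0$, so there is no free part) to obtain a pseudo-isomorphism $X_{\ppbar}\sim\bigoplus_{i\ge 1}(\Lambda/J^i)^{e_i}\oplus M$ with ${\rm char}_\Lambda(M)$ prime to $J$ and $e_i={\rm rank}_{\Z_p}(\underleftarrow{\overline{\mathfrak S}}_{\ppbar}^{(i)}/\underleftarrow{\overline{\mathfrak S}}_{\ppbar}^{(i+1)})$. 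Taking characteristic ideals gives ${\rm char}_\Lambda(X_{\ppbar})=J^{\sigma}\cdot{\rm char}_\Lambda(M)$ with $\sigma=\sum_{i\ge 1}i\,e_i$; hence $\varrho_{\rm alg}=\sigma$, and the leading Taylor coefficient of $F_{\ppbar}^{\rm BDP}$ is, up to a $p$-adic unit, the value at $T=0$ of a generator $g$ of ${\rm char}_\Lambda(M)$.

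For part (ii) I would combine $\varrho_{\rm alg}=\sum_i i\,e_i$ with two inputs: (a) $\sum_{i\ge 1}e_i={\rm rank}_{\Z_p}{\rm Sel}_\pp(K,T)=r-1$, since $\underleftarrow{\overline{\mathfrak S}}_\pp^{(1)}={\rm Sel}_\pp(K,T)$ has rank $r-1$ by Lemma~\ref{lem:rank-1} and the filtration terminates (the $e_i$ being the same for $\pp$ and $\ppbar$, cf.\ \eqref{eq:ei}); and (b) the \emph{structured degeneracy} $e_1={\rm rank}(h_\pp)\le 2\min\{r^+,r^-\}$, equivalently the left kernel of $h_\pp$ has $\Z_p$-rank at least $|r^+-r^-|-1$ (this is Proposition~\ref{prop:e2}), which comes from complex conjugation: $\tau\in{\rm Gal}(K/\Q)$ interchanges $\pp$ and $\ppbar$, acts on $J^i/J^{i+1}$ by $(-1)^i$, and Howard's derived heights are $\tau$-equivariant and symmetric in Howard's sense, so (as in \cite[\S 3]{BD-derived-AJM}) $h_\pp$ is governed by the pairing between the $(\pm)$-eigenspaces of $\check S_p(E/K)$ and vanishes on same-sign pairs. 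Granting (a) and (b), the elementary estimate $\sum_i i\,e_i=(r-1)+\sum_{i\ge 2}(i-1)e_i\ge 2(r-1)-e_1\ge 2(\max\{r^+,r^-\}-1)$ gives the inequality, and inspecting the cases of equality forces $e_1=2\min\{r^+,r^-\}$, $e_2=|r^+-r^-|-1$ and $e_i=0$ for $i\ge 3$ — i.e.\ $h_\pp$ maximally non-degenerate — and conversely.

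For part (i) I would establish an extension of \cite[Thm.~3.3.4]{PR-ht} valid without non-degeneracy: iterating the Bockstein maps attached to multiplication by $\gamma-1$ on $X_{\ppbar}$ along its entire $J$-adic filtration, one expresses the leading coefficient as $g(0)\sim_p\bigl(\prod_{i\ge 1}d_i\bigr)\cdot c_0$, where $d_i$ is the determinant of the $i$-th iterated Bockstein pairing and $c_0$ is a finite error term recording the $M$-part. The control theorems of Section~\ref{sec:der-ht} (Corollaries~\ref{cor:control}, \ref{cor:control-mazur}, \ref{cor:der-ht}) identify these Bockstein pairings with Howard's derived heights $h_\pp^{(i)}$ on $\underleftarrow{\overline{\mathfrak S}}_\pp^{(i)}$, so $d_i$ equals the partial regulator $R_{\pp,\gamma}^{(i)}$ of Definition~\ref{def:der-Reg} and their product is ${\rm Reg}_{\pp,{\rm der},\gamma}$. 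It then remains to evaluate $c_0$: a Mazur-type control computation for the $\ppbar$-strict Selmer group — the exact version of Proposition~\ref{prop:control-mazur} obtained in $\S\ref{subsec:step3}$ (Corollary~\ref{cor:control-2}), which contributes $\#E(\Q_p)[p^\infty]^2\prod_{\ell\mid N}c_\ell^2$ because the primes dividing $N$ split in $K$ — followed by a Poitou--Tate comparison of ${\rm Sel}_{\ppbar}(K,W)$ with the Bloch--Kato Selmer group, whose discrepancy at $\pp$ and $\ppbar$ produces the local Euler factor $\bigl(\tfrac{1-a_p(E)+p}{p}\bigr)^2$ and the term ${\rm log}_\pp(s_\pp)^2$ (the spurious $\#E(\Q_p)[p^\infty]^2$ cancelling), leaving $\Sha_{\rm BK}(K,W)={\rm Sel}_{p^\infty}(E/K)_{/{\rm div}}$.

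The hard part will be the derived descent formalism above together with the identification of its Bockstein-derived pairings with Howard's geometric derived heights: the classical argument of \cite{PR-ht} presupposes exactness of \eqref{eq:4-term}, i.e.\ non-degeneracy of $h_\pp$, which fails here as soon as $|r^+-r^-|>1$, so one must instead run the descent through the whole filtration $\{\underleftarrow{\overline{\mathfrak S}}_\pp^{(i)}\}_{i\ge 1}$ and the entire tower of derived pairings while keeping careful track of the integral structure. A secondary difficulty is that the usual Mazur control computation of $\#{\rm coker}(\beta_{\ppbar})$ relies on surjectivity of the global-to-local map, which is unavailable when ${\rm Sel}_{\ppbar}(K,W)$ is infinite; a replacement valid in that generality (Corollary~\ref{cor:control-2}) has to be proved first.
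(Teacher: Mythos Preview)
Your proposal is correct and follows essentially the same four-step route as the paper: the derived descent computation expressing the leading coefficient in terms of ${\rm Reg}_{\pp,{\rm der},\gamma}$ via iterated (Bockstein/derived-height) pairings is the content of Steps~1--2, your Mazur-type control replacement is exactly Corollary~\ref{cor:control-2} (Step~3), your Poitou--Tate comparison with the Bloch--Kato Selmer group is Step~4, and your inequality chain $\sum_i ie_i\ge 2(r-1)-e_1\ge 2(\max\{r^+,r^-\}-1)$ for part~(ii) is a rephrasing of the computation in Corollary~\ref{cor:rank-ineq}. The only cosmetic difference is that the paper does not isolate ``$g(0)$ for the $M$-part'' but instead works directly with the integer $\#\bigl(\underrightarrow{\mathfrak{S}}_{\ppbar}^{(r+1)}\bigr)$ (Proposition~\ref{prop:well-known}) and unwinds it through the exact sequences \eqref{eq:4-term-refined}; the substance is identical.
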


Combined with progress towards the Iwasawa--Greenberg Main Conjecture (Conjecture~\ref{conj:BDP-IMC}), we deduce the following result towards the $p$-adic Birch--Swinnerton-Dyer conjecture for $L_\pp^{\rm BDP}$. 

\begin{cor}\label{cor:B}
Let $\overline{\rho}:G_\Q\rightarrow{\rm Aut}_{\mathbf{F}_p}(E[p])$ be the mod $p$ representation associated with $E$.
If $p$ is ordinary, assume that:
\begin{itemize}
\item[(1a)] Either $N$ is square-free or there are at least two primes $\ell\Vert N$.
\item[(1b)] $\overline{\rho}$ is ramified at every prime $\ell\Vert N$.
\item[(1c)] $\overline{\rho}$ is surjective.
\item[(1d)] $a_p(E)\not\equiv 1\pmod{p}$.
\item[(1e)] $p>3$.
\end{itemize}
If $p$ is supersingular, assume that:
\begin{itemize}
\item[(2a)] $N$ is square-free.
\item[(2b)] $\overline{\rho}$ is ramified at every prime $\ell\mid N$.
\item[(2c)] Every prime above $p$ is totally ramified in $K_\infty/K$.
\item[(2d)] $p>3$, which implies $a_p(E)=0$.
\end{itemize}
Then,
\begin{itemize}
\item[(i)] The Leading Coefficient Formula of Conjecture~\ref{conj:BSD}(i) holds.
\item[(ii)] The following inequality holds 
\[
{\rm ord}_JL_\pp^{\rm BDP}\geq 2(\max\{r^+,r^-\}-1).
\] 
Equality is attained, and hence Conjecture~\ref{conj:BSD}(ii) holds, if and only if $h_\pp^{}$ is maximally non-degenerate.
\end{itemize}
\end{cor}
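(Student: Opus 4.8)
The plan is to obtain Corollary~\ref{cor:B} as a formal consequence of Theorem~\ref{thm:A} together with the Iwasawa--Greenberg Main Conjecture (Conjecture~\ref{conj:BDP-IMC}) in the cases where the latter is known. First I would record that, under the listed hypotheses, Conjecture~\ref{conj:BDP-IMC} holds: in the $p$-ordinary case, conditions (1a)--(1e) are precisely those under which the equality ${\rm char}_\Lambda({\rm Sel}_{\ppbar}(K_\infty,W)^\vee)\Lambda_{\widehat{\cO}}=(L_\pp^{\rm BDP})$ is proved in \cite{BCK}; in the supersingular case, conditions (2a)--(2d) are those under which the same equality is established in the Appendix to this paper, adapting the strategy of \cite{BCK}. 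Along the way one checks that the running hypotheses \eqref{eq:h0-intro}--\eqref{eq:h1-intro} of $\S\ref{subsec:setting}$ are in force --- \eqref{eq:h0-intro} being automatic from the surjectivity hypothesis (1c) in the ordinary case --- so that Theorem~\ref{thm:A} applies; note also that ${\rm Sel}_{\ppbar}(K_\infty,W)^\vee$ is $\Lambda$-torsion by Proposition~\ref{prop:Lambda-tors}, so a generator $F_{\ppbar}^{\rm BDP}$ is a nonzero element of $\Lambda$.

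Granting the Main Conjecture, we have $L_\pp^{\rm BDP}=u\cdot F_{\ppbar}^{\rm BDP}$ in $\Lambda_{\widehat{\cO}}$ for some unit $u\in\Lambda_{\widehat{\cO}}^\times$, whose constant term $u(0)$ therefore lies in $\widehat{\cO}^\times$. Taking $J$-adic orders gives $\varrho_{\rm an}={\rm ord}_JL_\pp^{\rm BDP}={\rm ord}_JF_{\ppbar}^{\rm BDP}=\varrho_{\rm alg}$; in particular the inequality and the equality criterion in Corollary~\ref{cor:B}(ii) follow verbatim from Theorem~\ref{thm:A}(ii), since the condition ``$h_\pp$ maximally non-degenerate'' is intrinsic to the arithmetic side and does not involve $L_\pp^{\rm BDP}$. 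Passing to $J^{\varrho_{\rm an}}/J^{\varrho_{\rm an}+1}$, the image of $L_\pp^{\rm BDP}$ equals $u(0)$ times the image of $F_{\ppbar}^{\rm BDP}$; combining this with Theorem~\ref{thm:A}(i) --- which identifies the image of $F_{\ppbar}^{\rm BDP}$, up to a unit of $\Z_p$, with the right-hand side of the Leading Coefficient Formula --- yields Conjecture~\ref{conj:BSD}(i), i.e.\ part (i) of the corollary, the factor $u(0)\in\widehat{\cO}^\times$ being absorbed into the stated $p$-adic-unit ambiguity.

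The only delicate point in this deduction is bookkeeping with units and base rings: one must make sure the identification $\Lambda_{\widehat{\cO}}\simeq\widehat{\cO}\dBr{T}$ used to define the ``leading coefficient'' of $L_\pp^{\rm BDP}$ in Conjecture~\ref{conj:BSD} matches the one used for $F_{\ppbar}^{\rm BDP}$ in Theorem~\ref{thm:A}, and to observe that since the right-hand side of the formula lies in $\Q_p$ and the image of $F_{\ppbar}^{\rm BDP}$ lies in $\Q_p$ (as $F_{\ppbar}^{\rm BDP}\in\Lambda$), the only passage from $\Z_p^\times$ to $\widehat{\cO}^\times$ comes from $u(0)$, which is harmless. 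The genuinely substantive work is not in the corollary itself but in its inputs: the ordinary case of Conjecture~\ref{conj:BDP-IMC} is the main theorem of \cite{BCK}, and the supersingular case --- the new ingredient needed here --- is carried out in the Appendix, where the expected main obstacle is transplanting the Euler-system and explicit-reciprocity-law arguments of \cite{BCK} to the supersingular setting, using signed (Kobayashi-style) Iwasawa theory and the constructions of \cite{CW-PR}.
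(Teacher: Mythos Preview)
Your proposal is correct and follows essentially the same route as the paper: invoke the Iwasawa--Greenberg Main Conjecture (proved in \cite{BCK} under (1a)--(1e) in the ordinary case, and in Corollary~\ref{cor:BDP-IMC} of the Appendix under (2a)--(2d) in the supersingular case), and then apply Theorem~\ref{thm:A}. Your additional discussion of unit bookkeeping between $\Lambda$ and $\Lambda_{\widehat{\cO}}$ and the verification of \eqref{eq:h0-intro}--\eqref{eq:h1-intro} is sound but more detailed than the paper's one-line deduction; note that these hypotheses are already part of the standing setting $\S\ref{subsec:setting}$ rather than something to be derived from (1a)--(1e) or (2a)--(2d).
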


\begin{proof}
In the $p$-ordinary case, 
the Iwasawa--Greenberg Main Conjecture (Conjecture~\ref{conj:BDP-IMC}) was proved in \cite[Thm.\,B]{BCK} under hypotheses (1a)--(1e); in the $p$-supersingular case, a proof of the same conjecture under hypotheses (2a)--(2d) is given in Corollary~\ref{cor:BDP-IMC}. 
The result thus follows from Theorem~\ref{thm:A}.
\end{proof}


\section{Proof of Theorem~\ref{thm:A}}\label{sec:proofs}

Before delving into the details, we give a brief outline of the proof of the Leading Coefficient Formula in part (i) of Theorem~\ref{thm:A}. The much shorter proof of part (ii) is given in $\S\ref{subsec:order-van}$.

The proof is divided into four steps. 
%
%
%
Our starting point is Lemma~\ref{lem:well-known}, giving an expression for the leading coefficient at $T=0$ 
of the characteristic power series $F_X$ of a torsion $\Lambda$-module $X$, where $\Lambda=\Z_p\dBr{T}$. The formula is in terms of the orders of the flanking terms in the exact sequence
\[
0\rightarrow T^rX[T]\rightarrow T^rX\xrightarrow{T}T^rX\rightarrow T^rX/T^{r+1}X\rightarrow 0
\]
for any  $r\geq{\rm ord}_TF_X$. Together with the results from Section~\ref{sec:der-ht}, this lemma applied to $X_{\ppbar}={\rm Sel}_{\ppbar}(K_\infty,W)^\vee$ leads to a proof of the equality up to a $p$-adic unit
\begin{equation}\label{eq:step1-outline}
\frac{1}{\varrho_{\rm alg}!}\frac{d^{\varrho_{\rm alg}}}{dT^{\varrho_{\rm alg}}}F_{\ppbar}^{\rm BDP}\Bigl\vert_{T=0}\,\sim_p\,
\#\bigl(\underrightarrow{\mathfrak{S}}_{\ppbar}^{(r+1)}\bigr)\tag{Step 1}
\end{equation}
for any $r\geq\varrho_{\rm alg}$. Passing to the limit in $k$, the $\cO_k$-valued derived height pairings $h_{\pp,k}^{(i)}$ 
give rise to a collection of exact sequences
\begin{equation}\label{eq:4-term-refined}
0\rightarrow\underleftarrow{\mathfrak{S}}_{\pp}^{(i+1)}\rightarrow\underleftarrow{\mathfrak{S}}_{\pp}^{(i)}\xrightarrow{h_\pp^{(i)}}{\rm Hom}_{\Z_p}\bigl(\underrightarrow{\mathfrak{S}}_{\ppbar}^{(i)},\Q_p/\Z_p\bigr)\rightarrow{\rm Hom}_{\Z_p}\bigl(\underrightarrow{\mathfrak{S}}_{\ppbar}^{(i+1)},\Q_p/\Z_p\bigr)\rightarrow 0
\end{equation}
for $i\geq 1$, where $\underleftarrow{\mathfrak{S}}_{\qq}^{(i)}=\varprojlim_k\mathfrak{S}_{\qq,k}^{(i)}$ and $\underrightarrow{\mathfrak{S}}_{\qq}^{(i)}=\varinjlim_k\mathfrak{S}_{\qq,k}^{(i)}$. 

We find the derived regulator ${\rm Reg}_{\pp,{\rm der},\gamma}$ appearing naturally from an iterative computation using \eqref{eq:4-term-refined} for $i=1,\dots,r$, leading to a proof of the equality
\begin{equation}\label{eq:step2-outline}
\#\bigl(\underrightarrow{\mathfrak{S}}_{\ppbar}^{(r+1)}\bigr)\,\sim_p\,{\rm Reg}_{\pp,{\rm der},\gamma}\cdot\bigl[{\rm Sel}_\pp(K,T)\colon\underleftarrow{\mathfrak{S}}_{\pp}^{(1)}\bigr]\cdot
\#\bigl((\underrightarrow{\mathfrak{S}}_{\ppbar}^{(1)})_{/{\rm div}}\bigr)\tag{Step 2}
\end{equation}
for any $r\geq\varrho_{\rm alg}$. 

Next we study the local conditions cutting out the Selmer groups $\underleftarrow{\mathfrak{S}}_{\pp}^{(1)}$ and $\underrightarrow{\mathfrak{S}}_{\ppbar}^{(1)}$, arriving at a five-term exact sequence from which we can deduce the relation
\begin{equation}\label{eq:step3-outline}
\bigl[{\rm Sel}_\pp(K,T)\colon\underleftarrow{\mathfrak{S}}_{\pp}^{(1)}\bigr]\cdot
\#\bigl((\underrightarrow{\mathfrak{S}}_{\ppbar}^{(1)})_{/{\rm div}}\bigr)
=\#(E(\Q_p)[p^\infty])^2\cdot\prod_{w\mid N}c_w^{(p)}\cdot\#({\rm Sel}_{\ppbar}(K,W)_{/{\rm div}}).\tag{Step 3}
\end{equation}

Finally, using global duality and a computation using ${\rm log}_\pp$ we obtain
\begin{equation}\label{eq:step4-outline}
\#({\rm Sel}_{\ppbar}(K,W)_{/{\rm div}})\,\sim_p\,\biggl(\frac{1-a_p(E)+p}{p}\biggr)^2\cdot{\rm log}_\pp(s_\pp)^2\cdot\#\Sha_{\rm BK}(K,W)\cdot\frac{1}{(\#E(K_\pp)[p^\infty])^2},\tag{Step 4}
\end{equation} 
which in combination with the previous steps yields the Leading Coefficient Formula in part~(i) of Theorem~\ref{thm:A}. 


\subsection{Step 1: Non-semisimple torsion $\Lambda$-modules and augmentation filtration}\label{subsec:step1}



Let $X$ be a $\Lambda$-module, where $\Lambda=\Z_p\dBr{T}$. For every $r\geq 0$, denote by 
\[
\beta_X^{(r)}:T^{r}X\rightarrow T^rX
\]
the map given by multiplication by $T$, and put $h(\beta_X^{(r)})=\#{\rm coker}(\beta_X^{(r)})/\#{\rm ker}(\beta_X^{(r)})$ whenever both terms in the right-hand side are finite.

\begin{lem}\label{lem:well-known}
Let $X$ be a finitely generated torsion $\Lambda$-module, and let $F_X\in\Lambda$ be a generator of ${\rm char}_\Lambda(X)$. 
Put
\[
\varrho_X:={\rm ord}_{T}F_X.
\]
For any $r\geq\varrho_X$, the sub-quotients $T^rX/T^{r+1}X$ and $T^rX[T]$ are both finite.
In addition,
\[
\frac{1}{\varrho_X!}\frac{d^{\varrho_X}}{dT^{\varrho_X}}F_X\Bigl\vert_{T=0}\;\sim_p\;h(\beta_X^{(r)})=
\frac{\#\bigl(T^rX/T^{r+1}X\bigr)}{\#(T^rX[T])},
\] 
where $\sim_p$ denotes equality up to a $p$-adic unit.
\end{lem}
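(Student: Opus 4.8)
# Proof proposal for Lemma~\ref{lem:well-known}

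The plan is to reduce to the case of elementary $\Lambda$-modules via pseudo-isomorphism, and then verify the identity directly on each elementary piece. First I would write $F_X = T^{\varrho_X}\cdot u(T)\cdot g(T)$, where $u(T)$ is a unit in $\Lambda = \Z_p\dBr{T}$ (i.e.\ $u(0)\in\Z_p^\times$) and $g(T)$ is a distinguished polynomial with $g(0)\neq 0$, so that $\frac{1}{\varrho_X!}\frac{d^{\varrho_X}}{dT^{\varrho_X}}F_X\vert_{T=0} \sim_p g(0)$, and $g(0)$ is (up to sign) the product of the nonzero constant terms of the distinguished polynomials in an elementary decomposition of $X$. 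The quantity $h(\beta_X^{(r)})$, when finite, is multiplicative in short exact sequences of $\Lambda$-modules in which the relevant kernels and cokernels are finite (a standard Herbrand-quotient-type argument: given $0\to X'\to X\to X''\to 0$, the snake lemma applied to multiplication by $T$ on $T^rX', T^rX, T^rX''$ — after noting $T^rX' = X'\cap T^rX$ up to finite error and $T^r X'' $ is the image — yields a six-term exact sequence of finite groups forcing $h(\beta_X^{(r)}) = h(\beta_{X'}^{(r)})\cdot h(\beta_{X''}^{(r)})$). Since a pseudo-isomorphism has finite kernel and cokernel, both of which are killed by a power of $T$ (being finite $\Lambda$-modules, hence supported away from the generic point but — more to the point — annihilated by $T^N$ for $N\gg 0$ is false in general; rather they are annihilated by some element of $\Lambda$ coprime to nothing in particular) — here I must be slightly careful: finite $\Lambda$-modules need not be $T$-power torsion. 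The cleaner route is to observe that for a \emph{finite} $\Lambda$-module $C$, $T^rC$ stabilizes and $\beta_C^{(r)}$ is an isomorphism onto $T^rC$ for $r\gg 0$, hence $h(\beta_C^{(r)})=1$; so pseudo-isomorphic torsion modules have the same $h(\beta^{(r)})$ for $r$ large, and moreover for $r\geq\varrho_X$ I would check the two flanking subquotients are finite (this is where $r\geq\varrho_X = \ord_T F_X$ enters: $T^rX/T^{r+1}X$ and $T^rX[T]$ are finite iff $X/TX$-type considerations bound the $T$-adic behavior, which holds precisely once $r$ exceeds the order of vanishing, cf.\ the structure theory).

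Next I would reduce to the two elementary cases. For $X = \Lambda/(p^m)$: here $\varrho_X = 0$ and $F_X = p^m$, so the left side is $\sim_p p^m$; and $T^rX = X$ for all $r$ (since $T$ acts injectively on $\Lambda/(p^m)\cong\Z_p^m\dBr{T}$... no — $T$ does not act invertibly), so $T^rX/T^{r+1}X \cong X/TX \cong \cO_m$ has order $p^m$ while $T^rX[T] = X[T] = 0$, giving $h(\beta_X^{(r)}) = p^m = p^m/1$, matching. For $X = \Lambda/(f(T))$ with $f$ distinguished of degree $d$: write $f(T) = T^e\cdot f_1(T)$ with $f_1(0)\neq 0$ a $p$-adic unit times a distinguished polynomial of positive valuation content... actually $f_1(0)\in\Z_p$ with $\ord_T$ meaning $e = \ord_T f$. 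Then $\varrho_X = e$. I would compute $T^rX$, $T^rX[T]$, and $T^rX/T^{r+1}X$ explicitly for $r\geq e$: multiplication by $T$ on $\Lambda/(f)$ has kernel $(f/T^{e})^{-1}\cdots$ — concretely $X[T] = $ annihilator of $T$ $= (f(T)/T)\Lambda/(f)\Lambda$ has order $p^{\ord_p(f_1(0))}$ when $e\geq 1$ and is trivial when $e=0$; similarly $T^rX/T^{r+1}X$ for $r\geq e$ is identified with $\Lambda/(T, f_1)$ which has order $p^{\ord_p(f_1(0))} = p^{\ord_p(g(0))}$. The Herbrand quotient $h(\beta_X^{(r)})$ then equals $p^{\ord_p f_1(0)}\sim_p f_1(0)\sim_p \frac{1}{e!}\frac{d^e}{dT^e}f\vert_{T=0}$, as desired. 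Assembling via multiplicativity over the elementary decomposition completes the argument.

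The main obstacle I anticipate is the bookkeeping around \emph{finite} $\Lambda$-submodules: pseudo-isomorphism only controls $X$ up to finite kernel/cokernel, and one must confirm that (a) the finiteness of $T^rX/T^{r+1}X$ and $T^rX[T]$ is genuinely guaranteed for all $r\geq\varrho_X$ (not just $r\gg 0$), and (b) the multiplicativity of $h(\beta^{(\cdot)})$ survives passage through a pseudo-isomorphism uniformly in $r$, so that the value computed on the elementary model agrees with the value for $X$ at the \emph{same} $r$. Both points are handled by the observation that a finite $\Lambda$-module $C$ satisfies $h(\beta_C^{(r)}) = 1$ for \emph{every} $r\geq 0$ (since $\#C[T] \cdot \#(C/TC)$... no — one shows $\#\ker = \#\operatorname{coker}$ for multiplication by $T$ on any finite module, which is immediate), and that $T$-multiplication on $C$ has equal-size kernel and cokernel trivially because $C$ is finite; this makes the multiplicativity exact rather than merely asymptotic, and the finiteness of the flanking terms for $X$ follows from that for the elementary model plus the finite error terms. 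Once this is in place, the lemma is a direct computation. I would present it by first stating the multiplicativity and the finite-module vanishing as two short claims, then doing the two elementary cases, then concluding.
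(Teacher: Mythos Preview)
Your strategy matches the paper's: verify the formula on elementary modules $\Lambda/(f)$, then transfer via pseudo-isomorphism using multiplicativity of $h$ in short exact sequences together with the observation that $h(\beta_C^{(r)})=1$ for any finite $\Lambda$-module $C$. (The paper organizes the transfer slightly differently, applying multiplicativity to the tautological sequences built from the induced maps $\phi_r\colon T^rX\to T^rY$, which sidesteps the issue you flag about $T^rX'$ versus $X'\cap T^rX$.)

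One slip to correct in the elementary computation. For $X=\Lambda/(f)$ with $f=T^ef_1$, $f_1(0)\neq 0$, $e\geq 1$, you write $X[T]=(f/T)\Lambda/(f)\Lambda$ and claim it has order $p^{\operatorname{ord}_p f_1(0)}$; but $(T^{e-1}f_1)/(T^ef_1)\simeq\Lambda/(T)\simeq\Z_p$ is infinite. The quantity you actually need is $(T^rX)[T]$ for $r\geq e$, and this vanishes: one checks $X[T^r]=(f_1)/(f)$ for every $r\geq e$, so $T^rX[T]\simeq X[T^{r+1}]/X[T^r]=0$. Combined with your (correct) identification $T^rX/T^{r+1}X\simeq\Lambda/(T,f_1)\simeq\Z_p/(f_1(0))$, this gives $h(\beta_X^{(r)})\sim_p f_1(0)$, and your argument goes through.
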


\begin{proof}
Suppose first that $X$ is an elementary module, in the sense that
\[
X=\Lambda/(f),
\]
where $f=a_nT^n+a_{n+1}T^{n+1}+\cdots\in \Lambda$ with $a_n\neq 0$ (so $n=\varrho_X$). Then for any $r\geq n$ we find
\begin{align*}
X[T^r]&=(T^{-n}f)/(f),\\
T^rX&=(T^r, f)/(f)\simeq (T^r)/(T^r)\cap (f)=(T^r)/(T^{r-n}f),
\end{align*}
Therefore,for any $r\geq\varrho_X$ we have that $T^rX[T]\simeq X[T^{r+1}]/X[T^r]$ is trivial and  
\[
T^rX/T^{r+1}X\simeq (T^r)/(T^{r+1}, T^{r-n}f)\simeq \Z_p/(a_n)
\] 
so the result is true in this case.

In general, by the structure theorem 
there exists a $\Lambda$-module homomorphism 
\[
\phi:X\rightarrow Y 
\]
with finite kernel and cokernel, where $Y$ is a direct sum of elementary modules as above. Since $X$ and $Y$ have the same characteristic ideal and by the above argument the result is true for $Y$, it remains to show that $h(\beta^{(r)}_X)=h(\beta^{(r)}_Y)$ for any $r\geq \varrho_X=\varrho_Y$.

For any $\Lambda$-module homomorphism $\beta:M\rightarrow M$ with finite kernel and cokernel, put $h(\beta)=\#{\rm coker}(\beta)/\#{\rm ker}(\beta)$. 
%
Note that if $0\rightarrow A\rightarrow B\rightarrow C\rightarrow 0$ is a $\Lambda$-module exact sequence, and any two of the multiplication-by-$T$ maps $T_A:A\rightarrow A$, $T_B:B\rightarrow B$, $T_C:C\rightarrow C$ have  finite kernel and cokernel, then from an easy application of the snake lemma we see that $h(T_A)$, $h(T_B)$, $h(T_C)$ are all defined, with
\begin{equation}\label{eq:mult-h}
h(T_B)=h(T_A)\cdot h(T_C).
\end{equation}

For any $r>0$, $\phi$ induces maps $\phi_r:T^rX\rightarrow T^rY$ with finite kernel and cokernel. 
Applying \eqref{eq:mult-h} to the tautological exact sequence $0\rightarrow{\rm im}(\phi_r)\rightarrow T^rY\rightarrow{\rm coker}(\phi_r)\rightarrow 0$, we obtain
\begin{equation}\label{eq:star-1}
h(\beta_{Y}^{(r)})=h(T_{{\rm im}(\phi_r)})\cdot h(T_{{\rm coker}(\phi_r)})=h(T_{{\rm im}(\phi_r)})
\end{equation}
for any $r\geq\varrho_X$, using that $h(T_{{\rm coker}(\phi_r)})=1$  (since $\#{\rm coker}(\phi_r)<\infty$) for the last equality. On the other hand, applied to $0\rightarrow{\rm ker}(\phi_r)\rightarrow T^rX\rightarrow{\rm im}(\phi_r)\rightarrow 0$, \eqref{eq:mult-h} similarly gives
\begin{equation}\label{eq:star-2}
h(\beta_X^{(r)})=h(T_{{\rm ker}(\phi_r)})\cdot h(T_{{\rm im}(\phi_r)})=h(T_{{\rm im}(\phi_r)})
\end{equation}
for any $r\geq\varrho_X$. Combining \eqref{eq:star-1} and \eqref{eq:star-2}, the result follows.
\end{proof}

\begin{rem}
When the action of $T$ on $X$ is ``semi-simple'' (i.e., when up to pseudo-isomorphism $X$ is of the form $\oplus_i\Lambda/(f_i)$ with $f_i\in\Lambda$ satisfying ${\rm ord}_{T}(f_i)\leq 1$ for all $i$), Lemma~\ref{lem:well-known} recovers a well-known result (see e.g. \cite[$\S{1.4}$, Lemme]{PR-ht-abvar}).  
\end{rem}


For the rest of this section, we let $\Lambda=\Z_p\dBr{\Gamma}$ be the anticyclotomic Iwasawa algebra and $J\subset \Lambda$ the augmentation ideal. 
We shall often identify $\Lambda$ (resp. $J$) with the one variable power series ring $\Z_p\dBr{T}$ (resp. $(T)$) setting $T=\gamma-1$ for a fixed choice of topological generator $\gamma\in\Gamma$. 


\begin{prop}\label{prop:well-known}
Let $\qq\in\{\pp,\ppbar\}$. 
Let $F_\qq^{\rm BDP}\in\Lambda$ be a generator of the characteristic ideal of $X_\qq={\rm Sel}_\qq(K,S_\infty)^\vee$, and put ${\varrho_{\rm alg}}={\rm ord}_{J}F_\qq^{\rm BDP}$. Then
\[
\frac{1}{\varrho_{\rm alg}!}\frac{d^{\varrho_{\rm alg}}}{dT^{\varrho_{\rm alg}}}F_\qq^{\rm BDP}\Bigl\vert_{T=0}\,\sim_p\,\#\bigl(\underrightarrow{\mathfrak{S}}_{\qq}^{(r+1)}\bigr)
\]
for any $r\geq{\varrho_{\rm alg}}$.
\end{prop}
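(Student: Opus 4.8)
The plan is to apply Lemma~\ref{lem:well-known} to the torsion $\Lambda$-module $X=X_\qq={\rm Sel}_\qq(K,S_\infty)^\vee$, whose characteristic ideal is generated by $F_\qq^{\rm BDP}$ with ${\rm ord}_T F_\qq^{\rm BDP}=\varrho_{\rm alg}$ by definition; note $X_\qq$ is indeed $\Lambda$-torsion by Proposition~\ref{prop:Lambda-tors} (using \eqref{eq:h0-intro}). The lemma gives, for any $r\geq\varrho_{\rm alg}$,
\[
\frac{1}{\varrho_{\rm alg}!}\frac{d^{\varrho_{\rm alg}}}{dT^{\varrho_{\rm alg}}}F_\qq^{\rm BDP}\Bigl\vert_{T=0}\,\sim_p\,\frac{\#\bigl(T^rX_\qq/T^{r+1}X_\qq\bigr)}{\#\bigl(T^rX_\qq[T]\bigr)},
\]
so the task reduces to identifying this ratio with $\#\bigl(\underrightarrow{\mathfrak{S}}_\qq^{(r+1)}\bigr)$. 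The natural strategy is to Pontryagin-dualise: $T^rX_\qq/T^{r+1}X_\qq$ is dual to ${\rm Sel}_\qq(K,S_\infty)[T^{r+1}]/{\rm Sel}_\qq(K,S_\infty)[T^r]$, i.e. to $J^r{\rm Sel}_\qq(K,S_\infty)[J]$ via the injection $\phi_{r+1,\gamma}$ of \eqref{eq:phi-gamma} (multiplication by $(\gamma-1)^r$), while $T^rX_\qq[T]$ is dual to ${\rm Sel}_\qq(K,S_\infty)/({\rm Sel}_\qq(K,S_\infty)[T^r]+T{\rm Sel}_\qq(K,S_\infty))$. So the first step is to rewrite the ratio purely in terms of the submodule $J^r{\rm Sel}_\qq(K,S_\infty)[J]$ and the cokernel of $T$ restricted to the $T$-divisible-by-$T^r$ part.

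The key identification is then to relate $J^r{\rm Sel}_\qq(K,S_\infty)[J]$ to $\underrightarrow{\mathfrak{S}}_\qq^{(r+1)}=\varinjlim_k\mathfrak{S}_{\qq,k}^{(r+1)}$. Recall from $\S\ref{subsec:hp-der}$ that $\mathfrak{S}_{\qq,k}^{(r+1)}$ is by definition the submodule of $\mathfrak{Sel}_\qq(K,S^{(k)})$ mapping to $J^r\mathfrak{Sel}_\qq(K,S_\infty^{(k)})[J]$ under the isomorphism \eqref{eq:sp-iso}. Passing to the direct limit over $k$ and using Corollary~\ref{cor:control} (which identifies $\varinjlim_k\mathfrak{Sel}_\qq(K,S_\infty^{(k)})$ with ${\rm Sel}_\qq(K,S_\infty)$ up to finite index), one expects $\underrightarrow{\mathfrak{S}}_\qq^{(r+1)}$ to be commensurable with $J^r{\rm Sel}_\qq(K,S_\infty)[J]$; in fact, for $r\geq\varrho_{\rm alg}$, both should be \emph{finite} (this is where we use that $F_\qq^{\rm BDP}$ has $T$-order exactly $\varrho_{\rm alg}$, so $J^r X_\qq$ has no $\Lambda/J^{\geq 1}$-summands beyond the finite part). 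The subtlety is that the control theorem cokernels in Proposition~\ref{prop:control} need not vanish, only be bounded; so one must argue that the relevant finite orders match \emph{exactly} up to a $p$-adic unit, using Proposition~\ref{prop:greenberg} (no proper finite-index submodules) to pin down ${\rm Sel}_\qq(K,S_\infty)$ and a careful bookkeeping of the $T^rX_\qq[T]$ term.

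I expect the main obstacle to be precisely this last point: showing that the denominator $\#(T^rX_\qq[T])$ exactly cancels the ``error'' between $J^r{\rm Sel}_\qq(K,S_\infty)[J]$ and $\underrightarrow{\mathfrak{S}}_\qq^{(r+1)}$, so that the quotient is $\#\bigl(\underrightarrow{\mathfrak{S}}_\qq^{(r+1)}\bigr)$ on the nose (up to a unit). Concretely, I anticipate an argument along the following lines: for $r$ large, $T^rX_\qq$ is a finite module (being $\Lambda$-torsion with the $\Lambda/J$-part already killed), so its characteristic-ideal-type invariant under multiplication by $T$ is computed termwise; dualising back, the finite group $\underrightarrow{\mathfrak{S}}_\qq^{(r+1)}$ sits in an exact sequence with $T^rX_\qq/T^{r+1}X_\qq$ and $T^rX_\qq[T]$ that exhibits $\#\bigl(\underrightarrow{\mathfrak{S}}_\qq^{(r+1)}\bigr)$ as exactly $h(\beta_{X_\qq}^{(r)})$. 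The cleanest route is probably to note that for $r\geq\varrho_{\rm alg}$ the module $T^rX_\qq$ is finite, hence $\#(T^rX_\qq/T^{r+1}X_\qq)\cdot\#(T^{r+1}X_\qq[T])/\#(T^rX_\qq[T])\cdot\#(\ldots)$ telescopes, and the stable value is $\#\bigl(\underrightarrow{\mathfrak{S}}_\qq^{(\infty)}\bigr)$-free — but since we want the statement for \emph{all} $r\geq\varrho_{\rm alg}$, one leverages that $\underrightarrow{\mathfrak{S}}_\qq^{(r+1)}$ stabilises appropriately together with Lemma~\ref{lem:well-known}'s assertion that $h(\beta_{X_\qq}^{(r)})$ is independent of $r$ (up to a unit) for $r\geq\varrho_{\rm alg}$.
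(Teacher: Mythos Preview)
Your framework is right—apply Lemma~\ref{lem:well-known} and dualise—but you are overcomplicating the endgame because you have misread two inputs, and as a result the ``cancellation'' you anticipate between the denominator $\#(T^rX_\qq[T])$ and a supposed discrepancy between $J^r{\rm Sel}_\qq(K,S_\infty)[J]$ and $\underrightarrow{\mathfrak{S}}_\qq^{(r+1)}$ is fictitious: both terms are trivial.

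First, Corollary~\ref{cor:control} is an \emph{equality} $\varinjlim_k\mathfrak{Sel}_\qq(K,S_\infty^{(k)})={\rm Sel}_\qq(K,S_\infty)$, not merely a finite-index inclusion (Proposition~\ref{prop:greenberg} has already been used to upgrade the inclusion of Proposition~\ref{prop:control}). Since direct limits are exact, applying $J^r(-)[J]$ commutes with $\varinjlim_k$, and so $\underrightarrow{\mathfrak{S}}_\qq^{(r+1)}=\varinjlim_k\mathfrak{S}_{\qq,k}^{(r+1)}\simeq J^r{\rm Sel}_\qq(K,S_\infty)[J]$ on the nose. Via $\phi_{r+1,\gamma}$ and Pontryagin duality this is exactly $(J^rX_\qq/J^{r+1}X_\qq)^\vee$, so the numerator already equals $\#\bigl(\underrightarrow{\mathfrak{S}}_\qq^{(r+1)}\bigr)$.

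Second, the denominator is $1$: for $r\geq\varrho_{\rm alg}$ the submodule $J^rX_\qq[J]\subset X_\qq$ is \emph{finite} (look at the pseudo-isomorphism $X_\qq\sim\bigoplus_i(\Lambda/J^i)^{e_i}\oplus M$ from Corollary~\ref{cor:der-ht}; since $r\geq\varrho_{\rm alg}=\sum_i ie_i\geq i_0$, multiplication by $J^r$ kills every $\Lambda/J^i$-summand, and $M[J]$ is finite because ${\rm char}_\Lambda(M)$ is prime to $J$). But Proposition~\ref{prop:greenberg} says ${\rm Sel}_\qq(K,S_\infty)$ has no proper $\Lambda$-submodule of finite index, which dualises to: $X_\qq$ has no nonzero finite $\Lambda$-submodule. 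Hence $J^rX_\qq[J]=0$. This is exactly how the paper argues; there is no telescoping or error-cancellation to perform.
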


\begin{proof}
After Lemma~\ref{lem:well-known}, it suffices to show that we have
\begin{align}\label{eq:reduction}
\#(J^rX_\qq/J^{r+1}X_\qq)=\#\bigl(\underrightarrow{\mathfrak{S}}_{\qq}^{(r+1)}\bigr),\qquad J^rX_\qq[J]=0.
\end{align}
for any $r\geq{\varrho_{\rm alg}}$. Consider the exact sequence
\[
0\rightarrow\frac{\mathfrak{Sel}_\qq(K,S_\infty)[J^{r+1}]}{\mathfrak{Sel}_\qq(K,S_\infty)[J^r]}\rightarrow\frac{\mathfrak{Sel}_\qq(K,S_\infty)}{\mathfrak{Sel}_\qq(K,S_\infty)[J^r]}\rightarrow\frac{\mathfrak{Sel}_\qq(K,S_\infty)}{\mathfrak{Sel}_\qq(K,S_\infty)[J^{r+1}]}\rightarrow 0,
\]
Taking Pontryagin duals and noting that 
\[
\bigl(\mathfrak{Sel}_\qq(K,S_\infty)/\mathfrak{Sel}_\qq(K,S_\infty)[J^r]\bigr)^\vee=\bigl(J^r\mathfrak{Sel}_\qq(K,S_\infty)\bigr)^\vee=J^rX_\qq,
\] 
using Corollary~\ref{cor:control} for the last equality, we obtain the exact sequence
\begin{equation}\label{eq:duals}
0\rightarrow J^{r+1}X_\qq\rightarrow J^rX_\qq\rightarrow{\rm Hom}_{\Z_p}\biggl(\frac{\mathfrak{Sel}_\qq(K,S_\infty)[J^{r+1}]}{\mathfrak{Sel}_\qq(K,S_\infty)[J^r]},\Q_p/\Z_p\biggr)\rightarrow 0.
\end{equation}
Via 
the maps $\phi_{r+1,\gamma}$ in \eqref{eq:phi-gamma} given by multiplication by $(\gamma-1)^{r}$,  
the last term in this sequence is identified with the Pontryagin dual of 
$J^r\mathfrak{Sel}_\qq(K,S_\infty)[J]\simeq\underrightarrow{\mathfrak{S}}_\qq^{(r+1)}$, and so the first equality in \eqref{eq:reduction} (for any $r\geq 0$) follows from \eqref{eq:duals}.

On the other hand, since by Proposition~\ref{prop:Lambda-tors} we know that $X_\qq$ is $\Lambda$-torsion,  Corollary~\ref{cor:der-ht}(ii) and our running hypothesis \eqref{eq:h0-intro} imply that the filtration \eqref{eq:fil-T} satisfies $\underleftarrow{\mathfrak{S}}^{(i)}_{\qq}=0$ for $i\gg 0$. Let 
\[
i_0=\max\{i\geq 1\,\colon\,\underleftarrow{\mathfrak{S}}^{(i)}_\qq\neq 0\}.
\]
%
Then by Corollary~\ref{cor:der-ht} we may fix a $\Lambda$-module pseudo-isomorphism
\begin{equation}\label{eq:pseudo-iso}
X_\qq\,\sim(\Lambda/J)^{e_1}\oplus(\Lambda/J^2)^{e_2}\oplus\cdots\oplus(\Lambda/J^{i_0})^{e_{i_0}}\oplus M
\end{equation}
with $M$ a torsion $\Lambda$-module with characteristic ideal prime to $J$, and therefore $\#(M[J])<\infty$. In particular
\begin{equation}\label{eq:varrho}
{\varrho_{\rm alg}}=e_1+2e_2+\cdots+i_0e_{i_0},
\end{equation}
which shows that $r\geq i_0$ for any $r\geq{\varrho_{\rm alg}}$. 
From \eqref{eq:pseudo-iso}, we thus see that for any $r\geq{\varrho_{\rm alg}}$ (in fact, $r\geq i_0$ suffices), the $\Lambda$-submodule $J^rX_\qq[J]$ of $X_\qq$ is finite, and so by Proposition~\ref{prop:greenberg}  the second equality in \eqref{eq:reduction} follows, whence the result.
\end{proof}

\subsection{Step 2: Derived $p$-adic regulator}\label{subsec:step2} 


We begin with a basic algebraic lemma.


\begin{lem}\label{lem:alg}
Let $0\rightarrow A\xrightarrow{h}B\rightarrow C\rightarrow 0$ be an exact sequence of finitely generated modules over $\Z_p$. 
Then
\[
\#(A_{\rm tor})\cdot\#(C_{\rm tor})\,\sim_p\,{\rm det}^*h\cdot\#(B_{\rm tor}),
\]
where ${\rm det}^*h$ is the product of the non-zero entries in the Smith normal form of $A_{/{\rm tor}}\xrightarrow{h}B_{/{\rm tor}}$.
\end{lem}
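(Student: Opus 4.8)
The plan is to compare $h$ with its restriction $h_{\rm tor}\colon A_{\rm tor}\to B_{\rm tor}$ to torsion submodules and with the induced map $\bar h\colon A_{/{\rm tor}}\to B_{/{\rm tor}}$ via the snake lemma, and then to isolate the torsion contribution in the resulting sequence of cokernels.

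First I would check that $\bar h$ is injective: if $h(a)\in B_{\rm tor}$ then $p^nh(a)=0$ for some $n$, so $h(p^na)=0$, so $p^na=0$ by injectivity of $h$, whence $a\in A_{\rm tor}$. Thus all three vertical maps $h_{\rm tor}$, $h$, $\bar h$ in the diagram with exact rows $0\to A_{\rm tor}\to A\to A_{/{\rm tor}}\to 0$ and $0\to B_{\rm tor}\to B\to B_{/{\rm tor}}\to 0$ are injective, so the snake lemma collapses to the exact sequence
\[
0\to{\rm coker}(h_{\rm tor})\to{\rm coker}(h)\to{\rm coker}(\bar h)\to 0.
\]
Here ${\rm coker}(h)=C$, the group $B_{\rm tor}$ is finite with $\#(B_{\rm tor})=\#(A_{\rm tor})\cdot\#({\rm coker}(h_{\rm tor}))$, and ${\rm coker}(h_{\rm tor})$ is a finite subgroup of $C$.

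Next I would extract the torsion. For any short exact sequence $0\to F\to C\to G\to 0$ of $\Z_p$-modules with $F$ finite one has $F\subseteq C_{\rm tor}$ and $G_{\rm tor}=C_{\rm tor}/F$, hence $\#(C_{\rm tor})=\#F\cdot\#(G_{\rm tor})$; applying this to the displayed sequence gives $\#(C_{\rm tor})=\#({\rm coker}(h_{\rm tor}))\cdot\#(({\rm coker}(\bar h))_{\rm tor})$. Finally, picking a Smith normal form for $\bar h$ with nonzero elementary divisors $d_1,\dots,d_a\in\Z_p$, we have ${\rm coker}(\bar h)\simeq\Z_p^{\,s}\oplus\bigoplus_{i=1}^a\Z_p/d_i\Z_p$ for some $s\geq 0$, so $\#(({\rm coker}(\bar h))_{\rm tor})=p^{\sum_i v_p(d_i)}$, which equals ${\rm det}^*h=\prod_id_i$ up to a $p$-adic unit. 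Combining these identities with $\#(B_{\rm tor})=\#(A_{\rm tor})\cdot\#({\rm coker}(h_{\rm tor}))$ yields $\#(A_{\rm tor})\cdot\#(C_{\rm tor})\sim_p{\rm det}^*h\cdot\#(B_{\rm tor})$.

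There is no serious obstacle here. The only points requiring attention are that the snake lemma genuinely degenerates to a short exact sequence of cokernels, which uses the injectivity of all three vertical maps and in particular of $\bar h$ as established above, and that the last comparison holds only modulo $\Z_p^\times$, since $\#(({\rm coker}(\bar h))_{\rm tor})$ records only the $p$-adic valuation of ${\rm det}^*h$.
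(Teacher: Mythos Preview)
Your proof is correct and takes essentially the same approach as the paper. The paper's proof simply records the two key relations $\#(C_{\rm tor})=\#\bigl(B_{/{\rm tor}}/h(A_{/{\rm tor}})\bigr)_{\rm tor}\cdot\#(B_{\rm tor}/h(A_{\rm tor}))$ and $\#\bigl(B_{/{\rm tor}}/h(A_{/{\rm tor}})\bigr)_{\rm tor}\sim_p{\rm det}^*h$ without further justification; your snake-lemma argument is exactly what establishes the first of these, and your Smith normal form computation is the second.
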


\begin{proof}
This follows upon noting the relations 
$\#(C_{\rm tor})=\#\bigl(B_{/{\rm tor}}/h(A_{/{\rm tor}})\bigr)_{\rm tor}\cdot\#(B_{\rm tor}/h(A_{\rm tor}))$ and $\#\bigl(B_{/{\rm tor}}/h(A_{/{\rm tor}})\bigr)_{\rm tor}\sim_p{\rm det}^*h$. 
\end{proof}

For $\qq\in\{\pp,\ppbar\}$, put 
\[
\mathfrak{Sel}_\qq(K,T):=\underleftarrow{\mathfrak{S}}_\qq^{(1)}=\varprojlim_k\mathfrak{Sel}_\qq(K,S_\infty^{(k)})[J],\qquad\mathfrak{Sel}_{\qq}(K,W):=\underrightarrow{\mathfrak{S}}_{\qq}^{(1)}.
\]
In particular, we have seen in Corollary~\ref{cor:control-mazur} that $\mathfrak{Sel}_\qq(K,T)$ is contained in the $\qq$-strict Selmer groups ${\rm Sel}_\qq(K,T)$ with finite index.

\begin{prop}\label{prop:recursion}
For any $r\geq\varrho_{\rm alg}:={\rm ord}_JF_{\ppbar}^{\rm BDP}$, we have
\[
\#\bigl(\underrightarrow{\mathfrak{S}}_{\ppbar}^{(r+1)}\bigr)\,\sim_p\,{\rm Reg}_{\pp,{\rm der},\gamma}\cdot\bigl[{\rm Sel}_\pp(K,T)\colon\mathfrak{Sel}_\pp(K,T)\bigr]\cdot
\#\bigl(\mathfrak{Sel}_{\ppbar}(K,W)_{/{\rm div}}\bigr).
\]
\end{prop}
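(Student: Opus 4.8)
The plan is to prove the formula by an iterative application of Lemma~\ref{lem:alg} to the exact sequences \eqref{eq:4-term-refined}, which I would first establish by passing to the inverse (resp. direct) limit in $k$ in the $\cO_k$-valued derived pairings of Theorem~\ref{thm:howard-ht-pk}, using \eqref{eq:sp-iso} and the control results of $\S\ref{subsec:hp-der}$ along the way; the crucial point here is that the pairing induced by $h_{\pp,k}^{(i)}$ on the graded pieces $\mathfrak{S}_{\pp,k}^{(i)}/\mathfrak{S}_{\pp,k}^{(i+1)}\times\mathfrak{S}_{\ppbar,k}^{(i)}/\mathfrak{S}_{\ppbar,k}^{(i+1)}$ is \emph{perfect} over $\cO_k$, so that in the limit one gets the four-term exact sequence
\[
0\rightarrow\underleftarrow{\mathfrak{S}}_{\pp}^{(i+1)}\rightarrow\underleftarrow{\mathfrak{S}}_{\pp}^{(i)}\xrightarrow{h_\pp^{(i)}}{\rm Hom}_{\Z_p}\bigl(\underrightarrow{\mathfrak{S}}_{\ppbar}^{(i)},\Q_p/\Z_p\bigr)\rightarrow{\rm Hom}_{\Z_p}\bigl(\underrightarrow{\mathfrak{S}}_{\ppbar}^{(i+1)},\Q_p/\Z_p\bigr)\rightarrow 0
\]
for every $i\geq 1$. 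Here each $\underleftarrow{\mathfrak{S}}_{\pp}^{(i)}$ is a finitely generated free $\Z_p$-module (being of finite index in the free module ${\rm Sel}_\pp(K,T)$, by Corollary~\ref{cor:control-mazur} and \eqref{eq:sp-iso}), each $\underrightarrow{\mathfrak{S}}_{\ppbar}^{(i)}$ is cofinitely generated over $\Z_p$, and, by Corollary~\ref{cor:der-ht}(ii) together with Proposition~\ref{prop:Lambda-tors}, the filtration \eqref{eq:fil-T} is eventually zero; in particular $\underrightarrow{\mathfrak{S}}_{\ppbar}^{(r+1)}$ is finite whenever $r\geq\varrho_{\rm alg}$.

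Next I would run the iteration. Writing $N_i={\rm Hom}_{\Z_p}(\underrightarrow{\mathfrak{S}}_{\ppbar}^{(i)},\Q_p/\Z_p)$ and $I_i={\rm im}(h_\pp^{(i)})$, the four-term sequence splits as $0\to\underleftarrow{\mathfrak{S}}_{\pp}^{(i+1)}\to\underleftarrow{\mathfrak{S}}_{\pp}^{(i)}\to I_i\to 0$ together with $0\to I_i\to N_i\to N_{i+1}\to 0$, and $I_i\simeq\underleftarrow{\mathfrak{S}}_{\pp}^{(i)}/\underleftarrow{\mathfrak{S}}_{\pp}^{(i+1)}$. Applying Lemma~\ref{lem:alg} to the second short exact sequence gives
\[
\#\bigl((I_i)_{\rm tor}\bigr)\cdot\#\bigl((N_{i+1})_{\rm tor}\bigr)\,\sim_p\,{\rm det}^*\!\bigl(I_i\hookrightarrow N_i\bigr)\cdot\#\bigl((N_i)_{\rm tor}\bigr).
\]
Telescoping this over $i=1,\dots,r$ and using $\#((N_{r+1})_{\rm tor})=\#(\underrightarrow{\mathfrak{S}}_{\ppbar}^{(r+1)})$ (as $N_{r+1}$ is finite) and $\#((N_1)_{\rm tor})=\#\bigl((\underrightarrow{\mathfrak{S}}_{\ppbar}^{(1)})_{/{\rm div}}\bigr)=\#\bigl(\mathfrak{Sel}_{\ppbar}(K,W)_{/{\rm div}}\bigr)$, I obtain
\[
\#\bigl(\underrightarrow{\mathfrak{S}}_{\ppbar}^{(r+1)}\bigr)\,\sim_p\,\#\bigl(\mathfrak{Sel}_{\ppbar}(K,W)_{/{\rm div}}\bigr)\cdot\prod_{i\geq 1}\frac{{\rm det}^*(I_i\hookrightarrow N_i)}{\#\bigl((I_i)_{\rm tor}\bigr)},
\]
where the (finite) product is really over $i\leq i_0:=\max\{i:\underleftarrow{\mathfrak{S}}_\pp^{(i)}\neq 0\}$.

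It then remains to identify $\prod_{i}{\rm det}^*(I_i\hookrightarrow N_i)/\#((I_i)_{\rm tor})$ with ${\rm Reg}_{\pp,{\rm der},\gamma}\cdot[{\rm Sel}_\pp(K,T):\mathfrak{Sel}_\pp(K,T)]$; this is the core of the argument. Via the isomorphism $I_i\simeq\underleftarrow{\mathfrak{S}}_\pp^{(i)}/\underleftarrow{\mathfrak{S}}_\pp^{(i+1)}$ the denominator is the order of the torsion of that quotient, and ${\rm det}^*(I_i\hookrightarrow N_i)$ is the determinant of the integral derived height pairing \eqref{eq:ht-Zp} restricted to the free quotients $(\underleftarrow{\mathfrak{S}}_\pp^{(i)}/\underleftarrow{\mathfrak{S}}_\pp^{(i+1)})_{/{\rm tor}}$ and $(\underleftarrow{\mathfrak{S}}_{\ppbar}^{(i)}/\underleftarrow{\mathfrak{S}}_{\ppbar}^{(i+1)})_{/{\rm tor}}$, up to the index of the latter in its $p$-saturation inside ${\rm Sel}_{\ppbar}(K,T)$. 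Comparing with Definition~\ref{def:der-Reg}, where the partial regulator $R^{(i)}_{\pp,\gamma}$ is computed instead on the $p$-saturated graded pieces $\underleftarrow{\overline{\mathfrak{S}}}_\qq^{(i)}/\underleftarrow{\overline{\mathfrak{S}}}_\qq^{(i+1)}$ using the $\Q_p$-valued pairing \eqref{eq:ht-Qp}, the difference is a product of $p$-saturation indices on the $\pp$- and $\ppbar$-sides. Using the complex-conjugation isomorphism ${\rm Sel}_\pp(K,S_\infty)\simeq{\rm Sel}_{\ppbar}(K,S_\infty)$ to match the $\ppbar$-side indices with $\pp$-side ones, and the telescoping identity
\[
[{\rm Sel}_\pp(K,T):\mathfrak{Sel}_\pp(K,T)]\,=\,\prod_{i\geq 1}\frac{\bigl[\,\underleftarrow{\overline{\mathfrak{S}}}_\pp^{(i)}/\underleftarrow{\overline{\mathfrak{S}}}_\pp^{(i+1)}:{\rm im}\bigl(\underleftarrow{\mathfrak{S}}_\pp^{(i)}/\underleftarrow{\mathfrak{S}}_\pp^{(i+1)}\bigr)\,\bigr]}{\#\bigl((\underleftarrow{\mathfrak{S}}_\pp^{(i)}/\underleftarrow{\mathfrak{S}}_\pp^{(i+1)})_{\rm tor}\bigr)}
\]
(obtained by telescoping the inclusions $\underleftarrow{\mathfrak{S}}_\pp^{(\bullet)}\subseteq\underleftarrow{\overline{\mathfrak{S}}}_\pp^{(\bullet)}$ down the filtration), all the correction factors should assemble into the asserted identity.

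The main obstacle I anticipate is precisely this last bookkeeping: checking that the $\ppbar$-side $p$-saturation and divisibility corrections cancel against the $\pp$-side ones via complex conjugation, and that what survives collapses \emph{exactly} to $[{\rm Sel}_\pp(K,T):\mathfrak{Sel}_\pp(K,T)]$ — all while keeping careful track of the distinction between $p$-saturation taken inside $\underleftarrow{\mathfrak{S}}_\pp^{(i)}$ and inside ${\rm Sel}_\pp(K,T)$. Establishing the perfectness over $\cO_k$ of the pairing on graded pieces (hence the exactness of \eqref{eq:4-term-refined}) is a secondary technical point that also requires care with the limits in $k$.
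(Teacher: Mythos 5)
Your proposal follows the same overall strategy as the paper's proof: establish the four-term exact sequence from Theorem~\ref{thm:howard-ht-pk} by passing to the limit (and your observation that nondegeneracy over $\cO_k$ on the graded pieces is automatically perfectness, since $\cO_k$ is self-injective, is exactly the point the paper leaves implicit), apply Lemma~\ref{lem:alg}, telescope, and identify the resulting product with ${\rm Reg}_{\pp,{\rm der},\gamma}\cdot[{\rm Sel}_\pp(K,T):\mathfrak{Sel}_\pp(K,T)]$. The small organizational differences are instructive. Where you apply Lemma~\ref{lem:alg} once and then prove the telescoping identity for $[{\rm Sel}_\pp(K,T):\mathfrak{Sel}_\pp(K,T)]$ directly, the paper instead applies Lemma~\ref{lem:alg} a second time, to the tautological short exact sequence
\[
0\rightarrow\underleftarrow{\mathfrak{S}}_{\pp}^{(i)}/\underleftarrow{\mathfrak{S}}_{\pp}^{(i+1)}\xrightarrow{\beta^{(i)}}{\rm Sel}_\pp(K,T)/\underleftarrow{\mathfrak{S}}_{\pp}^{(i+1)}\rightarrow{\rm Sel}_\pp(K,T)/\underleftarrow{\mathfrak{S}}_{\pp}^{(i)}\rightarrow 0,
\]
which produces the same telescoping after identifying $\det^*\beta^{(i)}=\bigl[\underleftarrow{\overline{\mathfrak{S}}}_\pp^{(i)}/\underleftarrow{\overline{\mathfrak{S}}}_\pp^{(i+1)}:(\underleftarrow{\mathfrak{S}}_\pp^{(i)}/\underleftarrow{\mathfrak{S}}_\pp^{(i+1)})_{/{\rm tor}}\bigr]$; your more direct filtration argument is a clean substitute.

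The one place your sketch is genuinely off target is the mechanism of the final cancellation. You describe $\det^*(I_i\hookrightarrow N_i)$ as the discriminant on the \emph{free} (unsaturated) graded pieces on both sides, corrected by a $\ppbar$-side saturation index, and propose to use complex conjugation to turn $\ppbar$-side indices into $\pp$-side ones so that they ``cancel against the $\pp$-side ones''. In fact no $\ppbar$-side correction arises at all in the paper's argument: by \eqref{eq:saturation}--\eqref{eq:dual-dual}, the free quotient of $N_i={\rm Hom}_{\Z_p}(\underrightarrow{\mathfrak{S}}_{\ppbar}^{(i)},\Q_p/\Z_p)$ is canonically ${\rm Hom}_{\Z_p}(\underleftarrow{\overline{\mathfrak{S}}}_{\ppbar}^{(i)},\Z_p)$ — the \emph{saturated} lattice on the $\ppbar$-side — so $\det^*\alpha^{(i)}$ is already the discriminant of $h_\pp^{(i)}$ on $(\underleftarrow{\mathfrak{S}}_\pp^{(i)}/\underleftarrow{\mathfrak{S}}_\pp^{(i+1)})_{/{\rm tor}}\times\underleftarrow{\overline{\mathfrak{S}}}_{\ppbar}^{(i)}/\underleftarrow{\overline{\mathfrak{S}}}_{\ppbar}^{(i+1)}$, and the only correction relative to $R_{\pp,\gamma}^{(i)}$ is the $\pp$-side index $\bigl[\underleftarrow{\overline{\mathfrak{S}}}_\pp^{(i)}/\underleftarrow{\overline{\mathfrak{S}}}_\pp^{(i+1)}:(\underleftarrow{\mathfrak{S}}_\pp^{(i)}/\underleftarrow{\mathfrak{S}}_\pp^{(i+1)})_{/{\rm tor}}\bigr]$, which is exactly what your telescoping identity consumes. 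If you insist on working with the unsaturated $\ppbar$-lattice, the $\ppbar$-saturation index does appear, but it cancels \emph{internally} between the change-of-lattice factor for the discriminant and the computation of $\det^*\alpha^{(i)}$ itself — not against anything on the $\pp$-side, and without any appeal to complex conjugation. Your plan would still reach the correct formula with a careful computation (the complex-conjugation identification of indices is true, just unnecessary), but the stated picture of a $\pp$-versus-$\ppbar$ cancellation is a misreading of where the corrections actually live, and this is precisely the spot where a blind completion would be most likely to go astray.
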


\begin{proof}
Passing to the limit in $k$, the $\cO_k$-valued $i$-th derived height pairings $h_{\pp,k}^{(i)}$ of Theorem~\ref{thm:howard-ht-pk} induce a pairing
\[
     \underleftarrow{\mathfrak{S}}_{\pp}^{(i)}\times\underrightarrow{\mathfrak{S}}_{\ppbar}^{(i)}\rightarrow\Q_p/\Z_p.
\]
By the descriptions of the kernels of $h_{\pp,k}^{(i)}$ in Theorem~\ref{thm:howard-ht-pk}, this gives rise to the exact sequence
\begin{equation*}
0\rightarrow\underleftarrow{\mathfrak{S}}_{\pp}^{(i)}/\underleftarrow{\mathfrak{S}}_{\pp}^{(i+1)}\xrightarrow{\alpha^{(i)}}{\rm Hom}_{\Z_p}\bigl(\underrightarrow{\mathfrak{S}}_{\ppbar}^{(i)},\Q_p/\Z_p\bigr)\rightarrow{\rm Hom}_{\Z_p}\bigl(\underrightarrow{\mathfrak{S}}_{\ppbar}^{(i+1)},\Q_p/\Z_p\bigr)\rightarrow 0.
\end{equation*}
We also have the tautological exact sequence
\[
0\rightarrow\underleftarrow{\mathfrak{S}}_{\pp}^{(i)}/\underleftarrow{\mathfrak{S}}_{\pp}^{(i+1)}\xrightarrow{\beta^{(i)}}{\rm Sel}_\pp(K,T)/\underleftarrow{\mathfrak{S}}_{\pp}^{(i+1)}\rightarrow{\rm Sel}_\pp(K,T)/\underleftarrow{\mathfrak{S}}_{\pp}^{(i)}\rightarrow 0.
\]
Applying Lemma~\ref{lem:alg} to the above two exact sequences gives
\begin{align*}
\frac{\#\bigl((\underrightarrow{\mathfrak{S}}_{\ppbar}^{(i+1)})_{/{\rm div}}\bigr)}{\#\bigl((\underrightarrow{\mathfrak{S}}_{\ppbar}^{(i)})_{/{\rm div}}\bigr)}
&\sim_p
\frac{\det^*\alpha^{(i)}}{\#\bigl((\underleftarrow{\mathfrak{S}}_{\pp}^{(i)}/\underleftarrow{\mathfrak{S}}_{\pp}^{(i+1)})_{\rm tor}\bigr)},\\
{\#\bigl((\underleftarrow{\mathfrak{S}}_{\pp}^{(i)}/\underleftarrow{\mathfrak{S}}_{\pp}^{(i+1)})_{\rm tor}\bigr)}
&\sim_p
\frac{\det^*\beta^{(i)}\cdot \#\bigl({\rm Sel}_\pp(K,T)/\underleftarrow{\mathfrak{S}}_{\pp}^{(i+1)}\bigr)_{\rm tor}}{\#\bigl({\rm Sel}_\pp(K,T)/\underleftarrow{\mathfrak{S}}_{\pp}^{(i)}\bigr)_{\rm tor}},
\end{align*}
from which we get
\begin{equation}\label{eq:rel-i}
\frac{\#\bigl((\underrightarrow{\mathfrak{S}}_{\ppbar}^{(i+1)})_{/{\rm div}}\bigr)}{\#\bigl((\underrightarrow{\mathfrak{S}}_{\ppbar}^{(i)})_{/{\rm div}}\bigr)}
\sim_p
\frac{\det^*\alpha^{(i)}}{\det^*\beta^{(i)}}\cdot \frac{\#\bigl({\rm Sel}_\pp(K,T)/\underleftarrow{\mathfrak{S}}_{\pp}^{(i)}\bigr)_{\rm tor}}{\#\bigl({\rm Sel}_\pp(K,T)/\underleftarrow{\mathfrak{S}}_{\pp}^{(i+1)}\bigr)_{\rm tor}}.
\end{equation}
For any $r\geq\varrho_{\rm alg}$, using that $\#(\underrightarrow{\mathfrak{S}}_{\ppbar}^{(r+1)})<\infty$ (and so $\underleftarrow{\mathfrak{S}}_{\pp}^{(r+1)}=0$) and taking the product of \eqref{eq:rel-i} for $i=1,\dots,r$, we arrive at
\begin{equation}\label{eq:alphabeta}
\begin{aligned}
   \#\bigl(\underrightarrow{\mathfrak{S}}_{\ppbar}^{(r+1)}\bigr)
   &= \#\bigl((\underrightarrow{\mathfrak{S}}_{\ppbar}^{(r+1)})_{/{\rm div}}\bigr)\\
   &\sim_p \prod_{i=1}^{\varrho_{\rm alg}} \frac{\det^*\alpha^{(i)}}{\det^*\beta^{(i)}}
   \cdot  \frac{\#\bigl({\rm Sel}_\pp(K,T)/\underleftarrow{\mathfrak{S}}_{\pp}^{(1)}\bigr)_{\rm tor}}{\#\bigl({\rm Sel}_\pp(K,T)/\underleftarrow{\mathfrak{S}}_{\pp}^{(r+1)}\bigr)_{\rm tor}}
   \cdot \#\bigl((\underrightarrow{\mathfrak{S}}_{\ppbar}^{(1)})_{/{\rm div}}\bigr)\\
   &\sim_p \prod_{i=1}^{\varrho_{\rm alg}} \frac{\det^*\alpha^{(i)}}{\det^*\beta^{(i)}}
   \cdot\bigl[{\rm Sel}_\pp(K,T)\colon\mathfrak{Sel}_\pp(K,T)\bigr]\cdot \#\bigl((\underrightarrow{\mathfrak{S}}_{\ppbar}^{(1)})_{/{\rm div}}\bigr).
\end{aligned}
\end{equation}
Thus we are reduced to showing that 
\begin{equation}\label{eq:ab-reg}
\frac{\det^*\alpha^{(i)}}{\det^*\beta^{(i)}}=R^{(i)}_{\pp,\gamma},
\end{equation}
where $R^{(i)}_{\pp,\gamma}$ is the partial regulator in Definition~\ref{def:der-Reg}. 

By Corollary~\ref{cor:control}, we note that 
Proposition~\ref{prop:control-mazur} 
amounts to the statement that ${\rm Sel}_{\ppbar}(K,W)$ is contained in $\underrightarrow{\mathfrak{S}}_{\ppbar}^{(1)}\simeq\mathfrak{Sel}_{\ppbar}(K,S_\infty)[J]$ with finite index. Therefore, $(\underrightarrow{\mathfrak{S}}_{\ppbar}^{(1)})_{\rm div}={\rm Sel}_{\ppbar}(K,W)_{\rm div}$.
It follows from the definition of ${\rm Sel}_{\ppbar}(K,T)$ and ${\rm Sel}_{\ppbar}(K,W)$ by propagating  the local conditions $\rH^1_{\ppbar}(K_w,V)$ that 
\begin{equation}\label{eq:div}
{\rm Sel}_{\ppbar}(K,T)\otimes_{\Z_p}\Q_p/\Z_p={\rm Sel}_{\ppbar}(K,W)_{\rm div}.    
\end{equation}
From this, it follows that for every $i\geq 1$ we have 
\begin{equation}\label{eq:saturation}
\underleftarrow{\overline{\mathfrak{S}}}_{\ppbar}^{(i)}\otimes\Q_p/\Z_p\simeq(\underrightarrow{\mathfrak{S}}_{\ppbar}^{(i)})_{\rm div}.
\end{equation}
(Indeed, because $\underleftarrow{\overline{\mathfrak{S}}}_{\ppbar}^{(i)}$ is defined as the saturation of $\underleftarrow{\mathfrak{S}}_{\ppbar}^{(i)}$ inside  ${\rm Sel}_{\ppbar}(K,T)$, the composite map
\[
\underleftarrow{\overline{\mathfrak{S}}}_{\ppbar}^{(i)}\otimes\Q_p/\Z_p\rightarrow{\rm Sel}_{\ppbar}(K,T)\otimes\Q_p/\Z_p\simeq(\underrightarrow{\mathfrak{S}}_{\ppbar}^{(1)})_{\rm div}
\]
is injective with image $\underrightarrow{\mathfrak{S}}_{\ppbar}^{(i)})_{\rm div}$.) 
From \eqref{eq:saturation} we deduce  
\begin{equation}\label{eq:dual-dual}
{\rm Hom}_{\Z_p}\bigl((\underrightarrow{\mathfrak{S}}_{\ppbar}^{(i)})_{\rm div},\Q_p/\Z_p\bigr)\simeq{\rm Hom}_{\Z_p}(\underleftarrow{\overline{\mathfrak{S}}}_{\ppbar}^{(i)},\Z_p),
\end{equation}
for all $i\geq 1$. Note that the composition
\[
   \underleftarrow{\mathfrak{S}}_{\pp}^{(i)}/\underleftarrow{\mathfrak{S}}_{\pp}^{(i+1)}\xrightarrow{\alpha^{(i)}}{\rm Hom}_{\Z_p}\bigl(\underrightarrow{\mathfrak{S}}_{\ppbar}^{(i)},\Q_p/\Z_p\bigr)\rightarrow{\rm Hom}_{\Z_p}\bigl((\underrightarrow{\mathfrak{S}}_{\ppbar}^{(i)})_{\rm div},\Q_p/\Z_p\bigr)\simeq{\rm Hom}_{\Z_p}(\underleftarrow{\overline{\mathfrak{S}}}_{\ppbar}^{(i)},\Z_p)
\]
is induced by the pairing $h^{(i)}_\pp$. Hence,
\begin{align*}
   \det\nolimits^*\alpha^{(i)}&={\rm disc}\Bigl(h_\pp^{(i)}\vert_{\bigl(\underleftarrow{\mathfrak{S}}_{\pp}^{(i)}/\underleftarrow{\mathfrak{S}}_{\pp}^{(i+1)}\bigr)_{/{\rm tor}}\times\underleftarrow{\overline{\mathfrak{S}}}_{\ppbar}^{(i)}/\underleftarrow{\overline{\mathfrak{S}}}_{\ppbar}^{(i+1)}}\Bigr)\\
&=R_{\pp,\gamma}^{(i)}\cdot\biggl[\underleftarrow{\overline{\mathfrak{S}}}_{\pp}^{(i)}/\underleftarrow{\overline{\mathfrak{S}}}_{\pp}^{(i+1)}\colon\bigl(\underleftarrow{\mathfrak{S}}_{\pp}^{(i)}/\underleftarrow{\mathfrak{S}}_{\pp}^{(i+1)}\bigr)_{/{\rm tor}}\biggr]\\
&=R_{\pp,\gamma}^{(i)}\cdot\det\nolimits^*\beta^{(i)}.
\end{align*}
This shows \eqref{eq:ab-reg}, which together with \eqref{eq:alphabeta} yields the result.
\end{proof}


\subsection{Step 3: Local universal norms}\label{subsec:step3}

Note that by definition, we have
\[
\mathfrak{Sel}_\qq(K,T)=\varprojlim_k\mathfrak{Sel}_\qq(K,S^{(k)}),\qquad \mathfrak{Sel}_{\qq}(K,W)=\varinjlim_k\mathfrak{Sel}_{\qq}(K,S^{(k)}),
\]
where $\mathfrak{Sel}_{\qq}(K,S^{(k)})$ is as in Definition~\ref{def:Sel-k}.
 



\begin{prop}\label{prop:self-dual}
For 
every $w\in\Sigma_f$, the local conditions $\rH^1_{\mathcal{F}_\qq}(K_w,S^{(k)})$ and $\rH^1_{\mathcal{F}_{\overline{\qq}}}(K_w,S^{(k)})$ are  exact orthogonal complements under the local Tate pairing
\begin{equation}\label{eq:local-Tate-Sk}
\rH^1(K_w,S^{(k)})\times\rH^1(K_w,S^{(k)})\rightarrow\cO_k
\end{equation}
induced by the Weil pairing $e:S^{(k)}\times S^{(k)}\rightarrow\cO_k(1)$.
\end{prop}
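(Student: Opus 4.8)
The plan is to propagate the orthogonal-complement relation that already holds on $S_{\mathcal{K}}^{(k)}$ down through the modules $S_\infty^{(k)}$ and $S_{\rm Iw}^{(k)}$ used to build the local conditions on $S^{(k)}$. The statement is local, so I fix $w\in\Sigma_f$; there is nothing to do at the archimedean place as $p$ is odd. Recall from the proof of Theorem~\ref{thm:howard-ht-pk} that $\rH^1_{\mathcal{F}_\qq}(K_w,S_{\mathcal{K}}^{(k)})$ and $\rH^1_{\mathcal{F}_{\overline{\qq}}}(K_w,S_{\mathcal{K}}^{(k)})$ are exact orthogonal complements under the $\mathcal{K}_k$-valued local pairing induced by the Weil pairing; this is read off from \eqref{eq:local-F} (at $w\nmid p$ one uses the classical self-duality of the unramified subgroup for self-dual finite Galois modules, and at $w\mid p$ the zero subgroup and the full group are trivially complementary). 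The structural point is that the two steps $S_{\mathcal{K}}^{(k)}\rightsquigarrow S_\infty^{(k)}$ and $S_\infty^{(k)}\rightsquigarrow S^{(k)}$ are compatible with Cartier duality: the Weil pairing on $S^{(k)}$ together with the tautological perfect pairing $\Lambda_k\times\mathcal{P}_k\to\mathcal{O}_k$ exhibits $S_{\rm Iw}^{(k)}$ and $S_\infty^{(k)}$ as mutual Cartier duals and $S^{(k)}$, $S_{\mathcal{K}}^{(k)}$ as Cartier self-dual; the inclusion $S_{\rm Iw}^{(k)}\hookrightarrow S_{\mathcal{K}}^{(k)}$ in \eqref{eq:S:K-P} is dual to the projection $S_{\mathcal{K}}^{(k)}\twoheadrightarrow S_\infty^{(k)}$; and the map $S^{(k)}\to S_\infty^{(k)}$, which identifies $S^{(k)}$ with $S_\infty^{(k)}[J]$ (by \cite[Lem.~1.5]{howard-derived}), is dual to the corestriction $S_{\rm Iw}^{(k)}\twoheadrightarrow S_{\rm Iw}^{(k)}/JS_{\rm Iw}^{(k)}=S^{(k)}$.

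Granting these compatibilities, the general formalism for transporting a complementary pair of local conditions along dual morphisms of Galois modules (implicit in \cite[\S1]{howard-derived}, cf.\ \cite{MR-KS}) gives, first, that $\rH^1_{\mathcal{F}_\qq}(K_w,S_\infty^{(k)})$ (the image of $\rH^1_{\mathcal{F}_\qq}(K_w,S_{\mathcal{K}}^{(k)})$) and $\rH^1_{\mathcal{F}_{\overline{\qq}}}(K_w,S_{\rm Iw}^{(k)})$ (the preimage of $\rH^1_{\mathcal{F}_{\overline{\qq}}}(K_w,S_{\mathcal{K}}^{(k)})$) are exact orthogonal complements under the $\mathcal{O}_k$-valued local pairing between $S_\infty^{(k)}$ and $S_{\rm Iw}^{(k)}$; and then, pulling back along $S^{(k)}\hookrightarrow S_\infty^{(k)}$ and pushing forward along $S_{\rm Iw}^{(k)}\twoheadrightarrow S^{(k)}$, that $\rH^1_{\mathcal{F}_\qq}(K_w,S^{(k)})$ and the \emph{image} in $\rH^1(K_w,S^{(k)})$ of $\rH^1_{\mathcal{F}_{\overline{\qq}}}(K_w,S_{\rm Iw}^{(k)})$ are exact orthogonal complements under the local Tate pairing \eqref{eq:local-Tate-Sk}.

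It then remains to identify this last image with the local condition $\rH^1_{\mathcal{F}_{\overline{\qq}}}(K_w,S^{(k)})$ of Definition~\ref{def:Sel-k}, i.e.\ with the preimage of $\rH^1_{\mathcal{F}_{\overline{\qq}}}(K_w,S_\infty^{(k)})$ under $\rH^1(K_w,S^{(k)})\to\rH^1(K_w,S_\infty^{(k)})$. Equivalently, one must check that the two routes by which $\mathcal{F}_{\overline{\qq}}$ descends from $S_{\mathcal{K}}^{(k)}$ to $S^{(k)}$ — through $S_\infty^{(k)}$ by pullback, or through $S_{\rm Iw}^{(k)}$ by pushforward — yield the same subgroup of $\rH^1(K_w,S^{(k)})$. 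For $w\nmid p$ this is immediate, since by \cite[Lem.~1.7]{howard-derived} the intermediate unramified conditions on $S_\infty^{(k)}$ and $S_{\rm Iw}^{(k)}$ are trivial. For $w\mid p$, where $\rH^1_{\mathcal{F}_{\overline{\qq}}}(K_w,S_{\mathcal{K}}^{(k)})$ is either $0$ or the full group, one argues by a diagram chase in the long exact cohomology sequences of $0\to S_{\rm Iw}^{(k)}\xrightarrow{\gamma-1}S_{\rm Iw}^{(k)}\to S^{(k)}\to 0$ and $0\to S^{(k)}\to S_\infty^{(k)}\xrightarrow{\gamma-1}S_\infty^{(k)}\to 0$, controlling the $\rH^0$-terms by hypothesis \eqref{eq:h0-intro} exactly as it is used to obtain the isomorphism \eqref{eq:sp-iso}. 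I expect this reconciliation of the two propagation routes — matching the propagated conditions at the primes above $p$ (and the bad primes) with what local Tate duality produces — to be the only genuinely delicate point; everything else is the formal propagation machinery.
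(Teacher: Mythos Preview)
Your general strategy---propagate the orthogonality from $S_{\mathcal{K}}^{(k)}$ through the dual pair $(S_\infty^{(k)},S_{\rm Iw}^{(k)})$ down to the self-dual $S^{(k)}$, and then reconcile the ``pushforward-via-$S_{\rm Iw}^{(k)}$'' description of $\rH^1_{\mathcal{F}_{\qqbar}}(K_w,S^{(k)})$ with its actual definition ``pullback-via-$S_\infty^{(k)}$''---is exactly the skeleton of the paper's argument.  The gap is that you have the difficulty of the two cases reversed, and neither is handled correctly.

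At $w\mid p$ you invoke hypothesis \eqref{eq:h0-intro} to control $\rH^0$-terms, but \eqref{eq:h0-intro} is a \emph{global} vanishing statement ($E(K)[p]=0$); it says nothing about the local groups $\rH^0(K_\qq,S^{(k)})=E(K_\qq)[p^k]$, which need not vanish.  The paper's proof here is purely local and quite short: one writes down the two propagated conditions explicitly as ${\rm im}\bigl(\rH^0(K_\qq,S_\infty^{(k)})_\Gamma\bigr)$ and ${\rm im}\bigl(\rH^1(K_\qq,S_{\rm Iw}^{(k)})_\Gamma\bigr)$, and observes that the quotient $\rH^1(K_\qq,S^{(k)})/\rH^1(K_\qq,S_{\rm Iw}^{(k)})_\Gamma\simeq\rH^2(K_\qq,S_{\rm Iw}^{(k)})[J]$ is, by local Tate duality, the Pontryagin dual of $\rH^0(K_\qq,S_\infty^{(k)})_\Gamma$.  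That is the whole argument at primes above $p$; no diagram chase with \eqref{eq:h0-intro} is needed (or possible).

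At $w\nmid p$ the reconciliation is \emph{not} immediate from \cite[Lem.~1.7]{howard-derived}.  That lemma gives $\rH^1_{\mathcal{F}_\qq}(K_w,S_\infty^{(k)})=0$, so the pullback route produces the image of $\rH^0(K_w,S_\infty^{(k)})_\Gamma$ in $\rH^1(K_w,S^{(k)})$, a group of order $\#E(K_w)[p^k]$---not zero.  The pushforward route, on the other hand, produces $\rH^1(K_w,S_{\rm Iw}^{(k)})_\Gamma$.  Showing these two subgroups coincide is the genuine content: the paper first produces the inclusion $\rH^0(K_w,S_\infty^{(k)})_\Gamma\hookrightarrow\rH^1(K_w,S_{\rm Iw}^{(k)})_\Gamma$ via the connecting map of \eqref{eq:S:K-P}, and then \emph{counts} using the local Euler characteristic formula and local Tate duality to conclude equality.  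Your proposal skips this step entirely.
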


\begin{proof}
We begin by considering the case $w=\qq$. From the definitions, we have
\begin{equation}\label{eq:q-qbar}
\begin{aligned}
\rH^1_{\mathcal{F}_\qq}(K_\qq,S^{(k)})&={\rm im}\bigl\{\rH^0(K_\qq,S_\infty^{(k)})_\Gamma\hookrightarrow\rH^1(K_\qq,S^{(k)})\bigr\},\\
\rH^1_{\mathcal{F}_{\qqbar}}(K_\qq,S^{(k)})&={\rm im}\bigl\{\rH^1(K_\qq,S_{\rm Iw}^{(k)})_\Gamma\hookrightarrow\rH^1(K_\qq,S^{(k)})\bigr\}.
\end{aligned}
\end{equation}
To see the second equality, we note that the propagation to $S^{(k)}$ of 
\begin{align*}
\rH^1_{\mathcal{F}_{\qqbar}}(K_\qq,S_\infty^{(k)})&:={\rm im}\bigl\{\rH^1(K_\qq,S_{\mathcal{K}}^{(k)})\rightarrow\rH^1(K_\qq,S_\infty^{(k)})\bigr\}={\rm ker}\{\rH^1(K_\qq,S_\infty^{(k)})\rightarrow\rH^2(K_\qq,S_{\rm Iw}^{(k})\}
\end{align*}
equals the kernel of the composition $\rH^1(K_\qq,S^{(k)})\rightarrow\rH^1(K_\qq,S_\infty^{(k)})\rightarrow\rH^2(K_\qq,S_{\rm Iw}^{(k)})$, which is the same as the connecting homomorphism for the exact sequence
\begin{equation}\label{eq:SIwext}
   0\rightarrow S_{\rm Iw}^{(k)}\xrightarrow{\gamma-1} S_{\rm Iw}^{(k)}\rightarrow S^{(k)}\rightarrow 0.
\end{equation} Hence,
\[
    \rH^1_{\mathcal{F}_{\qqbar}}(K_\qq,S_\infty^{(k)})={\rm ker}\{\rH^1(K_\qq,S^{(k)})\rightarrow\rH^2(K_\qq,S_{\rm Iw}^{(k)})\}={\rm im}\bigl\{\rH^1(K_\qq,S_{\rm Iw}^{(k)})_\Gamma\hookrightarrow\rH^1(K_\qq,S^{(k)})\bigr\}.
\]
Now, since we have 
\begin{equation}\label{eq:iw-infty}
\frac{\rH^1(K_{\qq},S^{(k)})}{\rH^1(K_\qq,S_{\rm Iw}^{(k)})_\Gamma}\simeq\rH^2(K_\qq,S_{\rm Iw}^{(k)})[J]\simeq\bigl(\rH^0(K_\qq,S_\infty^{(k)})_\Gamma\bigr)^\vee
\end{equation}
from the cohomology long exact sequence associated with \eqref{eq:SIwext} and Tate's local duality, the orthogonality assertion for $w=\qq$ follows. The argument for $w=\qqbar$ is the same.

Next we consider the case $w\in\Sigma_f\setminus\{\pp,\ppbar\}$. In this case, $\rH^1_{\mathcal{F}_\qq}(K_w,S_\infty^{(k)})=\rH^1_{\mathcal{F}_{\qqbar}}(K_w,S_\infty^{(k)})=0$, 
 and therefore
\begin{align*}
\rH^1_{\mathcal{F}_\qq}(K_w,S^{(k)})=\rH^1_{\mathcal{F}_{\qqbar}}(K_w,S^{(k)})&=\ker\bigl\{\rH^1(K_w,S^{(k)})\rightarrow\rH^1(K_w,S_\infty^{(k)})\bigr\}\\
&={\rm im}\bigl\{\rH^0(K_w,S_\infty^{(k)})_\Gamma\hookrightarrow\rH^1(K_w,S^{(k)})\bigr\}.
\end{align*}
From the short exact sequence
\begin{equation}\label{eq:ses-Iw}
0\rightarrow\rH^1(K_w,S_{\rm Iw}^{(k)})_\Gamma\rightarrow\rH^1(K_w,S^{(k)})\rightarrow\rH^2(K_w,S_{\rm Iw}^{(k)})[J]\simeq\bigl(\rH^0(K_w,S_\infty^{(k)})_\Gamma\bigr)^\vee
\rightarrow 0
\end{equation}
induced by \eqref{eq:SIwext} and local Tate duality, we see that 
\[
\rH^1_{\mathcal{F}_\qq}(K_w,S^{(k)})=\bigl({\rm im}\{\rH^1(K_w,S_{\rm Iw}^{(k)})_\Gamma\hookrightarrow\rH^1(K_w,S^{(k)})\}\bigr)^\perp,
\]
where the superscript $\perp$ denotes the orthogonal complement under  \eqref{eq:local-Tate-Sk}. 
Thus it suffices to establish the equality
\begin{equation}\label{eq:h0-h1}
\rH^0(K_w,S_\infty^{(k)})_\Gamma=\rH^1(K_w,S_{\rm Iw}^{(k)})_\Gamma
\end{equation}
inside $\rH^1(K_w,S^{(k)})$. Consider the commutative diagram with exact rows
\[
\xymatrix{
0\ar[r]&\rH^0(K_w,S_\infty^{(k)})_{/\cK}\ar[r]\ar[d]^{\gamma-1}&\rH^1(K_w,S_{\rm Iw}^{(k)})\ar[r]\ar[d]^{\gamma-1}&{\rm im}\{\rH^1(K_w,S_{\rm Iw}^{(k)})\rightarrow\rH^1(K_w,S_{\cK}^{(k)})\}\ar[r]\ar[d]^{\gamma-1}&0\\
0\ar[r]&\rH^0(K_w,S_\infty^{(k)})_{/\cK}\ar[r]&\rH^1(K_w,S_{\rm Iw}^{(k)})\ar[r]&{\rm im}\{\rH^1(K_w,S_{\rm Iw}^{(k)})\rightarrow\rH^1(K_w,S_\cK^{(k)})\}\ar[r]&0,
}
\]
where the rows are induced by \eqref{eq:S:K-P} and the subscript $/\cK$ denotes the quotient by the natural image of $\rH^0(K_w,S_\cK^{(k)})$ in $\rH^0(K_w,S_\infty^{(k)})$. Using the fact that multiplication by $\gamma-1$ is invertible in $S_\cK^{(k)}$, the snake lemma applied to this diagram yields an injection 
\begin{equation}\label{eq:incl-h0-h1}
\rH^0(K_w,S_\infty^{(k)})_\Gamma\hookrightarrow\rH^1(K_w,S_{\rm Iw}^{(k)})_\Gamma.
\end{equation}
On the other hand, we can compute
\begin{equation}\label{eq:order}
\#\rH^0(K_w,S_\infty^{(k)})_\Gamma=\#\Bigl(\bigoplus_{\eta\vert w}E(K_{\infty,\eta})[p^k]\Bigr)_\Gamma=\#\Bigl(\bigoplus_{\eta\vert w}E(K_{\infty,\eta})[p^k]\Bigr)[J]=\#E(K_w)[p^k],
\end{equation}
using the finiteness of $E(K_{\infty,\eta})[p^k]$ and the fact that $w$ is finitely decomposed in $K_\infty/K$ (since it splits in $K$) for the second equality. From \eqref{eq:ses-Iw}, we similarly find
\begin{equation}\label{eq:index-Iw}
\bigl[\rH^1(K_w,S^{(k)}):\rH^1(K_w,S_{\rm Iw}^{(k)})_\Gamma\bigr]=\#\rH^2(K_w,S_{\rm Iw}^{(k)})[J]=\#E(K_w)[p^k].
\end{equation}
Since $\#\rH^1(K_w,S^{(k)})=(\#E(K_w)[p^k])^2$ by Tate's local Euler characteristic formula and Tate's local duality, the desired equality \eqref{eq:h0-h1} now follows from the combination of \eqref{eq:incl-h0-h1}, \eqref{eq:order}, and \eqref{eq:index-Iw}, thereby concluding the proof.
\end{proof}


From the analysis in the proof of Proposition~\ref{prop:self-dual}, we deduce the following two key results.

\begin{cor}\label{cor:control-1}
We have
\[
\mathfrak{Sel}_\qq(K,T)\simeq{\rm ker}\Biggl\{\rH^1(G_{K,\Sigma},T)\rightarrow\rH^1(K_\qq,T)\times\prod_{w\in\Sigma_f\setminus\{\qq\}}\frac{\rH^1(K_w,T)}{\rH^1(K_w,T)^u}\Biggr\},
\]
where $\rH^1(K_w,T)^u:=\varprojlim_k{\rm im}\bigl\{\rH^1(K_w,S_{\rm Iw}^{(k)})_\Gamma\rightarrow\rH^1(K_w,S^{(k)})\bigr\}$.
\end{cor}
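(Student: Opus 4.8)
The plan is to derive the description by applying $\varprojlim_k$ to the exact sequence defining $\mathfrak{Sel}_\qq(K,S^{(k)})$ and identifying the inverse limit of each local quotient, reusing the local computations already carried out inside the proof of Proposition~\ref{prop:self-dual}.

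First I would observe that, because $T=\varprojlim_kS^{(k)}$ for the multiplication-by-$p$ maps and because all of $\rH^1(G_{K,\Sigma},S^{(k)})$ and $\rH^1(K_w,S^{(k)})$ are finite, the functor $\varprojlim_k$ is exact on the systems at hand (the relevant $\varprojlim^1$ vanish), so in particular $\varprojlim_k\rH^1(G_{K,\Sigma},S^{(k)})=\rH^1(G_{K,\Sigma},T)$ and $\varprojlim_k\rH^1(K_w,S^{(k)})=\rH^1(K_w,T)$. Applying $\varprojlim_k$ to
\[
0\rightarrow\mathfrak{Sel}_\qq(K,S^{(k)})\rightarrow\rH^1(G_{K,\Sigma},S^{(k)})\rightarrow\prod_{w\in\Sigma_f}\frac{\rH^1(K_w,S^{(k)})}{\rH^1_{\mathcal{F}_\qq}(K_w,S^{(k)})}
\]
and invoking $\mathfrak{Sel}_\qq(K,T)=\varprojlim_k\mathfrak{Sel}_\qq(K,S^{(k)})$ reduces the statement to computing $\varprojlim_k\rH^1(K_w,S^{(k)})/\rH^1_{\mathcal{F}_\qq}(K_w,S^{(k)})$ for each $w\in\Sigma_f$.

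Then I would handle the three types of primes. For $w=\qq$, the first line of \eqref{eq:q-qbar} identifies $\rH^1_{\mathcal{F}_\qq}(K_\qq,S^{(k)})$ with the image of $\rH^0(K_\qq,S_\infty^{(k)})_\Gamma$; by \cite[Lem.~2.7]{KO} the group $\bigoplus_{\eta\mid\qq}E(K_{\infty,\eta})[p^\infty]$ is finite, so these form a system of finite groups that stabilises for $k\gg 0$ with eventually-zero transition maps, whence $\varprojlim_k\rH^1_{\mathcal{F}_\qq}(K_\qq,S^{(k)})=0$ and the local quotient converges to $\rH^1(K_\qq,T)$ — the strict condition at $\qq$ in the statement. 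For $w=\qqbar$, the mirror of the second line of \eqref{eq:q-qbar} (the construction of $\mathcal{F}_\qq,\mathcal{F}_{\qqbar}$ being manifestly symmetric in $\qq\leftrightarrow\qqbar$), and for $w\in\Sigma_f\setminus\{\pp,\ppbar\}$ the identity \eqref{eq:h0-h1} proved inside Proposition~\ref{prop:self-dual}, both identify $\rH^1_{\mathcal{F}_\qq}(K_w,S^{(k)})$ with ${\rm im}\{\rH^1(K_w,S_{\rm Iw}^{(k)})_\Gamma\hookrightarrow\rH^1(K_w,S^{(k)})\}$; taking $\varprojlim_k$ of the latter is $\rH^1(K_w,T)^u$ by definition, so the local quotient converges to $\rH^1(K_w,T)/\rH^1(K_w,T)^u$. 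Substituting these computations back into the kernel description from the previous step gives the corollary.

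The only delicate point is the vanishing $\varprojlim_k\rH^1_{\mathcal{F}_\qq}(K_\qq,S^{(k)})=0$, i.e.\ that the $\qq$-local condition genuinely degenerates to the strict condition in the limit; this is where the finiteness of the local $p^\infty$-torsion along $K_\infty/K_\qq$ and the precise form of the transition maps enter. Everything else is a formal transcription of the local analysis in the proof of Proposition~\ref{prop:self-dual} combined with the exactness of inverse limits of finite groups.
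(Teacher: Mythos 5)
Your proposal is correct and takes essentially the same route as the paper's proof: both identify the local conditions term-by-term via \eqref{eq:q-qbar} and \eqref{eq:h0-h1} from the proof of Proposition~\ref{prop:self-dual}, and both invoke the finiteness of $\bigoplus_{\eta\mid\qq}E(K_{\infty,\eta})[p^\infty]$ to show the $\qq$-local condition degenerates to the strict one in the inverse limit. You are somewhat more explicit than the paper about the Mittag–Leffler vanishing of $\varprojlim^1$ (automatic here since all the groups involved are finite) and about the eventually-zero transition maps at $\qq$, but these are precisely the technical ingredients the paper leaves implicit.
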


\begin{proof}
By \eqref{eq:q-qbar} and the equality \eqref{eq:h0-h1} established in the proof of Proposition~\ref{prop:self-dual}, the local conditions cutting out $\mathfrak{Sel}_\qq(K,T)$ are given by $\rH^1(K_w,T)^u$ for $w\in\Sigma_f\setminus\{\qq\}$. 
Restricted to $\qq$, the classes in $\mathfrak{Sel}_\qq(K,T)$ land in the image of $\varprojlim_k\rH^0(K_{\qq},S_\infty^{(k)})_\Gamma$ in $\rH^1(K_\qq,T)$, and so their restriction to $\qq$ vanishes by the finiteness of $\bigoplus_{\eta\mid \qq}E(K_{\infty,\eta})[p^\infty]$. 
\end{proof}


\begin{cor}\label{cor:control-2}
We have a five-term exact sequence
\[
0\rightarrow\mathfrak{Sel}_\qq(K,T)\rightarrow{\rm Sel}_\qq(K,T)\rightarrow\mathcal{L}\rightarrow\mathfrak{Sel}_{\qqbar}(K,W)^\vee\rightarrow{\rm Sel}_{\overline\qq}(K,W)^\vee\rightarrow 0
\]
with
\[
\#\mathcal{L}=(\#E(\Q_p)[p^\infty])^2\cdot\prod_{w\mid N}c_w^{(p)},
\]
where $c_w^{(p)}$ is the $p$-part of the Tamagawa number of $E/K_w$.
\end{cor}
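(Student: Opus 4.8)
\emph{Proof proposal.} The plan is to deduce the five-term sequence from global (Poitou--Tate) duality, comparing the ``derived'' Selmer structures of Section~\ref{sec:der-ht} with the $\qq$-strict ones, and then to evaluate $\mathcal{L}$ by a purely local computation. First I would record that, by Proposition~\ref{prop:self-dual}, the local conditions cutting out $\mathfrak{Sel}_\qq(K,S^{(k)})$ and $\mathfrak{Sel}_{\qqbar}(K,S^{(k)})$ are everywhere exact orthogonal complements under local Tate duality, so that for each $k$ the Poitou--Tate exact sequence yields
\[
0\to\mathfrak{Sel}_\qq(K,S^{(k)})\to\rH^1(G_{K,\Sigma},S^{(k)})\to\bigoplus_{w\in\Sigma_f}\frac{\rH^1(K_w,S^{(k)})}{\rH^1_{\mathcal{F}_\qq}(K_w,S^{(k)})}\to\mathfrak{Sel}_{\qqbar}(K,S^{(k)})^\vee\to 0,
\]
the exactness on the right (vanishing of the Tate--Shafarevich-type term $\Sha^2(G_{K,\Sigma},S^{(k)})$) being a consequence of \eqref{eq:h0-intro}. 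Passing to the inverse limit over $k$ — using \eqref{eq:sp-iso}, Corollary~\ref{cor:control-mazur}, and the fact that the local quotients are finite of bounded order, so that the relevant $\varprojlim^1$ terms vanish — produces
\[
0\to\mathfrak{Sel}_\qq(K,T)\to\rH^1(G_{K,\Sigma},T)\to\bigoplus_{w\in\Sigma_f}\frac{\rH^1(K_w,T)}{\rH^1_{\mathcal{F}_\qq}(K_w,T)}\to\mathfrak{Sel}_{\qqbar}(K,W)^\vee\to 0,
\]
where $\rH^1_{\mathcal{F}_\qq}(K_w,T):=\varprojlim_k\rH^1_{\mathcal{F}_\qq}(K_w,S^{(k)})$ is described by Corollary~\ref{cor:control-1}.

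Next I would observe that the $\qq$-strict and $\qqbar$-strict Selmer structures are also exact orthogonal complements -- this is checked place by place: at $w\nmid p$ using that $\rH^1(K_w,W)$ is finite, hence has trivial divisible part, and at $\qq,\qqbar$ using that the annihilator of a ``torsion'' local condition on $T$ is the corresponding ``divisible'' condition on $W$ -- so Poitou--Tate likewise gives $0\to{\rm Sel}_\qq(K,T)\to\rH^1(G_{K,\Sigma},T)\to\bigoplus_w\rH^1(K_w,T)/\rH^1_\qq(K_w,T)\to{\rm Sel}_{\qqbar}(K,W)^\vee\to 0$. Since $\rH^1_{\mathcal{F}_\qq}(K_w,T)\subseteq\rH^1_\qq(K_w,T)$ for every $w$, there is a morphism from the first four-term sequence to the second which is the identity on $\rH^1(G_{K,\Sigma},T)$; it induces the inclusion $\mathfrak{Sel}_\qq(K,T)\hookrightarrow{\rm Sel}_\qq(K,T)$ on the left, the surjection dual to the finite-index inclusion ${\rm Sel}_{\qqbar}(K,W)\hookrightarrow\mathfrak{Sel}_{\qqbar}(K,W)$ (Proposition~\ref{prop:control-mazur} and Corollary~\ref{cor:control}) on the right, and on the local terms a surjection with kernel $\mathcal{L}:=\bigoplus_{w\in\Sigma_f}\rH^1_\qq(K_w,T)/\rH^1_{\mathcal{F}_\qq}(K_w,T)$. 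Splitting each row into two short exact sequences and chasing the resulting ladder (two applications of the snake lemma) then yields precisely the asserted five-term exact sequence.

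It remains to compute $\#\mathcal{L}=\prod_{w\in\Sigma_f}[\rH^1_\qq(K_w,T):\rH^1_{\mathcal{F}_\qq}(K_w,T)]$ one place at a time, using the explicit descriptions of $\rH^1_{\mathcal{F}_\qq}(K_w,\cdot)$ from the proof of Proposition~\ref{prop:self-dual} and from Corollary~\ref{cor:control-1}. At $w=\qq$ one has $\rH^1_{\mathcal{F}_\qq}(K_\qq,T)=0$ while $\rH^1_\qq(K_\qq,T)=\rH^1(K_\qq,T)_{\rm tor}=\rH^0(K_\qq,W)=E(\Q_p)[p^\infty]$ (using $\rH^0(K_\qq,V)=0$); at $w=\qqbar$, local Tate duality identifies the quotient with the Pontryagin dual of $\bigcup_n\ker\bigl(\rH^1(K_{\qqbar},W)\to\rH^1(K_{n,\qqbar},W)\bigr)$, of order $\#\rH^0(K_{\qqbar},W)=\#E(\Q_p)[p^\infty]$; at $w\mid N$ the $\mathcal{F}_\qq$-condition is the unramified one, and the quotient has order $c_w^{(p)}$ by the standard relation between the full and the unramified local cohomology of $T$ at a bad place and the Tamagawa number; and at the remaining (good) places of $\Sigma_f$ the quotient is trivial. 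Multiplying gives $\#\mathcal{L}=(\#E(\Q_p)[p^\infty])^2\prod_{w\mid N}c_w^{(p)}$.

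I expect the main obstacle to be exactly this local analysis: carrying the $S^{(k)}$-level descriptions of the local conditions through the inverse limit over $k$ (controlling $\varprojlim^1$ and the limits of the local Iwasawa cohomology groups $\rH^1(K_w,S_{\rm Iw}^{(k)})_\Gamma$), and pinning down the local indices at $\qqbar$ and at the bad primes, where one must carefully disentangle the universal-norm, unramified, and divisible local conditions and match the discrepancies with $\#E(\Q_p)[p^\infty]$ and with $c_w^{(p)}$ respectively. It is precisely the need to allow ${\rm Sel}_\qq(K,W)$ (equivalently, the modules $B_w$) to be infinite that forces this duality-based route rather than the direct argument of \cite{greenberg-cetraro} and \cite{JSW} recalled in Remark~\ref{rem:jsw}.
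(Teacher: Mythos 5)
Your proposal is correct and follows essentially the same route as the paper: applying Poitou--Tate duality (using the self-duality from Proposition~\ref{prop:self-dual}) to compare the $\mathcal{F}_\qq$- and $\qq$-strict Selmer structures, and then evaluating $\#\mathcal{L}$ place by place, getting $\#E(\Q_p)[p^\infty]$ at each of $\qq,\qqbar$ and $c_w^{(p)}$ at $w\mid N$. The paper avoids your intermediate four-term sequences and $\varprojlim$-over-$k$ bookkeeping by invoking the five-term comparison form of Poitou--Tate directly at the $T$/$W$ level, with the limit computations already packaged in Corollary~\ref{cor:control-1}; but the local indices you compute agree with \eqref{eq:order-q}, \eqref{eq:order-qbar}, and \eqref{eq:tam-w}--\eqref{eq:tam-coinv}.
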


\begin{proof}
%

In light of Proposition~\ref{prop:self-dual} and Corollary~\ref{cor:control-1}, Poitou--Tate duality gives rise to a five-term exact sequence as in the statement with
\begin{equation}\label{eq:def-L}
\mathcal{L}=\rH^1_\qq(K_\qq,T)\times\prod_{w\in\Sigma_f\setminus\{\qq\}}\frac{\rH^1_\qq(K_w,T)}{\rH^1(K_w,T)^u}.
\end{equation}
It remains to compute the size of each the factors in the right-hand side.

For the first factor, Definition~\ref{def:BDP-Sel} gives $\rH^1_\qq(K_\qq,T)=\rH^1(K_\qq,T)_{\rm tor}$, and from the combination of \eqref{eq:q-tor} and \eqref{eq:q-tor-bis} we have
\begin{equation}\label{eq:order-q}
\#\rH^1_\qq(K_\qq,T)=\#\rH^1(K_\qq,T)_{\rm tor}=\#E(\Q_p)[p^\infty].
\end{equation}
For $w=\qqbar$, the numerator in the second factor is $\rH^1_\qq(K_{\qqbar},T)=\rH^1(K_{\qqbar},T)$, 
and from \eqref{eq:iw-infty} we have
\[
\rH^1(K_{\qqbar},T)/\rH^1(K_{\qqbar},T)^u\simeq\biggl(\varinjlim_k\rH^0(K_{\qqbar},S_\infty^{(k)})_\Gamma\biggr)^\vee.
\]
Since $\varinjlim_k\rH^0(K_{\qqbar},S_\infty^{(k)})$ is finite by \cite[Lem.~2.7]{KO}, it follows that
\begin{equation}\label{eq:order-qbar}
\#\bigl(\rH^1_\qq(K_{\qqbar},T)/\rH^1(K_{\qqbar},T)^u\bigr)
=\#\bigl(\rH^1(K_{\qqbar},T)/\rH^1(K_{\qqbar},T)^u\bigr)=\#E(\Q_p)[p^\infty].
\end{equation}
It remains to consider the case $w\nmid p$. In this case, we have
\[
\rH^1_\qq(K_w,T)=\rH^1(K_w,T)_{\rm tor}=\rH^1(K_w,T),
\]
using Tate's local Euler characteristic formula for the second equality. Therefore,
\begin{equation}\label{eq:tam-w}
\begin{aligned}
\#\bigl(\rH^1_\qq(K_w,T)/\rH^1(K_w,T)^u\bigr)&=
\#\bigl(\rH^1(K_w,T)/\rH^1(K_w,T)^u\bigr)\\
&=\#\bigl(\varinjlim_k\rH^0(K_w,S^{(k)})_\Gamma\bigr)=\#\bigl(B_w/(\gamma-1)B_w\bigr),
\end{aligned}
\end{equation}
where the second equality is shown in the proof of Proposition~\ref{prop:self-dual}, and $B_w$ is as in \eqref{eq:def-B}. Now recall that $c_w^{(p)}=\#\rH^1_{\rm unr}(K_w,W)$, where
\[  
\rH^1_{\rm unr}(K_w,W):={\rm ker}\left\{\rH^1(K_w,W)\rightarrow\rH^1(K_w^{\rm unr},W)\right\}.
\]
Since for every $\eta\mid w$ the restriction map $\rH^1(K_{\infty,\eta},W)\rightarrow\rH^1(K_w^{\rm unr},W)$ is injective (this follows from the fact that ${\rm Gal}(K_w^{\rm unr}/K_{\infty,\eta})$ has trivial pro-$p$-part), we deduce that
\begin{equation}\label{eq:tam-coinv}
\begin{aligned}
\rH^1_{\rm unr}(K_w,W)&={\rm ker}\{\rH^1(K_w,W)\rightarrow\bigoplus_{\eta\mid w}\rH^1(K_{\infty,\eta},W)\}\\
&
\simeq B_w/(\gamma-1)B_w.
\end{aligned}
\end{equation}
From \eqref{eq:tam-w} and \eqref{eq:tam-coinv}, we conclude that $\#\bigl(\rH^1_\qq(K_w,T)/\rH^1(K_w,T)^u\bigr)=c_w^{(p)}$ for $w\nmid p$.  Together with \eqref{eq:order-q} and \eqref{eq:order-qbar}, this gives the stated formula for $\#\mathcal{L}$.
\end{proof}



\subsection{Step 4: $p$-adic logarithm} 

The last step is to relate $\#\bigl({\rm Sel}_{\ppbar}(K,W)_{/{\rm div}}\bigr)$ to $\#\Sha_{\rm BK}(K,W)$ and some of the other terms appearing in our Leading Coefficient Formula.

\begin{prop}\label{prop:sel-sha}
Assume hypotheses {\rm \eqref{eq:h0-intro}}-{\rm \eqref{eq:h1-intro}}. 
Then
\[
\#({\rm Sel}_{\ppbar}(K,W)_{/{\rm div}})=\#\Sha_{\rm BK}(K,W)\cdot(\#{\rm coker}({\rm res}_{\pp/{\rm tor}}))^2,
\]
where ${\rm res}_{\pp/{\rm tor}}$ is the composition
$\check{S}_p(E/K)\xrightarrow{{\rm res}_\pp}E(K_\pp)\otimes\Z_p\rightarrow E(K_\pp)_{/{\rm tor}}\otimes\Z_p$. 
\end{prop}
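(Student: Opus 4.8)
The plan is to obtain the proposition from global (Poitou--Tate) duality, by comparing the $\ppbar$-strict Selmer group ${\rm Sel}_{\ppbar}(K,W)$ with the ordinary $p^\infty$-Selmer group ${\rm Sel}_{p^\infty}(E/K)$, whose maximal finite quotient is $\Sha_{\rm BK}(K,W)$ by definition. I would factor the comparison through the auxiliary Selmer group ${\rm Sel}_{\pp}^{\rm rel}(K,W)$ obtained from ${\rm Sel}_{p^\infty}(E/K)$ by relaxing the local condition at $\pp$, keeping the Bloch--Kato (Kummer-image / unramified) conditions at $\ppbar$ and at the primes $w\mid N$; all other places are harmless since $\rH^1(K_w,V)=0$ at good primes $w\nmid p$. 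Throughout I would use that $\#(A^\vee)_{\rm tors}=\#(A_{/{\rm div}})$ for a cofinitely generated $\Z_p$-module $A$, together with the bookkeeping of orders of torsion subgroups in exact sequences, as in Lemma~\ref{lem:alg}.

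\emph{First step.} The five-term global duality sequence comparing ${\rm Sel}_{p^\infty}(E/K)$ and ${\rm Sel}_{\pp}^{\rm rel}(K,W)$ (which agree outside $\pp$), once dualized and combined with the self-duality $\rH^1_f(K_\pp,W)^\perp=\rH^1_f(K_\pp,T)=E(K_\pp)\otimes\Z_p$, takes the form
\[
0\to{\rm ker}({\rm res}_\pp)\to\check{S}_p(E/K)\xrightarrow{{\rm res}_\pp}E(K_\pp)\otimes\Z_p\to{\rm Sel}_{\pp}^{\rm rel}(K,W)^\vee\to{\rm Sel}_{p^\infty}(E/K)^\vee\to 0
\]
(cf. the sequence \eqref{eq:PT-ranks}); here ${\rm ker}({\rm res}_\pp)$ differs from ${\rm Sel}_\pp(K,T)={\rm ker}({\rm res}_{\pp/{\rm tor}})$ only by the torsion of $E(K_\pp)\otimes\Z_p$. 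By \eqref{eq:h1-intro}, ${\rm coker}({\rm res}_\pp)$ is finite, so the tail yields the short exact sequence $0\to{\rm coker}({\rm res}_\pp)\to{\rm Sel}_{\pp}^{\rm rel}(K,W)^\vee\to{\rm Sel}_{p^\infty}(E/K)^\vee\to 0$, hence $\#({\rm Sel}_{\pp}^{\rm rel}(K,W)_{/{\rm div}})=\#{\rm coker}({\rm res}_\pp)\cdot\#\Sha_{\rm BK}(K,W)$.

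\emph{Second step.} Now I would compare ${\rm Sel}_{\pp}^{\rm rel}(K,W)$ with ${\rm Sel}_{\ppbar}(K,W)$; these differ only at $\ppbar$ and at the primes $w\mid N$, where the former carries the Bloch--Kato condition and the latter is strict. The relevant five-term sequence on the discrete side, together with the companion one on the compact side (relating, on $T$, the dual Selmer structures, which differ from ${\rm ker}({\rm res}_\pp)$ by relaxing at $\ppbar$ and at the $w\mid N$), must be combined. Two things happen: (i) at each $w\mid N$ the local defect on the discrete side, $\rH^1_{\rm unr}(K_w,W)$, is identified by local Tate duality with the local defect on the compact side, $\rH^1(K_w,T)/\rH^1_{\rm unr}(K_w,T)$, so these cancel in the exact sequences --- exactly as the bad-prime terms combine in Corollary~\ref{cor:control-2}; and (ii) the local modification at $\ppbar$ contributes, after accounting for the torsion coming from $\rH^0(K_{\ppbar},W)=E(\Q_p)[p^\infty]$, a factor $\#{\rm coker}({\rm res}_{\ppbar/{\rm tor}})$. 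Finally, complex conjugation induces an isomorphism ${\rm Sel}_\pp(K,S_\infty)\simeq{\rm Sel}_{\ppbar}(K,S_\infty)$ intertwining ${\rm res}_\pp$ with ${\rm res}_{\ppbar}$; this gives $\#{\rm coker}({\rm res}_{\ppbar/{\rm tor}})=\#{\rm coker}({\rm res}_{\pp/{\rm tor}})$ and shows that the $E(\Q_p)[p^\infty]$-factors introduced at $\ppbar$ in this step cancel those introduced at $\pp$ in the first step (in particular those incurred in passing from ${\rm coker}({\rm res}_\pp)$ to ${\rm coker}({\rm res}_{\pp/{\rm tor}})$ and from $E(K_\pp)\otimes\Z_p$ to $E(K_\pp)_{/{\rm tor}}\otimes\Z_p$). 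Combining the two steps yields $\#({\rm Sel}_{\ppbar}(K,W)_{/{\rm div}})=\#\Sha_{\rm BK}(K,W)\cdot\#{\rm coker}({\rm res}_{\pp/{\rm tor}})\cdot\#{\rm coker}({\rm res}_{\ppbar/{\rm tor}})=\#\Sha_{\rm BK}(K,W)\cdot(\#{\rm coker}({\rm res}_{\pp/{\rm tor}}))^2$.

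The hard part is the local bookkeeping in the second step: one must arrange the two Poitou--Tate comparison sequences so that the duality cancellation at the primes $w\mid N$ is manifest, and then track carefully the failure of $\rH^1(K_\pp,W)$ and $\rH^1(K_{\ppbar},W)$ to be divisible, so that every auxiliary $\#E(\Q_p)[p^\infty]$-factor --- arising from $\rH^0(K_\qq,W)$, from the difference between the relaxed condition $\rH^1(K_\pp,W)$ and its divisible part, and from the torsion of $E(K_\pp)\otimes\Z_p$ --- cancels exactly. Once these cancellations are pinned down, identifying the two residual contributions with $\#{\rm coker}({\rm res}_{\pp/{\rm tor}})$ and invoking the complex-conjugation symmetry is routine.
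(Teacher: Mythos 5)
Your plan is essentially the paper's: compare ${\rm Sel}_{\ppbar}(K,W)$ with ${\rm Sel}_{p^\infty}(E/K)$ through an auxiliary group with relaxed condition at $\pp$ and Bloch--Kato conditions elsewhere (your ${\rm Sel}_{\pp}^{\rm rel}(K,W)$, the paper's ${\rm Sel}_{{\rm rel},{\rm fin}}(K,W)$), extract the factor $\#\Sha_{\rm BK}(K,W)\cdot\#{\rm coker}({\rm res}_{\pp/{\rm tor}})$ from Poitou--Tate duality, pick up the second $\#{\rm coker}$-factor from the modification at $\ppbar$, and close via complex conjugation. That is the right strategy.

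However, the part you flag as ``the hard part'' of Step~2 --- the local-duality cancellation at the primes $w\mid N$ --- does not in fact occur. For $V=T\otimes\Q_p$ and any finite place $w\nmid p$, including the bad places, one has $\rH^1(K_w,V)=0$: the local Euler characteristic at $w\nmid p$ gives $\dim\rH^1(K_w,V)=\dim\rH^0(K_w,V)+\dim\rH^0(K_w,V^*(1))$, and both summands vanish for the Tate module of an elliptic curve at a place $w\nmid p$, regardless of reduction type. Since the local conditions on both your auxiliary group and on ${\rm Sel}_{\ppbar}(K,W)$ at $w\mid N$ are obtained by propagation from the $V$-level, they are both zero; the two Selmer groups already agree at $w\mid N$, and the only local modification in your Step~2 is at $\ppbar$. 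There is thus no $\rH^1_{\rm unr}(K_w,W)$-versus-$\rH^1(K_w,T)/\rH^1_{\rm unr}(K_w,T)$ cancellation to arrange. The Tamagawa factors enter the proof of Theorem~\ref{thm:A} elsewhere, namely in Corollary~\ref{cor:control-2}, which compares the $\mathfrak{Sel}$-type Selmer groups (whose local conditions come from $\Lambda$-adic cohomology, not from $V$) with the ${\rm Sel}$-ones --- a genuinely different pair of local conditions. With this correction, the $\#E(\Q_p)[p^\infty]$-bookkeeping you allude to is short: the paper settles it by writing $\check{S}_p(E/K)\simeq{\rm Sel}_\pp(K,T)\oplus\Z_p s_\pp$ and feeding the resulting finite module $U$ with $\#U=\#{\rm coker}({\rm res}_{\pp/{\rm tor}})$ directly into the two comparisons.
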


\begin{proof}
By Lemma~\ref{lem:rank-1} we have 
\[
{\rm Sel}_\pp(K,T)=\ker({\rm res}_\pp)\simeq\Z_p^{r-1},
\] 
where $r={\rm rank}_{\Z_p}\check{S}_p(E/K)$.
Let $s_1,\dots,s_{r-1}$ be a $\Z_p$-basis for ${\rm Sel}_\pp(K,T)$ and extend it to a $\Z_p$-basis $s_1,\dots,s_{r-1},s_\pp$ for $\check{S}_p(E/K)$, so we have
\begin{equation}\label{eq:free}
\check{S}_p(E/K)\simeq{\rm Sel}_\pp(K,T)\oplus\Z_ps_\pp.
\end{equation}

The map ${\rm res}_{\pp/{\rm tor}}$ gives an injection $\Z_ps_\pp\hookrightarrow E(K_\pp)_{/{\rm tor}}\otimes\Z_p$, and defining $U$ by the exactness of the sequence
\begin{equation}\label{eq:U}
0\rightarrow\Z_ps_\pp\rightarrow E(K_\pp)_{/{\rm tor}}\otimes\Z_p\rightarrow U\rightarrow 0,
\end{equation}
we see that $U$ is finite, with $\#U=\#{\rm coker}({\rm res}_{\pp/{\rm tor}})$.
Tensoring \eqref{eq:U} with $\Q_p/\Z_p$ gives
\begin{equation}\label{eq:V}
0\rightarrow U'\rightarrow(\Q_p/\Z_p)s_\pp\rightarrow E(K_\pp)\otimes\Q_p/\Z_p\rightarrow 0
\end{equation}
for a certain finite module $U'$ with $\#U'=\#U$.

Noting that ${\rm Sel}_\pp(K,T)\otimes_{\Z_p}\Q_p/\Z_p={\rm Sel}_\pp(K,W)_{\rm div}$ (see \eqref{eq:div}) 
and similarly 
$\check{S}_p(E/K)\otimes_{\Z_p}\Q_p/\Z_p={\rm Sel}_{p^\infty}(E/K)_{\rm div}$, from \eqref{eq:free} and \eqref{eq:V} it follows that
\begin{equation}\label{eq:I}
\ker\biggl\{{\rm Sel}_{p^\infty}(E/K)_{\rm div}\xrightarrow{{\rm res}_\pp}E(K_\pp)\otimes\Q_p/\Z_p\biggr\}\simeq{\rm Sel}_\pp(K,W)_{\rm div}\oplus U'.
\end{equation}
Moreover, we know that 
\begin{equation}\label{eq:order-V}
\#U'=\#{\rm coker}({\rm res}_{\pp/{\rm tor}}).
\end{equation}

Denote by ${\rm Sel}_{{\rm rel},{\rm fin}}(K,V)$ the Selmer group obtained by relaxing the condition at $\pp$ in the usual Bloch--Kato Selmer group ${\rm Sel}(K,V)$, and let ${\rm Sel}_{{\rm rel},{\rm fin}}(K,W)$ be the Selmer group obtained by propagation.
Then from Poitou--Tate duality we have the exact sequence
\begin{align*}
0\rightarrow{\rm Sel}_{p^\infty}(E/K)\rightarrow{\rm Sel}_{{\rm rel},{\rm fin}}(K,W)\xrightarrow{{\rm res}_{\pp/{\rm fin}}}\frac{\rH^1(K_{\pp},W)_{\rm div}}{E(K_{\pp})\otimes\Q_p/\Z_p}&\simeq(E(K_{\pp})_{/{\rm tor}}\otimes\Z_p)^\vee\\
&\quad\xrightarrow{({\rm res}_{\pp/{\rm tor}})^\vee}\check S_p(E/K)^\vee,
\end{align*}
and it follows from our assumption that the map ${\rm res}_{\pp/{\rm tor}}$ has finite cokernel. Hence the map ${\rm res}_{{\pp}/{\rm fin}}$ has finite image, with
\begin{equation}\label{eq:duality}
\#{\rm im}({\rm res}_{\pp/{\rm fin}})=\#{\rm coker}({\rm res}_{\pp/{\rm tor}}).
\end{equation}
We thus deduce the following commutative diagram with exact rows
\begin{equation}\label{eq:diag}
\begin{aligned}
\xymatrix{
&{\rm Sel}_{p^\infty}(E/K)_{\rm div}\ar[r]^-{\simeq}\ar@{^{(}->}[d]&{\rm Sel}_{{\rm rel},{\rm fin}}(K,W)_{\rm div}\ar@{^{(}->}[d]&\\
0\ar[r]&{\rm Sel}_{p^\infty}(E/K)\ar[r]&{\rm Sel}_{{\rm rel},{\rm fin}}(K,W)\ar[r]&{\rm im}({\rm res}_{\pp/{\rm fin}})\ar[r]&0,
}
\end{aligned}
\end{equation}
from where, together with \eqref{eq:duality}, we conclude that
\begin{equation}\label{eq:step1}
\#({\rm Sel}_{{\rm rel},{\rm fin}}(K,W)_{/{\rm div}})=\#\Sha_{\rm BK}(K,W)\cdot\#{\rm coker}({\rm res}_{\pp/{\rm tor}}).
\end{equation}

On the other hand, from \eqref{eq:I} and the isomorphism in \eqref{eq:diag} we also have the commutative diagram with exact rows 
\[
\xymatrix{
0\ar[r]&{\rm Sel}_{\ppbar}(K,W)_{\rm div}\oplus U'\ar[r]\ar@{^{(}->}[d]&{\rm Sel}_{{\rm rel},{\rm fin}}(K,W)_{\rm div}\ar@{^{(}->}[d]\ar[r]^-{{\rm res}_{\ppbar}}&E(K_{\ppbar})\otimes\Q_p/\Z_p\ar@{=}[d]\ar[r]&0\\
0\ar[r]&{\rm Sel}_{\ppbar}(K,W)\ar[r]&{\rm Sel}_{{\rm rel},{\rm fin}}(K,W)\ar[r]^-{{\rm res}_{\ppbar}}&E(K_{\ppbar})\otimes\Q_p/\Z_p\ar[r]&0,
}
\]
from where we conclude that
\begin{align*}
\#({\rm Sel}_{\ppbar}(K,W)_{/{\rm div}})&=\#({\rm Sel}_{{\rm rel},{\rm fin}}(K,W)_{/{\rm div}})\cdot\#U'.
\end{align*}
Together with \eqref{eq:order-V} and \eqref{eq:step1}, this last formula yields the result.
\end{proof}

Recall that we use ${\rm log}_\pp:\check{S}_p(E/K)\rightarrow\Z_p$ to denote the composition of ${\rm res}_{\pp/{\rm tor}}:\check{S}_p(E/K)\rightarrow E(K_\pp)_{/{\rm tor}}\otimes\Z_p$ with the formal group logarithm 
associated with a fixed N\'{e}ron differential $\omega_E\in\Omega^1(E/\Z_{(p)})$.

\begin{prop}\label{prop:coker}
Let the hypotheses be as in Proposition\;\ref{prop:sel-sha}.
Then
\[
\#{\rm coker}({\rm res}_{\pp/{\rm tor}})\sim_p
\biggl(\frac{1-a_p(E)+p}{p}\biggr)\cdot\log_\pp(s_\pp)\cdot\frac{1}{\#E(K_\pp)[p^\infty]},
\]
where $s_\pp$ is any generator of $\check{S}_p(E/K)/\ker({\rm res}_{\pp/{\rm tor}})\simeq\Z_p$ and $a_p(E):=p+1-\#E(\mathbf{F}_p)$.
\end{prop}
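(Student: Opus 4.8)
The plan is to reduce $\#{\rm coker}({\rm res}_{\pp/{\rm tor}})$ to a purely local index at $\pp$ using Lemma~\ref{lem:rank-1}, and then to evaluate that index via the formal group logarithm and the reduction exact sequence at $p$. \emph{Reduction to a local index.} By Lemma~\ref{lem:rank-1} and \eqref{eq:h0-intro} we have a $\Z_p$-module decomposition $\check{S}_p(E/K)={\rm Sel}_\pp(K,T)\oplus\Z_p s_\pp$ with ${\rm Sel}_\pp(K,T)=\ker({\rm res}_{\pp/{\rm tor}})$; hence ${\rm res}_{\pp/{\rm tor}}$ has image $\Z_p\cdot{\rm res}_{\pp/{\rm tor}}(s_\pp)$, which is of finite index in the free rank-one $\Z_p$-module $E(K_\pp)_{/{\rm tor}}\otimes\Z_p$ (as ${\rm res}_\pp(s_\pp)$ is non-torsion), so that
\[
\#{\rm coker}({\rm res}_{\pp/{\rm tor}})=\bigl[E(K_\pp)_{/{\rm tor}}\otimes\Z_p:\Z_p\cdot{\rm res}_{\pp/{\rm tor}}(s_\pp)\bigr].
\]
Applying the formal group logarithm $\log_{\omega_E}$, which is injective on $E(K_\pp)_{/{\rm tor}}\otimes\Z_p$, identifies this index with $[L:\Z_p\cdot{\rm log}_\pp(s_\pp)]$, where $L:=\log_{\omega_E}(E(K_\pp)_{/{\rm tor}}\otimes\Z_p)=p^m\Z_p$ is a fractional $\Z_p$-ideal containing ${\rm log}_\pp(s_\pp)$; therefore $\#{\rm coker}({\rm res}_{\pp/{\rm tor}})\sim_p{\rm log}_\pp(s_\pp)\cdot p^{-m}$.

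\emph{Computing $p^m$.} Since \eqref{eq:spl} gives $K_\pp=\Q_p$, good reduction at $p$ yields the exact sequence $0\to\hat{E}(p\Z_p)\to E(\Q_p)\to\tilde{E}(\mathbf{F}_p)\to 0$, and, as $p$ is odd, the standard convergence estimates show $\log_{\omega_E}$ restricts to an isomorphism $\hat{E}(p\Z_p)\xrightarrow{\sim}p\Z_p$. Extending $\log_{\omega_E}$ to $E(\Q_p)$ (it kills $E(\Q_p)_{\rm tor}$), and using that $\hat{E}(p\Z_p)$ is torsion-free — so that $\hat{E}(p\Z_p)\cap E(\Q_p)_{\rm tor}=0$ and reduction is injective on $E(\Q_p)_{\rm tor}$ — we obtain $p\Z_p\subseteq L$ and
\[
[L:p\Z_p]=\bigl[E(\Q_p)/E(\Q_p)_{\rm tor}:\text{image of }\hat{E}(p\Z_p)\bigr]=\frac{[E(\Q_p):\hat{E}(p\Z_p)]}{\#E(\Q_p)_{\rm tor}}=\frac{\#\tilde{E}(\mathbf{F}_p)}{\#E(\Q_p)_{\rm tor}}.
\]
By Hensel's lemma the prime-to-$p$ torsion of $E(\Q_p)$ maps isomorphically onto that of $\tilde{E}(\mathbf{F}_p)$, so this ratio has trivial prime-to-$p$ part and equals, up to a $p$-adic unit, $\#\tilde{E}(\mathbf{F}_p)/\#E(K_\pp)[p^\infty]=(1-a_p(E)+p)/\#E(K_\pp)[p^\infty]$. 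Hence $p^m=p\cdot[L:p\Z_p]^{-1}\sim_p p\cdot\#E(K_\pp)[p^\infty]/(1-a_p(E)+p)$.

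Combining the two steps gives $\#{\rm coker}({\rm res}_{\pp/{\rm tor}})\sim_p{\rm log}_\pp(s_\pp)\cdot(1-a_p(E)+p)/(p\cdot\#E(K_\pp)[p^\infty])$, which is the asserted formula. The only substantive point is the local index of the second step; the analytic ingredient — that $\log_{\omega_E}$ carries $\hat{E}(p\Z_p)$ isomorphically onto $p\Z_p$ for odd $p$ — is standard, and the remaining work is the bookkeeping needed to check that the prime-to-$p$ torsion cancels (so that $\#\tilde{E}(\mathbf{F}_p)/\#E(\Q_p)_{\rm tor}$ agrees with its $p$-part up to a $p$-adic unit) and that $E(K_\pp)_{/{\rm tor}}\otimes\Z_p$ may legitimately be identified with $E(\Q_p)/E(\Q_p)_{\rm tor}$.
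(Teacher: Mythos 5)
Your proof is correct and follows essentially the same route as the paper: both reduce $\#\operatorname{coker}(\mathrm{res}_{\pp/\mathrm{tor}})$ to the index $[E(K_\pp)_{/\mathrm{tor}}\otimes\Z_p:\Z_p s_\pp]$, transport it via $\log_{\omega_E}$ using the isomorphism $E_1(K_\pp)\otimes\Z_p\xrightarrow{\sim}p\Z_p$, and evaluate the remaining local index with the reduction exact sequence and $\#\tilde E(\mathbf{F}_p)=1-a_p(E)+p$. The only cosmetic differences are notational (you write $\hat E(p\Z_p)$ where the paper writes $E_1(K_\pp)$, and you package the index as $p^m$), so there is nothing further to add.
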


\begin{proof}
As shown in the proof of Proposition\,\ref{prop:sel-sha} (see \eqref{eq:U}), we have
\begin{equation}\label{eq:index}
\#({\rm coker}({\rm res}_{\pp/{\rm tor}}))=[E(K_\pp)_{/{\rm tor}}\otimes\Z_p:\Z_ps_\pp].
\end{equation}
Let $E_1(K_\pp)$ be the kernel of the reduction map modulo $\pp$, so 
we have the exact sequence
\begin{equation}\label{eq:red-p}
0\rightarrow E_1(K_\pp)\rightarrow E(K_\pp)\rightarrow E(\mathbf{F}_p)\rightarrow 0.\nonumber
\end{equation}
The formal group logarithm defines an isomorphism
\[
{\rm log}_\pp:E_1(K_\pp)\otimes\Z_p\xrightarrow{\simeq}p\Z_p,
\]
and this extends to an injection $E(K_\pp)_{/{\rm tor}}\otimes\Z_p\hookrightarrow\Z_p$.
Hence from \eqref{eq:index} we find
\begin{align*}
\#({\rm coker}({\rm res}_{\pp/{\rm tor}}))&=\frac{[\Z_p:{\rm log}_\pp(s_\pp)\Z_p]}{[\Z_p:{\rm log}_\pp(E(K_\pp)_{/{\rm tor}}\otimes\Z_p)]}\\
&=\frac{[\Z_p:{\rm log}_\pp(s_\pp)\Z_p]}{[\Z_p:p\Z_p]}\cdot[E(K_\pp)_{/{\rm tor}}\otimes\Z_p:E_1(K_\pp)\otimes\Z_p]\\
&\sim_p\frac{\log_\pp(s_\pp)}{p}\cdot\frac{[E(K_\pp)\otimes\Z_p:E_1(K_\pp)\otimes\Z_p]}{\#E(K_\pp)[p^\infty]}.
\end{align*}
Since by definition $\#(E(\mathbf{F}_p)\otimes\Z_p)\simeq\#\Z_p/(1-a_p(E)+p)\Z_p$, this yields the result.
\end{proof}


\subsection{Leading Coefficient Formula}


\begin{proof}[Proof of Theorem~\ref{thm:A}(i)]
Let $r\geq\varrho_{\rm alg}={\rm ord}_JF_{\ppbar}^{\rm BDP}$.
From Proposition~\ref{prop:well-known} and Proposition~\ref{prop:recursion} we have the equalities up to a $p$-adic unit:
\begin{equation}\label{eq:step1-2}
\begin{aligned}
\frac{1}{\varrho_{\rm alg}!}\frac{d^{\varrho_{\rm alg}}}{dT^{\varrho_{\rm alg}}}F_{\ppbar}^{\rm BDP}\Bigl\vert_{T=0}\,&\sim_p\,\#\bigl(\underrightarrow{\mathfrak{S}}_{\ppbar}^{(r+1)}\bigr)\\
\,&\sim_p\,{\rm Reg}_{\pp,{\rm der},\gamma}\cdot\bigl[{\rm Sel}_\pp(K,T)\colon\mathfrak{Sel}_\pp(K,T)\bigr]\cdot
\#\bigl(\mathfrak{Sel}_{\ppbar}(K,W)_{/{\rm div}}\bigr).
\end{aligned}
\end{equation}
Since from Corollary~\ref{cor:control-2} we have the relation
\[
\bigl[{\rm Sel}_\pp(K,T)\colon\mathfrak{Sel}_{\pp}(K,T)\bigr]\cdot
\#\bigl(\mathfrak{Sel}_{\ppbar}(K,W)_{/{\rm div}}\bigr)
=\#(E(\Q_p)[p^\infty])^2\cdot\prod_{w\mid N}c_w^{(p)}\cdot\#({\rm Sel}_{\ppbar}(K,W)_{/{\rm div}}),
\]
continuing from \eqref{eq:step1-2} we deduce that
\begin{align*}
\frac{1}{\varrho_{\rm alg}!}\frac{d^{\varrho_{\rm alg}}}{dT^{\varrho_{\rm alg}}}&F_{\ppbar}^{\rm BDP}\Bigl\vert_{T=0}\\
\,&\sim_p\,{\rm Reg}_{\pp,{\rm der},\gamma}\cdot(\#E(\Q_p)[p^\infty])^2\cdot\prod_{w\mid N}c_w^{(p)}\cdot\#({\rm Sel}_{\ppbar}(K,W)_{/{\rm div}})\\
\,&\sim_p\,{\rm Reg}_{\pp,{\rm der},\gamma}\cdot(\#E(\Q_p)[p^\infty])^2\cdot\prod_{w\mid N}c_w^{(p)}\cdot\#\Sha_{\rm BK}(K,W)\cdot(\#{\rm coker}({\rm res}_{\pp/{\rm tor}}))^2\\
\,&\sim_p\,{\rm Reg}_{\pp,{\rm der},\gamma}\cdot\biggl(\frac{1-a_p(E)+p}{p}\biggr)^2\cdot{\rm log}_\pp(s_\pp)^2\cdot\prod_{w\mid N}c_w^{(p)}\cdot\#\Sha_{\rm BK}(K,W),
\end{align*}
using Proposition~\ref{prop:sel-sha}
 and Proposition~\ref{prop:coker} for the middle and the last equality, respectively.
 Noting that $\prod_{w\mid N}c_w=\prod_{\ell\mid N}c_\ell^2$ as a consequence of \eqref{eq:Heeg}, this finishes the proof.
\end{proof}

\subsection{Order of Vanishing}\label{subsec:order-van}

In this section we give the proof of Theorem~\ref{thm:A}(ii).
Since $\underleftarrow{\overline{\mathfrak{S}}}_\qq^{(1)}={\rm Sel}_\qq(K,T)$, from \eqref{eq:ht-Qp} we have a $p$-adic height pairing
\[
h_\pp = h_\pp^{(1)}:{\rm Sel}_\pp(K,T)\times{\rm Sel}_{\ppbar}(K,T)\rightarrow\Q_p
\]
whose kernel on the left 
is given by $\underleftarrow{\overline{\mathfrak{S}}}_\pp^{(2)}$ (and whose $\Z_p$-rank is the same as that of $\underleftarrow{{\mathfrak{S}}}_\pp^{(2)}$). 

\begin{prop}\label{prop:e2}
Set $r^\pm={\rm rank}_{\Z_p}\check{S}_p(E/K)^\pm$.
Then
\[
{\rm rank}_{\Z_p}\underleftarrow{\mathfrak{S}}_\pp^{(2)}\geq\abs{ r^+-r^-}-1.
\]
\end{prop}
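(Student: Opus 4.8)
The plan is to deduce the bound from a sign analysis of the derived height $h_\pp=h_\pp^{(1)}$ of \eqref{eq:ht-Qp}. By Lemma~\ref{lem:rank-1} I may work inside $\mathbb{S}:=\check{S}_p(E/K)\otimes_{\Z_p}\Q_p$, in which ${\rm Sel}_\pp(K,V)=\ker({\rm res}_\pp)$, ${\rm Sel}_{\ppbar}(K,V)=\ker({\rm res}_{\ppbar})$ and ${\rm Sel}_{\rm str}(K,V)={\rm Sel}_\pp(K,V)\cap{\rm Sel}_{\ppbar}(K,V)$; extending $h_\pp$ $\Q_p$-linearly gives a pairing ${\rm Sel}_\pp(K,V)\times{\rm Sel}_{\ppbar}(K,V)\to\Q_p$ whose left kernel is $\underleftarrow{\overline{\mathfrak{S}}}_\pp^{(2)}\otimes\Q_p$, of dimension ${\rm rank}_{\Z_p}\underleftarrow{\mathfrak{S}}_\pp^{(2)}$. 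Since $\dim_{\Q_p}{\rm Sel}_\pp(K,V)=r-1=r^++r^--1$, the proposition is equivalent to the rank bound ${\rm rank}\,h_\pp\le 2\min\{r^+,r^-\}$. First I would record how complex conjugation $\tau\in{\rm Gal}(K/\Q)$ interacts with the derived heights: because $\tau$ interchanges $\pp$ and $\ppbar$ it induces isomorphisms $\tau\colon{\rm Sel}_\pp(K,V)\xrightarrow{\sim}{\rm Sel}_{\ppbar}(K,V)$ and conversely and preserves ${\rm Sel}_{\rm str}(K,V)$, while, because $K_\infty/K$ is \emph{anticyclotomic}, $\tau$ acts on $\Gamma$ and hence on $J/J^2$ by inversion. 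Combining the functoriality of Howard's construction with the symmetry of $\tilde h_{\qq,k}$ recalled in the proof of Theorem~\ref{thm:howard-ht-pk}, one obtains for $x\in{\rm Sel}_\pp(K,V)$, $y\in{\rm Sel}_{\ppbar}(K,V)$ the relations
\[
h_\pp(x,y)=h_{\ppbar}(y,x),\qquad h_{\ppbar}(\tau x,\tau y)=-\,h_\pp(x,y),
\]
together with the compatibility $h_\pp=h_{\ppbar}$ after restricting both arguments to ${\rm Sel}_{\rm str}(K,V)$.

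Granting these, I would set $\beta(x,x'):=h_\pp(x,\tau x')$ on ${\rm Sel}_\pp(K,V)$; a short manipulation of the displayed relations using $\tau^2={\rm id}$ shows $\beta(x',x)=-\beta(x,x')$, so $\beta$ is alternating with radical equal to the left kernel of $h_\pp$, and it suffices to prove ${\rm rank}\,\beta\le 2\min\{r^+,r^-\}$. Next I would introduce the eigenspace decomposition $\mathbb{S}=\mathbb{S}^+\oplus\mathbb{S}^-$ under $\tau$, with $\dim_{\Q_p}\mathbb{S}^\pm=r^\pm$. Since ${\rm res}_\pp$ and ${\rm res}_{\ppbar}$ are intertwined by $\tau$ on each eigenspace, the restriction of ${\rm res}_\pp$ to $\mathbb{S}^\pm$ has image of dimension at most $\dim_{\Q_p}\big(E(K_\pp)\otimes\Q_p\big)=1$; hence ${\rm Sel}_{\rm str}(K,V)^{\pm}:={\rm Sel}_{\rm str}(K,V)\cap\mathbb{S}^\pm$ has dimension $\ge r^\pm-1$, one has ${\rm Sel}_{\rm str}(K,V)={\rm Sel}_{\rm str}(K,V)^+\oplus{\rm Sel}_{\rm str}(K,V)^-$, and this sum has codimension $c\in\{0,1\}$ in ${\rm Sel}_\pp(K,V)$. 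Feeding the $\tau$-eigenvalue $\pm1$ into the three relations then shows that $\beta$ vanishes identically on ${\rm Sel}_{\rm str}(K,V)^+$ and on ${\rm Sel}_{\rm str}(K,V)^-$: both are $\beta$-isotropic.

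The argument then closes with linear algebra. An alternating (or symmetric) form on a finite-dimensional $\Q_p$-space admitting transverse isotropic subspaces $W_1,W_2$ with $W_1\oplus W_2$ of codimension $c$ has rank at most $2\min\{\dim W_1,\dim W_2\}+2c$, as one sees by block-decomposing its Gram matrix along $W_1\oplus W_2$ and a complement. Applying this with $W_i={\rm Sel}_{\rm str}(K,V)^\pm$ and writing $\dim{\rm Sel}_{\rm str}(K,V)^\pm=r^\pm-\delta^\pm$ with $\delta^\pm\le1$ and $\delta^++\delta^-=c+1$, an elementary check of the cases $c=0$ and $c=1$ yields $\min\{\dim W_1,\dim W_2\}+c\le\min\{r^+,r^-\}$, whence ${\rm rank}\,\beta\le 2\min\{r^+,r^-\}$, which is the assertion. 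The main obstacle I anticipate is not this last step but the careful justification of the two functoriality relations for Howard's pairing --- in particular pinning down the \emph{sign} of the $\tau$-action (where ``anticyclotomic'' enters decisively) and the agreement of $h_\pp$ with $h_{\ppbar}$ on the strict Selmer group --- since Howard's derived heights are constructed concretely via the $\Lambda_k$-adic Selmer complexes, so that these compatibilities must be verified at that level rather than quoted from an abstract symmetry.
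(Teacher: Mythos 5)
Your proposal is essentially correct and sound, but it packages the argument differently from the paper, so let me compare. The paper restricts $h_\pp$ to ${\rm Sel}_{\rm str}(K,T)={\rm Sel}_\pp(K,T)\cap{\rm Sel}_{\ppbar}(K,T)$, quotes the anti-symmetry $h_{\rm str}(x^\tau,y^\tau)=-h_{\rm str}(x,y)$ directly from \cite[Rem.~1.12]{howard-derived}, deduces ${\rm rank}\,{\rm ker}(h_{\rm str})\geq\abs{r_{\rm str}^+-r_{\rm str}^-}$, and then splits into two cases according to ${\rm rank}_{\Z_p}{\rm im}({\rm res}_p)\in\{1,2\}$: in the first case ${\rm Sel}_{\rm str}={\rm Sel}_\pp={\rm Sel}_{\ppbar}$ and one is done; in the second case one introduces an element $z$ with ${\rm res}_\pp(z)=0$, ${\rm res}_{\ppbar/{\rm tor}}(z)\neq 0$, writes ${\rm Sel}_\pp={\rm Sel}_{\rm str}\oplus\Z_pz$, ${\rm Sel}_{\ppbar}={\rm Sel}_{\rm str}\oplus\Z_pz^\tau$, and observes that the left kernel of $h_\pp$ contains $\ker(h_{\rm str})\cap\ker(h_\pp(-,z^\tau))$, a codimension-at-most-one subspace. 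Your route instead pulls everything back to an alternating form $\beta(x,x')=h_\pp(x,\tau x')$ on ${\rm Sel}_\pp(K,V)$ itself, shows that the two $\tau$-eigenspaces of ${\rm Sel}_{\rm str}$ are $\beta$-isotropic, and closes with the general rank bound for a form with two transverse isotropic subspaces; the dichotomy $c\in\{0,1\}$ you use corresponds exactly to the paper's Cases (i) and (ii), just absorbed into a uniform linear-algebra step. What this buys you is a cleaner formulation in which the auxiliary element $z$ never appears; what it costs you is a finer list of sign relations. In particular, the isotropy of ${\rm Sel}_{\rm str}(K,V)^\pm$ genuinely requires your third relation (that $h_\pp$ and $h_{\ppbar}$ agree on ${\rm Sel}_{\rm str}$): the $\tau$-antisymmetry $h_{\ppbar}(\tau x,\tau y)=-h_\pp(x,y)$ alone only gives $h_\pp(x,x')=-h_{\ppbar}(x,x')$ for $x,x'\in{\rm Sel}_{\rm str}^+$, not vanishing. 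The paper sidesteps this by citing the single relation $h_{\rm str}(x^\tau,y^\tau)=-h_{\rm str}(x,y)$ \emph{for the same pairing} $h_{\rm str}$ on both sides (which is equivalent to your (R2) together with your agreement statement). You flag exactly this at the end, correctly identifying that pinning down the $\tau$-sign and the $h_\pp=h_{\ppbar}$ agreement at the level of Howard's $\Lambda_k$-adic construction is the real content. Modulo that verification — which the paper also relies upon via its citation — your proof is correct and the concluding linear algebra (${\rm rank}\,\beta\leq 2(\min\{\dim W_1,\dim W_2\}+c)$ and the check $\min\{\dim W_1,\dim W_2\}+c\leq\min\{r^+,r^-\}$ in both subcases) checks out.
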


\begin{proof}
Note that complex conjugation 
acts on ${\rm Sel}_{\rm str}(K,T)={\rm Sel}_\pp(K,T)\cap{\rm Sel}_{\ppbar}(K,T)$.
Let 
\begin{equation}
\label{eq:h-str}
h_{\rm str}:{\rm Sel}_{\rm str}(K,T)\times{\rm Sel}_{\rm str}(K,T)\rightarrow\Q_p\noindent
\end{equation}
be the pairing obtained from $h_\pp$ by restriction.
By \cite[Rem.~1.12]{howard-derived}, we have
\[
h_{\rm str}(x^\tau,y^\tau)=-\check{h}_p(x,y),
\]
for all $x,y\in{\rm Sel}_{\rm str}(K,T)$, where $\tau$ is complex conjugation.
Writing $r_{\rm str}^\pm$ to denote the $\Z_p$-rank of the $\tau$-eigenspace ${\rm Sel}_{\rm str}(K,T)^{\pm}$, it follows that
\begin{equation}\label{eq:null-str}
{\rm rank}_{\Z_p}{\rm ker}(h_{\rm str})\geq\abs{ r_{\rm str}^+-r_{\rm str}^-}.
\end{equation}
We distinguish two cases according to the $\Z_p$-rank of the image of the restriction map ${\rm res}_p$ in the proof of Lemma~\ref{lem:rank-1}.
\sk

\noindent\emph{Case (i): ${\rm rank}_{\Z_p}{\rm im}({\rm res}_p)=1$.} 
By \eqref{eq:BDP=str}, we have $h_{\rm str}=h_\pp$ and
\begin{equation}\label{eq:drops}
(r_{\rm str}^+,r_{\rm str}^-)\in\{(r^+-1,r^-),(r^+,r^--1)\}.\nonumber
\end{equation}
Thus $\abs{ r_{\rm str}^+-r_{\rm str}^-}\geq\abs{ r^+-r^-}-1$, and the result follows from \eqref{eq:null-str}.
\sk

\noindent\emph{Case (ii): ${\rm rank}_{\Z_p}{\rm im}({\rm res}_p)=2$.} 
Consider a non-zero element $z\in\check{S}_p(E/K)$ satisfying ${\rm res}_\pp(z)={\rm res}_{\ppbar}(z^\tau)=0$ and ${\rm res}_{\ppbar/{\rm tor}}(z)\neq 0$.
Then 
\begin{equation}\label{eq:S-Sstr}
\check{S}_p(E/K)={\rm Sel}_{\rm str}(K,T)\oplus\Z_pz^+\oplus\Z_pz^-,
\end{equation}
where $z^\pm=\frac{1}{2}(z\pm z^\tau)$.
With this notation, we can write
\begin{align*}
%
{\rm Sel}_\pp(K,T)={\rm Sel}_{\rm str}(K,T)\oplus\Z_pz,
\qquad{\rm Sel}_{\ppbar}(K,T)={\rm Sel}_{\rm str}(K,T)\oplus\Z_pz^\tau.
\end{align*}
Now, we immediately see that
\begin{align*}
\textrm{(left kernel of $h_\pp$)}\supset\bigcap_{s}{\rm ker}(h_{\rm str}(-,s))\cap{\rm ker}(h_\pp(-,z^\tau)),
\end{align*}
where $s$ runs over all the elements in ${\rm Sel}_{\rm str}(K,T)$.
Thus we conclude that
\begin{align*}
{\rm rank}_{\Z_p}\underleftarrow{\mathfrak{S}}_\pp^{(2)}&\geq{\rm rank}_{\Z_p}{\rm ker}(h_{\rm str})-1\\
&\geq\abs{ r_{\rm str}^+-r_{\rm str}^-}-1\\
&=\abs{ r^+-r^-}-1,
\end{align*}
using \eqref{eq:null-str} and \eqref{eq:S-Sstr} for the second inequality and the last equality, respectively.
\end{proof}

\begin{rem}\label{rem:sha}
Conjecturally, \emph{Case (i)} in the proof of Proposition~\ref{prop:e2} only occurs when either $r^+$ or $r^-$ is $0$.
Indeed, let $E^K/\Q$ be the twist of $E$ by the quadratic character corresponding to $K/\Q$.
If both $r^+$ and $r^-$ are positive, then the finiteness of $\Sha(E/K)[p^\infty]=\Sha(E/\Q)[p^\infty]\oplus\Sha(E^K/\Q)[p^\infty]$ implies that the restriction map
\[
{\rm res}_p=({\rm res}_p^+,{\rm res}_p^-):\check{S}_p(E/K)\rightarrow E(K_p)\otimes\Z_p=(E(\Q_p)\otimes\Z_p)\oplus(E^K(\Q_p)\otimes\Z_p)
\]
satisfies ${\rm rank}_{\Z_p}{\rm im}({\rm res}_p^\pm)=1$, so the $\Z_p$-rank of ${\rm im}({\rm res}_p)$ is $2$.
\end{rem}

The following is Theorem~\ref{thm:A}(ii):

\begin{cor}\label{cor:rank-ineq}
Let ${\varrho_{\rm alg}}={\rm ord}_{J}F_{\ppbar}^{\rm BDP}$.
Then
\[
{\varrho_{\rm alg}}\geq 2(\max\{r^+,r^-\}-1),
\]
where $r^\pm={\rm rank}_{\Z_p}\check{S}_p(E/K)^\pm$, with equality if and only if $h_\pp^{}$ is maximally non-degenerate.
\end{cor}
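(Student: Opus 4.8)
The plan is to combine the formula for $\varrho_{\rm alg}$ already extracted in Step~1 with the lower bound of Proposition~\ref{prop:e2}. Applying \eqref{eq:varrho} (from the proof of Proposition~\ref{prop:well-known}) to $X_{\ppbar}$ and invoking Corollary~\ref{cor:der-ht}, one has
\[
\varrho_{\rm alg}=\sum_{i\geq 1}i\,e_i,\qquad e_i:={\rm rank}_{\Z_p}\bigl(\underleftarrow{\mathfrak{S}}_{\ppbar}^{(i)}/\underleftarrow{\mathfrak{S}}_{\ppbar}^{(i+1)}\bigr),
\]
a finite sum since $\underleftarrow{\mathfrak{S}}_{\ppbar}^{(i)}=0$ for $i\gg 0$ by Proposition~\ref{prop:Lambda-tors} and Corollary~\ref{cor:der-ht}(ii). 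Because complex conjugation induces an isomorphism ${\rm Sel}_\pp(K,S_\infty)\simeq{\rm Sel}_{\ppbar}(K,S_\infty)$, these $e_i$ agree with the invariants of \eqref{eq:ei} attached to the $\pp$-filtration; in particular $\sum_{i\geq 1}e_i={\rm rank}_{\Z_p}\underleftarrow{\mathfrak{S}}_\pp^{(1)}=r-1$ by Lemma~\ref{lem:rank-1} and Corollary~\ref{cor:control-mazur}, while $\sum_{i\geq 2}e_i={\rm rank}_{\Z_p}\underleftarrow{\mathfrak{S}}_\pp^{(2)}$.

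Then I would simply split the sum and estimate:
\[
\varrho_{\rm alg}=\sum_{i\geq 1}e_i+\sum_{i\geq 2}(i-1)\,e_i\;\geq\;(r-1)+\sum_{i\geq 2}e_i=(r-1)+{\rm rank}_{\Z_p}\underleftarrow{\mathfrak{S}}_\pp^{(2)}.
\]
By Proposition~\ref{prop:e2}, ${\rm rank}_{\Z_p}\underleftarrow{\mathfrak{S}}_\pp^{(2)}\geq\abs{r^+-r^-}-1$, and since $r=r^++r^-$ we get $\varrho_{\rm alg}\geq (r-1)+\abs{r^+-r^-}-1=2\max\{r^+,r^-\}-2$, which is the claimed inequality. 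For the equality case, trace this chain backwards: equality forces both $\sum_{i\geq 2}(i-1)e_i=\sum_{i\geq 2}e_i$ and ${\rm rank}_{\Z_p}\underleftarrow{\mathfrak{S}}_\pp^{(2)}=\abs{r^+-r^-}-1$. The first identity forces $e_i=0$ for all $i\geq 3$ (equivalently $\underleftarrow{\mathfrak{S}}_\pp^{(i)}=0$ for $i\geq 3$, since these are $\Z_p$-submodules of the free module $\check S_p(E/K)$ by Corollary~\ref{cor:control-mazur}); granting that, the second reads $e_2=\abs{r^+-r^-}-1$. These are precisely the conditions defining $h_\pp$ to be maximally non-degenerate, and conversely that condition turns both displays into equalities.

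I do not expect a genuine obstacle here: the substantive work (the structure-theoretic input behind \eqref{eq:varrho} and the degeneracy bound of Proposition~\ref{prop:e2}) is already in place. The only points needing care are bookkeeping ones --- matching the $\pp$- and $\ppbar$-side invariants $e_i$ via complex conjugation and Corollary~\ref{cor:der-ht}(i), and noting the $\Z_p$-torsion-freeness of the $\underleftarrow{\mathfrak{S}}_\pp^{(i)}$ so that ``rank zero'' can be upgraded to ``zero'' in the equality discussion.
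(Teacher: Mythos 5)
Your proof is correct and follows essentially the same route as the paper: decompose $\varrho_{\rm alg}=\sum_i i e_i$, identify $\sum_{i\geq 1}e_i=r-1$ and $\sum_{i\geq 2}e_i={\rm rank}_{\Z_p}\underleftarrow{\mathfrak{S}}_\pp^{(2)}$, bound the latter below via Proposition~\ref{prop:e2}, and read off the equality conditions. The only cosmetic difference is that the paper states "maximally non-degenerate" directly in terms of the invariants $e_i$, so your closing remark about upgrading ``rank zero'' to ``zero'' for $\underleftarrow{\mathfrak{S}}_\pp^{(i)}$ is a harmless extra observation rather than a needed step.
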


\begin{proof}
With the notation from \eqref{eq:varrho}, we have
\begin{align*}
{\varrho_{\rm alg}}
&=e_1+2e_2+\cdots+i_0e_{i_0}\\
&\geq (e_1+e_2+\cdots+e_{i_0})+(e_2+\cdots+e_{i_0})\\
&\geq(r-1)+(\abs{ r^+-r^-}-1)\\
&=2(\max\{r^+,r^-\}-1),
\end{align*} 
using Proposition~\ref{prop:e2} for the second inequality.
These inequalities are equalities if and only if $e_i=0$ for $i\geq 3$ and $e_2=\abs{ r^+-r^-}-1$, as was to be shown.
\end{proof}

\begin{rem}\label{rem:max-nondeg}
Assume that $p$ is ordinary for $E$.
Applied to the usual (compact) Selmer group $\check{S}_p(E/K)$, Howard's work produces a filtration
\[
\check{S}_p(E/K)\otimes\Q_p=S_p^{(1)}\supset S_p^{(2)}\supset\cdots \supset S_p^{(i)}\supset\cdots
\]
and a sequence of pairings $\check{h}_p^{(i)}:S_p^{(i)}\times S_p^{(i)}\rightarrow\Q_p$ such that $S_p^{(i+1)}$ is the kernel of $S_p^{(i)}$ (\cite[Thm.~4.2]{howard-derived}).
In this setting, a conjecture due to Mazur and Bertolini--Darmon predicts that
\begin{equation}\label{eq:BD-conj}
{\rm dim}_{\Q_p}S_p^{(i)}\overset{?}=\begin{cases}
\vert r^+-r^-\vert&\textrm{if $i=2$,}\\
1&\textrm{if $i=3$,}\\[0.2em]
\end{cases}
\end{equation}
and $S_p^{(i)}=0$ for $i\geq 4$, and that $S_p^{(3)}$ spanned by the space of universal norms 
\[
\check{S}_p(E/K)^u=\bigcap_n{\rm cor}_{K_n/K}(\check{S}_p(E/K_n)) 
\]
(see \cite[Conj.~4.4]{howard-derived} and \cite[Conj.~3.8]{BD-derived-AJM}).
Using Proposition~\ref{prop:Lambda-tors} and Corollary~\ref{cor:der-ht}, one easily checks 
(arguing similarly as in the proof of Proposition~\ref{prop:e2}) that $\check{S}_p(E/K)^u\cap{\rm Sel}_\pp(K,T)=0$ .
As a result, conjecture \eqref{eq:BD-conj} implies that $h_\pp$ is maximally non-degenerate.

In the $p$-supersingular case the same conclusion should hold, building on the work of Benois \cite{benois-ht} to obtain (derived) anticyclotomic $p$-adic height pairings on $\check{S}_p(E/K)\otimes\Q_p$ compatible with our $h_\pp$.  
\end{rem}


\section{Proof of anticyclotomic main conjectures: supersingular case}\label{sec:IMC}

The purpose of this section is to give a proof of the signed Heegner point main conjecture formulated in \cite{CW-PR} for supersingular primes $p$ under mild hypotheses. 
By the equivalence between this conjecture and Conjecture~\ref{conj:BDP-IMC} when $p=\pp\ppbar$ splits in $K$, we deduce a proof of the latter conjecture under the same hypotheses.

The formulation of the signed main conjecture in \cite{CW-PR} is under a generalised Heegner hypothesis\footnote{Allowing $N$ to be divisible by any square-free product $N^-$ of an even number of primes inert in $K$.}, 
and many cases were proved in \emph{op.\,cit.} by building on the main result of \cite{CLW}.
Unfortunately, due to the technical hypotheses from \cite{CLW}, the classical Heegner hypothesis \eqref{eq:Heeg}, i.e. the case  $N^-=1$,   
was excluded from those results.
To obtain a result under \eqref{eq:Heeg}, here we adapt the approach of \cite{BCK} to the supersingular setting\footnote{After a first draft of this Appendix was written, similar results were announced in \cite{BLV}.}.



\subsection{Statement of the main results}\label{subsec:statement}

We begin by introducing the setting, which is slightly more general than what is needed for the application in Corollary~\ref{cor:B}.

Let $E/\Q$ be an elliptic curve of conductor $N$, and let $p>2$ be a prime of good supersingular reduction for $E$.
We assume that
\begin{equation}\label{eq:non-ord}
a_p(E)=0\tag{non-ord}
\end{equation} 
(which by the Hasse bounds is automatic for $p>3$). Let $K$ be an imaginary quadratic field of discriminant prime to $N$ such that
\begin{equation}\label{eq:split-app}
\textrm{$p=\pp\ppbar$ splits in $K$.}\tag{spl}
\end{equation}
Writing $N=N^+N^-$ with $N^+$ (resp. $N^-$) divisible only by primes that split (resp. remain inert) in $K$, assume the following \emph{generalised Heegner hypothesis}:
\begin{equation}\label{eq:gen-Heeg}
\textrm{$N^-$ is the square-free product of an even number of primes.}\tag{gen-Heeg}
\end{equation}
As in the main text, we let $\Gamma$ be the Galois group of the anticyclotomic $\Z_p$-extension $K_\infty/K$, and let $\Lambda=\Z_p\dBr{\Gamma}$ be the anticyclotomic Iwasawa algebra.
%

For any triple $(E,K,p)$ satisfying \eqref{eq:non-ord}, \eqref{eq:split-app}, and \eqref{eq:gen-Heeg} as above, an analogue of Perrin-Riou's Heegner point main conjecture 
\cite{PR-HP} was formulated in \cite[Conj.~4.8]{CW-PR}.
Letting $\mathcal{S}^\pm$ and $\mathcal{X}^\pm$ be the compact Selmer groups over $K_\infty/K$ denoted by ${\rm Sel}^\pm(K,\mathbf{T}^{\rm ac})$ and ${\rm Sel}^\pm(K,\mathbf{A}^{\rm ac})^\vee$ in \cite[\S{4.2}]{CW-PR}, respectively, the conjecture predicts that both $\mathcal{S}^\pm$ and $\mathcal{X}^\pm$ have $\Lambda$-rank one, 
with the characteristic ideal of the $\Lambda$-torsion submodule $\mathcal{X}_{\rm tors}^\pm\subset\mathcal{X}^\pm$ being the same as ${\rm char}_\Lambda(\mathcal{S}^\pm/(\kappa_\infty^\pm))^2$ for the $\Lambda$-adic $\pm$-Heegner class
\begin{equation}\label{eq:pm-Heeg}
\kappa_\infty^\pm\in\mathcal{S}^\pm
\end{equation}  
constructed in \cite[$\S{4.1}$]{CW-PR} (where it is denoted $\mathbf{z}_\infty^\pm={\rm cor}_{K[1]/K}(\mathbf{z}_\infty[1]^\pm)$.

In this section we prove the following.



\begin{thm}\label{thm:pm-HPMC}
Let $(E,K,p)$ be a triple as above, and assume in addition that:
\begin{itemize}
\item[(i)] $N$ is square-free.
\item[(ii)] $E[p]$ is ramified at every prime $\ell\mid N^+$. 
\item[(iii)] $E[p]$ is ramified at every prime $\ell\mid N^-$ with $\ell\equiv\pm{1}\pmod{p}$.
\item[(iv)] Every prime above $p$ is totally ramified in $K_\infty/K$.
\end{itemize}
Then $\mathcal{S}^\pm$ and $\mathcal{X}^\pm$ have $\Lambda$-rank one, and 
\[
{\rm char}_\Lambda(\mathcal{X}^\pm_{\rm tors})={\rm char}_\Lambda(\mathcal{S}^\pm/(\kappa_\infty^\pm))^2.
\] 
In other words, the $\pm$-Heegner point main conjecture in \cite[Conj.~4.8]{CW-PR} holds.
\end{thm}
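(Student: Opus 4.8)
The plan is to adapt the argument of \cite{BCK} from the ordinary case, the point being to replace the Eisenstein-congruence divisibility of \cite{CLW} --- whose technical hypotheses forced $N^-\neq 1$ --- by an input drawn from the cyclotomic Iwasawa theory of $E$, which is insensitive to $N^-$. First I would record what the Euler system of signed Heegner classes already supplies. By \cite{CW-PR}, $\kappa_\infty^\pm$ is the bottom class of a $\Lambda$-adic anticyclotomic Kolyvagin system, and its non-triviality --- which follows from the signed explicit reciprocity law relating $\kappa_\infty^\pm$ to $\mathscr{L}_\pp^{\rm BDP}$ together with the non-vanishing of the latter --- forces, via the Kolyvagin system machinery in the signed supersingular form of \cite{CGLS,CW-PR}, both $\mathcal{S}^\pm$ and $\mathcal{X}^\pm$ to have $\Lambda$-rank one, with $\kappa_\infty^\pm$ not $\Lambda$-torsion, together with one of the two divisibilities of \cite[Conj.~4.8]{CW-PR}, say ${\rm char}_\Lambda(\mathcal{X}^\pm_{\rm tors})\supseteq{\rm char}_\Lambda(\mathcal{S}^\pm/(\kappa_\infty^\pm))^2$. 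What remains is the opposite divisibility.

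Next I would pass to the BDP side. Using the equivalence --- established in \cite{CW-PR} and recalled at the beginning of this appendix --- between the $\pm$-Heegner point main conjecture and the Iwasawa--Greenberg main conjecture ${\rm char}_\Lambda(X_{\ppbar})\,\Lambda_{\widehat{\cO}}=(L_\pp^{\rm BDP})$ of Conjecture~\ref{conj:BDP-IMC}, together with the signed explicit reciprocity law (which is what accounts for the exponent $2$ appearing on both sides), the missing divisibility for the $\pm$-Heegner point main conjecture becomes precisely the divisibility in Conjecture~\ref{conj:BDP-IMC} that is \emph{not} produced by the Heegner point Euler system. Thus it suffices to bound $X_{\ppbar}$ from the analytic side.

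Finally --- and this is the crux --- I would obtain that bound by relating $X_{\ppbar}$ to cyclotomic objects, following \cite{BCK}: via global Poitou--Tate duality and a control theorem in the cyclotomic direction, one compares $X_{\ppbar}$ with signed (Kobayashi-style) Selmer groups for $E$ over the cyclotomic $\Z_p$-extension, invokes one divisibility of the supersingular cyclotomic main conjecture for $E/K$ --- available from Kato's Euler system in Kobayashi's $\pm$-formulation, with no hypothesis on $N^-$ --- and matches $p$-adic $L$-functions by identifying the relevant specialization of $\mathscr{L}_\pp^{\rm BDP}$ with the cyclotomic signed $p$-adic $L$-functions entering Kato's main conjecture, as in \cite{BCK}. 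Combining this with the Euler system divisibility of the first step yields equality in Conjecture~\ref{conj:BDP-IMC}, hence, through the equivalence, Theorem~\ref{thm:pm-HPMC}. The hard part will be to carry out this comparison \emph{in the signed setting}: one must track the Kobayashi $\pm$-local conditions at the two primes $\pp$ and $\ppbar$ compatibly across the cyclotomic and anticyclotomic $\Z_p$-extensions, check that the signed explicit reciprocity law is compatible with the factorization of $p$-adic $L$-functions used in the comparison, and verify that the running hypotheses of the argument of \cite{BCK} are guaranteed by (i)--(iv); in particular, the total ramification above $p$ in $K_\infty/K$ of hypothesis~(iv) is what makes the signed Iwasawa theory and the required control theorems available, while (ii)--(iii) govern the ramified local terms and the residual representations that occur.
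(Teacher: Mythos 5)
Your first step correctly identifies that an Euler-system input gives one divisibility, and your broad strategy of drawing on the cyclotomic Iwasawa theory to supply the missing bound is in the right spirit. But the decisive third step does not describe what the \cite{BCK} argument (or the paper) actually does, and as stated it would not go through. There is no comparison of $X_{\ppbar}$ with signed Kobayashi Selmer groups for $E$ over the \emph{cyclotomic} $\Z_p$-extension via a Poitou--Tate/control theorem, nor is there any sensible ``specialization'' of $\mathscr{L}_\pp^{\rm BDP}$ to a cyclotomic signed $p$-adic $L$-function: the two objects live over Iwasawa algebras attached to disjoint $\Z_p$-extensions of $K$, and no control-theorem bridge between them exists without passing to the full two-variable theory (which is precisely the heavy machinery the argument is designed to avoid). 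Your second step is also running in the wrong direction: the paper proves the $\pm$-Heegner point main conjecture directly and then \emph{deduces} Conjecture~\ref{conj:BDP-IMC} from it via \cite[Thm.~6.8]{CW-PR} in Corollary~\ref{cor:BDP-IMC}, rather than reducing to the BDP side first.

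What the proof actually needs, and what \cite{BCK} does, is the \emph{bipartite} Euler system formalism of Howard (Theorems~\ref{thm:construction} and~\ref{thm:howard}), not a Kolyvagin system built from the single class $\kappa_\infty^\pm$. The bipartite system supplies both families $\boldsymbol\kappa^\pm_j$ (indexed by $m\in\mathcal{N}_j^{\rm ind}$) and $\boldsymbol\lambda^\pm_j$ (indexed by $m\in\mathcal{N}_j^{\rm def}$), tied together by the two explicit reciprocity laws, and Howard's theorem converts the lower bound into an equality precisely when some $\lambda^\pm_j(m)$ is a unit modulo $\mathfrak m$. The remaining work is then to produce such an $m$: one level-raises $f$ at admissible primes $q_1,\dots,q_{2s},q_r$ (using Condition~CR, which is available by hypothesis (iii) and the surjectivity of $\overline\rho$) so as to kill the $\wp$-Selmer group of the resulting $m$-new form $g$ over $K$; then Theorem~\ref{thm:IMC-ss} --- a rank-zero $p$-part of BSD over $K$ for $g$, obtained by applying the \emph{cyclotomic} Iwasawa main conjecture of \cite{CCSS,FW} to $g$ and its quadratic twist $g^K$ and comparing periods and Tamagawa exponents as in \cite{SZ} --- shows that $L(g/K,1)/(\Omega_g^{\rm cong}\eta_{g,N^+,N^-m})$ is a $p$-adic unit, hence that $\lambda^\pm_j(m)$ is invertible. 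This is the input you need in place of the cyclotomic Selmer-group comparison you propose; as written, your step three has no mechanism to produce the required unit value.
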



\begin{cor}\label{cor:BDP-IMC}
Let the hypotheses be as in Theorem~\ref{thm:pm-HPMC}.
Then \cite[Conj.~5.2]{CW-PR} holds.
In particular, in the case $N^-=1$,  Conjecture~\ref{conj:BDP-IMC} in the body of the paper holds.
\end{cor}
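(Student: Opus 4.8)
The plan is to reduce Corollary~\ref{cor:BDP-IMC} to Theorem~\ref{thm:pm-HPMC} and then to indicate how the latter is proved. For the reduction: under \eqref{eq:non-ord}, \eqref{eq:split-app} and \eqref{eq:gen-Heeg} it is shown in \cite[\S 5]{CW-PR} that the $\pm$-Heegner point main conjecture \cite[Conj.~4.8]{CW-PR} is equivalent to the signed Iwasawa--Greenberg main conjecture \cite[Conj.~5.2]{CW-PR}; this equivalence is an avatar of the explicit reciprocity law computing the image of $\kappa_\infty^\pm$ under the signed Coleman maps in terms of the signed BDP $p$-adic $L$-functions, together with a comparison of the relevant Selmer groups. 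Granting Theorem~\ref{thm:pm-HPMC}, this yields \cite[Conj.~5.2]{CW-PR}. When moreover $N^-=1$, the hypothesis \eqref{eq:gen-Heeg} reduces to the classical Heegner hypothesis \eqref{eq:Heeg}, and \cite[Conj.~5.2]{CW-PR} unwinds---via the dictionary between the plus/minus local conditions at $\pp,\ppbar$ and the $\ppbar$-strict local conditions defining ${\rm Sel}_{\ppbar}(K_\infty,W)$, using Proposition~\ref{prop:Lambda-tors} for the cotorsion assertion---to the equality ${\rm char}_\Lambda({\rm Sel}_{\ppbar}(K_\infty,W)^\vee)\Lambda_{\widehat{\cO}}=(L_\pp^{\rm BDP})$, i.e.\ to Conjecture~\ref{conj:BDP-IMC}.

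It then remains to prove Theorem~\ref{thm:pm-HPMC}, which I would split into two parts. The first part---the $\Lambda$-rank one statements for $\mathcal{S}^\pm$ and $\mathcal{X}^\pm$ together with the divisibility ${\rm char}_\Lambda(\mathcal{X}^\pm_{\rm tors})\mid{\rm char}_\Lambda(\mathcal{S}^\pm/(\kappa_\infty^\pm))^2$---follows from Kolyvagin's method applied to the anticyclotomic signed Kolyvagin system generated by $\kappa_\infty^\pm$, which in this generality is \cite[Thm.~6.8]{CW-PR} (the input already invoked in the proof of Proposition~\ref{prop:Lambda-tors}). Here hypotheses (i)--(iii) supply the irreducibility of $E[p]$ and the ramification conditions needed to run the Euler system argument and the accompanying level-raising, while hypothesis (iv) gives the control theorems at the primes above $p$, where $a_p(E)=0$ and Kobayashi's $\pm$-formalism play their usual role.

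The substance of the appendix, and the point I expect to be the main obstacle, is the reverse divisibility ${\rm char}_\Lambda(\mathcal{S}^\pm/(\kappa_\infty^\pm))^2\mid{\rm char}_\Lambda(\mathcal{X}^\pm_{\rm tors})$, which in the case $N^-=1$ of interest lies outside the scope of \cite{CLW,CW-PR}. Adapting the strategy of \cite{BCK} to the supersingular setting, I would deduce it from one divisibility of the signed main conjecture \cite[Conj.~5.2]{CW-PR} obtained independently of Heegner points---transferred to the present situation by a congruence (level-raising / Jochnowitz-type) argument from an auxiliary definite quaternionic setting with $N^->1$ in which the signed anticyclotomic main conjecture is known, with the $\pm$-decompositions and the relevant signed reciprocity laws tracked through the congruence---and then re-apply the equivalence of \cite[\S 5]{CW-PR} in the opposite direction to return to $\kappa_\infty^\pm$ and $\mathcal{X}^\pm$. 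The delicate points will be the compatibility of the $\pm$-structures across the congruence and checking that the required auxiliary divisibility genuinely holds under hypotheses (i)--(iv) in the supersingular range; similar results have since been announced in \cite{BLV}.
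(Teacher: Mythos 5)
Your reduction of the Corollary to Theorem~\ref{thm:pm-HPMC} plus the CW--PR equivalence is exactly the paper's approach, and the logical skeleton is sound. Two small but worth-noting conceptual muddles, however. First, the paper places \emph{all} of the content of the translation in the equivalence step: it invokes \cite[Thm.~6.8]{CW-PR}, which identifies \cite[Conj.~4.8]{CW-PR} (the $\pm$-Heegner point main conjecture) with \cite[Conj.~5.2]{CW-PR}, and then simply \emph{notes} that \cite[Conj.~5.2]{CW-PR} with $N^-=1$ \emph{is} Conjecture~\ref{conj:BDP-IMC} of this paper, verbatim. You put a ``dictionary between the plus/minus local conditions and the $\ppbar$-strict local conditions'' in the $N^-=1$ specialization, but that dictionary (together with the signed reciprocity laws you mention) is part of \cite[Thm.~6.8]{CW-PR} and not a separate unwinding step; once you are at \cite[Conj.~5.2]{CW-PR}, setting $N^-=1$ requires no further translation. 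You also label \cite[Conj.~5.2]{CW-PR} as the ``signed'' IMC, which conflicts with the paper identifying it with the unsigned Conjecture~\ref{conj:BDP-IMC}. Second, the outline of Theorem~\ref{thm:pm-HPMC} itself --- not strictly required for the Corollary, whose proof in the paper is just the two-sentence deduction above --- misattributes the rank-one and one-sided divisibility statements to \cite[Thm.~6.8]{CW-PR}; in the paper they come from Howard's bipartite Euler system theory (Theorem~\ref{thm:howard}, citing \cite[Thm.~3.2.3]{howard-bipartite}) applied to the bipartite system from Darmon--Iovita/Pollack--Weston (Theorem~\ref{thm:construction}). Likewise the reverse divisibility is not transferred from an auxiliary quaternionic main conjecture; rather, a level-raising argument reduces the Selmer rank to zero for some $m\in\mathcal{N}^{\rm def}$, and the needed nonvanishing of $\lambda^{\pm}_j(m)$ modulo $\mathfrak{m}$ is then verified directly from the \emph{cyclotomic} GL$_2$ IMC in the non-ordinary case (Theorem~\ref{thm:IMC-ss}). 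These are differences in mechanism rather than in conclusion, so the Corollary itself is fine; just be aware the paper's argument is shorter and routed through different citations than you indicate.
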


\begin{proof}
This follows from Theorem~\ref{thm:pm-HPMC} and the equivalence in \cite[Thm.~6.8]{CW-PR}, noting that Conjecture~\ref{conj:BDP-IMC} is the same as \cite[Conj.~5.2]{CW-PR} when $N^-=1$. 
\end{proof}


The remainder of this section is devoted to the proof of Theorem~\ref{thm:pm-HPMC}.

\subsection{Bipartite Euler system for non-ordinary primes}

Let $f=\sum_{n=1}^\infty a_nq^n\in S_2(\Gamma_0(N))$ be the newform corresponding to $E$, and denote by
\[
\overline{\rho}:G_\Q\rightarrow{\rm Aut}_{\mathbf{F}_p}(E[p])\simeq{\rm GL}_2(\mathbf{F}_p)
\]
the associated residual representation.
%
%
Following \cite{pollack-weston}, we say that the pair $(\overline{\rho},N^-)$ satisfies \emph{Condition CR} if:
\begin{itemize}
\item $\overline{\rho}$ is ramified at every prime $\ell\mid N^-$ with $\ell\equiv\pm{1}\pmod{p}$, and
\item $\overline{\rho}$ is surjective\footnote{It follows from \cite[Prop.~2.1]{edix} that if $E$ is semistable and $p$ is supersingular for $E$, then $\overline{\rho}$ is surjective.}.
\end{itemize}


We refer the reader to 
\cite[p.\,18]{bdIMC} for the definition of \emph{$j$-admissible} primes (for any $j>0$) relative to $f$. 
Denote by $\mathcal{L}_j$ the set of $j$-admissible primes, and by $\mathcal{N}_j$ the set of square-free products of primes $q\in\mathcal{L}_j$.
When $j=1$, we suppress it from the notations.
We decompose
\[
\mathcal{N}_j=\mathcal{N}_j^{\rm ind}\sqcup\mathcal{N}_j^{\rm def}
\]
with $\mathcal{N}_j^{\rm ind}$ 
consisting of the square-free products of an even 
number of primes $q\in\mathcal{L}_j$.

\begin{rem}
By definition, admissible primes $q\in\mathcal{L}$ satisfy in particular $q\not\equiv\pm{1}\pmod{p}$.
So, Condition~CR allows for the existence, for any $m\in\mathcal{N}$, of $m$-new forms $g\in S_2(\Gamma_0(Nm))$ level-raising $f$ (and whose existence follows from results of Ribet \cite{ribet-ICM} and Diamond--Taylor \cite{DT-dmj,DT-inv}; see \cite[Thm.~2.1]{zhang-Kolyvagin}).
\end{rem}

Let $T=\varprojlim_jE[p^j]$ be the $p$-adic Tate module of $E$, and put
\[
\mathbf{T}_j:=\varprojlim_n{\rm Ind}_{K_n/K}(T/p^jT),\qquad
\mathbf{A}_j:=\varinjlim_n{\rm Ind}_{K_n/K}(E[p^j]).
\]
For every $m\in\mathcal{N}_j$ the ``$N^-m$-ordinary'' signed Selmer groups ${\rm Sel}_{N^-m}^\pm(K,\mathbf{T}_j)$, ${\rm Sel}_{N^-m}^\pm(K,\mathbf{A}_j)$ are defined as in \cite[p.\,1634]{BCK}, with the local conditions at primes $v\mid p$ in \emph{loc.\,cit.} replaced by the above local conditions $\rH^1_\pm(K_v,\mathbf{T}_j)$ and $\rH^1_\pm(K_v,\mathbf{A}_j)$  in \cite[Def.~4.6]{CW-PR}.

In particular, at the primes $q\nmid N^-mp$, the classes $c\in{\rm Sel}_{N^-m}^\pm(K,\mathbf{T}_j)$ are unramified, i.e., ${\rm res}_q(c)\in\rH^1_{\rm unr}(K_q,\mathbf{T}_j)$, 
while at the primes $q\mid N^-m$ they are required to land in the ``ordinary'' submodule $\rH^1_{\rm ord}(K_q,\mathbf{T}_j)$. 
It is easy to see that for $q\in\mathcal{L}$, both $\rH^1_{\rm unr}(K_q,\mathbf{T}_j)$ and $\rH^1_{\rm ord}(K_q,\mathbf{T}_j)$ are free of rank $1$ over $\Lambda/p^j\Lambda$ (see e.g. \cite[Lem.~2.1]{BCK}).



\begin{thm}[Darmon--Iovita, Pollack--Weston]\label{thm:construction}
Suppose that 
\begin{itemize}
\item[(i)] $a_p(E)=0$.
\item[(ii)] $p$ splits in $K$.
\item[(iii)] Each prime above $p$ is totally ramified in $K_\infty/K$.
\item[(iv)] $(\overline{\rho},N^-)$ satisfies Condition~CR.
\end{itemize}
Then 
for every choice of sign $\pm$ and every $j>0$ there is a pair of systems
\begin{align*}
\boldsymbol{\kappa}^\pm_j&=\{\kappa^\pm_j(m)\in{\rm Sel}^\pm_{N^-m}(K,\mathbf{T}_j)\,\colon\,m\in\mathcal{N}_j^{\rm ind}\},\\
\boldsymbol{\lambda}_j^\pm&=\{\lambda_j^\pm(m)\in\Lambda/\wp^j\Lambda\,\colon\,m\in\mathcal{N}_j^{\rm def}\},
\end{align*}
related by a system of ``explicit reciprocity laws'':
\begin{itemize}
\item If $mq_1q_2\in\mathcal{N}_j^{\rm ind}$ with $q_1,q_2\in\mathcal{L}_j$ distinct primes, then
\[
{\rm loc}_{q_2}(\kappa_j^\pm(mq_1q_2))=\lambda^\pm_j(mq_1)
\]
under a fixed isomorphism $\rH^1_{\rm ord}(K_{q_1},\mathbf{T}_j)\simeq\Lambda/p^j\Lambda$.
\item If $mq\in\mathcal{N}_j^{\rm def}$ with $q\in\mathcal{L}_j$ prime, then
\[
{\rm loc}_q(\kappa_j^\pm(m))=\lambda_j^\pm(mq)
\]
under a fixed isomorphism $\rH^1_{\rm unr}(K_q,\mathbf{T}_j)\simeq\Lambda/p^j\Lambda$.
\end{itemize}
\end{thm}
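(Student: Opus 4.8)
The plan is to assemble the statement from three sources: the construction of Heegner and Gross points on (in)definite quaternionic Shimura curves together with the ``arithmetic level-raising'' machinery of Bertolini--Darmon \cite{bdIMC}, the passage to the supersingular setting via Kobayashi's plus/minus formalism in the style of Darmon--Iovita, and the mod~$p^j$ multiplicity-one input of Pollack--Weston \cite{pollack-weston} under Condition~CR. Since $N^-$ is a square-free product of an even number of inert primes, for $m\in\cN_j^{\rm ind}$ the quaternion algebra $B_{N^-m}/\Q$ ramified exactly at the primes dividing $N^-m$ is indefinite, so one has a Shimura curve $X_{N^+,N^-m}$ (the modular curve $X_0(N)$ when $N^-m=1$) carrying Heegner points with CM by $\cO_K$ and by its order of conductor $p^n$. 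The norm-compatible family of these points over the anticyclotomic tower $K_\infty/K$, pushed through the Kummer map, produces a $\Lambda$-adic cohomology class, and applying the signed (plus/minus) projection at the primes above $p$ yields $\kappa_j^\pm(m)$. For $m\in\cN_j^{\rm def}$ the algebra $B_{N^-m}$ is definite; the Jacquet--Langlands transfer of $f$ level-raised at the primes $q\mid m$ exists by Ribet \cite{ribet-ICM} and Diamond--Taylor \cite{DT-dmj,DT-inv} (this is precisely where Condition~CR, in particular the surjectivity afforded by \cite{edix}, is used), and evaluating it at CM points along the tower and reducing modulo $p^j$ gives the theta element $\lambda_j^\pm(m)\in\Lambda/p^j\Lambda$.

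First I would check that $\kappa_j^\pm(m)\in{\rm Sel}_{N^-m}^\pm(K,\mathbf{T}_j)$. Away from $p$ and from the primes of $N^-m$ the class is unramified because the Heegner points have good reduction there. At the primes $q\mid N^-m$, the \v{C}erednik--Drinfel'd uniformization of $X_{N^+,N^-m}$ shows that the localization lands in the ordinary (Tate-period) submodule $\rH^1_{\rm ord}(K_q,\mathbf{T}_j)$, exactly as in the ordinary case treated in \cite{bdIMC}. At the primes above $p$, the interpolation property of the signed Coleman maps forces the local condition $\rH^1_\pm(K_v,\mathbf{T}_j)$ of \cite[Def.~4.6]{CW-PR}; this step uses $a_p(E)=0$, that $p$ splits in $K$, and that $p$ is totally ramified in $K_\infty/K$, so that the local Iwasawa cohomology at $v\mid p$ decomposes cleanly into plus and minus parts. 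Next I would establish the two reciprocity laws. The first, ${\rm loc}_{q_2}(\kappa_j^\pm(mq_1q_2))=\lambda_j^\pm(mq_1)$, is an identification of the ordinary localization at $q_2$ of the Heegner class on $X_{N^+,N^-mq_1q_2}$ with a Gross point for the definite algebra $B_{N^-mq_1}$, obtained by analyzing the special fiber of the Shimura curve at $q_2$, whose character group is governed by $B_{N^-mq_1}$. The second, ${\rm loc}_q(\kappa_j^\pm(m))=\lambda_j^\pm(mq)$, relates the \emph{unramified} localization at $q$ of the Heegner class on $X_{N^+,N^-m}$ to the Gross point for $B_{N^-mq}$ via the reduction of Heegner points modulo $q$ together with Ihara's lemma and the level-raising congruence. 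In both laws the mod~$p^j$ precision is where the Pollack--Weston multiplicity-one statement is invoked, to promote congruences of quaternionic automorphic forms to genuine equalities in $\Lambda/p^j\Lambda$ using the rank-one freeness of $\rH^1_{\rm ord}(K_q,\mathbf{T}_j)$ and $\rH^1_{\rm unr}(K_q,\mathbf{T}_j)$ recalled just above the theorem.

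The main obstacle is the local behavior at $p$: showing that the plus/minus projections of the $\Lambda$-adic Heegner classes are well defined and satisfy $\rH^1_\pm$, and that the $\pm$-theta elements are the correct objects on the definite side of the reciprocity laws. This is purely local at $p$, and rests on the interpolation formulas for the signed Coleman (or Perrin-Riou) maps in the supersingular case together with hypotheses (i)--(iv) of the theorem; once it is in place, the admissible primes $q$ all being prime to $p$, the plus/minus conditions are untouched by level-raising, and the global monodromy and congruence arguments of \cite{bdIMC} transfer essentially verbatim. A secondary point needing care is the bookkeeping of the fixed isomorphisms $\rH^1_{\rm ord}(K_q,\mathbf{T}_j)\simeq\Lambda/p^j\Lambda$ and $\rH^1_{\rm unr}(K_q,\mathbf{T}_j)\simeq\Lambda/p^j\Lambda$, which must be normalized compatibly across the tower so that both reciprocity laws hold simultaneously with the same families $\boldsymbol{\kappa}^\pm_j$ and $\boldsymbol{\lambda}^\pm_j$.
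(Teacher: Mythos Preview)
Your sketch is correct and follows exactly the approach the paper attributes to Darmon--Iovita and Pollack--Weston. The paper's own proof is simply a citation: it points to \cite{darmon-iovita} (Propositions~4.4 and 4.6 for the two reciprocity laws) for the construction under hypotheses (i)--(iii) together with the stronger assumption that $f$ is $p$-isolated, and then to \cite{pollack-weston}, \S4.3, for the replacement of $p$-isolation by Condition~CR via their multiplicity-one results. Your outline is a faithful unpacking of what those references do; in particular you correctly isolate the role of Condition~CR as supplying the mod~$p^j$ multiplicity-one input, and the role of hypotheses (i)--(iii) as enabling the $\pm$-decomposition at $p$ so that the Bertolini--Darmon level-raising machinery (which only touches primes away from $p$) goes through unchanged.
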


\begin{proof}
This is shown in \cite{darmon-iovita} (in particular, see [\emph{op.\,cit.}, Prop.~4.4, Prop.~4.6] for the two explicit reciprocity laws) under hypotheses (i)--(iii) and an additional hypothesis that 
$f$ is ``$p$-isolated'' in the sense of \cite{bdIMC}.
This last hypothesis was replaced by the weaker hypothesis (iv) above in \cite{pollack-weston} (see  [\emph{op.\,cit.}, $\S4.3$]).
\end{proof}

For $m=1$, the classes $\kappa_j^\pm:=\kappa_j^\pm(1)$ exist for all $j>0$ and are compatible under the natural maps ${\rm Sel}_{N^-}^\pm(K,\mathbf{T}_{j+1})\rightarrow{\rm Sel}_{N^-}^\pm(K,\mathbf{T}_j)$, thereby defining the class
\begin{equation}\label{eq:spec-class}
\varprojlim_j\kappa_j^\pm\in{\rm Sel}_{N^-}^\pm(K,\mathbf{T}):=\varprojlim_j{\rm Sel}_{N^-}^\pm(K,\mathbf{T}_j).
\end{equation}

As in \cite[Lem.~2.2]{BCK}, we have natural isomorphisms
\[
\mathcal{S}^\pm\simeq\varprojlim_j{\rm Sel}_{N^-}^\pm(K,\mathbf{T}_j),\qquad
\mathcal{X}^\pm\simeq\biggl(\varinjlim_j{\rm Sel}_{N^-}^\pm(K,\mathbf{A}_j)\biggr)^\vee
\]
(note that the almost divisibility result of \cite[Prop.~3.12]{hatley-lei-MRL} used in the proof can be shown in the same manner in the supersingular case, replacing the appeal to results from \cite{cas-BF} by their counterparts in \cite{CW-PR}, and using the equality 
${\rm Sel}^\pm(K,\mathbf{T}^{\rm ac})={\rm Sel}^{\pm,{\rm rel}}(K,\mathbf{T}^{\rm ac})$ shown in the proof of \cite[Thm.~6.8]{CW-PR}).  Moreover, comparing the construction of the classes $\kappa_j^\pm(m)$ in \cite[$\S{4}$]{darmon-iovita}\footnote{see esp. \cite[Prop.~4.3]{darmon-iovita}.} and the construction of the classes $\mathbf{z}_\infty[S]^\pm$ in \cite[$\S{4.1}$]{CW-PR}\footnote{see esp. \cite[Prop.~4.4]{CW-PR}.}, we see that \eqref{eq:spec-class} is the same as the class $\kappa_\infty^\pm$ in \eqref{eq:pm-Heeg}.


Denote by $\mathfrak{m}\subset\Lambda$ the maximal ideal.

\begin{thm}[Howard]\label{thm:howard}
Let the notations and hypotheses be as in Theorem~\ref{thm:construction}.
Then both $\mathcal{S}^\pm$ and $\mathcal{X}^\pm$ have $\Lambda$-rank one, and the following divisibility holds in $\Lambda$:
\begin{equation}\label{eq:bipartite-div}
{\rm Char}_\Lambda(\mathcal{X}^\pm_{\rm tors})\supset{\rm Char}_\Lambda(\mathcal{S}^\pm/(\kappa_\infty^\pm))^2.\nonumber
\end{equation}
Moreover, if for some $j>0$ there exists $m\in\mathcal{N}_{j}^{\rm def}$ such that $\lambda_j^\pm(m)$ has non-zero image under the map
\[
\Lambda/p^j\Lambda\rightarrow\Lambda/\mathfrak{m}\Lambda\simeq\mathbf{F}_p,
\]
then the above divisibility is an equality.
\end{thm}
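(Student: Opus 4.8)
The plan is to recognise the two systems $\{\boldsymbol{\kappa}^\pm_j\}_{j>0}$ and $\{\boldsymbol{\lambda}^\pm_j\}_{j>0}$ supplied by Theorem~\ref{thm:construction} as a \emph{bipartite Euler system} over $\Lambda$ in the sense of Howard \cite{howard-bipartite}, and to deduce the statement from his abstract theorems, following the ordinary case of \cite{BCK}.

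First I would verify that Howard's axioms hold for the $\pm$-Selmer structure: (a) the local conditions $\rH^1_\pm(K_v,\mathbf{T}_j)$ at the primes $v\mid p$, the unramified conditions at $v\nmid N^-p$, and the ``ordinary'' conditions at $v\mid N^-$ cut out a self-dual Selmer structure (of ``core rank one'' in Howard's terminology), the self-duality being that $\rH^1_\pm(K_v,\mathbf{T}_j)$ is its own exact orthogonal complement under the local Tate pairing, which is already established in \cite[\S{4}]{CW-PR} and underlies the proof of \cite[Thm.~6.8]{CW-PR}; (b) at each admissible prime $q\in\mathcal{L}_j$ the unramified and ``ordinary'' local cohomology groups are free of rank one over $\Lambda/p^j\Lambda$ and are interchanged by the change of local condition, as in \cite[Lem.~2.1]{BCK}; and (c) the two explicit reciprocity laws of Theorem~\ref{thm:construction} are exactly Howard's compatibility axioms linking the $\kappa$-classes indexed by $\mathcal{N}_j^{\rm ind}$ to the $\lambda$-values indexed by $\mathcal{N}_j^{\rm def}$ under the maps ${\rm loc}_q$. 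The identification of \eqref{eq:spec-class} with $\kappa_\infty^\pm$, noted before the statement, says that the $m=1$ member of the system is the $\Lambda$-adic Heegner class.

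Next I would feed in non-triviality. Since $\kappa_\infty^\pm=\varprojlim_j\kappa_j^\pm(1)\in\mathcal{S}^\pm$ is non-zero -- known by \cite{CW-PR}, where (via the $\pm$-explicit reciprocity law) it reduces to the non-vanishing of $\mathscr{L}_\pp^{\rm BDP}$ proved in \cite{cas-hsieh1} -- the bipartite Euler system is non-trivial, and Howard's main theorem then yields at once that $\mathcal{S}^\pm$ and $\mathcal{X}^\pm$ have $\Lambda$-rank one and the divisibility ${\rm Char}_\Lambda(\mathcal{X}^\pm_{\rm tors})\supset{\rm Char}_\Lambda(\mathcal{S}^\pm/(\kappa_\infty^\pm))^2$. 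For the last assertion I would invoke the refined part of Howard's theory: the existence of some $m\in\mathcal{N}_j^{\rm def}$ with $\lambda_j^\pm(m)$ a unit modulo $\mathfrak{m}$ propagates through the reciprocity laws to pin down ${\rm Char}_\Lambda(\mathcal{X}^\pm_{\rm tors})$ on the nose, forcing the equality
\[
{\rm Char}_\Lambda(\mathcal{X}^\pm_{\rm tors})={\rm Char}_\Lambda(\mathcal{S}^\pm/(\kappa_\infty^\pm))^2.
\]

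The hard part will not be the construction of the classes (Theorem~\ref{thm:construction}) nor Howard's formalism as such, but the bookkeeping needed to fit the Kobayashi-type $\pm$-condition at $p$ into that formalism: one must check that the $\mathfrak{m}$-adic control arguments, the global (Poitou--Tate) duality sequences, and the Chebotarev and level-raising steps of \cite{howard-bipartite,bdIMC} go through verbatim with the ordinary local condition at $p$ replaced by $\rH^1_\pm(K_v,\mathbf{T}_j)$. Since essentially all of the relevant local input is already provided by \cite{CW-PR}, I expect this to be technical but routine.
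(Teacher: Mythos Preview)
Your approach is essentially the same as the paper's: verify that the $\pm$-Selmer structure fits Howard's bipartite Euler system formalism, use non-triviality of $\kappa_\infty^\pm$ for the rank-one statement and the divisibility, and use the unit $\lambda_j^\pm(m)$ to force equality. Two small points where the paper is more precise than your sketch: (1) for self-duality of $\rH^1_\pm$ and for the control theorem, the paper cites \cite[Prop.~4.11, Prop.~4.18]{kim-parity} (the latter via \cite[Lem.~6.5]{CW-PR}) rather than pointing generically to \cite{CW-PR}; (2) for the equality step, the paper explicitly invokes \cite[Lem.~3.6]{BCK} to pass from ``$\lambda_j^\pm(m)$ is a unit modulo $\mathfrak{m}$'' to ``for every height-one prime $\mathfrak{P}$ some $\lambda$ has non-zero image in $\Lambda/(\mathfrak{P},p)$'', which is exactly Howard's freeness criterion with $k(\mathfrak{P})=1$ in \cite[Thm.~3.2.3]{howard-bipartite}. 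Also, you should say $\kappa_\infty^\pm$ is non-\emph{torsion} (not just non-zero): this is what is needed for the rank-one conclusion, and it follows either from Cornut--Vatsal or, as you note, from the explicit reciprocity law of \cite[Thm.~6.2]{CW-PR} together with the non-vanishing of $\mathscr{L}_\pp^{\rm BDP}$.
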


\begin{proof}
The element
$\kappa_\infty^\pm$ is non-torsion by Cornut--Vatsal \cite{CV-dur} (alternatively, it follows from the explicit reciprocity law of \cite[Thm.~6.2]{CW-PR} and the non-vanishing of $\mathscr{L}_\pp^{\rm BDP}$).
Suppose $j=j_0>0$ is such that the condition in the last part of the theorem holds.
Then by \cite[Lem.~3.6]{BCK}\footnote{where $\wp$ should denote the maximal ideal $\mathfrak{m}$ of $\Lambda$.} it follows that for all $j\geq j_0$ the system $\boldsymbol{\lambda}_j^\pm$ satisfies the following condition: for all height one primes $\mathfrak{P}\subset\Lambda$, the system $\boldsymbol{\lambda}_j^\pm$ contains an element with non-zero image in $\Lambda/(\mathfrak{P},p)$.
The result thus follows from \cite[Thm.~3.2.3]{howard-bipartite} with $k=k(\mathfrak{P})=1$ and the ordinary Selmer condition at the primes above $p$ replaced by the $\pm$-condition, noting that the self-duality of the latter is given by \cite[Prop.~4.11]{kim-parity}, and as shown in \cite[Lem.~6.5]{CW-PR} the analogue of the control theorem of \cite[Prop.~3.3.1]{howard-bipartite} follows from \cite[Prop.~4.18]{kim-parity}.
\end{proof}

For the proof of Theorem~\ref{thm:pm-HPMC}, we shall verify the non-vanishing condition in the last statement of Theorem~\ref{thm:howard} building on progress towards the \emph{cyclotomic} Iwasawa main conjecture.

\subsection{A consequence of the ${\rm GL}_2$-Iwasawa main conjecture}

If $g\in S_2(\Gamma_0(M))$ is any cuspidal eigenform, 
we denote by $A_g/\Q$ a ${\rm GL}_2$-type abelian variety in the isogeny class associated to $g$, and by $\mathscr{O}=\mathscr{O}_g$ the ring of integers of the completion of the Hecke field $\Q(\{a_n(g)\}_n)$ at the prime $\wp$ above $p$ determined by our fixed embedding $\iota_p:\overline{\Q}\hookrightarrow\overline{\Q}_p$.
We also let
\[
\Omega_g^{\rm cong}\in\overline{\Q}_p^\times
\]
be Hida's canonical period of $g$ as defined in \cite[\S{9.3}]{SZ}.
For any  number field $F$ and a finite prime $w$ of $F$, let $t_w(A_g/F)$ denote the Tamagawa exponent as defined in [\emph{op.\,cit.},\S{9.1}].

\begin{thm}\label{thm:IMC-ss}
Let $g\in S_2(\Gamma_0(M))$ be a cuspidal eigenform, 
and let $\wp$ be a prime of $\mathscr{O}_g$ above $p\geq 3$.
Suppose that
\begin{itemize}
\item[(i)] $\wp$ is good non-ordinary for $g$.
\item[(ii)] $M$ is square-free.
\end{itemize}
Then $L(g/K,1)$ is non-zero if and only if ${\rm Sel}_{\wp^\infty}(A_g/K)$ is finite, in which case
\[
{\rm ord}_{\wp}\biggl(\frac{L(g/K,1)}{\Omega_g^{\rm cong}}\biggr)={\rm length}_{\mathscr{O}}\,{\rm Sel}_{\wp^\infty}(A_g/K)+\sum_{w\mid M}t_w(A_g/K).
\]
\end{thm}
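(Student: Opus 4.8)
The plan is to deduce Theorem~\ref{thm:IMC-ss} from the anticyclotomic (signed) Iwasawa Main Conjecture established above, together with known cases of the cyclotomic main conjecture for non-ordinary modular forms. First I would set up the two-variable picture. Since $L(g/K,1)$ factors as $L(g,1)\cdot L(g^K,1)$ where $g^K$ is the quadratic twist by $K/\mathbf{Q}$, and both $g$ and $g^K$ have square-free conductor and are non-ordinary at $\wp$ (the latter since $p$ splits in $K$, so $a_p(g^K)=a_p(g)$ up to sign and in particular $v_\wp(a_p)>0$), I would reduce the statement over $K$ to the product of the corresponding statements over $\mathbf{Q}$ for $g$ and $g^K$. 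The Hida congruence period $\Omega_g^{\rm cong}$ is, up to $p$-adic units, the product of the real and imaginary canonical periods, so the period ratio behaves correctly under this factorization; I would record this as a lemma. The Tamagawa exponents $\sum_{w\mid M}t_w(A_g/K)$ likewise split as a sum of the Tamagawa exponents of $A_g$ and $A_{g^K}$ over $\mathbf{Q}$, using that $M$ is square-free so the bad primes are all multiplicative.

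Next I would invoke the cyclotomic main conjecture for $g/\mathbf{Q}$ at the non-ordinary prime $\wp$. Under the square-free conductor hypothesis and $p\geq 3$, this is known by the work of Kobayashi (for $p$-supersingular elliptic curves, $a_p=0$) and its extensions by X.~Wan and by Sprung/Lei--Loeffler--Zerbes to general non-ordinary newforms, combined with Kato's Euler system for one divisibility and the Skinner--Urban-type method (or Wan's work) for the other. From the cyclotomic main conjecture one extracts the $p$-adic Birch--Swinnerton-Dyer formula ``at $s=1$'': namely $L(g,1)\neq 0$ iff ${\rm Sel}_{\wp^\infty}(A_g/\mathbf{Q})$ is finite, and in that case ${\rm ord}_\wp(L(g,1)/\Omega_g^+)={\rm length}_{\mathscr{O}}{\rm Sel}_{\wp^\infty}(A_g/\mathbf{Q})+\sum_{w\mid M}t_w(A_g/\mathbf{Q})$ up to the local factor at $p$, which is a $p$-adic unit in the non-ordinary case because $a_p\equiv 0$ makes the relevant Euler factor $(1-a_p p^{-1}+p^{-1})$-type term a unit. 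I would spell out the standard descent argument (control of the signed Selmer group down the cyclotomic tower, evaluating the signed $p$-adic $L$-function at the trivial character, relating its value to the complex $L$-value via the interpolation formula of Pollack/Sprung) that converts the Iwasawa-theoretic statement into the leading-term formula. Then applying the same to $g^K$ and multiplying gives the claimed formula over $K$.

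The main obstacle I expect is bookkeeping of the \emph{exact} powers of $p$ and the error terms in the descent: the passage from the characteristic ideal of a cyclotomic Selmer group to the order of the Bloch--Kato Selmer group over $\mathbf{Q}$ involves the Mazur-type control theorem in the non-ordinary setting (Kobayashi's control of $E^\pm$-Selmer groups, or the analogue of \cite[Lem.~6.5]{CW-PR}), and one must verify that the cokernels of the control maps contribute exactly the Tamagawa exponents and the local term at $p$, with no extra factors. One also has to match Hida's congruence period $\Omega_g^{\rm cong}$ of \cite[\S 9.3]{SZ} with the periods appearing in the non-ordinary $p$-adic $L$-function normalization; this is a comparison of period normalizations that is standard but needs care, and I would cite \cite{SZ} for the precise compatibility. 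Finally, one needs the non-vanishing of the relevant signed $p$-adic $L$-function (equivalently $\mathscr{L}_\pp^{\rm BDP}\neq 0$ by \cite{cas-hsieh1}, or its cyclotomic counterpart) to ensure that ``$\Lambda$-torsion'' statements genuinely descend to ``finite'' statements; this is available in the cases at hand. Assembling these pieces yields Theorem~\ref{thm:IMC-ss}.
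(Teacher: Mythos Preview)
Your core strategy---factor $L(g/K,1)=L(g,1)\cdot L(g^K,1)$, apply the cyclotomic Main Conjecture over $\mathbf{Q}$ for each of $g$ and $g^K$ to obtain the $\wp$-adic BSD formula at $s=1$, then match $\Omega_g^{\rm cong}$ with $(2\pi i)^2\Omega_g^+\Omega_{g^K}^+$ and split the Tamagawa exponents---is exactly what the paper does. The paper simply cites \cite[Thm.~C]{CCSS} and \cite[Cor.~1.10]{FW} for the rank-zero formula over $\mathbf{Q}$ (absorbing all the descent and Euler-factor bookkeeping you sketch), and \cite[Cor.~9.2, Lem.~9.5--9.6]{SZ} for the Tamagawa and period comparisons, noting that irreducibility of $\overline{\rho}$ (automatic from the non-ordinary hypothesis) is needed for the period relation.

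That said, your framing contains a circularity you should excise. Your opening sentence proposes to deduce this theorem from ``the anticyclotomic (signed) Iwasawa Main Conjecture established above'': but Theorem~\ref{thm:IMC-ss} is an \emph{input} to the proof of the anticyclotomic main conjecture (Theorem~\ref{thm:pm-HPMC}), not a consequence of it. Likewise, your final paragraph's appeal to the non-vanishing of $\mathscr{L}_\pp^{\rm BDP}$ is anticyclotomic and plays no role here. Drop both; the argument lives entirely in the cyclotomic tower over $\mathbf{Q}$. There is also no ``two-variable picture'' to set up---the reduction is a direct decomposition over $\mathbf{Q}$ via the quadratic twist.
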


\begin{proof}
Let $g^K$ be the newform associated to the twist of $g$ by the quadratic character corresponding to $K$.
As a consequence of the Iwasawa Main Conjecture for ${\rm GL}_2/\Q$ for non-ordinary primes (see \cite[Thm.~C]{CCSS}, and also \cite[Cor.~1.10]{FW}) 
we have that $L(g,1)$ is non-zero if and only if ${\rm Sel}_{\wp^\infty}(A_g/\Q)$ is finite, in which case
\begin{equation}\label{eq:IMC-spec-val}
{\rm ord}_{\wp}\biggl(\frac{L(g,1)}{-2\pi i\cdot\Omega_g^+}\biggr)={\rm length}_{\mathscr{O}}\,{\rm Sel}_{\wp^\infty}(A_g/\Q)+\sum_{\ell\mid M}t_\ell(A_g/\Q),
\end{equation}
where $\Omega_g^+$ is the canonical period of $g$ (see \cite[\S{9.2}]{SZ}), and similarly\footnote{Note that \cite[Thm.~C]{CCSS} assumes square-free level as stated, but as explained in \cite[Rmk.~7.2.3]{JSW} it also applies to quadratic twists such as $g^K$.} with $g^K$ in place of $g$.
By \cite[Cor.~9.2]{SZ} we have
\[
\sum_{w\mid M}t_w(A_g/K)=\sum_{\ell\mid M}t_\ell(A_g/\Q)+\sum_{\ell\mid M}t_\ell(A_{g^K}/\Q),
\]
and since $\overline{\rho}$ is irreducible as a consequence of hypothesis (i), by Lemmas~9.5 and 9.6 in \emph{op.\,cit.} we have the period relation 
\[
\Omega_g^{\rm cong}\sim_p(2\pi i)^2\cdot\Omega_g^+\cdot\Omega_{g^K}^+.
\]
The result thus follows from the combination of \eqref{eq:IMC-spec-val} for $g$ and $g^K$.
\end{proof}

\begin{rem}\label{rem:flat/sharp}
Note that the $p$-th Fourier coefficient of the non-ordinary form $g$ in Theorem~\ref{thm:IMC-ss} is not assumed to be zero.
In fact, the result will be applied to a suitable $g$ satisfying $g\equiv f\pmod{\wp^j}$ for some $j>0$, where $f$ is as in Theorem~\ref{thm:pm-HPMC}, and so \emph{a priori} we only have  
\[
a_p(g)\equiv 0\pmod{\wp^j}.
\] 
Thus the Iwasawa theory of $g$ underlying the proof of Theorem~\ref{thm:IMC-ss} is of $\sharp/\flat$-type (after Sprung and Lei--Loeffler--Zerbes), rather than $\pm$-type (after Kobayashi and Pollack).
\end{rem}

Suppose now that $g\in S_2(\Gamma_0(M))$ is an eigenform of level $M=M^+M^-$ with $M^-$ equal to the square-free product of an \emph{odd} number of primes inert in $K$ and such that every prime factor of $M^+$ splits in $K$. 
Further, suppose that the $p$-th Fourier coefficient of $g$ satisfies $a_p(g)\equiv 0\pmod{\wp^j}$ for some $j>0$.

As explained in \cite[\S{4.3}]{CCSS} (see also \cite[Thm.~3.5]{BBL}), building on the results of \cite{ChHs1} one can associate to $g$ a pair of theta elements $\Theta_\infty^\pm(g/K)\in\mathscr{O}\dBr{\Gamma}$, and it follows from their construction and the interpolation formula in \cite[Prop.~4.3]{ChHs1} that the image of  
\begin{equation}\label{eq:Lp-pm}
L_p^\pm(g/K):=\Theta_\infty^\pm(g/K)^2
\end{equation}
under the augmentation map $\mathscr{O}\dBr{\Gamma}\rightarrow\mathscr{O}$ is equal to 
\[
\frac{L(g/K,1)}{\Omega_g^{\rm cong}}\cdot\frac{1}{\eta_{g,M^+,M^-}}\in\mathscr{O}
\] 
up to a $p$-adic unit, where $\eta_{g,M^+,M^-}\in\mathscr{O}$ is as in \cite[Eq.~(6.4)]{zhang-Kolyvagin}.
Moreover, one can easily check the implication if $g\equiv f\pmod{\wp^j}$, then
\[
\Theta_\infty^\pm(g/K)\equiv\Theta_\infty^\pm(f/K)\pmod{\wp^j\mathscr{O}\dBr{\Gamma}}
\]
(see \cite[Lem.~3.7]{BBL}).
Therefore, from the construction of the elements $\lambda_j^\pm(m)$,  
it follows that if $g$ is level-raising $f$ at $m\in\mathcal{N}_j^{\rm def}$, then the image of $\Theta_\infty^\pm(g/K)$ under the map $\mathscr{O}\dBr{\Gamma}\rightarrow\mathscr{O}\dBr{\Gamma}/\wp^j\mathscr{O}\dBr{\Gamma}$ is the same as $\lambda_j^\pm(m)$.



\subsection{Proof of Theorem~\ref{thm:pm-HPMC}}

%
By Theorem~\ref{thm:howard} and the construction of $\boldsymbol{\lambda}^\pm_j$ of Theorem~\ref{thm:construction}, it suffices to show that there exists $m\in\mathcal{N}^{\rm def}$ and an $m$-new eigenform $g\in S_2(\Gamma_0(Nm))$ with $f\equiv g\;({\rm mod}\,\wp)$, for which the $p$-adic $L$-function $L_p^\pm(g/K)$ in \eqref{eq:Lp-pm} 
is invertible.
Let
\begin{equation}\label{eq:sel-rk}
r={\rm dim}_{\mathbf{F}_p}{\rm Sel}_p(E/K).
\end{equation}
The surjectivity of $\overline{\rho}$ implies that the natural map ${\rm Sel}_{p}(E/K)\twoheadrightarrow{\rm Sel}_{p^\infty}(E/K)[p]$ is an isomorphism.
By \eqref{eq:gen-Heeg} and the $p$-parity conjecture we know that $r$ is odd, say $r=2s+1$.
By a repeated application of the argument in the proof \cite[Thm.~9.1]{zhang-Kolyvagin} (to drop the Selmer rank \eqref{eq:sel-rk} down to $1$ by adding distinct admissible primes $q_1,\dots,q_{2s}$ to the level of $f$) and the proof of Theorem~7.2 in \emph{op.\,cit.}, there exists $m=q_1\cdots q_{2s}q_r\in\mathcal{N}^{\rm def}$ and an $m$-new eigenform $g\in S_2(\Gamma_0(Nm))$ level-raising $f$ with ${\rm dim}_{\mathscr{O}/\wp}{\rm Sel}_{\wp}(A_g/K)=0$.
In particular, 
\[
{\rm Sel}_{\wp^\infty}(A_g/K)=0.
\]
From Theorem~\ref{thm:IMC-ss}, it follows that
\[
{\rm ord}_{\wp}\biggl(\frac{L(g/K,1)}{\Omega_g^{\rm cong}}\biggr)=\sum_{w\mid Nm}t_w(A_g/K).
\]
By the hypothesis that $E[p]$ is ramified at the primes $\ell\mid N^+$, we have $t_w(A_g/K)=0$ for all $w\mid N^+$, and by \cite[Thm.~6.8]{pollack-weston} (see also \cite[Thm.~6.4]{zhang-Kolyvagin}) we have 
\[
{\rm ord}_\wp(\eta_{g,N^+,N^-m})=\sum_{w\mid N^-m}t_w(A_g/K).
\] 
Therefore,
\[
\frac{L(g/K,1)}{\Omega_{g}^{\rm cong}}\frac{1}{\eta_{g,N^+,N^-m}}\in\mathscr{O}_{}^\times,
\] 
and this concludes the proof.

\bibliographystyle{amsalpha}
\bibliography{references}
\end{document}